\pdfoutput=1
\documentclass[a4paper,11pt]{article}

\usepackage[subsec]{modstyle}
\usepackage{microtype}

\newcommand{\ctxt}{{F}}
\newcommand{\nrms}{{N}}
\newcommand{\SFF}{\Span_{\nrms}(\cF)}

\title{Normed equivariant ring spectra \\ and higher Tambara functors}
\author{Bastiaan Cnossen, Rune Haugseng, Tobias Lenz, and Sil Linskens}

\date{July 11, 2024}

\tikzcdset{arrow style=tikz,diagrams={>=Classical TikZ Rightarrow}}

\begin{document}
\maketitle

\begin{abstract}
  In this paper we extend equivariant infinite loop space theory
  to take into account multiplicative norms: For every finite group $G$, we
  construct a multiplicative refinement of the comparison between the \icats{} of 
  connective genuine $G$-spectra and space-valued Mackey functors,
  first proven by Guillou--May, and use this to give a
  description of connective normed equivariant ring spectra as
  space-valued Tambara functors.

  In more detail, we first introduce and study a general notion of
  homotopy\hskip0pt-coherent normed (semi)rings, and identify these with
  product-preserving functors out of a corresponding $\infty$-category
  of bispans. In the equivariant setting, this identifies space-valued
  Tambara functors with normed algebras with respect to a certain
  normed monoidal structure on grouplike $G$-commutative monoids in
  spaces. We then show that the latter is canonically equivalent to
  the normed monoidal structure on connective $G$-spectra given by the
  Hill--Hopkins--Ravenel norms.  Combining our comparison with results
  of Elmanto--Haugseng and Barwick--Glasman--Mathew--Nikolaus, we
  produce normed ring structures on equi\-variant algebraic K-theory
  spectra.
\end{abstract}

\begingroup\tosfstyle
\let\oldcontentsline=\contentsline
\def\contentsline#1#2#3#4{\oldcontentsline{#1}{#2}{\texttosf{#3}}{#4}}
\tableofcontents
\endgroup

\section{Introduction}
Infinite loop space theory has played an important role in algebraic
topology since the 1970's, giving a way to construct interesting
examples of spectra from space-level data. At its heart lies the
\emph{Recognition Theorem} \cite{MayGeomIter,BoardmanVogt, segal}, 
which in modern language describes
connective spectra as commutative group objects in the
\icat{} of spaces, i.e.
\begin{equation}
  \label{eq:SpconnisCGrp}
\Sp^{\geq 0} \simeq \catname{CGrp}(\Spc).
\end{equation}
Such commutative groups arise in nature, for instance, by
group-completing the classifying spaces of symmetric monoidal ($\infty$-)categories. As an important
example, applying this to the groupoid of finitely generated
projective modules over a ring $R$ with symmetric monoidal structure
via direct sum yields the algebraic K-theory spectrum of the ring $R$ \cite{may-perm, segal}.

In order to obtain spectra with algebraic structures, we need to
upgrade \cref{eq:SpconnisCGrp} to take into account
\emph{multiplicative} structures. For $1$-categorical inputs, such
\emph{multiplicative infinite loop spaces machines} have been
considered for example by May \cite{may-biperm,may-biperm2} and Elmendorf--Mandell \cite{elmendorf-mandell}. Working in the \icatl{}
framework, Gepner--Groth--Nikolaus \cite{GepnerGrothNikolaus} both generalized and elucidated these constructions: they show that there is a canonical symmetric monoidal structure on the \icat{}
$\CMon(\Spc)$ of commutative monoids in spaces, which localizes to
commutative groups, and that \cref{eq:SpconnisCGrp} uniquely upgrades to an
equivalence of symmetric monoidal \icats{} where we equip
$\Sp^{\geq 0}$ with the smash product. The tensor product on
$\CGrp(\Spc)$ is an \icatl{} analogue of the tensor product of abelian
groups, so it is natural to think of a
commutative algebra object of $\CGrp(\Spc)$ as a commutative ring in
$\Spc$; as a direct consequence of the multiplicative comparison, we then have an equivalence
\begin{equation}\label{eq:Sp-as-mackey}
 \catname{CAlg}(\Sp^{\geq 0}) \simeq \CAlg\big(\CGrp(\Spc)\big)\eqqcolon \catname{CRing}(\Spc)
\end{equation}
between connective commutative ring spectra and commutative ring spaces.

\subsection*{A multiplicative equivariant recognition theorem}
Our first goal is to extend the above story to equivariant
spectra over a finite group $G$. While the study of such
\emph{equivariant infinite loop space machines} began in the late 70's
(in unpublished work of Segal and Hauschild--May--Waner), the subject
has experienced a renaissance in recent years. As part of this, its
point-set level foundations have been rewritten and extended by May
and his collaborators \cite{guillou-may, gm-operad, mmo}, and new
\icatl{} approaches to the subject have been introduced by Barwick and collaborators \cite{BarwickMackey, exposeI}.

In the present paper, we will adopt the latter perspective. For this, recall from \cite{cmmn-Mackey}*{A.1} that the \icat{} $\Sp_G$ of $G$-spectra, defined classically as the Dwyer--Kan localization of a suitable model category of orthogonal or symmetric spectra with $G$-action, admits a purely \icatl{} description as
\[ \Sp_{G} \simeq \Fun^{\times}(\Span(\xF_{G}), \Sp),\]
that is, as product-preserving functors from a $(2,1)$-category of
spans of finite $G$-sets to spectra;  see also \cite{guillou-may} for a
model-categorical version.

From this equivalence and
\cref{eq:SpconnisCGrp}, one immediately obtains an \emph{Equivariant Recognition Theorem}, in the form of a space-level description of {connective}
$G$-spectra as
\begin{equation}
  \label{eq:SpGconnisCGrpG}
  \Sp_G^{\geq 0} \simeq \Fun^{\times}_{\mathrm{grp}}(\Span(\xF_{G}),
  \Spc);
\end{equation}
here the right-hand side consists of functors $F \colon \Span(\xF_{G}) \to
\Spc$ that preserve products and that are \emph{grouplike}, in the sense that for every $G$-set $X$ the
commutative multiplication given by
\[ F(X) \times F(X) \simeq F(X \amalg X) \to F(X)\]
makes $F(X)$ a commutative group. Analogously to the non-equivariant situation, such objects arise naturally from \icatl{} data, and this provides one possible approach to equivariant algebraic K-theory \cite{BarwickMackey2}.

To shed some light on this equivalence,
recall \cite{Dress} that a \emph{Mackey functor} $M$ for $G$ consists of abelian
groups $M(X)$ for every finite $G$-set $X$ together with restriction
and additive norm (or ``transfer'') maps
\[
  f^{*} \colon M(X) \to M(Y), \quad f_{\oplus} \colon M(Y) \to
  M(X) \]
for every morphism $f \colon Y \to X$ of $G$-sets, such that $M$ takes
disjoint unions to products, both restrictions and norms are
functorial, and they compose according to a double coset
formula. The zeroth homotopy groups of any $G$-spectrum form a Mackey
functor, and Mackey functors are the most general coefficients for
ordinary equivariant (co)homology \cite{lms}*{\S V.9}.
The data of a Mackey functor can be neatly organized into a
product-preserving functor
\[ \Span(\xF_{G}) \to \Ab,\] or equivalently a functor
$\Span(\xF_{G}) \to \Set$ that preserves products and such that the
induced (commutative) multiplication on the value at every $G$-set has
inverses. Thus we may think of the equivalence
\cref{eq:SpGconnisCGrpG} as saying that connective $G$-spectra are
\emph{space-valued Mackey functors}.

Just like a space-valued Mackey functor contains more information than a commutative group in the \icat{} of $G$-spaces (namely, in the form of additive norms), a multiplicative refinement of the equivalence \cref{eq:SpGconnisCGrpG} should not just take the ordinary symmetric monoidal structures on both sides into account (arising via the smash product and Day convolution, respectively), but additionally respect suitable \emph{symmetric monoidal norms}. To make this precise, note that if $\cC$ is any \icat{} with finite products, we can more generally define a
\emph{normed $G$-monoid} in $\cC$ to be a functor
\[ M \colon \Span(\xF_{G}) \to \cC\]
that preserves finite products; this amounts to specifying a commutative
monoid $M(G/H)$ with an action of the Weyl group $W_GH\coloneqq N_GH/H$ for every subgroup $H$ of $G$, together with restrictions $\Res^K_H\colon M(G/K)\to M(G/H)$ and
norm maps $\Nm^K_H\colon M(G/H) \to M(G/K)$ for all subgroups $H\leqslant K\leqslant G$ as well as various coherences. The equivalence (\ref{eq:Sp-as-mackey}) can then be restated as saying that genuine $G$-spectra are normed $G$-monoids in spectra, while the equivalence (\ref{eq:SpGconnisCGrpG}) says that connective $G$-spectra are equivalently ``grouplike'' normed $G$-monoids in spaces, or \emph{normed $G$-groups} for short.

On the other hand, taking $\cC$ to be $\CatI$, we obtain the notion of a
\emph{normed $G$-\icat{}} as the equivariant version of a symmetric monoidal \icat{}. Many important symmetric monoidal \icats{} studied in equivariant homotopy theory turn out to admit natural refinements to normed $G$-\icats{}; in particular, there is a normed $G$-\icat{}
\[
  \ul\Sp_G\colon G/H\mapsto \Sp_H
\]
whose contravariant functoriality is given by the evident
restrictions, and whose covariant functoriality encodes the smash
product of equivariant spectra together with the
\emph{Hill--Hopkins--Ravenel norms} \cite{HHR}. We then
prove:

\begin{introthm}[See \Cref{thm:recognition}]\label{introthm:Gsymmeq}
  The $G$-\icat{}
  \[
    \ul\NMon_{G}(\Spc)\colon G/H\mapsto \Fun^\times(\Span(\xF_H),\Spc)
  \]
   of $G$-normed monoids in spaces has a canonical normed structure
   that localizes to $\ul\NGrp_{G}(\Spc)$. Furthermore, the equivalence
  \cref{eq:SpGconnisCGrpG} upgrades to an equivalence
  \[ \ul\Sp_G^{\geq 0} \simeq \ul\NGrp_{G}(\Spc)\]
  of normed $G$-\icats{}, where the left-hand side carries the restriction of the normed structure described above.
\end{introthm}

\subsection*{Normed \boldmath $G$-ring spectra}
As a direct consequence of Theorem~\ref{introthm:Gsymmeq}, we obtain equivalences between connective $G$-spectra equipped with extra ``parametrized algebraic structure'' and $G$-commutative groups equipped with the same structure. To make this precise, note that by straightening--unstraightening we can equivalently regard a normed $G$-\icat{} $\cC$ as a cocartesian
fibration
\[ \cC^{\otimes} \to \Span(\xF_{G}).\]
Following Bachmann--Hoyois \cite{norms} we define a \emph{$G$-normed
  algebra} in $\cC$ as a section $\Span(\xF_{G}) \to \cC^{\otimes}$
that takes the backward maps to cocartesian morphisms in
$\cC^{\otimes}$; this amounts to specifying a commutative algebra $A$ in the underlying \icat{} $\cC(G/G)$ together with suitably coherent \emph{normed multiplications} $\mathop{\Nm}^G_H\mathop{\Res}^G_H A\to A$ for every $H\leqslant G$.

In particular, we get a notion of $G$-normed
algebras in $\ul\Sp_{G}$, or \emph{normed $G$-spectra} for short, which are generally expected to be equivalent to
the objects obtained as strict commutative algebras in the $1$-categories of symmetric or
orthogonal $G$-spectra. Theorem~\ref{introthm:Gsymmeq} then shows that \emph{connective} normed $G$-spectra can equivalently be described as normed algebras in $\ul\NGrp_G(\Spc)$, i.e.~as ``normed $G$-rings.'' As our second main result, we then build on this comparison to give a space-level description of connective normed $G$-spectra, generalizing the result for $G=1$ proven in \cite{CHLL1}:

\begin{introthm}[See Theorem~\ref{thm:Main_Theorem}]\label{introthm:tambara}
  There is an equivalence of \icats{}
  \begin{equation}
    \label{eq:normedTambaraeq}
  \NAlg_{G}(\Sp_G^{\geq 0}) \simeq
    \Fun^{\times}_{\mathrm{grp}}(\Bispan(\xF_{G}), \Spc).
  \end{equation}
\end{introthm}
Here $\Bispan(\xF_{G})$ is the $(2,1)$-category of \emph{bispans} of finite
$G$-sets in the sense of \cite{bispans}: its objects are finite $G$-sets, and morphisms are
given by diagrams
\begin{equation}\label{eq:generic-bispan}
  A\xlongleftarrow{R} B\xlongrightarrow{N} C\xlongrightarrow{T} D;
\end{equation}
the composition law in $\Bispan(\xF_G)$ is somewhat involved and encodes both the Mackey double coset formulas for commuting restrictions past norms and transfers, as well as a distributivity relation between norms and transfers. Moreover,
$\Fun^\times_{\textup{grp}}$ again denotes the full subcategory of
those product-preserving functors that are \emph{grouplike} in a
suitable sense (see Definition~\ref{def:tambara} for details).

Recall \cite{tambara-tnr} that a \emph{Tambara functor} $X$ for a
finite group $G$ is an assignment of an abelian group $X(G/H)$ for
every subgroup of $G$ together with compatible restriction, transfer,
and norm maps for every subgroup inclusion. A Tambara functor is thus a
multiplicative enhancement of a Mackey functor, and this is precisely
the structure existing on the zeroth equivariant homotopy groups of a
strictly commutative $G$-ring spectrum, see \cite{Brun}*{\S 7.2} and \cite{ullman}. Tambara functors can equivalently be described 
\cite{StricklandTambara} as
grouplike product-preserving functors $\Bispan(\xF_G)\to\Set$ (with restrictions, transfers, and norms corresponding to the functoriality in the components $R$, $T$, and $N$ of the bispan (\ref{eq:generic-bispan}), respectively), and
we can therefore think of the equivalence \cref{eq:normedTambaraeq} as
identifying connective normed $G$-spectra with \emph{space-valued
  Tambara functors}.

In fact, we deduce \Cref{introthm:tambara} from a much more general result: following Bachmann \cite{BachmannMotivic}, we consider \emph{normed \icats{}} as functors from suitable span \icats{} into $\CatI$, and we give a general description of \emph{normed (semi)rings} in this context in terms of product-preserving functors out of an \icat{} of bispans, see Theorems~\ref{thm:cmon-bispan-model} and~\ref{thm:ring-comparison}. This in particular allows us to deduce a version of Theorem~\ref{introthm:tambara} with fewer normed multiplications, in which case we can describe the corresponding connective normed algebras as a space-valued version of the \emph{incomplete Tambara functors} considered by Blumberg--Hill \cite{blumberg-hill}.

\subsection*{Multiplicative equivariant K-theory}
As a concrete application of \cref{introthm:tambara}, we can construct
normed multiplicative structures on equivariant algebraic K-theory
spectra: Recall that Elmanto and Haugseng \cite{bispans}*{\S 4.3} show that if $E$ is a
normed $G$-spectrum, then the assignment
\[ H \mapsto \Mod_{E^{H}}(\Sp_{H})\]
extends naturally to a functor
\[ \Bispan(\xF_{G}) \to \CatI \]
that preserves products and takes values in the subcategory of stable \icats{} and polynomial
functors. Combining this with the polynomial functoriality of
(connective) algebraic K-theory of Barwick, Glasman, Mathew, and
Nikolaus~\cite{polynomials}, we obtain a space-valued Tambara functor
given by
\[ H \mapsto \Omega^{\infty}K(\Mod_{E^{H}}(\Sp_{H})).\]
Now \cref{introthm:tambara} identifies this with a normed
$G$-spectrum; as the constructions involved are functorial, we obtain:
\begin{introcor}
  Connective equivariant algebraic K-theory can be enhanced to a functor
  \[ K \colon \NAlg_{G}(\Sp_{G}) \to \NAlg_{G}(\Sp_G^{\geq 0}).\]
\end{introcor}

More generally, we obtain normed $G$-spectra from suitable normed stable $\infty$-categories \cite{bispans}*{4.3.2}. Specializing this as in \cite{bispans}*{4.3.9} we in particular obtain a refinement of connective equivariant algebraic $K$-theory of stable \icats{} to a functor from symmetric monoidal stable \icats{} to normed $G$-spectra. In the case where $G$ is a finite $2$-group, an entirely different approach to
such a refinement has previously been worked out by
Hilman~\cite{Hilman-Kth}; to the best of our knowledge, ours is the first construction in this generality.

\subsection*{Related work}
During the long history of equivariant infinite loop space theory, a wide range of notions of ``$G$-commutative monoids'' have been introduced and studied, for example Shimakawa's \emph{special $\Gamma$-$G$-spaces} \cite{shimakawa-equivariant}, the operadic models of Guillou--May \cite{gm-operad}, various ``ultra-commutative'' models \cite{g-global,global-star}, and the $\infty$-categorical model \cite{guillou-may,BarwickMackey} used in this paper. All of these notions are known to be equivalent to each other \cite{mmo, genuine-naive, marckey}, and in particular each of them comes with an equivariant recognition theorem relating the corresponding grouplike objects to connective $G$-spectra.

Since the early days of the subject, much effort went into the search
for multiplicative refinements of these comparisons, with several
breakthroughs in the last couple of years. In particular,
Guillou--May--Merling--Osorno studied multiplicative properties of the
operadic machine, culminating in the article \cite{gmmo-mult} where
they refine equivariant infinite loop space theory to an enriched
multifunctor, allowing the construction of \emph{non-commutative}
$G$-ring spectra from space-level or categorical data. Yau
\cite{yau-mult} recently improved this to a \emph{symmetric}
enriched multifunctor, which then in particular can also be used to produce
\emph{commutative} $G$-ring spectra.

In contrast to our approach, the aforementioned authors work with
\emph{strict} (non-parametrized) algebraic structures on the level of
$1$-categorical models. While commutative structures on
symmetric/orthogonal $G$-spectra are expected to model
 \icatl{} $G$-normed spectra, a symmetric monoidal or symmetric multifunctor
structure on the functor from $G$-commutative monoids to connective
$G$-spectra does \emph{not} induce any such structure on the inverse
functor, and accordingly there is no analogue of our
Theorem~\ref{introthm:tambara} known in these settings. In fact, there
are serious obstructions to achieving a complete space-level
description of connective commutative $G$-ring spectra along these
lines: for example, Lawson \cite{lawson} proved that even for $G=1$
not all connective commutative ring spectra arise from strictly
commutative algebras in $\Gamma$-spaces.

\subsection*{Outline}
In Section~\ref{sec:bg} we recall some necessary background about \icats{} of spans and bispans. We then introduce the framework of \emph{normed monoids}, \emph{normed \icats{}}, and \emph{normed algebras} in Section~\ref{sec:normed-icats} as a very mild generalization of work of Bachmann and Hoyois. As the main new result of that section, we construct a \emph{Day convolution} normed structure on certain \icats{} of product-preserving functors and give a description of normed algebras in it, see \cref{propn:FunTsymmon}.

Specializing this, we then construct normed \icats{} of {normed monoids} in Section~\ref{sec:commring}, which in particular allows us to define \emph{normed (semi)rings}. Combining the description of normed algebras with respect to the Day convolution structure with our results in \cite{CHLL1}, we then show that normed rings can be equivalently described as certain higher Tambara functors (\Cref{thm:ring-comparison}).

In Section~\ref{sec:Normed_Spectra} we introduce and compare various normed \icats{} related to equivariant homotopy theory, in particular proving \Cref{introthm:Gsymmeq}. Combining this with the results of the previous section, we then finally deduce \Cref{introthm:tambara}.

The paper ends with a short appendix on a \emph{Borel construction} due to Hilman \cite{borel-normed} that builds normed $G$-\icats{} from ordinary symmetric monoidal \icats{} with $G$-action, which is used in various constructions in Section~\ref{sec:Normed_Spectra}.

\subsection*{Notations and conventions}
\begin{itemize}
  \item We write $\xF$ for the category of finite sets, and $\bfu{n} \coloneqq \{1,\dots,n\}$
  for the standard set with $n$ elements. For an $\infty$-category $\cC$, we write $\xF[\cC]$ for the finite coproduct completion of $\cC$. When $\cC$ is the orbit category $\bO_G$ of a finite group $G$, we denote the category of finite $G$-sets by $\xF_G=\xF[\bO_G]$.
  \item Functors that preserve \textit{finite} products will be referred to as \emph{product-preserving} for short, and similarly for coproducts. We will never speak about arbitrary products and coproducts.
  \item We write $\CatI$ for the \icat{} of \icats{} and $\Spc$ for the
  \icat{} of spaces (a.k.a. \igpds{}).
  \item If $\cC$ is an \icat{}, then we denote its underlying \igpd{} by $\cC^{\simeq}$ or $\cC_{\eq}$, depending on context.
  \item We write $\Ar(\cC) \coloneqq \Fun([1],
  \cC)$ for the \emph{arrow \icat{}} of $\cC$.
  \item Throughout, we use the word \emph{subcategory} to refer to what is sometimes called a \emph{replete subcategory}: that is, for us a subcategory $\cC_0\subseteq\cC$ is always required to contain all equivalences between its objects. A subcategory is called \emph{wide} if it in addition
  contains all objects.
\end{itemize}

\subsection*{Acknowledgments}
The first author is an associate member of the SFB 1085 Higher Invariants. The fourth author is a member of the Hausdorff Center for Mathematics,
supported by the DFG Schwerpunktprogramm 1786 ``Homotopy Theory and
Algebraic Geometry'' (project ID SCHW 860/1-1). We thank Maxime Ramzi for helpful discussions regarding \cref{propn:LKEprodpres}.

\section{Spans and bispans}\label{sec:bg}
The goal of this section is to recall some basic properties of $\infty$-categories of spans and bispans, and to formulate conditions guaranteeing that a (bi)span $\infty$-category admits (co)products.

\subsection{Spans}\label{sec:spans}
Let us begin by recalling some basic definitions and results concerning $\infty$-{\hskip0pt}categories of spans. Our main references for this are Barwick's original article \cite{BarwickMackey} and the more recent treatment in
\cite{HHLN2}.

\begin{defn}
	A \emph{span pair} $(\cC, \cC_{F})$
	consists of an \icat{} $\cC$ together with a wide
	subcategory $\cC_{F}$ of ``forward'' maps, such that base
        changes of morphisms in $\cC_F$ exist in $\cC$ and are again contained in $\cC_F$. We write $\SPair$ for the \icat{} of span pairs; a morphism $(\cC, \cC_{F}) \to (\cD, \cD_{F})$ here is a functor $\cC \to \cD$ that preserves the forward maps  as well as pullbacks along forward maps.
\end{defn}

\begin{remark}
	Both \cite{BarwickMackey} and \cite{HHLN2} work more generally with so-called \emph{adequate triples} $(\cC,\cC_F,\cC_B)$. Span pairs correspond to the special case $\cC_B=\cC$.
\end{remark}

\begin{ex}
	For any \icat{} $\cC$, we always have the \emph{minimal} span pair $(\cC,
	\cC^{\simeq})$. If $\cC$ has all pullbacks, then we also have the \emph{maximal} span pair $(\cC, \cC)$.
\end{ex}

\cite{HHLN2}*{2.12} constructs a functor
\[ \Span \colon \SPair \to \CatI,\]
sending a span pair $(\cC, \cC_{F})$ to its \emph{span \icat{}}
\[\Span_F(\cC) := \Span(\cC, \cC_{F}).\] This $\infty$-category has
the same objects as $\cC$, and a map in $\Span_{F}(\cC)$ from $x$ to
$y$ is given by a \emph{span}
\[
\begin{tikzcd}
	{} & z \ar[dl,"b"{swap}] \ar[dr, "f"] \\
	x  & & y,
\end{tikzcd}
\]
where $f$ is in $\cC_{F}$ and $b$ is arbitrary; composition is
given by taking pullbacks in $\mathcal{C}$. If $\cC$ has all pullbacks, we abbreviate $\Span(\cC)$ for the span category associated to the span pair $(\cC, \cC)$.

\begin{ex}[\cite{HHLN2}*{2.15}]
	We have $\Span_{\eq}(\cC) = \Span(\cC, \cC^{\simeq}) \simeq \cC^{\op}$.
\end{ex}

\begin{remark}
  The \icat{} $\SPair$ has limits, which are computed in
	$\CatI$ \cite{HHLN2}*{2.4}, and the functor $\Span$ preserves these \cite{HHLN2}*{2.18}.
\end{remark}

We will need the following special case of Barwick's ``unfurling''
theorem:

\begin{propn}\label{unfurl radj}
  Suppose $(\cB, \cB_{F})$ is a span pair and $\Phi \colon \cB \to \CatI$ is a functor such that
  \begin{itemize}
  \item for every morphism $f \colon b \to b'$ in $\cB_{F}$, the functor $f_{!} = \Phi(f)$ has a right adjoint $f^{*}$,
  \item and for every pullback square
    \[
      \begin{tikzcd}
       a' \ar[r, "f'"]\drpullback \ar[d, "\alpha"'] & b'  \ar[d, "\beta"] \\
       a \ar[r, "f"'] & b
      \end{tikzcd}
    \]
    in $\cB$ with $f$ in $\cB_{F}$, the induced Beck--Chevalley transformation
    \[ \alpha_{!}f'^{*} \to f^{*}\beta_{!}\]
    is an equivalence.
  \end{itemize}
  Let $p \colon \cE \to \cB$ be the cocartesian fibration for $\Phi$, and write $\cE_{\fcart}$ for the subcategory containing the morphisms that are $p$-cartesian over morphisms in $\cB_{F}$. Then $(\cE, \cE_{\fcart})$ is a span pair, $p$ is a morphism of span pairs, and
  \[ \Span(p)^{\op} \colon \Span_{\fcart}(\cE)^{\op} \to
    \Span_{F}(\cB)^{\op}\] is the cocartesian fibration for a functor
  $\Span_{F}(\cB)^{\op} \to \CatI$ that restricts to $\Phi$ on $\cB$
  and to the functor obtained by passing to right adjoints from $\Phi$
  on $\cB_{F}^{\op}$.
\end{propn}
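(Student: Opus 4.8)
The plan is to derive this from Barwick's unfurling theorem \cite{BarwickMackey}: a span pair $(\cB,\cB_F)$ is precisely an adequate triple $(\cB,\cB_F,\cB)$, and under this identification the two displayed hypotheses are exactly the adjointability and Beck--Chevalley conditions needed there (see also \cite{HHLN2} for a treatment in the present $\infty$-categorical language). Concretely, I would check the three assertions in turn. First, that $(\cE,\cE_{\fcart})$ is a span pair: as $p$-cartesian morphisms compose and contain the equivalences and $\cB_F$ is a wide subcategory closed under composition, $\cE_{\fcart}$ is a wide subcategory of $\cE$, and for base changes, given $f\colon e_1\to e_2$ in $\cE_{\fcart}$ over $\bar f\colon b_1\to b_2$ in $\cB_F$ and an arbitrary $g\colon e_0\to e_2$ over $\bar g\colon b_0\to b_2$, one forms the pullback $b_3\coloneqq b_0\times_{b_2}b_1$ in $\cB$ with its projection $q\colon b_3\to b_0$ in $\cB_F$, sets $e_3\coloneqq q^{*}e_0$ equipped with its $p$-cartesian structure morphism $e_3\to e_0$ (which therefore lies in $\cE_{\fcart}$), and uses that $f$ exhibits $e_1\simeq\bar f^{*}e_2$ to build the remaining edge $e_3\to e_1$ from the counit of $q_!\dashv q^{*}$ followed by $g$ via the adjunction $\bar f_!\dashv\bar f^{*}$; a direct mapping-space computation in the total space $\cE$ of the cocartesian fibration $p$ then identifies the resulting square as a pullback exhibiting $e_3\to e_0$ as the base change of $f$ along $g$. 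Since $p$ visibly preserves forward morphisms and pullbacks along them, it is a morphism of span pairs.

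The second and main part is to show that $\Span(p)\colon\Span_{\fcart}(\cE)\to\Span_F(\cB)$ is a cartesian fibration, equivalently that $\Span(p)^{\op}$ is cocartesian. For an object $e$ over $b$ and a morphism of $\Span_F(\cB)$ landing at $b$, i.e.\ a span $b'\xleftarrow{u}c\xrightarrow{v}b$ with $v\in\cB_F$, the candidate cartesian lift with target $e$ should be the span
\[
  u_!v^{*}e\;\longleftarrow\;v^{*}e\;\longrightarrow\;e
\]
in $\cE$, whose right-hand leg is the structure morphism of $v^{*}e$ (hence $p$-cartesian over $v\in\cB_F$, so in $\cE_{\fcart}$) and whose left-hand leg is $p$-cocartesian over $u$. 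The hard part is to verify that this span is $\Span(p)$-cartesian, and this is precisely where the Beck--Chevalley hypothesis enters: it is what lets one commute a forward ($\cE_{\fcart}$) base change past a backward pushforward inside $\cE$, which is exactly the input needed to check --- via the recognition criterion for cartesian morphisms in span \icats{} from \cite{HHLN2} --- that composition with the displayed span induces the required equivalences on mapping spaces. I expect this verification, together with the bookkeeping of spans, (co)cartesian edges, and base-change squares, to be the main obstacle.

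Finally, I would identify the classifying functor. A morphism of $\cE_{\fcart}$ lying over an identity is an equivalence, so the fibre of $\Span(p)$ over $b$ is $\Span(\Phi(b),\Phi(b)^{\simeq})\simeq\Phi(b)^{\op}$ by the example above, and hence the fibre of $\Span(p)^{\op}$ over $b$ is $\Phi(b)$. Since every morphism of $\Span_F(\cB)$ factors as a backward morphism followed by a forward one, it suffices to read off the fibre transport on these two classes: a backward morphism is the image of some $\gamma$ in $\cB$ under $\cB\hookrightarrow\Span_F(\cB)^{\op}$, and the cartesian-lift formula above computes its transport to be $\gamma_!=\Phi(\gamma)$; a forward morphism is the image of some $f$ in $\cB_F$ under $\cB_F^{\op}\hookrightarrow\Span_F(\cB)^{\op}$, and its transport is the right adjoint $f^{*}$ of $\Phi(f)$. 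Thus $\Span(p)^{\op}$ is classified by a functor $\Span_F(\cB)^{\op}\to\CatI$ that restricts to $\Phi$ on $\cB$ and to the pointwise right adjoint of $\Phi$ on $\cB_F^{\op}$; the compatibility of these two descriptions along the factorization of a general span is one further instance of Beck--Chevalley. This yields the proposition.
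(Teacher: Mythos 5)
Your proposal is correct and follows essentially the same route as the paper: both reduce to Barwick's unfurling theorem, exhibit the same candidate lift $u_!v^*e \leftarrow v^*e \to e$, and verify it via the recognition criterion for cocartesian morphisms in span \icats{} (\cite{HHLN2}*{3.1}), with the Beck--Chevalley hypothesis entering exactly where you say it does. The only real difference is one of detail: the paper carries out the verification you defer, and it identifies the restriction of the classifying functor to $\cB_F^{\op}$ by recognizing the restricted fibration as the span-category dual of the cartesian fibration $\cE\times_{\cB}\cB_F\to\cB_F$ (\cite{BarwickGlasmanNardinCart}*{1.4} or \cite{HHLN2}*{3.18}) rather than by your pointwise fibre-transport computation.
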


\begin{proof}
It follows from \cite{BarwickMackey}*{11.6} that $(\cE, \cE_{\fcart})$ is a span pair, that $p$ is a morphism of span pairs, and that $\Span(p)^{\op}$ is a cocartesian fibration. For the convenience of the reader we recall the proof, giving some additional details. It follows from \cite{BarwickMackey}*{11.2} that $(\cE, \cE_{\fcart})$ is a span pair, and the pullback
  \[
    \begin{tikzcd}
     w \ar[r, "\bar{f}'"] \ar[d, "\bar{g}'"']\drpullback & z  \ar[d, "\bar{g}"] \\
     x \ar[r, "\bar{f}"'] & y
    \end{tikzcd}
  \]
  of $\bar{f} \colon x \to y$ in $\cE_{\fcart}$ over $f \colon a \to b$ along a morphism $\bar{g} \colon z \to y$ over $g \colon c \to b$ is obtained by taking the pullback
  \[
    \begin{tikzcd}
     d \drpullback \ar[r, "f'"] \ar[d, "g'"'] & c  \ar[d, "g"] \\
     a \ar[r, "f"'] & b
    \end{tikzcd}
  \]
  in $\cB$, picking a $p$-cartesian morphism $\bar{f}' \colon w \to z$ over $f'$, and letting $\bar{g}'$ be the unique factorization of $\bar{g}\bar{f}'$ through the $p$-cartesian morphism $\bar{f}$.

  To show that $\Span(p)^{\op}$ is a cocartesian fibration, it suffices to show that any span
  \[ x \xfrom{f} y \xto{g} z \]
  in $\Span_{\fcart}(\cE)^{\op}$, where $f$ is $p$-cartesian over $\cB_{F}$ and $g$ is $p$-cocartesian, is a cocartesian morphism, since then $\Span_{\fcart}(\cE)^{\op}$ has all cocartesian lifts of morphisms in $\Span_{F}(\cB)^{\op}$. To see this we apply \cite{BarwickMackey}*{12.2}, in the guise of \cite{HHLN2}*{3.1}:
  \begin{itemize}
  \item Condition (1) is immediate since $p$ is a cocartesian fibration.
  \item Unwinding the definitions, condition (2) says that given a pullback square
    \[
      \begin{tikzcd}
       a\ar[r, "g'"] \ar[d, "f'"']\drpullback & b' \ar[d, "f"] \\
       b \ar[r, "g"'] & c
      \end{tikzcd}
    \]
    in $\cB$ with $f$ in $\cB_{F}$ and a commutative square
    \[
      \begin{tikzcd}
        f'^{*}x \ar[r, "\gamma"] \ar[d, "\bar{f}'"'] & y  \ar[d, "\phi"] \\
        x \ar[r, "\bar{g}"'] & g_{!}x
      \end{tikzcd}
    \]
    where $\bar{g}$ is $p$-cocartesian over $g$ and $\bar{f}'$ is
    $p$-cartesian over $f'$, then $\gamma$ is $p$-cocartesian \IFF{}
    $\phi$ lies in $\cE_{\fcart}$ and the square is a
    pullback. Indeed, in the former case $\phi$ factors as the
    canonical map $g'_{!}f'^{*}x \to f^{*}g_{!}x$ followed by a
    cartesian morphism over $f$, while in the latter case $\gamma$
    factors as a cocartesian morphism over $g'$ followed by the same
    map. Since this Beck--Chevalley map is by assumption invertible,
    the two conditions are equivalent.
  \end{itemize}
  It remains to identify the fibrations we get over $\cB$ and
  $\cB_{F}^{\op}$. Since the functor $\Span(\blank)$ is compatible
  with pullbacks, we see that over $\cB$ we recover
  $p \colon \cE \to \cB$, while over $\cB_{F}$ we get
  $\Span(\cE_{F}, \cE_{F,\fw}, \cE_{\fcart}) \to \cB_{F}^{\op}$, where
  $\cE_{F} \coloneqq \cE \times_{\cB} \cB_{F}$ and $\cE_{F,\fw}$ denotes the
  subcategory of morphisms that map to equivalences in
  $\cB_{F}^{\op}$. This is the cocartesian fibration that describes
  the same functor as the cartesian fibration $\cE_{F} \to \cB_{F}$,
  by \cite{BarwickGlasmanNardinCart}*{1.4} or \cite{HHLN2}*{3.18}.
\end{proof}

\subsection[{Products in span \icats{}}]{\boldmath Products in span \icats{}}\label{sec:prodinspan}

 In this subsection we provide criteria for \icats{} of spans to have products and coproducts.

 \begin{defn}
 	Recall that an \icat{} $\cC$ is called \emph{extensive} if $\cC$ has finite coproducts
 	and the coproduct functor
 	\[ \amalg \colon \prod_{i=1}^{n} \cC_{/x_{i}} \to \cC_{/\coprod_{i}
 		x_{i}}\] is an equivalence for all objects $x_1, \dots, x_n \in \cC$. A span pair
 	$(\cC, \cC_{F})$ is called \textit{extensive} if
  the following conditions are satisfied:
 	\begin{itemize}
    \item $\cC$ is extensive,
 		\item the morphisms in $\cC_F$ are closed under finite coproducts,
 		\item and for every $x \in \cC$, the maps $\emptyset \to x$ and $x \amalg x \to x$ are in $\cC_F$.
 	\end{itemize}
 	More generally, we say that $(\cC,\cC_F)$ is \textit{weakly extensive} if
 	\begin{itemize}
 		\item $\cC$ has finite coproducts,
 		\item the morphisms in $\cC_{F}$ are closed under
                  finite coproducts,
 		\item and the coproduct functor
 		\[ \amalg \colon \prod_{i=1}^{n} \cC_{/x_{i}}^{F} \to
 		\cC_{/\amalg_{i} x_{i}}^{F}\]
 		is an equivalence for all $n\ge0$ and $x_1, \dots, x_n \in \cC$. Here $\cC^F_{/y}$ denotes the full subcategory of $\cC_{/y}$ spanned by those maps $z\to y$ that belong to $F$.
 	\end{itemize}
 	If $\cC$ is an extensive $\infty$-category, then a wide subcategory $\cC_F \subseteq \cC$ is called a \textit{(weakly) extensive subcategory} if the pair $(\cC, \cC_F)$ is (weakly) extensive span pair.
 \end{defn}

\begin{remark}
	Note that a span pair $(\cC,\cC_F)$ is extensive if and only if $\cC$ and $\cC_F$ are both extensive \icats{} and the inclusion $\cC_F \hookrightarrow \cC$ preserves finite coproducts. Also note that every extensive span pair is weakly extensive.
\end{remark}

\begin{remark}
	\label{rmk:Reformulation_Extensive}
	Let $(\cC,\cC_F)$ be a span pair such that the morphisms in $\cC_F$ are closed under finite coproducts. Then the coproduct functor $\prod_{i=1}^{n} \cC^{F}_{/x_{i}} \to \cC^{F}_{/\mathop\amalg_{i} x_{i}}$ admits a right adjoint $\cC^{F}_{/\mathop\amalg_{i} x_{i}} \to \prod_{i=1}^{n} \cC^{F}_{/x_{i}}$ given by pullback along the maps $x_i \to \mathop\amalg_i x_i$, see \cite{HTT}*{5.2.5.1}, and it follows that $(\cC,\cC_F)$ is weakly extensive if and only if this functor is an equivalence. In this case, the canonical squares
	\[
	\begin{tikzcd}
		y_i \rar \dar[swap]{f_i} & \mathop\amalg_i y_i \dar{\mathop\amalg_i f_i} \\
		x_i \rar & \mathop\amalg_i x_i
	\end{tikzcd}
	\]
	are pullback squares for all morphisms $f_i\colon y_i \to x_i$ in $\cC_F$.

	One may similarly characterize extensiveness of $\cC$ by means of the right adjoint $\cC_{/\amalg_{i} x_{i}} \to \prod_{i=1}^{n} \cC_{/x_{i}}$ to the coproduct functor; in this case we need to assume that the morphism $\emptyset \to x$ is in $\cC_F$ for each object $x$ to guarantee that the relevant pullbacks exist.
\end{remark}

\begin{propn}
	\label{propn:Characterization_Extensive}
	\label{obs:Fext}
	A span pair $(\cC, \cC_F)$ is weakly extensive if and only if the following conditions hold:
	\begin{itemize}
		\item the \icat{} $\cC$ has finite coproducts,
		\item the coproduct functor $\cC \times \cC \to \cC$ is a morphism of span pairs (\ie{} morphisms in $\cC_{F}$ are closed
		under coproducts and coproducts of pullbacks of morphisms in
		$\cC_{F}$ are again pullbacks),
		\item and the commutative squares
		\[
		\begin{tikzcd}
			x \amalg x \ar[r] \ar[d] & y \amalg y \ar[d] \\
			x \ar[r] & y
		\end{tikzcd}
		\qquad
		\begin{tikzcd}
			\emptyset  \ar[r] \ar[d] & \emptyset  \ar[d] \\
			x \ar[r] & y
		\end{tikzcd}
		\]
		are pullbacks for all morphisms $x \to y$ in $\cC_{F}$.
	\end{itemize}
	The pair $(\cC, \cC_F)$ is extensive if and only if in addition we have:
	\begin{itemize}
		\item the above two squares are pullbacks for \emph{any} morphism $x \to y$ in $\cC$,
		\item the maps $\emptyset \to x$ and $x \amalg x \to x$ are in $\cC_F$ for all $x \in \cC$.
	\end{itemize}
\end{propn}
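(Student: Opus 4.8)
The plan is to route everything through \cref{rmk:Reformulation_Extensive}, which (once $\cC$ has finite coproducts and $\cC_F$ is closed under finite coproducts) identifies weak extensivity of $(\cC,\cC_F)$ with the statement that for all $x_1,\dots,x_n$ the coproduct functor $\amalg\colon\prod_i\cC^{F}_{/x_i}\to\cC^{F}_{/\amalg_i x_i}$ is an equivalence, with quasi-inverse the right adjoint given by pullback along the inclusions $\iota_{x_i}\colon x_i\to\amalg_i x_i$; the analogous statement for the full slices $\cC_{/-}$ encodes extensivity of the \icat{} $\cC$. A routine induction writing $\amalg_{i\le n}x_i=(\amalg_{i<n}x_i)\amalg x_n$ reduces both of these to the cases $n=0$ and $n=2$, so I would fix the adjunction $\amalg\dashv(\iota_x^{*},\iota_y^{*})$ and check that its unit and counit are equivalences.

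First I would prove the implication ``the three bullets $\Rightarrow$ weakly extensive''. The case $n=0$ is immediate: applying the nullary-square hypothesis to a morphism $p\colon z\to\emptyset$ in $\cC_F$ gives $z\simeq z\times_\emptyset\emptyset\simeq\emptyset$, so $\cC^{F}_{/\emptyset}\simeq *$. For $n=2$, the counit at a forward map $h\colon z\to x\amalg y$ is the canonical map $\theta\colon z_x\amalg z_y\to z$, where $z_x\coloneqq z\times_{x\amalg y}x$ and $z_y\coloneqq z\times_{x\amalg y}y$. I would take the coproduct of the two defining pullback squares of $z_x$ and $z_y$ — a pullback by the ``morphism of span pairs'' bullet, since both are base changes of the forward map $h$ — and paste it on top of the fold-square of $h$ (a pullback by the third bullet, as $h\in\cC_F$), obtaining
\[
\begin{tikzcd}
z_x\amalg z_y \drpullback \ar[r]\ar[d] & x\amalg y \ar[d,"\iota_x\amalg\iota_y"]\\
z\amalg z \drpullback \ar[r,"h\amalg h"]\ar[d,"\nabla_z"'] & (x\amalg y)\amalg(x\amalg y) \ar[d,"\nabla"]\\
z \ar[r,"h"'] & x\amalg y
\end{tikzcd}
\]
Because $\nabla\circ(\iota_x\amalg\iota_y)=\mathrm{id}_{x\amalg y}$, the outer rectangle exhibits $\theta$ as a base change of an identity morphism, hence as an equivalence. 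For the unit at $(f\colon a\to x,\ g\colon b\to y)$ I would observe that the square with corners $a,\ a\amalg b,\ x,\ x\amalg y$ (horizontal maps $\iota_a,\iota_x$; vertical maps $f,\ f\amalg g$) is the coproduct of the trivial pullback square on $f$ over $\mathrm{id}_x$ with the pullback square $\emptyset\simeq b\times_y\emptyset$ furnished by the nullary-square hypothesis for $g$, so again the ``morphism of span pairs'' bullet makes it a pullback; the other unit component is symmetric. Hence unit and counit are equivalences and $(\cC,\cC_F)$ is weakly extensive.

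For the converse, if $(\cC,\cC_F)$ is weakly extensive then the first two bullets follow from \cref{rmk:Reformulation_Extensive} — which already records that the canonical comparison squares of the $f_i$ with $\amalg_i f_i$ are pullbacks — together with a short diagram chase identifying, under the slice equivalences, the base-change functor along $g_1\amalg g_2$ with the product of the base-change functors along $g_1$ and $g_2$; and the two squares of the third bullet are the images of a forward $f\colon x\to y$ under pullback along $\nabla_y\colon y\amalg y\to y$ and along $\emptyset\to y$, which the $n=2$ and $n=0$ slice equivalences compute to be $f\amalg f$ and $\emptyset$. The extensive case runs through exactly the same unit/counit computations with $\cC_F$ replaced by $\cC$: the two extra bullets are precisely what is needed, since they make $\iota_x=\mathrm{id}_x\amalg(\emptyset\to y)$ and the fold map $(x\amalg y)\amalg(x\amalg y)\to x\amalg y$ forward — so the required base changes exist on the full slices — and they make the fold- and nullary-squares pullbacks for arbitrary maps, which is all the above arguments use; one then gets that $\amalg\colon\cC_{/x}\times\cC_{/y}\to\cC_{/x\amalg y}$ and $\cC_{/\emptyset}\to *$ are equivalences, i.e.\ $\cC$ is extensive, and combined with closure of $\cC_F$ under coproducts and $\emptyset\to x,\ x\amalg x\to x\in\cC_F$ this exhibits $(\cC,\cC_F)$ as an extensive span pair. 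The reverse implication is once more \cref{rmk:Reformulation_Extensive}, now invoking extensivity of $\cC$ in place of weak extensivity to identify the base changes.

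I expect the only genuine obstacle to be the $n=2$ case of the first implication, and there specifically the bookkeeping: verifying that the pasted rectangle above, after using the elementary identity $\nabla\circ(\iota_x\amalg\iota_y)=\mathrm{id}_{x\amalg y}$, really is the pullback square defining $\theta$, and dually that the relevant unit square is the coproduct of those two particular pullback squares. Everything else is either immediate from \cref{rmk:Reformulation_Extensive} or a formal consequence of the pasting lemma for pullbacks together with the fact that base changes of equivalences are equivalences.
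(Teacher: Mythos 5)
Your proposal is correct and follows essentially the same route as the paper's proof: both reduce via the adjunction of \cref{rmk:Reformulation_Extensive} to the cases $n=0$ and $n=2$, prove the counit is an equivalence by pasting the coproduct of the two defining pullback squares onto the fold square and using $\nabla\circ(\iota_x\amalg\iota_y)=\mathrm{id}$, prove the unit is an equivalence by writing the comparison square as a coproduct of a trivial pullback square with a nullary one, and handle the converse and the extensive case exactly as you describe.
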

\begin{proof}
	First assume that $(\cC,\cC_F)$ is weakly extensive. By assumption, $\cC$ has finite coproducts and morphisms in $\cC_F$ are closed under finite coproducts. For the second condition, consider pullback squares
	\[
	\begin{tikzcd}
		x_i' \drpullback\rar{h_i} \dar[swap]{f_i'} & x_i \dar{f_i} \\
		y_i' \rar{g_i} & y_i
	\end{tikzcd}
	\]
	for $i = 1, \dots, n$, with $f_i \in \cC_F$. We need to show that their coproduct is again a pullback square, or, equivalently, that the map
	\[
		\mathop{\amalg}\nolimits_i x_i' \to (\mathop{\amalg}\nolimits_i x_i) \times_{{\amalg}_i y_i} (\mathop{\amalg}\nolimits_i y_i')
	\]
	is an equivalence in $\cC^F_{/\amalg_i y_i'}$. But since $(\cC,\cC_F)$ is weakly extensive, this may be checked after pulling back along each of the maps $y_i' \to \mathop\amalg_i y_i'$, where it becomes clear. For the third condition, we must show that the maps $x \amalg x \to (y \amalg y) \times_y x$ in $\cC^F_{/y \amalg y}$ and $\emptyset \to \emptyset \times_y x$ in $\cC^F_{/\emptyset}$ are equivalences. The latter is clear since $\cC^F_{/\emptyset} \isoto *$. For the former, it again suffices to check this after pulling back along the two inclusions $y \to y \amalg y$, where it is also clear.

	For the converse, assume that the first three conditions in the proposition are satisfied. We show that $(\cC,\cC_F)$ is a weakly extensive span pair. It suffices to prove
	that the pullback functor
	\[ \cC_{/{\coprod_{i=1}^{n}} x_{i}}^{F} \to \prod_{i=1}^{n} \cC_{/x_{i}}^{F} \]
	is an equivalence when $n = 0$ and $n = 2$. For $n = 0$ we want to show that $\cC_{/\emptyset}^{F} \simeq *$, which follows because for $x \to \emptyset$ in $\cC_{F}$ we have a pullback square
	\[
	\begin{tikzcd}
		\emptyset \drpullback \ar[r] \ar[d] & \emptyset  \ar[d] \\
		x \ar[r] & \emptyset,
	\end{tikzcd}
	\]
	so that $\emptyset \to x$ is an equivalence. For $n = 2$, the coproduct functor determines a left adjoint $\amalg\colon \cC^F_{/x_1} \times \cC^F_{/x_2} \to \cC^F_{/x_1 \amalg x_2}$ of the pullback functor by \cite{HTT}*{5.2.5.1}, and it will suffice that both the unit and the counit are equivalences. For the counit, consider a map $y \to x_{1} \amalg x_{2}$ in $\cC_{F}$, and let $y_{i} \to x_{i}$ be the pullback of $y$ along the inclusion of $x_{i}$ in the coproduct, which is again in $\cC_{F}$; we must show that the canonical map $y_{1} \amalg y_{2} \to y$ is an equivalence. To see this, consider the commutative diagram
	\[
	\begin{tikzcd}
		y_{1} \amalg y_{2} \ar[r] \ar[d] & y \amalg y \ar[r]\ar[d] & y \ar[d] \\
		x_{1} \amalg x_{2} \ar[r] & (x_{1}\amalg x_{2}) \amalg (x_{1} \amalg x_{2}) \ar[r] & x_{1} \amalg x_{2}.
	\end{tikzcd}
	\]
	Here the left square is cartesian since it's a coproduct of two
	pullback squares along morphisms in $\cC_{F}$, and the right square
	is cartesian since it's a square of fold maps for a morphism in
	$\cC_{F}$. The composite square is then cartesian, and the bottom
	horizontal composite is the identity, which implies that the top horizontal composite is indeed an equivalence.

	We now show that the unit of the adjunction is an equivalence. Given morphisms $y_1 \to x_1$ and $y_2 \to x_2$ in $\cC_F$, this amounts to showing that the canonical squares
	\[
	\begin{tikzcd}
		y_{i}  \ar[r] \ar[d] & y_{1} \amalg y_{2} \ar[d] \\
		x_{i} \ar[r] & x_{1} \amalg x_{2}
	\end{tikzcd}
	\]
	are pullback squares. Writing $x_1 = x_1 \amalg \emptyset$ and similarly for $x_2$, $y_1$ and $y_2$, these squares can be expressed as a coproduct of squares we know are pullbacks along morphisms in $\cC_{F}$, hence are pullback squares by assumption. This finishes the proof of the characterization of being weakly extensive.

	The proof for extensive span pairs is identical; the additional assumption that $\cC_F$ contains the maps $\emptyset \to x$ and $x \amalg x \to x$ is to ensure that all the relevant pullbacks that appear in the proof exist in $\cC$.
\end{proof}

We will now show that the extensiveness properties on a span pair imply good behavior of products and coproducts in the associated span $\infty$-category.

\begin{propn}[cf.\ {\cite{norms}*{C.3}}]\label{obs:spanprod}
	Suppose $(\cC, \cC_{F})$ is a span pair.
	\begin{enumerate}
		\item If $(\cC, \cC_{F})$ is weakly extensive, then the
		coproduct in $\cC$ gives a product in $\Span_{F}(\cC)$.
		\item If $(\cC, \cC_{F})$ is extensive, then the coproduct in
		$\cC$ is also a coproduct in $\Span_{F}(\cC)$. Moreover, the
		\icat{} $\Span_{F}(\cC)$ is semiadditive.
	\end{enumerate}
\end{propn}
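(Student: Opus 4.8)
The plan is to verify the required universal properties by computing mapping spaces in $\Span_F(\cC)$ directly. Recall that $\Span_F(\cC)$ has the same objects as $\cC$, and that for objects $x,y$ the mapping space $\Map_{\Span_F(\cC)}(x,y)$ is the \igpd{} core of the pullback $\cC_{/x}\times_{\cC}\cC^F_{/y}$ formed along the two domain projections to $\cC$. The main tool is that the domain projection $\cC_{/w}\to\cC$ is the right fibration classifying the representable presheaf $z\mapsto\Map_{\cC}(z,w)$; since right fibrations are stable under pullback, its pullback along the coproduct functor $\amalg\colon\cC^{n}\to\cC$ classifies $(z_i)_i\mapsto\Map_{\cC}(\coprod_i z_i,w)\simeq\prod_i\Map_{\cC}(z_i,w)$, and is therefore equivalent over $\cC^{n}$ to $(\cC_{/w})^{n}$.

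For part (1), fix $x_1,\dots,x_n\in\cC$ and set $x\coloneqq\coprod_i x_i$. Weak extensivity says precisely that the coproduct functor $\amalg\colon\prod_i\cC^F_{/x_i}\to\cC^F_{/x}$ is an equivalence, under which the domain projection to $\cC$ is identified with $(z_i)_i\mapsto\coprod_i z_i$. Combining this with the observation above, a short manipulation with iterated pullbacks shows that $\cC_{/w}\times_\cC\cC^F_{/x}\simeq\prod_i\bigl(\cC_{/w}\times_\cC\cC^F_{/x_i}\bigr)$, and passing to cores gives, for each object $w$, an equivalence $\Map_{\Span_F(\cC)}(w,x)\simeq\prod_i\Map_{\Span_F(\cC)}(w,x_i)$. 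Unwinding the construction identifies this with the map induced by the ``projection'' spans $x\xleftarrow{\iota_i}x_i\xrightarrow{\ =\ }x_i$, so $x$ is the product of the $x_i$ in $\Span_F(\cC)$; the case $n=0$ exhibits the initial object $\emptyset$ of $\cC$ as a terminal object of $\Span_F(\cC)$.

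For part (2), keep the same notation but now use \emph{full} extensivity, which additionally provides an equivalence $\cC_{/x}\simeq\prod_i\cC_{/x_i}$; thus a morphism $x\to w$ in $\Span_F(\cC)$ consists of a tuple of maps $z_i\to x_i$ together with a map $\coprod_i z_i\to w$ lying in $\cC_F$. The additional input is that $\coprod_i z_i\to w$ lies in $\cC_F$ \emph{if and only if} each component $z_i\to w$ does: the ``if'' direction uses that $\cC_F$ is closed under finite coproducts, together with the fact that the fold maps $\coprod^{n}w\to w$ lie in $\cC_F$ (obtained by induction from $w\amalg w\to w$ and $\emptyset\to w$ being in $\cC_F$); the ``only if'' direction uses that the coproduct inclusions $z_i\to\coprod_j z_j$ are base changes along $\coprod_j z_j\to\coprod_j x_j$ of the inclusions $x_i\to\coprod_j x_j$, and that these latter inclusions lie in $\cC_F$ as coproducts of an identity with a map of the form $\emptyset\to y$. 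Granting this, the same manipulation as in part (1) yields $\cC_{/x}\times_\cC\cC^F_{/w}\simeq\prod_i\bigl(\cC_{/x_i}\times_\cC\cC^F_{/w}\bigr)$ and hence, for each $w$, an equivalence $\Map_{\Span_F(\cC)}(x,w)\simeq\prod_i\Map_{\Span_F(\cC)}(x_i,w)$ induced by the ``inclusion'' spans $x_i\xleftarrow{\ =\ }x_i\xrightarrow{\iota_i}x$; so $x$ is also a coproduct, and $n=0$ exhibits $\emptyset$ as initial.

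Finally, for semiadditivity: by the above, $\emptyset$ is a zero object and finite products and coproducts in $\Span_F(\cC)$ both exist and are computed by $\amalg$ in $\cC$, so it remains only to check that the canonical comparison map $X\amalg Y\to X\times Y$ is an equivalence. Since its source and target are the single object $X\amalg_\cC Y$, it suffices to verify that $\mathrm{id}_{X\amalg_\cC Y}$ satisfies the defining property of the comparison map, i.e.\ that the composites $\mathrm{pr}_i\circ\iota_j$ of the structure spans from parts (1) and (2) equal $\mathrm{id}$ for $i=j$ and the zero morphism for $i\neq j$; these composites are given by pullbacks in $\cC$, and extensivity supplies exactly the required computations $\iota_i\times_{X\amalg Y}\iota_i\simeq\mathrm{id}$ and $X\times_{X\amalg Y}Y\simeq\emptyset$. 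The step I expect to be most delicate is the bookkeeping in part (2) that isolates when a map out of a coproduct lies in $\cC_F$: this is the one place where full extensivity is needed beyond weak extensivity, and it uses essentially all of the extra axioms on $\cC_F$. The remaining technical point — that the pointwise mapping-space equivalences are compatible with the candidate structure maps, so that they genuinely witness the universal properties — is routine, since every equivalence involved arises either from a base change of right fibrations or from the universal property of $\coprod$ in $\cC$.
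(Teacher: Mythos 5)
Your proof is correct, but it takes a genuinely different route from the paper's. The paper deduces the whole statement from \cite{norms}*{C.21}: by the reformulation of (weak) extensivity in \cref{propn:Characterization_Extensive}, the adjunctions $\amalg \dashv \Delta$ and $\{\emptyset\} \dashv p$ descend to span \icats{}, where $\Span(\amalg)$ becomes \emph{right} adjoint to the diagonal (hence computes products) and, in the extensive case, also \emph{left} adjoint (hence computes coproducts); semiadditivity then follows from the factorization $x\amalg y\simeq(x\times 0)\amalg(0\times y)\to(x\amalg 0)\times(0\amalg y)\simeq x\times y$ plus the fact that $\amalg$, being a right adjoint, preserves products. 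You instead verify the universal properties by decomposing the mapping spaces $(\cC_{/x}\times_{\cC}\cC^{F}_{/y})^{\simeq}$ directly, which is essentially Barwick's original argument for disjunctive triples (\cite{BarwickMackey}*{4.3 and 5.8}), as the paper itself notes in the remark following the proposition; your version is more self-contained at the cost of more bookkeeping, while the paper's is shorter but leans on an external adjunction lemma. Two small points on your write-up. First, in part (2) the manipulation is not literally ``the same'' as in part (1): there the decorated factor $\cC^{F}_{/x}$ was decomposed by weak extensivity while the right-fibration trick was applied to the undecorated $\cC_{/w}$; in part (2) the decorated factor $\cC^{F}_{/w}\to\cC$ is \emph{not} a right fibration, and the identification of $\cC^{n}\times_{\amalg,\cC}\cC^{F}_{/w}$ with $\prod_i\cC^{F}_{/w}$ over $\cC^{n}$ needs your componentwise membership criterion for essential surjectivity together with the equivalence $\Map_{\cC}(\coprod_i z_i,w)\simeq\prod_i\Map_{\cC}(z_i,w)$ for full faithfulness --- you supply the former, and the latter is indeed routine. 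Second, in the ``only if'' direction there is a shortcut: $z_i\to\coprod_j z_j$ is itself the coproduct of $\id_{z_i}$ with $\emptyset\to\coprod_{j\neq i}z_j$, hence lies in $\cC_{F}$ directly by the extensivity axioms, with no need for the base-change detour.
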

For the last statement, recall that an $\infty$-category $\cD$ is called \textit{semiadditive} if it admits finite products and coproducts, the unique morphism $\emptyset \to *$ is an equivalence, and for all $x_1,x_2 \in \cD$, the morphism
\[
\begin{pmatrix}
	\id_{x_1} & 0 \\
	0 & \id_{x_2}
\end{pmatrix}\colon x_1 \amalg x_2 \to x_1 \times x_2
\]
is an equivalence, where $0$ denotes the unique map that factors through $*$.
\begin{proof}
	For the first part apply \cite{norms}*{C.21(2)} together with the characterization from \Cref{propn:Characterization_Extensive} to the adjunctions
	\[ \amalg : \cC \times \cC \rightleftarrows \cC : \Delta \qquad
	\text{and} \qquad \{\emptyset\}\colon * \rightleftarrows \cC :
	p.\] For the second part apply part (1) of the same corollary, to see that $\emptyset$ is also initial and $\Span(\amalg)$ is also \emph{left} adjoint to the restriction, so that $\Span_F(\cC)$ has finite coproducts. It is then clear that $\Span_F(\cC)$ is pointed. To see that it is semiadditive, we now observe that in any pointed \icat{} with finite (co)products the canonical comparison map $x\amalg y\to x\times y$ factors as
	\[
	x\amalg y\simeq (x\times0)\amalg(0\times y)\to (x\amalg 0)\times(0\amalg y)\simeq x\times y,
	\]
	so it is an equivalence in the case of $\Span_F(\cC)$ as the
        coproduct functor is a right adjoint by the above, and hence preserves products.
\end{proof}

\begin{remark}
  Our definition of ``extensive span
  pairs'' is closely related to Barwick's \emph{disjunctive triples}
  \cite{BarwickMackey}*{5.2}. Thus, \cref{obs:spanprod} is essentially
  a variant of the proof of semiadditivity in
  \cite{BarwickMackey}*{4.3 and 5.8}.
\end{remark}

\subsection{Bispans}\label{sec:bispans}
Finally, let us recall \icats{} of \emph{bispans} from
\cite{bispans}.

\begin{defn}\label{def:bispantrip}
  A \emph{bispan triple} $(\cC, \cC_{F}, \cC_{L})$ consists of an
  \icat{} $\cC$ together with two wide subcategories $\cC_{F},\cC_{L}\subseteq\cC$ such that both $(\cC,\cC_F)$ and $(\Span_F(\cC)^\op,\cC_L)$ are span pairs. In this case, we define
   \[ \Bispan_{F,L}(\cC) \coloneqq \Span_{L}(\Span_{F}(\cC)^{\op}).\]
  For $\cC_{L} = \cC$ we abbreviate this to $\Bispan_{F}(\cC)$, and if
  moreover also $\cC_{F} = \cC$, we will simply write $\Bispan(\cC)$.
\end{defn}

\begin{remark}
  By \cite{bispans}*{2.5.2(1)}, a triple $(\cC,\cC_F,\cC_L)$ is a bispan triple if and only if it satisfies the following more explicit conditions:
  \begin{enumerate}[(1)]
  \item Both $(\cC, \cC_{F})$ and $(\cC, \cC_{L})$ are span pairs.
  \item\label{it:bispanradj} Let $\cC_{/x}^{L} \subseteq \cC_{/x}$ again denote the full subcategory
    spanned by the maps to $x$ that lie in $\cC_{L}$. Then the functor
    $f^{*} \colon \cC_{/y}^{L} \to \cC_{/x}^{L}$ given by pullback along $f$ has a right adjoint
    $f_{*}$ for every map $f$ in $\cC_F$.
  \item\label{it:bispanmate} For any pullback square
        \[
      \begin{tikzcd}
        x' \ar[r, "f'"]\ar[d, "\xi"{swap}]\drpullback  & y' \ar[d, "\eta"] \\
        x \ar[r, "f"'] & y
      \end{tikzcd}
    \]
    with $f$ a map in $\cC_{F}$, the commutative square
    \[
      \begin{tikzcd}
        \cC^{L}_{/y} \ar[r, "f^{*}"] \ar[d, "\eta^{*}"'] & \cC^{L}_{/x}
        \ar[d, "\xi^{*}"] \\
        \cC^{L}_{/y'} \ar[r, "f'^{*}"'] & \cC^{L}_{/x'}
      \end{tikzcd}
    \]
    is \emph{right adjointable}, \ie{} the Beck--Chevalley transformation $\eta^{*}f_{*} \to
    f'_{*}\xi^{*}$ is invertible.
  \end{enumerate}

  Note that if $\cC_{L} = \cC$, then condition \ref{it:bispanradj} precisely says
   that $\cC$ is locally cartesian closed. In this case, condition
  \ref{it:bispanmate} is actually automatic as it can be checked after passing to left adjoints.
\end{remark}

\begin{defn}\label{def:bispanmor}
  Let $(\cC, \cC_{F}, \cC_{L})$ and $(\cD, \cD_{F}, \cD_{L})$ be bispan triples. A
  \emph{morphism of bispan triples} is a functor $\Phi \colon \cC \to \cD$ that induces morphisms of span pairs
  $(\cC, \cC_{F}) \to (\cD, \cD_{F})$, $(\cC, \cC_{L}) \to (\cD, \cD_{L})$, and
  \[(\Span_{F}(\cC)^{\op}, \cC_{L}) \to (\Span_{F}(\cD)^{\op},
    \cD_{L}).\]
\end{defn}

\begin{remark}
  In order to unpack the final condition, let us describe pullbacks in $\Span_F(\cC)^\op$ along morphisms in $\cC_L$ more concretely, for which it will be enough to describe pullbacks of backwards and forwards maps individually:
  \begin{itemize}
    \item Given a forward map $x \xleftarrow{=} x \xto{g} y$, its pullback along a map $l \colon z \to y$ in $\cC_{L}$ is given by
      \[
      \begin{tikzcd}
        d & d \arrow{r} \arrow[equals]{l} & z \\
        d \arrow[equals]{u} \arrow{d} & d \arrow{r} \arrow[equals]{l}
        \arrow[equals]{u} \arrow{d} \drpullback \urpullback \dlpullback & z \arrow[equals]{u} \arrow{d}{l} \\
        x  & x \arrow{r}{g} \arrow[equals]{l} & y\rlap,
      \end{tikzcd}
      \]
      see \cite{bispans}*{2.5.10}.
      \item Given a backwards map $x \xleftarrow{f} y \xto{=} y$ with $f$ in $\cC_{F}$,  \cite{bispans}*{2.5.12} shows that its pullback along $l \colon z \to y$ is of the form
      \[
      \begin{tikzcd}
        e & d \arrow{r}{\epsilon} \arrow{l} & z \\
        e \arrow[equals]{u} \arrow[swap]{d}{f_{*}l} & d \arrow{r}{\epsilon} \arrow{l}
        \arrow[equals]{u} \arrow{d}{f^{*}f_{*}l} \urpullback \dlpullback & z \arrow[equals]{u} \arrow{d}{l} \\
        x & y \arrow[equals]{r}\arrow{l}{f} & y\rlap,
      \end{tikzcd}
      \]
      where $\epsilon$ is the counit map $f^{*}f_{*}l \to l$.
  \end{itemize}
  In particular, we see that if $\Phi\colon\cC\to\cD$ is such that $(\cC,\cC_L)\to(\cD,\cD_L)$ and $(\cC,\cC_F)\to(\cD,\cD_F)$ are maps of span pairs, then $\Phi$ is a a map of bispan triples if and only if
  the Beck--Chevalley map
  \[ \Phi \circ f_{*} \to \Phi(f)_{*} \circ \Phi\]
  induced by the commutative square
  \[
    \begin{tikzcd}
      \cC^{L}_{/y} \ar[r, "f^{*}"] \ar[d, "\Phi"'] &[1em] \cC^{L}_{/x} \ar[d, "\Phi"] \\
      \cD^{L}_{/\Phi(y)} \ar[r, "\Phi(f)^{*}"'] & \cD^{L}_{/\Phi(x)}
    \end{tikzcd}
  \]
  is an equivalence.
\end{remark}

\begin{propn}\label{propn:bispanprod}
  Suppose $(\cC, \cC_{F}, \cC_{L})$ is a bispan triple such that both
  of the span pairs $(\hskip0pt minus .5pt\cC\hskip0pt minus .5pt,\hskip0pt minus 1pt \cC_{F})$ and $(\hskip0pt minus .5pt\cC\hskip0pt minus .5pt,\hskip0pt minus 1pt\cC_{L})$ are
  weakly extensive. Then the span pair $(\Span_{F}\hskip0pt minus .5pt(\hskip0pt minus .5pt\cC)\hskip0pt minus 1pt{}^{\op}\hskip0pt minus 1pt,\hskip0pt minus .5pt \cC_{L})$
  is also weakly extensive. In particular, $\Bispan_{F,L}(\cC)$ has finite products, and these are given by coproducts in $\cC$.
\end{propn}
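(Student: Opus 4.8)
The plan is to reduce everything to the claim that the span pair $(\Span_{F}(\cC)^{\op}, \cC_{L})$ is weakly extensive. Granting this, the ``in particular'' follows by applying \cref{obs:spanprod}(1) to this span pair: $\Bispan_{F,L}(\cC) = \Span_{L}(\Span_{F}(\cC)^{\op})$ then has finite products computed by coproducts in $\Span_{F}(\cC)^{\op}$, and a second application of \cref{obs:spanprod}(1), this time to the weakly extensive span pair $(\cC, \cC_{F})$, identifies those coproducts with products in $\Span_{F}(\cC)$, i.e.\ with coproducts in $\cC$. The same reasoning already shows that $\Span_{F}(\cC)^{\op}$ has finite coproducts, computed by coproducts in $\cC$, that the backward-maps embedding $\cC \hookrightarrow \Span_{F}(\cC)^{\op}$ preserves them, and --- since by definition $\cC_{L} \subseteq \Span_{F}(\cC)^{\op}$ is the image of $\cC_{L} \subseteq \cC$ under this embedding, and $\cC_{L}$ is closed under coproducts in $\cC$ as $(\cC, \cC_{L})$ is weakly extensive --- that $\cC_{L}$ is closed under coproducts in $\Span_{F}(\cC)^{\op}$.

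To prove weak extensiveness of $(\Span_{F}(\cC)^{\op}, \cC_{L})$ I would invoke \cref{rmk:Reformulation_Extensive}: it suffices to check that for all objects $a_{1}, \dots, a_{n}$ the coproduct functor
\[
  \amalg \colon \prod_{i=1}^{n} \bigl(\Span_{F}(\cC)^{\op}\bigr)^{\cC_{L}}_{/a_{i}} \longrightarrow \bigl(\Span_{F}(\cC)^{\op}\bigr)^{\cC_{L}}_{/\amalg_{i} a_{i}}
\]
is an equivalence. The key step is to identify each of these slice \icats{} with its counterpart in $\cC$, namely to produce an equivalence $\bigl(\Span_{F}(\cC)^{\op}\bigr)^{\cC_{L}}_{/a} \simeq \cC^{L}_{/a}$. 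On objects both sides consist of $\cC_{L}$-maps $b \to a$, and the identification is essentially the identity there; the content is a computation of mapping spaces. A morphism in the left-hand slice from $(\ell\colon b \to a)$ to $(\ell'\colon b'\to a)$ is a span $b' \xleftarrow{p} w \xrightarrow{q} b$ with $q$ in $\cC_{F}$, together with a homotopy in $\Span_{F}(\cC)^{\op}$ witnessing that its composite with $\ell'$ agrees with $\ell$; unwinding the composition law of the span category, the presence of this homotopy forces $q$ to be an equivalence, so the datum collapses to a plain morphism $b \to b'$ in $\cC_{/a}$ (and hence in $\cC^{L}_{/a}$, since $\ell, \ell' \in \cC_{L}$). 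Under these equivalences, and because $\cC \hookrightarrow \Span_{F}(\cC)^{\op}$ preserves coproducts, the displayed coproduct functor corresponds to the coproduct functor $\amalg\colon \prod_{i} \cC^{L}_{/a_{i}} \to \cC^{L}_{/\amalg_{i} a_{i}}$, which is an equivalence because $(\cC, \cC_{L})$ is weakly extensive. This completes the proof.

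The main obstacle is making the slice identification $\bigl(\Span_{F}(\cC)^{\op}\bigr)^{\cC_{L}}_{/a} \simeq \cC^{L}_{/a}$ fully rigorous --- in particular carrying out the mapping-space computation coherently and checking that the equivalences are compatible with the coproduct functors, so that the last reduction is legitimate --- since this is the one point where the interaction between the $\cC_{F}$-span structure and the $\cC_{L}$-forward maps genuinely enters. Alternatively, one can verify the three conditions of \cref{propn:Characterization_Extensive} for $(\Span_{F}(\cC)^{\op}, \cC_{L})$ directly, using the explicit descriptions of pullbacks in $\Span_{F}(\cC)^{\op}$ along $\cC_{L}$-maps recorded after \cref{def:bispanmor} (from \cite{bispans}*{2.5.10, 2.5.12}), which express them through pullbacks and right adjoints in $\cC$; in that approach the difficulty migrates into showing that the pushforwards $f_{*}$ (for $f$ in $\cC_{F}$) commute with finite coproducts in the fibered sense, which one gets by passing to right adjoints from the corresponding --- and more transparent --- statement for the pullback functors $f^{*}$, using the slice decompositions of both $(\cC, \cC_{F})$ and $(\cC, \cC_{L})$ together with the Beck--Chevalley compatibility built into the bispan triple.
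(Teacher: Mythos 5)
Your proposal is correct, and the overall skeleton (reduce to weak extensiveness of $(\Span_{F}(\cC)^{\op},\cC_{L})$, then get the ``in particular'' from two applications of \cref{obs:spanprod}(1)) matches the paper. But your primary route differs from the paper's in how weak extensiveness is verified. The paper goes through the pullback-square characterization of \cref{propn:Characterization_Extensive}: using the explicit description of pullbacks in $\Span_{F}(\cC)^{\op}$ along $\cC_{L}$-maps, the only non-obvious point is the identity $(f_{1}\amalg f_{2})_{*}(l_{1}\amalg l_{2})\simeq f_{1,*}(l_{1})\amalg f_{2,*}(l_{2})$, which it proves by precisely the adjunction/slice-decomposition argument you sketch in your ``alternative'' paragraph --- so your fallback is essentially the paper's proof, and you correctly locate the one real computation in it. Your primary route instead verifies the slice condition in the definition directly, via the identification $(\Span_{F}(\cC)^{\op})^{\cC_{L}}_{/a}\simeq\cC^{L}_{/a}$; this is a genuinely different and arguably more economical argument, since it obtains the compatibility of the pushforwards $f_{*}$ with coproducts as a consequence of \cref{propn:Characterization_Extensive} rather than as an input. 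Your mapping-space collapse is right: the homotopy over $a$ forces the $\cC_{F}$-leg of the span to be invertible, so morphisms in the slice reduce to morphisms in $\cC_{/a}$, and compatibility with $\amalg$ follows because the summand inclusions of the coproduct in $\Span_{F}(\cC)^{\op}$ are exactly the backward spans on the summand inclusions in $\cC$. The trade-off is that the paper's computation yields an explicit formula for $(f_{1}\amalg f_{2})_{*}$ (useful later for distributivity in $\Bispan_{F,L}(\cC)$), whereas your route concentrates all the work into the single slice identification.
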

\begin{proof}
  By \cref{obs:spanprod}(1), the coproduct in $\cC$ gives a product in
  $\Span_{F}(\cC)$ and hence a coproduct in $\Span_{F}(\cC)^{\op}$.

  We now claim that pullback squares along $\cC_{L}$ are closed under finite coproducts. Using the explicit description of pullbacks from the previous remark, the only non-obvious part of this is that, given $f_{i} \colon y_{i} \to x_{i}$ in $\cC_{F}$ and $l_{i} \colon z_{i} \to y_{i}$ in $\cC_L$ for $i = 1, 2$, we have
  \[ (f_{1} \amalg f_{2})_{*}(l_{1} \amalg l_{2}) \simeq f_{1,*}(l_{1}) \amalg f_{2,*}(l_{2}).\]
  To see this we use \cref{obs:Fext} with the argument from \cite{bispans}*{2.6.12}:
  Given $g \colon a \to x_{1} \amalg x_{2}$ in $\cC_{L}$, if $g_{i} \colon a_{i} \to x_{i}$ for $i = 1,2$ are the pullbacks along the summand inclusions, we get
  \[
  \begin{split}
    \Map_{\cC^{L}_{/x_{1} \amalg x_{2}}}(g, f_{1,*}(l_{1}) \amalg f_{2,*}(l_{2})) & \simeq \Map_{\cC^{L}_{/x_{1}}}(g_{1}, f_{1,*}(l_{1})) \times \Map_{\cC^{L}_{/x_{2}}}(g_{2}, f_{2,*}(l_{2})) \\
    & \simeq \Map_{\cC^{L}_{/y_{1}}}(f_{1}^{*}g_{1}, l_{1}) \times \Map_{\cC^{L}_{/y_{2}}}(f_{2}^{*}g_{2}, l_{2}) \\
    & \simeq \Map_{\cC^{L}_{/y_{1} \amalg y_{2}}}(f_{1}^{*}g_{1} \amalg f_{2}^{*}g_{2}, l_{1} \amalg l_{2}) \\
    & \simeq \Map_{\cC^{L}_{/y_{1} \amalg y_{2}}}((f_{1} \amalg f_{2})^{*}(g_{1} \amalg g_{2}), l_{1} \amalg l_{2}) \\
    & \simeq \Map_{\cC^{L}_{/x_{1} \amalg x_{2}}}(g, (f_{1} \amalg f_{2})_{*}(l_{1} \amalg l_{2})).
  \end{split}
  \]
  The remaining part of the conditions for a weakly extensive span pair hold
  because they by assumption hold for $(\cC, \cC_{L})$. With this established, the final statement is another instance of \Cref{obs:spanprod}(1).
\end{proof}

\section{\boldmath Normed $\infty$-categories}\label{sec:normed-icats}
We recall the definition of normed \icats{} and normed algebras from \cite{norms, BachmannMotivic} and give various examples of normed \icats{}.

\subsection[Normed monoids]{\boldmath Normed monoids}\label{sec:cmon}
Our starting point is the following generalization of the notion of a commutative monoid:

\begin{defn}
  \label{defn:tcommon}
  Let $\ctxt = (\cF, \cF_{\nrms})$ be a weakly extensive span pair and let $\cC$ be an \icat{}
  with finite products. An \emph{$\ctxt$-normed monoid in $\cC$} is a product-preserving functor
  \[ M \colon \SFF \to \cC.\]
  We denote its contravariant functoriality by $f^*\colon M(Y) \to M(X)$ for morphisms $f\colon X \to Y$ in $\cF$, and refer to these maps as \textit{restriction maps}. We denote its covariant functoriality by either $n_{\oplus}\colon M(X) \to M(Y)$ or $n_{\otimes}\colon M(X) \to M(Y)$ for morphisms $n\colon X \to Y$ in $\cF_{\nrms}$, and refer to these maps as \textit{(additive/multiplicative) norm maps}.
	We write \[ \NMon_{\ctxt}(\cC) \coloneqq \FunT(\SFF, \cC)\] for the full
	subcategory of $\Fun(\SFF, \cC)$ spanned by the
    $\ctxt$-normed monoids.
\end{defn}

\begin{observation}\label{obs:levelwise-monoid}
  If $\ctxt$ is actually extensive (and not only weakly so), then semiadditivity of $\SFF$ implies that all its objects carry unique commutative
  monoid structures, and so the values $M(X)$ of an $\ctxt$-normed
  monoid at $X \in \cF$ inherit commutative monoid structures in
  $\cC$. In fact, by \cite{GepnerGrothNikolaus}*{2.5} we get an
  equivalence
  \begin{equation}
    \label{eq:cmonincmon}
    \NMon_{\ctxt}(\cC) \simeq \NMon_{\ctxt}(\CMon(\cC))
 \end{equation}
 inverse to the forgetful functor.
\end{observation}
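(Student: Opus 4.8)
The whole statement rests on a single input: when $\ctxt=(\cF,\cF_{\nrms})$ is extensive (and not merely weakly so), the \icat{} $\SFF=\Span_{\nrms}(\cF)$ is \emph{semiadditive}. This is precisely \Cref{obs:spanprod}(2), so the plan is to take it as given; in particular $\SFF$ then has finite products and coproducts and lands in the regime where the Gepner--Groth--Nikolaus theory of commutative monoids applies.

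For the first assertion I would recall the standard fact (part of the Gepner--Groth--Nikolaus package) that in a semiadditive \icat{} $\cD$ every object carries an essentially unique commutative monoid structure, natural in $\cD$ --- concretely its biproduct structure, with multiplication the fold map $X\times X\simeq X\amalg X\to X$ and unit $\ast\simeq\emptyset\to X$ --- equivalently, that the forgetful functor $\CMon(\cD)\to\cD$ is an equivalence. Since commutative monoid objects of $\cC$ are by definition product-preserving functors out of $\Span(\xF)$, post-composition sends a product-preserving $M\colon\SFF\to\cC$ to a functor $\CMon(M)\colon\CMon(\SFF)\to\CMon(\cC)$; precomposing with the equivalence $\SFF\simeq\CMon(\SFF)$ then exhibits each value $M(X)$ as a commutative monoid in $\cC$, functorially in $X\in\SFF$, and in particular produces a lift of $M$ along the forgetful functor $\CMon(\cC)\to\cC$.

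This lift is the candidate inverse to the functor $\NMon_{\ctxt}(\CMon(\cC))\to\NMon_{\ctxt}(\cC)$ induced by $\CMon(\cC)\to\cC$, and rather than check the coherences by hand I would invoke \cite{GepnerGrothNikolaus}*{2.5}: for any semiadditive \icat{} $\cD$ and any \icat{} $\cC$ with finite products, post-composition with the forgetful functor is an equivalence $\FunT(\cD,\CMon(\cC))\xto{\sim}\FunT(\cD,\cC)$ --- i.e.\ $\CMon(\cC)$ is the cofree semiadditive \icat{} on $\cC$. Applying this with $\cD=\SFF$ gives exactly the asserted equivalence $\NMon_{\ctxt}(\CMon(\cC))\simeq\NMon_{\ctxt}(\cC)$, with inverse the forgetful functor. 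I do not expect a genuine obstacle here; the only real content is (i) semiadditivity of $\SFF$, already supplied by \Cref{obs:spanprod}(2), and (ii) having the correct Gepner--Groth--Nikolaus input --- one must be careful to use the cofree/coreflection property rather than merely ``$\CMon$ of a cartesian \icat{} is semiadditive'', and to track that the equivalence produced is indeed (inverse to) the forgetful functor. If one prefers not to cite \cite{GepnerGrothNikolaus}*{2.5} in that precise form, an alternative is to use the exchange equivalence $\FunT\bigl(\SFF,\FunT(\Span(\xF),\cC)\bigr)\simeq\FunT\bigl(\Span(\xF),\FunT(\SFF,\cC)\bigr)$ --- valid since being product-preserving separately in each of two variables is a symmetric condition --- to rewrite $\NMon_{\ctxt}(\CMon(\cC))$ as $\CMon(\NMon_{\ctxt}(\cC))$ compatibly with the forgetful functors, and then conclude from the semiadditivity of $\NMon_{\ctxt}(\cC)=\FunT(\SFF,\cC)$, which holds because $\SFF$ is semiadditive.
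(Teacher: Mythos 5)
Your proposal is correct and matches the paper's (very terse) justification: the paper proves this observation exactly by invoking semiadditivity of $\SFF$ from \cref{obs:spanprod}(2) and then citing \cite{GepnerGrothNikolaus}*{2.5} for the cofree-semiadditive property of $\CMon(\cC)$, which is precisely your main route. Your alternative exchange argument via $\Fun^{\times}\bigl(\SFF,\CMon(\cC)\bigr)\simeq\CMon\bigl(\Fun^{\times}(\SFF,\cC)\bigr)$ is a fine equivalent packaging of the same input.
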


\begin{observation}
  If $\cC$ is presentable and $\cF$ is small, then $\NMon_{\ctxt}(\cC)$ is an accessible localization of $\Fun(\SFF, \cC)$, and so is a presentable \icat{}.
\end{observation}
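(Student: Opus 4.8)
The plan is to present $\NMon_{\ctxt}(\cC)$ as the full subcategory of $S$-local objects in $\Fun(\SFF,\cC)$ for a suitable small set $S$ of morphisms, and then to quote the general theory of localizations at small sets of maps. First I would collect the ingredients: since $\cF$ is small, $\SFF=\Span_{\nrms}(\cF)$ is again small (being the span \icat{} of a small span pair), so $\Fun(\SFF,\cC)$ is presentable by \cite{HTT}*{5.5.3.6}; moreover, by \Cref{obs:spanprod}(1) together with weak extensiveness of $\ctxt$, the \icat{} $\SFF$ has finite products, computed by the finite coproducts of $\cF$ --- in particular the empty coproduct of $\cF$ provides a terminal object --- so that the condition of preserving finite products makes sense.

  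Next I would build $S$. For each object $d$ of $\SFF$, the evaluation functor $\mathrm{ev}_d\colon\Fun(\SFF,\cC)\to\cC$ admits a left adjoint $L_d$, given by left Kan extension along $\{d\}\hookrightarrow\SFF$ (using cocompleteness of $\cC$), with $\Map(L_dc,F)\simeq\Map_{\cC}(c,F(d))$; passing to left adjoints, any morphism $d\to d'$ of $\SFF$ induces a transformation $L_{d'}\to L_d$, and applying this to the projections $d_1\times\cdots\times d_n\to d_i$ of a finite tuple of objects yields, for every $c\in\cC$, a canonical comparison map $\coprod_{i=1}^{n}L_{d_i}c\to L_{d_1\times\cdots\times d_n}c$ in $\Fun(\SFF,\cC)$. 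Fixing a small set $S_0$ of objects that generates $\cC$ under colimits (which exists since $\cC$ is presentable), so that a morphism of $\cC$ is an equivalence as soon as $\Map_{\cC}(s,-)$ inverts it for every $s\in S_0$, I would then put
  \[
    S\coloneqq\bigl\{\,\coprod\nolimits_{i=1}^{n}L_{d_i}s\to L_{d_1\times\cdots\times d_n}s \;:\; n\ge0,\ d_1,\dots,d_n\in\SFF,\ s\in S_0\,\bigr\},
  \]
  a small set of morphisms of $\Fun(\SFF,\cC)$. Unwinding the adjunction $\Map(L_dc,F)\simeq\Map_{\cC}(c,F(d))$ shows that $F\colon\SFF\to\cC$ is $S$-local exactly when $\Map_{\cC}(s,-)$ inverts each canonical map $F(d_1\times\cdots\times d_n)\to\prod_i F(d_i)$ for all $s\in S_0$, hence exactly when every such map is itself an equivalence --- the case $n=0$ being precisely the condition that $F$ preserve the terminal object --- that is, exactly when $F\in\NMon_{\ctxt}(\cC)$.

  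Finally I would invoke \cite{HTT}*{5.5.4.15}: the subcategory $S^{-1}\Fun(\SFF,\cC)$ is an accessible localization of the presentable \icat{} $\Fun(\SFF,\cC)$, and by the previous step it coincides with $\NMon_{\ctxt}(\cC)$; hence $\NMon_{\ctxt}(\cC)$ is an accessible localization of $\Fun(\SFF,\cC)$, and in particular a presentable \icat{}. I do not expect a genuine obstacle here, as the argument is entirely standard; the only points that need a little care are that $S$ is really small (which rests on $\SFF$ being small) and that $S$-locality coincides with preservation of finite products once the degenerate $n=0$ case is accounted for.
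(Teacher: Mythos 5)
Your argument is correct and is precisely the standard localization argument that the paper leaves implicit (the statement is given as an observation without proof): you exhibit the product-preserving functors as the $S$-local objects for a small set $S$ built from the corepresenting objects $L_d s$ and the projections of finite products in $\SFF$, and then invoke \cite{HTT}*{5.5.4.15}. All the supporting points check out — smallness of $\SFF$, existence of the left adjoints $L_d$, joint conservativity of $\Map_{\cC}(s,-)$ for a generating set $S_0$, and the inclusion of the $n=0$ case for the terminal object — so there is nothing to add.
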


Let us discuss various examples of normed monoids:

\begin{ex}\label{ex:g-mackey}
	Our main example of an extensive span pair is the pair
	$\ctxt = (\xF_{G}, \xF_{G})$ where $\xF_{G}$ is the category of finite
	$G$-sets for a finite group $G$. In this case, $\ctxt$-normed
    monoids in $\cC$ are also known as $\cC$-valued \textit{$G$-Mackey
    functors}:
    \[
    	\Mack_G(\cC) := \Fun^{\times}(\Span(\xF_G),\cC).
    \]
    More generally, we obtain a notion of \textit{incomplete $G$-Mackey functors} by taking $\ctxt = (\xF_G,I)$ for some weakly extensive subcategory $I \subseteq \xF_G$:
    \[
    \Mack_G^I(\cC) := \NMon_{(\xF_G,I)}(\cC) = \Fun^{\times}(\Span_I(\xF_G), \cC).
    \]
    These are most typically considered when $I \subseteq \xF_G$ is in fact an extensive subcategory of $\xF_G$ (and not only weakly extensive), in which case $I$ is usually called an \textit{indexing system for $G$} \cite[1.2 and 1.4]{blumberg-hill}.
\end{ex}

\begin{remark}\label{rk:mackey-functors}
  To see how our approach relates to classical equivariant infinite loop space theory, consider an indexing system $I\subseteq\xF_G$. By the discussion after
  \cite{rubin}*{3.9}, we can associate to $I$ a so-called
  \emph{$N_\infty$-operad} $\cO$ in $G$-spaces, and all
  $N_\infty$-operads arise this way; see also~\cite{gutierrez, bonventre}. The main result of \cite{marckey} connects space-valued Mackey-functors to $N_{\infty}$-algebras by showing that $\Mack_G^I(\Spc)$ is equivalent to the Dwyer--Kan localization of the $1$-category of $\cO$-algebras in $G$-spaces at a certain class of equivariant weak equivalences.
\end{remark}

\begin{ex}\label{ex:classical-cmon}
  Specializing example \cref{ex:g-mackey} to the trivial group, we obtain the extensive span pair $(\xF, \xF)$, where $\xF$ is the category of finite sets. By \cite{norms}*{C.1} there is an equivalence
  \[
  	\NMon_{\xF}(\cC) \simeq \CMon(\cC)
  \]
  between $\xF$-normed monoids in $\cC$ and commutative monoids in $\cC$, defined as functors $\xF_*\to\cC$ satisfying the Segal condition.
\end{ex}

\begin{ex}\label{ex:fold_maps}
	Let $\cF$ be an extensive $\infty$-category and let $\cF_{\mathrm{fold}}$ be the wide subcategory whose morphisms are finite coproducts of fold maps $\coprod_n x \to x$ for $x \in \cF$ and $n \ge 0$. Then the pair $(\cF, \cF_{\mathrm{fold}})$ is an extensive span pair, and \cite{norms}*{C.5} provides an equivalence
	\[
		\NMon_{(\cF,\cF_{\mathrm{fold}})}(\cC) \simeq \Fun^{\times}(\cF^{\op}, \CMon(\cC)).
	\]
\end{ex}

\begin{ex}\label{ex:extslice}
	Given a span pair $\ctxt = (\cF,\cF_{\nrms})$ and an object $x\in \cC$, we may consider the wide subcategory $\cF_{/x,\nrms} \coloneqq \cF_{/x} \times_{\cF} \cF_{\nrms}$ of the slice $\cF_{/x}$ consisting of those morphisms over $x$ that are contained in $\cF_{\nrms}$. We note that $\ctxt_{/x} := (\cF_{/x},\cF_{/x,\nrms})$ is again a span pair, which is (weakly) extensive if $(\cF,\cF_\nrms)$ is so. In particular we may speak of $\ctxt_{/x}$-normed monoids in $\cC$.
\end{ex}

\begin{ex}\label{ex:orbital}
  Let $T$ be any small \icat{}, and let $\xF[T]$ be the \icat{}
  obtained by freely adjoining finite coproducts to $T$, i.e.~$\xF[T]$
  is the full subcategory of the \icat{} of presheaves spanned by
  finite coproducts of representables. An \emph{orbital subcategory of
    $T$} \cite{CLL_Global}*{4.2.2} is a wide subcategory $P\subseteq
  T$ such that $(\xF[T],\xF[P])$ is a span pair. In this case,
  $(\xF[T],\xF[P])$ is always extensive: indeed, pullbacks in $\xF[T]$
  are also pullbacks in $\Fun(T^\op,\Spc)$ as $\xF[T]$ contains all
  representables, whence it suffices to check the compatibility axioms
  between coproducts and pullbacks in $\Spc$, which is
  straightforward.

  In particular, if $T$ is any \emph{orbital category} in the sense of \cite{NardinStab}*{4.1} (i.e.~$T$ is orbital as a subcategory of itself), then $(\xF[T],\xF[T])$ is extensive. Note that for $T=\textbf{O}_G$ the \emph{orbit category} of $G$ (i.e.~the $1$-category of finite transitive $G$-sets), we precisely recover Example~\ref{ex:g-mackey}.
\end{ex}

\begin{remark}
  If $\ctxt_{T}\coloneqq(\xF[T],\xF[T])$ is the extensive span pair arising from an
  orbital \icat{} $T$, then our definition of $\ctxt_{T}$-normed
  monoids fits into the framework for algebraic structures defined by
  Segal conditions from \cite{patterns1}: We can endow
  $\Span(\xF[T])$ with the structure of an \emph{algebraic pattern}
  where the inert--active factorization system is that given by the
  backwards and forwards maps, and the objects of $T$ are the
  elementary objects. Then a Segal $\Span(\xF[T])$-object in $\cC$ is
  a functor $M\colon \Span(\xF[T]) \to \cC$ such that
  \[ \textstyle M\big({\coprod_{i} t_{i}}\big) \isoto \prod_{i=1}^{n} M(t_{i})\]
  for all $t_{i}\in T$, which is equivalent to $M$ preserving finite products.
\end{remark}

\subsection[Normed \icats{} and normed algebras]{\boldmath Normed \icats{} and normed algebras}\label{sec:tsymmoncats}

In this subsection we fix a weakly extensive span pair $\ctxt = (\cF,
\cF_{\nrms})$. Specializing \cref{defn:tcommon} to $\CatI$ leads to the following definition:
\begin{defn}[Bachmann]
	\label{def:Normed_Category}
  An \emph{$\ctxt$-normed \icat{}} is an $\ctxt$-normed monoid in $\CatI$, i.e.\ a product-preserving
  functor $\cC\colon\Span_{\nrms}(\cF)\to\CatI$. We denote its contravariant functoriality by $f^*\colon \cC(y) \to \cC(x)$ for morphisms $f\colon x \to y$ in $\cF$, and denote its covariant functoriality by $f_{\otimes}\colon \cC(x) \to \cC(y)$ whenever $f$ is in $\cF_{\nrms}$.
\end{defn}

\begin{remark}
When $\ctxt$ is actually an extensive span pair, our definition of a normed $\infty$-category is identical to that of Bachmann \cite{BachmannMotivic}*{3.1}.
\end{remark}

\begin{remark}
	\label{rmk:Normed_Terminology}
	Since the product in $\SFF$ is the coproduct in $\cF$, a
	functor $\SFF \to \CatI$ is an $\ctxt$-normed \icat{} if and only if its restriction to $\cF^{\op}$
	preserves finite products. We will sometimes refer to product-preserving functors $\cF^{\op} \to \CatI$ as \emph{$\cF$-\icats{}}, and refer to the restriction of an $\ctxt$-normed \icat{} $\cC$ to $\cF^{\op}$ as the \emph{underlying $\cF$-\icat{}} of $\cC$. Similarly, we may sometimes refer to $\ctxt$-normed \icats{} as \textit{$\nrms$-normed $\cF$-\icats{}} whenever we wish to emphasize the collection of morphisms $\cF_{\nrms}$ along which we have norms.

	Note that for $\ctxt_{T} = (\xF[T], \xF[T])$, $T$ some orbital $\infty$-category, an $\xF[T]$-\icat{} is equivalently a functor $T^{\op} \to \CatI$ by the universal property of finite coproduct completion. This is the definition of a \emph{$T$-\icat} used e.g.~in \cite{exposeI,NardinStab,CLL_Global}.
\end{remark}

\begin{notation}
  Given an $\ctxt$-normed structure on an \icat{}
  $\cC$, we will denote the corresponding cocartesian and cartesian
  fibrations by 
  \[ \cC^{\otimes} \to \SFF, \quad \cC_{\otimes} \to
    \SFF^{\op}.\]
  We say that a morphism in $\cC^{\otimes}$ is \emph{inert} if
  it is cocartesian over a backwards morphism in $\SFF$;
  similarly, a morphism in $\cC_{\otimes}$ is \emph{inert} if it is
  cartesian over a (reversed) backwards morphism in
  $\SFF^{\op}$.
\end{notation}

\begin{defn}
  Suppose $\cC^{\otimes}, \cD^{\otimes} \to
  \SFF$ are $\ctxt$-normed \icats{}. An \emph{$\ctxt$-normed functor} from $\cC$ to $\cD$ is a
  commutative triangle
  \[
    \begin{tikzcd}
      \cC^{\otimes} \ar[rr, "\Phi"] \ar[dr] & &
      \cD^{\otimes} \ar[dl] \\
       & \SFF
    \end{tikzcd}
  \]
  where $\Phi$ preserves cocartesian morphisms. We say that $\Phi$ is
  \emph{lax $\ctxt$-normed} if it instead only preserves inert
  morphisms. We write
  \[
    \Funlax_{/\SFF}(\cC^{\otimes}, \cD^{\otimes})\subseteq\Fun_{/\SFF}(\cC^{\otimes}, \cD^{\otimes})
  \]
  for the full subcategory spanned by the lax $\ctxt$-normed functors.
\end{defn}

\begin{remark}
In the non-parametrized case, i.e.~the case $\ctxt = (\xF,\xF)$, it follows from \cite{envelopes}*{5.1.15}
  that this definition of lax symmetric monoidal functors agrees with
  the more standard one, with $\xF_{*}$ in place of $\Span(\xF)$. 
\end{remark}

\begin{defn}
  An \emph{$\ctxt$-normed algebra} in an $\ctxt$-normed \icat{}
  $\cC$ is a lax $\ctxt$-normed functor from
  $*^{\otimes} = \SFF$ to $\cC$; in other words, it is a section of the cocartesian fibration
  \[ \cC^{\otimes} \to \SFF\] that takes backward maps in $\SFF$ to
  cocartesian morphisms.  We write
  \[ \NAlg_{\ctxt}(\cC) \coloneqq \Funlax_{/\SFF}(\SFF,
    \cC^{\otimes}) \]
  for the \icat{} of $\ctxt$-normed algebras in $\cC$.
\end{defn}

\begin{remark}
  By an easy extension of \cite{envelopes}*{5.2.14}, our definitions
  of $\ctxt$-normed \icats{} and lax $\ctxt$-normed
  functors are equivalent to those of Nardin and
  Shah~\cite{NardinShah} in the case where $\ctxt = \ctxt_{T}$ for a
  so-called ``atomic'' orbital \icat{} $T$. In particular, the
  $\infty$-categories of $\ctxt$-normed algebras are equivalent, cf.~\cite{envelopes}*{5.3.17}. For extensive $\ctxt$, our $\ctxt$-normed algebras are also studied in \cite{BachmannMotivic}, as a generalization of the normed spectra introduced in \cite{norms}*{7.1}.
\end{remark}

We end this section by considering a construction of normed structures on spans:
\begin{construction}\label{const:sym_mon_spans}
  Since the functor $\Span \colon \SPair \to \CatI$ preserves limits,
  hence in particular finite products, any $\ctxt$-normed monoid in
  $\SPair$ gives rise to an $\ctxt$-normed \icat{} by applying $\Span$ pointwise. Observe that an $\ctxt$-normed monoid in $\SPair$ is
  an $\ctxt$-normed $\infty$-category
  \[
    \cC\colon \SFF \to \CatI
  \]
  equipped with a subfunctor $\cC_{Q} \subseteq \cC$ such
  that $(\cC(X),\cC_Q(X))$ is a span pair for every $X \in \cF$ and the induced
  functor $m_{\otimes}f^*\colon \cC(X) \to \cC(Y)$ is a map of span pairs for
  every morphism $X \xleftarrow{\raisebox{-.3ex}[0pt][0pt]{\ensuremath{\scriptstyle f}}} Z \xrightarrow{\raisebox{-.3ex}[0pt][0pt]{\ensuremath{\scriptstyle m}}} Y$ in $\Span_{\nrms}(\cF)$. In this case, the composite
  \[
    \Span_{Q}(\cC)\coloneqq \Span \circ (\cC,\cC_Q) \colon \SFF \to \CatI
  \]
  endows $\Span_{Q}(\cC)$ with an $\ctxt$-normed structure inherited from that of $\cC$.
\end{construction}

The following result provides an explicit description of the cocartesian fibrations associated to such normed structures:
\begin{propn}\label{propn:spansmon}
  Let $p \colon \cC_{\otimes} \to \SFF^{\op}$ be a
  cartesian fibration corresponding to an $\ctxt$-normed monoid
  in $\SPair$. Then the cocartesian fibration
  $\Span_{Q}(\cC)^{\otimes} \to \SFF$ for the induced
  $\ctxt$-normed structure on spans from
  \cref{const:sym_mon_spans} is given by
  \[ \Span_{Q}(\cC)^{\otimes} \simeq
    \Span_{Q\dfw}(\cC_{\otimes}),\]
  where $(\cC_{\otimes})_{Q\dfw}$ denotes the subcategory
  of maps that go to equivalences under $p$ and fiberwise lie in $\cC^{\otimes}(\blank)_{Q}$.
\end{propn}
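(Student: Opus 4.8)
The plan is to verify directly that $\Span(p)\colon\Span_{Q\dfw}(\cC_{\otimes})\to\Span_{\eq}(\SFF^{\op})\simeq\SFF$ is a cocartesian fibration classifying $\Span_{Q}(\cC)=\Span\circ(\cC,\cC_Q)$, following the pattern of the proof of \Cref{unfurl radj}. Recall that $(\cC_{\otimes})_{Q}\subseteq\cC_{\otimes}$ is the subcategory unstraightening the subfunctor $\cC_Q\subseteq\cC$, and that $(\cC_{\otimes})_{Q\dfw}$ consists of those morphisms of $(\cC_{\otimes})_{Q}$ that $p$ carries to equivalences. \textbf{Step 1} is to show that $(\cC_{\otimes},(\cC_{\otimes})_{Q\dfw})$ is a span pair and that $p$ is a morphism of span pairs to the minimal span pair $(\SFF^{\op},(\SFF^{\op})^{\simeq})$; applying $\Span$ then produces the functor $\Span(p)$ above. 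To construct the pullback of a morphism $\phi\in(\cC_{\otimes})_{Q\dfw}$ over some $X\in\cF$ along an arbitrary morphism $\psi$, one factors $\psi$ as a $p$-vertical morphism followed by a $p$-cartesian one: the pullback of $\phi$ along the $p$-cartesian part is the naturality square of $p$-cartesian transport applied to $\phi$, which is a pullback in $\cC_{\otimes}$ by a standard property of cartesian fibrations and whose new ``forward'' leg again lies in $(\cC_{\otimes})_{Q\dfw}$ because cartesian transport preserves $\cC_Q$ (as $\cC_Q$ is a subfunctor); the pullback along the remaining $p$-vertical part is then computed inside a single fibre $\cC(X')$ (where $X'=p(\dom\psi)$), in which it exists and stays in $\cC_Q(X')$ because $(\cC(X'),\cC_Q(X'))$ is a span pair. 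Pasting gives the pullback in $\cC_{\otimes}$, and since $p$ sends $(\cC_{\otimes})_{Q\dfw}$ to equivalences and preserves these pullbacks, it is a morphism of span pairs.

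\textbf{Step 2} is to check that $\Span(p)$ is a cocartesian fibration, with the cocartesian lift of a morphism $X\xfrom{f}Z\xto{m}Y$ of $\SFF$ starting at $x$ given by the span $x\xfrom{\beta}z\xto{=}z$ where $\beta$ is a $p$-cartesian lift of the corresponding morphism of $\SFF^{\op}$ with target $x$; such lifts exist since $p$ is a cartesian fibration. That these spans are $\Span(p)$-cocartesian is verified via Barwick's criterion \cite{BarwickMackey}*{12.2}, in the form of \cite{HHLN2}*{3.1}, just as in \Cref{unfurl radj}: its first condition is immediate, and its second unwinds to a Beck--Chevalley statement which holds because, by the hypothesis of \cref{const:sym_mon_spans}, each structure functor $m_{\otimes}f^{*}\colon\cC(X)\to\cC(Y)$ is a morphism of span pairs and hence preserves the pullbacks of $\cC_Q$-morphisms appearing in that statement.

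\textbf{Step 3} is to identify the straightening of $\Span(p)$ with $\Span_{Q}(\cC)$. A morphism of $\Span_{Q\dfw}(\cC_{\otimes})$ lies over an identity of $\SFF$ precisely when both of its legs are $p$-vertical, so the fibre of $\Span(p)$ over $X\in\cF$ is $\Span(\cC(X),\cC_Q(X))=\Span_{Q}(\cC)(X)$; and by Step 2, cocartesian transport along $X\xfrom{f}Z\xto{m}Y$ is induced on fibres by $p$-cartesian transport along the corresponding morphism of $\SFF^{\op}$, namely by $m_{\otimes}f^{*}$, so the induced functor $\Span(\cC(X),\cC_Q(X))\to\Span(\cC(Y),\cC_Q(Y))$ is $\Span$ applied to the map of span pairs $m_{\otimes}f^{*}$, i.e.\ $\Span_{Q}(\cC)(X\xfrom{f}Z\xto{m}Y)$. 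Thus the straightening of $\Span(p)$ agrees with $\Span_{Q}(\cC)=\Span\circ(\cC,\cC_Q)$ levelwise and on morphisms.

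I expect the main difficulty to lie not in any one of these steps in isolation, but in making Step 3 fully coherent: the levelwise matching of fibres and transition functors must be promoted to an honest equivalence of cocartesian fibrations over $\SFF$ (equivalently, to an identification of straightenings). One clean way to do this is to produce a functor $\Span_{Q\dfw}(\cC_{\otimes})\to\Span_{Q}(\cC)^{\otimes}$ over $\SFF$ that preserves cocartesian edges --- using naturality of both constructions in the $\ctxt$-normed monoid $(\cC,\cC_Q)$ in $\SPair$ together with the fact that $\Span$ preserves limits --- and then invoke that a cocartesian-edge-preserving fibrewise equivalence of cocartesian fibrations is an equivalence. Alternatively, one can cite the relevant ``$\Span$ of a (co)cartesian fibration'' results of \cite{HHLN2}, which are tailored to exactly this kind of statement. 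A secondary technical point is the bookkeeping in Step 2 needed to match our situation precisely to the hypotheses of \cite{HHLN2}*{3.1}.
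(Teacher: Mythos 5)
Your argument is correct in outline, but it takes a genuinely different route from the paper: the paper disposes of this proposition in a single line, observing that it is a special case of \cite{HHLN2}*{3.9} (the general result identifying the span \icat{} of a (co)cartesian fibration with fibrewise span-pair structure), whereas you re-derive that special case by hand, modelled on the proof of \cref{unfurl radj}. Your three steps are sound. In Step 1, beyond the observation that the naturality square of cartesian transport is a pullback in $\cC_{\otimes}$, you should say explicitly why the fibrewise pullback is also a pullback in the \emph{total} category before pasting: this holds because every cartesian transport functor $m_{\otimes}f^{*}$ is a morphism of span pairs and hence preserves pullbacks along $\cC_Q$-morphisms. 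In Step 2 your candidate cocartesian edges (cartesian backward leg, identity forward leg) are the right ones, though since the forward maps in the base $\Span_{\eq}(\SFF^{\op})$ are equivalences, the second condition of \cite{HHLN2}*{3.1} degenerates to the statement that cartesian transport preserves $Q$-morphisms and the associated pullbacks, rather than a genuine Beck--Chevalley condition involving adjoints as in \cref{unfurl radj}; your justification via the span-pair-morphism hypothesis of \cref{const:sym_mon_spans} is nevertheless the correct one. You also correctly identify the coherence of Step 3 as the real content, and both of your proposed resolutions work --- the second (citing the tailored results of \cite{HHLN2}) being precisely what the paper does. The trade-off is the usual one: the citation is shorter and hides all the coherence bookkeeping inside a general theorem, while your direct verification makes the geometry of the fibration explicit at the cost of having to promote a levelwise identification to an equivalence of cocartesian fibrations.
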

\begin{proof}
	This is a special case of \cite{HHLN2}*{3.9}.
\end{proof}

\subsection{Norms on product-preserving functors}
In this subsection we will construct a (low-tech) version of
``parametrized Day convolution'' for \icats{} of product-preserving
functors. More precisely, we will show the following:
\begin{propn}\label{propn:FunTsymmon}
  Let $\cX$ be a cocomplete \icat{} with finite products, where the
  product functor preserves colimits in each variable. Suppose
  $\ctxt = (\cF, \cF_{\nrms})$ is a weakly extensive span pair, and consider an
  $\ctxt$-normed \icat{} $\cC \colon \SFF \to \CatI$
  such that $\cC(X)$ has finite products for every $X \in \cF$ (but the
  morphisms in the diagram need not preserve these).
  \begin{enumerate}[(i)]
  \item There is a functor
    \[ \cQ = \FunT(\cC(\blank), \cX) \colon \SFF \to \LCatI \]
    obtained by left Kan extension from $\cC$. This preserves finite
    products, and so defines another $\ctxt$-normed \icat{}.
\item If $\cC^{\otimes} \to \SFF$ is the cocartesian fibration for $\cC$, then $\ctxt$-normed algebras in $\cQ = \FunT(\cC(\blank), \cX)$ are equivalent to functors
  \[ A \colon \cC^{\otimes} \to \cX\]
  such that
  \begin{itemize}
  \item for every $X \in \cF$ the restriction
    \[ A_{X} \colon \cC(X) \to \cX\]
    of $A$ to the fiber over $X$ is a product-preserving functor,
  \item and for every morphism $f \colon X \to Y$ in $\cF$, viewed as a backward morphism in $\SFF$, the natural transformation
      \[
    \begin{tikzcd}
      \cC(Y) \ar[rr, "f^{*}"{above}] \ar[dr, "A_{Y}"', ""{above,name=A, inner sep=4pt}] & & \cC(X) \ar[dl, "A_{X}", ""{above,name=B, inner sep=4pt}] \\
      & \cX
      \ar[from=A, to=B, Rightarrow]
    \end{tikzcd}
  \]
  exhibits $A_{X}$ as a left Kan extension of $A_{Y}$ along $f^{*}$.
  \end{itemize}
  More precisely, $\NAlg_{\ctxt}(\cQ)$ is equivalent to the full
  subcategory $\cal A\subseteq\Fun(\cC^{\otimes}, \cX)$ of functors that satisfy
  these conditions, and for every $A\in\cF$ this equivalence fits into
  a commutative diagram
  \begin{equation*}
    \begin{tikzcd}[column sep=small]
      \NAlg_\ctxt(\cQ)\arrow[dr, bend right=10pt, "\ev_A"']\arrow[rr,"\simeq"] && \cal A\rlap.\arrow[dl, bend left=10pt, "\textup{res}"]\\
      & \Fun^\times(\cC(A),\cX)
    \end{tikzcd}
  \end{equation*}
  \end{enumerate}
\end{propn}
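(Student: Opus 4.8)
The plan is to deduce (i) from a lemma saying that left Kan extension preserves product-preservation, and then to deduce (ii) from the standard description of functors out of the total space of a cocartesian fibration. The lemma: for any functor $g\colon\cD\to\cD'$ between \icats{} with finite products (which need not be preserved), $\mathrm{Lan}_{g}\colon\Fun(\cD,\cX)\to\Fun(\cD',\cX)$ carries product-preserving functors to product-preserving functors. To prove it, fix $F\in\FunT(\cD,\cX)$ and $c_{1}',c_{2}'\in\cD'$. Since $\mathrm{Lan}_{g}F(c_{i}')$ is the colimit of $F$ over the comma \icat{} $g_{/c_{i}'}$, and finite products in $\cX$ commute with colimits in each variable while $F$ preserves products, the target $\mathrm{Lan}_{g}F(c_{1}')\times\mathrm{Lan}_{g}F(c_{2}')$ of the comparison map is the colimit over $g_{/c_{1}'}\times g_{/c_{2}'}$ of $(d_{1},d_{2})\mapsto F(d_{1}\times d_{2})$. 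This last functor is the restriction, along $\Phi\colon g_{/c_{1}'}\times g_{/c_{2}'}\to g_{/c_{1}'\times c_{2}'}$, $\bigl((d_{1},u_{1}),(d_{2},u_{2})\bigr)\mapsto\bigl(d_{1}\times d_{2},\ g(d_{1}\times d_{2})\to g(d_{1})\times g(d_{2})\to c_{1}'\times c_{2}'\bigr)$ (last arrow $u_{1}\times u_{2}$), of the functor computing $\mathrm{Lan}_{g}F(c_{1}'\times c_{2}')$ as a colimit over $g_{/c_{1}'\times c_{2}'}$; so it suffices that $\Phi$ be cofinal. For this one verifies, for each object $(e,v)$ of $g_{/c_{1}'\times c_{2}'}$, that the relevant comma \icat{} is weakly contractible: it unwinds to the product over $i=1,2$ of the undercategories $(g_{/c_{i}'})_{(e,v_{i})/}$, where $v_{i}$ is the $i$-th component of $v$, each of which has the initial object $(e,v_{i},\mathrm{id}_{e})$. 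Tracing through identifies the resulting equivalence with the canonical comparison map. The nullary product is separate: $\mathrm{Lan}_{g}F$ at the terminal object of $\cD'$ is the colimit of $F$ over $\cD$ itself, which is $F(1_{\cD})\simeq 1_{\cX}$ because the inclusion of a terminal object is cofinal and $F$ preserves the empty product.

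Granting this, part (i) is formal. For any $\cD$ with finite products, $\FunT(\cD,\cX)\subseteq\Fun(\cD,\cX)$ is a colimit-closed, hence cocomplete, full subcategory with colimit-preserving inclusion (again since products commute with colimits in $\cX$). Left Kan extension assembles $\cD\mapsto\Fun(\cD,\cX)$ into a functor from \icats{} with finite products, and arbitrary functors between them, to $\LCatI$; by the lemma, the fiberwise-full subcategory cutting out $\FunT(\cD,\cX)$ is closed under cocartesian pushforward, hence is a cocartesian sub-fibration and defines a functor $\FunT(\blank,\cX)$ on the same source. Precomposing with $\cC$, which lands in \icats{} with finite products by hypothesis, gives $\cQ=\FunT(\cC(\blank),\cX)\colon\SFF\to\LCatI$. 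Since $\cC$ is product-preserving, that $\cQ$ is too reduces to checking that $\FunT(\blank,\cX)$ sends finite products of \icats{} to products. Here $\FunT(*,\cX)\simeq*$ is immediate, and the comparison $\FunT(\cD_{1}\times\cD_{2},\cX)\to\FunT(\cD_{1},\cX)\times\FunT(\cD_{2},\cX)$ — which one identifies with restriction along the product-preserving functors $d\mapsto(d,1)$ and $d\mapsto(1,d)$, the identification using once more that a terminal object is cofinal, now to compute $\mathrm{Lan}_{\mathrm{pr}_{i}}$ — is an equivalence, with inverse $(F_{1},F_{2})\mapsto\bigl((d_{1},d_{2})\mapsto F_{1}(d_{1})\times F_{2}(d_{2})\bigr)$, both composites collapsing to the identity via $F(d_{1},d_{2})\simeq F(d_{1},1)\times F(1,d_{2})$ for product-preserving $F$. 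Hence $\cQ$ is an $\ctxt$-normed \icat{}.

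For (ii), write $\widetilde\cQ\colon\SFF\to\LCatI$ for the non-product-preserving variant $X\mapsto\Fun(\cC(X),\cX)$, again with left-Kan-extension functoriality, and $\widetilde\cQ^{\otimes}\to\SFF$ for its cocartesian fibration. I would invoke the standard equivalence $\Fun(\cC^{\otimes},\cX)\simeq\Fun_{/\SFF}(\SFF,\widetilde\cQ^{\otimes})$, under which a functor $A$ corresponds to the section with value $A_{X}\coloneqq A|_{\cC(X)}$ at each $X$, and whose component over $\sigma\colon X\to Y$ is the image, under the adjunction between $\mathrm{Lan}_{\cC(\sigma)}$ and restriction, of the natural transformation $A_{X}\Rightarrow A_{Y}\circ\cC(\sigma)$ induced by $A$ on the cocartesian edges of $\cC^{\otimes}$ over $\sigma$; in particular that component is cocartesian in $\widetilde\cQ^{\otimes}$ exactly when the comparison $\mathrm{Lan}_{\cC(\sigma)}A_{X}\to A_{Y}$ is an equivalence. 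By part (i), $\cQ^{\otimes}\subseteq\widetilde\cQ^{\otimes}$ is a full subcategory closed under cocartesian pushforward, so $\Fun_{/\SFF}(\SFF,\cQ^{\otimes})$ is the full subcategory of $\Fun_{/\SFF}(\SFF,\widetilde\cQ^{\otimes})$ on those sections landing in $\cQ^{\otimes}$, corresponding under the equivalence to the $A$ with every $A_{X}$ product-preserving — the first displayed condition.

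Finally, $\NAlg_{\ctxt}(\cQ)=\Funlax_{/\SFF}(\SFF,\cQ^{\otimes})$ is the full subcategory of $\Fun_{/\SFF}(\SFF,\cQ^{\otimes})$ on the sections taking backward morphisms to cocartesian morphisms. As $\cC$ sends the backward morphism of $\SFF$ attached to a map $f\colon X\to Y$ of $\cF$ to $f^{*}\colon\cC(Y)\to\cC(X)$, the previous paragraph shows that such a section is a normed algebra precisely when the comparison $\mathrm{Lan}_{f^{*}}A_{Y}\to A_{X}$ is an equivalence for every $f$ — exactly the left-Kan-extension condition in the statement. This identifies $\NAlg_{\ctxt}(\cQ)$ with $\cal A$, and the triangle commutes by construction: at an object $X\in\cF\subseteq\SFF$ the section corresponding to $A\in\cal A$ has value $A|_{\cC(X)}$, which is at once the image of the section under $\ev_{X}$ and the restriction of $A$. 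The only genuine obstacle I foresee is the cofinality of $\Phi$ (together with the care needed to see that the colimit identities above produce the \emph{canonical} comparison maps); granting the lemma of (i) and the standard section comparison, the remainder — assembling $\cQ$, its behaviour on products, and matching up the full subcategories — is formal, resting on standard facts about (co)cartesian fibrations and on \cref{obs:spanprod} (products in $\SFF$ are coproducts in $\cF$).
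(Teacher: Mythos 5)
Your proposal is correct and follows essentially the same route as the paper: the key lemma that left Kan extension preserves product-preserving functors (your comparison functor $\Phi$ is precisely the functor the paper calls $R_{b_{1},b_{2}}$, except that the paper deduces its cofinality from its being right adjoint to $\pi_{b_{1},b_{2}}\colon \cA_{/b_{1}\times b_{2}}\to\cA_{/b_{1}}\times\cA_{/b_{2}}$ rather than verifying the contractibility of comma categories directly), the decomposition $\Fun^{\times}(\prod_{i}\cA_{i},\cX)\simeq\prod_{i}\Fun^{\times}(\cA_{i},\cX)$ via Kan extension along the projections, and the identification of normed algebras with sections of the fibration for $X\mapsto\Fun(\cC(X),\cX)$ whose cocartesian edges are the left-Kan-extension exhibitions. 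The one step you flag but do not carry out — that the equivalence produced by cofinality is inverse to the \emph{canonical} comparison map $g_{!}F(b_{1}\times b_{2})\to g_{!}F(b_{1})\times g_{!}F(b_{2})$ — is exactly the remaining bookkeeping, which the paper dispatches by a short naturality argument (the right adjoints $R_{b_{1},b_{2}}$ are compatible with composition in the target, and specializing the resulting square to $c_{1}=b_{1}$, $c_{2}=*$ identifies the projections on both sides).
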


The key input to the construction is the following observation about
left Kan extensions of product-preserving functors:
\begin{propn}\label{propn:LKEprodpres}
  Suppose $\cA$ and $\cB$ are small \icats{} with finite products, and
  $\cC$ is an \icat{} with small colimits and finite products such
  that the cartesian product preserves colimits in each variable. If
  $F \colon \cA \to \cC$ is a product-preserving functor and
  $g \colon \cA \to \cB$ is an arbitrary functor, then the left Kan
  extension $g_{!}F$ also preserves finite products. In other words, left Kan extension restricts to a functor
  \[ g_{!} \colon \Fun^{\times}(\cA, \cC) \to \Fun^{\times}(\cB, \cC).\]
\end{propn}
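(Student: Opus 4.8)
The plan is to reduce the statement to a single computation: that the canonical map $g_!F(b_1) \times g_!F(b_2) \to g_!F(b_1 \times b_2)$ (and similarly for the empty product) is an equivalence. Since $\cC$ has finite products, it suffices to check preservation of the terminal object and of binary products separately. For the terminal object, note that $g_!F(*_\cB)$ is computed by the colimit of $F$ over the comma category $\cA_{/*_\cB} \simeq \cA$ (the slice over the terminal object of $\cB$), which is $\colim_\cA F$; but $F$ preserves the terminal object, so... actually the cleaner route is the one for binary products below, applied also to the $0$-ary case, so I would simply run both uniformly.

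For binary products, I would use the pointwise formula for left Kan extension: $g_!F(b) \simeq \colim_{(a,\phi) \in \cA_{/b}} F(a)$, where $\cA_{/b} \coloneqq \cA \times_\cB \cB_{/b}$. The key combinatorial input is that the functor $\cA_{/b_1 \times b_2} \to \cA_{/b_1} \times \cA_{/b_2}$ induced by postcomposition with the two projections is \emph{cofinal} — indeed it is a right adjoint, with left adjoint sending $((a_1,\phi_1),(a_2,\phi_2))$ to $(a_1 \times a_2, \phi_1 \times \phi_2)$, using that $\cA$ has binary products and that products in $\cB$ induce the expected universal property on slices. Granting this cofinality, we compute
\[
  g_!F(b_1 \times b_2) \simeq \colim_{\cA_{/b_1 \times b_2}} F \simeq \colim_{\cA_{/b_1} \times \cA_{/b_2}} F(\pr_1^* (-) \times \pr_2^*(-)),
\]
and then we expand $F$ of a product using that $F$ is product-preserving, obtaining $\colim_{(a_1,a_2)} F(a_1) \times F(a_2)$. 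Finally, since the cartesian product in $\cC$ preserves colimits in each variable separately, this double colimit of a product splits as $\big(\colim_{\cA_{/b_1}} F\big) \times \big(\colim_{\cA_{/b_2}} F\big) \simeq g_!F(b_1) \times g_!F(b_2)$, and one checks this identification agrees with the canonical comparison map. The $0$-ary case is the same argument with $\cA_{/*_\cB} \simeq \cA$ having a terminal object (coming from $*_\cA$, since $\cA$ has a terminal object), so the colimit is $F(*_\cA) \simeq *_\cC$.

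The main obstacle I expect is the verification that the comparison map $\cA_{/b_1 \times b_2} \to \cA_{/b_1} \times \cA_{/b_2}$ is cofinal, together with the bookkeeping that the resulting chain of equivalences really is the canonical product-comparison map rather than merely \emph{some} equivalence — i.e.\ naturality in the $b_i$. The cofinality is best handled by exhibiting the explicit adjunction $(a_1,a_2) \mapsto a_1 \times a_2$ and invoking that left (or right) adjoints are cofinal (resp.\ final), rather than checking the $\infty$-categorical cofinality criterion by hand; one must be slightly careful that this adjunction exists at the level of $\infty$-categories, which follows since products equip $\cA$ with a right adjoint to the diagonal and this passes to slice categories. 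The "canonicity" of the final equivalence can be obtained by observing that every step — the pointwise Kan extension formula, the cofinality reindexing, the product-preservation of $F$, and the distributivity of $\times$ over colimits — is natural, so the composite is a map of functors on $\cB \times \cB$ restricting to the identity appropriately; alternatively, since both sides are product-preserving functors receiving a natural transformation from $F \circ g$, one can identify the comparison abstractly. Once product-preservation is established, the last sentence is immediate: $g_!$ manifestly sends $\Fun^\times(\cA,\cC)$ into $\Fun^\times(\cB,\cC)$.
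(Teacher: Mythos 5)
Your overall strategy is the same as the paper's: reduce to nullary and binary products, use the pointwise formula for $g_!F$, reindex the colimit computing $g_!F(b_1\times b_2)$ via an adjunction between $\cA_{/b_1\times b_2}$ and $\cA_{/b_1}\times\cA_{/b_2}$ involving the product functor, and conclude using that $F$ preserves products and that $\times$ distributes over colimits in $\cC$. The nullary case and the closing remarks are fine. However, the central cofinality claim has the adjunction running the wrong way, and as written the step fails. The projection functor $\pi\colon\cA_{/b_1\times b_2}\to\cA_{/b_1}\times\cA_{/b_2}$ is \emph{left} adjoint to the product functor $R\colon(\Phi_1,\Phi_2)\mapsto(a_1\times a_2,\dots)$, not right adjoint to it: a morphism $(a,\phi)\to R(\Phi_1,\Phi_2)$ in $\cA_{/b_1\times b_2}$ is a map $a\to a_1\times a_2$ over $b_1\times b_2$, which unravels to a pair of maps $a\to a_i$ over $b_i$, i.e.\ to a morphism $\pi(a,\phi)\to(\Phi_1,\Phi_2)$; a morphism \emph{out of} $a_1\times a_2$ admits no such description. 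Consequently it is $R$, not $\pi$, that is a right adjoint and hence cofinal; $\pi$ itself is a left adjoint, hence initial (limit-cofinal), which is of no use here.

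This is not merely a labelling slip: cofinality of $\pi$ would not yield your displayed reindexing even if it held, because the diagram $(a,\phi)\mapsto F(a)$ on $\cA_{/b_1\times b_2}$ is not the restriction along $\pi$ of the diagram $(\Phi_1,\Phi_2)\mapsto F(a_1\times a_2)$ (restricting the latter along $\pi$ gives $F(a\times a)$). What you actually need --- and what the correctly oriented adjunction provides --- is that restricting $(a,\phi)\mapsto F(a)$ along the cofinal functor $R$ gives $(\Phi_1,\Phi_2)\mapsto F(a_1\times a_2)$, whence $\colim_{\cA_{/b_1}\times\cA_{/b_2}}F(a_1\times a_2)\simeq g_!F(b_1\times b_2)$; the remainder of your computation then goes through and coincides with the paper's proof. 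You should also make the identification of the resulting equivalence with the canonical comparison map explicit rather than asserting naturality of each step: the paper does this by checking that the adjoints $R_{b_1,b_2}$ are compatible with composition in $\cB$ and specializing to $(c_1,c_2)=(b_1,*)$, which is the precise form of the bookkeeping you gesture at.
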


\begin{remark}
  For $1$-categories, a version of this result apparently goes back to
  Lawvere's thesis \cite{LawvereThesis}. See also for instance
  \cite{DayThesis}*{Appendix 2} or \cite{BorceuxDay} for
  generalizations to enriched categories and \cite{StreetProduct} for
  another variant and a historical discussion.
\end{remark}

\begin{proof}[Proof of \cref{propn:LKEprodpres}]
  Our assumptions guarantee that $g_{!}F$ is computed by the pointwise formula,
  \[ g_{!}F(b) \simeq \colim_{\cA_{/b}} F.\]
  In particular, $g_{!}F(*)$ is a colimit over $\cA \times_{\cB} \cB_{/*} \simeq \cA$; since this has a terminal object $*\to*$, we see \[g_{!}F(*) \simeq F(*) \simeq *.\]
  For objects $b_{1},b_{2} \in \cB$, consider the functor
  \[ \pi_{b_{1},b_{2}} \colon \cA_{/b_{1} \times b_{2}} \to \cA_{/b_{1}} \times \cA_{/b_{2}}\]
  given by composition with the projections $b_{1} \times b_{2} \to b_{i}$. We claim that this functor has a right adjoint $R = R_{b_{1},b_{2}}$, given on a pair $(\Phi_1,\Phi_2)$ of objects $\Phi_{i} \coloneqq (a_{i}, \phi_{i}\colon g(a_{i}) \to b_{i})$ in $\cA_{/b_i}$ by $R(\Phi_1,\Phi_2) \,=\,(a_{1} \times a_{2}, r(\phi_1,\phi_2))$, where $r(\phi_1,\phi_2)$ is defined as the composite
  \[
  	g(a_{1} \times a_{2}) \rightarrow g(a_{1}) \times g(a_{2}) \xrightarrow{\phi_{1} \times \phi_{2}} b_{1} \times b_{2}.
  \]
  To see this, observe that for an object $\Psi = (x, \psi\colon g(x) \to b_{1} \times b_{2})$ of $\cA_{/b_{1} \times b_{2}}$ the mapping space $\Map_{\cA_{/b_1 \times b_2}}(\Psi,R(\Phi_1,\Phi_2))$ sits in a pullback diagram as follows:
	\[
	\hspace{-10.70pt}\hfuzz=10.70pt
	\begin{tikzcd}[column sep = small]
		{\Map_{\cA_{/b_1 \times b_2}}(\Psi,R(\Phi_1,\Phi_2))} \drpullback &[-.5em] {\Map_{\cB_{/b_1 \times b_2}}(\psi,r(\phi_1,\phi_2))} \drpullback
		&[2em] {\{\psi\}} \\
		{\Map_{\cA}(x,a_1\times a_2)} & {\Map_{\cB}(g(x),g(a_1\times a_2))}
		& {\Map_{\cB}(g(x),b_1 \times b_2)}\rlap.
		\arrow[from=1-1, to=1-2]
		\arrow[from=1-1, to=2-1]
		\arrow[from=1-2, to=1-3]
		\arrow[from=1-2, to=2-2]
		\arrow[from=1-3, to=2-3]
		\arrow[from=2-1, to=2-2, "g"]
		\arrow[from=2-2, to=2-3, "{r(\phi_1,\phi_2) \circ -}"]
	\end{tikzcd}\]
	Under the identification of $\Map_{\cA}(x,a_1\times a_2)$ with $\Map_{\cA}(x,a_1) \times \Map_{\cA}(x,a_2)$ and of $\Map_{\cB}(g(x),b_1 \times b_2)$ with $\Map_{\cB}(g(x),b_1) \times \Map_{\cB}(g(x),b_2)$, the bottom map turns into a product of the two maps
	\[
		\Map_{\cA}(x,a_i) \xto{g} \Map_{\cB}(g(x),g(a_i)) \xto{\phi_i \circ -} \Map_{\cB}(g(x),b_i),
	\]
	and so by passing to fibers we obtain a natural equivalence
	\[
	\Map_{\cA_{/b_1 \times b_2}}(\Psi,R(\Phi_1,\Phi_2)) \isoto \Map_{\cA_{/b_1}}((x,\pr_1 \psi),\Phi_1) \times \Map_{\cA_{/b_2}}((x,\pr_2 \psi),\Phi_2).
	\]
	Since the target is canonically identified with $\smash{\Map_{\cA_{/b_1} \times \cA_{/b_2}}(\pi_{b_1,b_2}\Psi,(\Phi_1,\Phi_2))}$, this shows that $R_{b_1,b_2}$ is the desired right adjoint.

  Since right adjoints are cofinal, composition with $R_{b_{1},b_{2}}$ therefore induces an equivalence
  \[\hskip-14.2pt\hfuzz=14.2pt g_{!}F(b_{1}) \times g_{!}F(b_{2}) \simeq \colim_{(a,a') \in \cA_{/b_{1}} \times \cA_{/b_{2}}} F(a \times a') \to \colim_{x \in \cA_{/b_{1} \times b_{2}}} F(x) \simeq g_{!}F(b_{1} \times b_{2}).\]
  Moreover, these right adjoints are compatible with composition in $\cB$, so for maps $b_{1} \to c_{1}, b_{2} \to c_{2}$ we get a commutative square
  \[
    \begin{tikzcd}
      g_{!}F(b_{1}) \times g_{!}F(b_{2}) \ar[r, "\sim"] \ar[d] & g_{!}F(b_{1} \times b_{2}) \ar[d] \\
      g_{!}F(c_{1}) \times g_{!}F(c_{2}) \ar[r, "\sim"] & g_{!}F(c_{1} \times c_{2}).
    \end{tikzcd}
  \]
  Taking $c_{1} = b_{1}$ and $c_{2} = *$, we see in particular that
  projection to $g_{!}F(b_{1})$ on the left corresponds to composition
  with $b_{1} \times b_{2} \to b_{1}$ on the right, so that the
  canonical map
  $g_{!}F(b_{1} \times b_{2}) \to g_{!}F(b_{1}) \times g_{!}F(b_{2})$
  is an equivalence. In other words, the functor $g_{!}F$ is
  product-preserving, as required.
\end{proof}

\begin{lemma}\label{lem:prodpresprod}
  Suppose $\cA_{1},\ldots, \cA_{n}$ and $\cB$ are \icats{} with finite products. If $\cA \coloneqq \prod_{i} \cA_{i}$, then left Kan extension along the projections $\pi_{i} \colon \cA \to \cA_{i}$ gives an equivalence
  \[ \FunT(\cA, \cB) \isoto \prod_{i} \FunT(\cA_{i}, \cB),\]
  with inverse given by
  \[ \left(F_{i} \colon \cA_{i} \to \cB\right) \,\, \mapsto \,\, \left(\prod_{i}F_{i}\circ \pi_{i} \colon \cA \to \cB \right)\]
\end{lemma}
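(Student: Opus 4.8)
The plan is to make the left Kan extension functors $(\pi_i)_!$ completely explicit by exploiting that each projection $\pi_i\colon\cA\to\cA_i$ admits a right adjoint, and then to verify directly that the resulting functor is inverse to the one named in the statement. For the first step, write $\cA\simeq\cA_i\times\cA^{(i)}$ with $\cA^{(i)}\coloneqq\prod_{j\neq i}\cA_j$, which has a terminal object since it has finite products, and let $s_i\colon\cA_i\to\cA$ be the functor sending $a$ to the tuple with $a$ in the $i$-th coordinate and terminal objects $*_{\cA_j}$ elsewhere. A one-line mapping-space computation gives an adjunction $\pi_i\dashv s_i$ with counit $\pi_i s_i\simeq\id_{\cA_i}$; in particular $s_i$ is fully faithful, and being a right adjoint it preserves finite products. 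For any $F\colon\cA\to\cB$ the comma category $\cA\times_{\cA_i}(\cA_i)_{/a}$ then has the terminal object $(s_i(a),\pi_i s_i(a)\xrightarrow{\sim}a)$, so the pointwise left Kan extension exists even though $\cB$ is not assumed cocomplete, and $(\pi_i)_!F(a)\simeq F(s_i(a))$, naturally in $a$; equivalently, whiskering the unit and counit of $\pi_i\dashv s_i$ exhibits $s_i^*$ as left adjoint to $\pi_i^*=(\text{restriction along }\pi_i)$, so $(\pi_i)_!\simeq s_i^*$ by uniqueness of adjoints. Since $s_i$ is product-preserving, $(\pi_i)_!$ carries $\FunT(\cA,\cB)$ into $\FunT(\cA_i,\cB)$, and the forward functor $F\mapsto((\pi_i)_!F)_i$ of the lemma is thereby identified with $F\mapsto(F\circ s_i)_i$.

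For the second step, first note that $P\colon(F_i)_i\mapsto\prod_i F_i\circ\pi_i$ is well defined, since each $F_i\circ\pi_i$ is product-preserving and a finite product of product-preserving functors is again product-preserving. Then verify the two round-trips. For $(s_j^*)_j\circ P$: we have $(\prod_i F_i\pi_i)\circ s_j=\prod_i F_i(\pi_i s_j)$, where $\pi_i s_j\simeq\id_{\cA_j}$ for $i=j$ and $\pi_i s_j$ is the constant functor at the terminal object of $\cA_i$ for $i\neq j$; applying the product-preserving $F_i$ (which sends the terminal object to the terminal object) collapses the product to $F_j$, naturally in $(F_i)_i$. For $P\circ(s_i^*)_i$: on a product-preserving $F$ the units $\id_\cA\to s_i\pi_i$ of the adjunctions assemble to a natural comparison map $F\to\prod_i F(s_i\pi_i)=P((F s_i)_i)$, and evaluated at $\vec a=(a_1,\dots,a_n)$ this is $F$ applied to the canonical map $\vec a\to\prod_i s_i(a_i)$ in $\cA$; computing coordinatewise, the $k$-th coordinate of $\prod_i s_i(a_i)$ is $a_k\times\prod_{i\neq k}*_{\cA_k}\simeq a_k$, so this map is an equivalence, and since $F$ preserves products the comparison is an equivalence. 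This exhibits $(s_i^*)_i$ and $P$ as mutually inverse, which is exactly the assertion of the lemma. (The cases $n=0$ and $n=1$ are degenerate and may be noted separately or simply absorbed into this argument.)

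I do not expect a genuine difficulty anywhere; the only point requiring a little care is the first step, namely obtaining the identification $(\pi_i)_!\simeq s_i^*$ as functors (not merely objectwise) and making sure the left Kan extension is even defined without a cocompleteness hypothesis on $\cB$ — the lemma, unlike \cref{propn:LKEprodpres}, assumes only finite products on $\cB$, so one cannot invoke the usual pointwise formula blindly. Both issues are dispatched by the presence of the right adjoint $s_i$ of $\pi_i$: once $\pi_i\dashv s_i$ is in hand, either the observation that the relevant comma categories have terminal objects, or the whiskering argument identifying $s_i^*$ as the left adjoint of restriction along $\pi_i$, supplies the functorial equivalence $(\pi_i)_!\simeq s_i^*$ with no size assumptions. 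Everything after that is the bookkeeping in the second paragraph.
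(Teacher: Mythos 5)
Your proof is correct and takes essentially the same route as the paper's: both identify $(\pi_{i})_{!}F$ with $F\circ s_{i}$ (i.e.\ $F(*,\dots,x,\dots,*)$) via the terminal object of the relevant comma category, and both then verify the two composites by the same computations, namely collapsing $\prod_{i}F_{i}(\pi_{i}s_{j})\simeq F_{j}$ and applying product-preservation of $F$ to the equivalence $\vec a\simeq\prod_{i}s_{i}(a_{i})$. The only cosmetic difference is that the paper packages the round-trip checks as the unit and counit of an adjunction between the two composites, while you check them directly.
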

\begin{proof}
  For $i = 0$ we indeed have $\FunT(*, \cB) \simeq *$ as the only
  product-preserving functor is the one constant at the terminal
  object. Suppose therefore that $i > 1$.  The pointwise left Kan
  extension of $F \colon \cA \to \cB$ along $\pi_{i}$, if it exists,
  is given at $x \in \cA_{i}$ by taking a colimit over
  \[ \cA_{/x} \simeq \cA_{i/x} \times \prod_{j \neq i} \cA_{j}\]
  This has a terminal object, so the colimit (and hence the pointwise Kan extension) always exists, and is given by
  \[ (\pi_{i,!}F)(x) \simeq F(*,\dots,*,x,*,\dots,*).\]
  The functor we claim is an equivalence is the composite
  \[ \FunT(\cA, \cB) \to \prod_{i} \FunT(\cA, \cB) \xto{\prod_{i} \pi_{i,!}} \prod_{i} \FunT(\cA_{i}, \cB).\]
  Since $\FunT(\cA, \cB)$ has finite products (computed pointwise), this functor has a right adjoint, given by
  \[ \prod_{i} \FunT(\cA_{i}, \cB) \xto{\prod \pi_{i}^{*}} \prod_{i} \FunT(\cA, \cB) \xto{\times} \FunT(\cA, \cB).\]
  To see that this adjunction is in fact an equivalence, it suffices to observe that for $F_{i} \in \FunT(\cA_{i},\cB)$ we have
  \[ \Big(\pi_{j,!}\Big(\prod_{i} F_{i}\circ \pi_{i}\Big)\Big)(x) \simeq F_{j}(x) \times \prod_{i \neq j} F_{i}(*) \simeq F_{j}(x)\]
  and that for $F \in \FunT(\cA,\cB)$ we have
  \[ F(x_{1},\ldots,x_{n}) \isoto \prod_{i} (\pi_{i,!}F)(x_{i}),\]
  since $(x_{1},\ldots,x_{n})$ is the finite product
  \[ (x_{1},*,\dots*) \times (*,x_{2},*,\dots,*) \times \cdots (*,\dots,*,x_{n}) \]
  in $\cA$.
\end{proof}

\begin{remark}
  Let $\mathcal R$ be any collection of diagram shapes containing both the empty set and the two-point set. Then the same argument shows that the categories of $\mathcal R$-shaped limit preserving functors satisfy $\Fun^{\mathcal R\text{-lim}}(\prod_{i=1}^n\cA_i,\cB)\simeq\prod_{i=1}^n\Fun^{\mathcal R\text{-lim}}(\cA_i,\cB)$.
\end{remark}

We now come to our main construction:

\begin{construction}\label{constr:funprodfib}
  Let $\cX$ be a cocomplete \icat{} with finite products, such that
  the cartesian product preserves colimits in each variable, and let
  $F \colon \cI \to \CatI$ be a functor such that $F(i)$ has finite
  products for all $i \in \cI$ (but these are not necessarily
  preserved by the morphisms in the diagram).

  Let $p \colon \cE \to \cI$ be the cartesian fibration for the functor
  \[ \Fun(F(\blank), \cX) \colon \cI^{\op} \to \LCatI,\]
  and note that by \cite{freefib}*{7.3} there is a natural equivalence
  \begin{equation}
    \label{eq:fibfuntoconst}
    \Fun_{/\cI}(\cK, \cE) \simeq \Fun(\cK \times_{\cI} \cF, \cX),
\end{equation}
  where $\cF \to \cI$ is the cocartesian fibration for $F$. Here
  $p$ is also a cocartesian fibration, since we can left Kan extend functors to $\cX$. Moreover, if we define $\cE'$
  as the full subcategory containing the functors $F(i) \to \cX$ that
  preserve products for all $i$, then \cref{propn:LKEprodpres} implies
  that $\cE' \to \cI$ is again a cocartesian fibration. Note that for $f \colon i \to j$ in $\cI$, a morphism $\phi$ in $\cE$ over $f$ corresponds under the equivalence \cref{eq:fibfuntoconst} to a functor $[1] \times_{\cI} \cF \to \cX$. Here the source is the cocartesian fibration over $[1]$ encoding the functor $F(f) \colon F(i) \to F(j)$ and so can be described as the pushout $F(i) \times [1] \amalg_{F(i) \times \{1\}} F(j)$, see \cite{freefib}*{3.1}. We can thus identify the morphism $\phi$ with a natural transformation
  \[
    \begin{tikzcd}
      F(i) \ar[rr, "F(f)"{above}] \ar[dr,  ""{above,name=A, inner sep=4pt}] & & F(j) \ar[dl,  ""{above,name=B, inner sep=4pt}] \\
      & \cX\rlap,
      \ar[from=A, to=B, Rightarrow]
    \end{tikzcd}
  \]
  and $\phi$ is a cocartesian morphism if and only if this diagram exhibits $F(j) \to \cX$ as a left Kan extension of $F(i) \to \cX$ along $F(f)$.
\end{construction}

\begin{proof}[Proof of \cref{propn:FunTsymmon}]
  To prove that $\cQ$ is $\ctxt$-normed we must show that given a finite coproduct $X \simeq \coprod_{i} X_{i}$ in $\cF$, with $\iota_{j} \colon X_{j} \to X$ the summand inclusions, the functor
  \[ (\pi_{j,!})_{j} \colon \FunT(\cC(X), \cX) \to \prod_{j} \FunT(\cC(X_{j}), \cX),\]
  where $\pi_{j} \coloneqq \cC(\iota_{j})$,
  is an equivalence. This is the content of \cref{lem:prodpresprod}.

  Part (ii) follows immediately from \cref{constr:funprodfib} specialized to $\cI=\Span_{\nrms}(\cF)$: note that the straightening of the cocartesian fibration $\cE\to\cI$ dicussed there agrees by definition with the functor $X\mapsto\Fun(\cC(X),\cX)$ with functoriality via left Kan extension, so that the cocartesian subfibration $\cE'\to\cI$ classifies the functor $\FunT(\cC(-),\cX)$ in question.
\end{proof}

\begin{observation}\label{obs:radjsimplify}
  In the situation above, suppose the functor
  $f^{*} \colon \cC(Y) \to \cC(X)$ has a right adjoint $f_{*}$ for every
  backwards map $f$. Then the condition for $A\colon \cC^{\otimes} \to \cX$ to define an
  $\ctxt$-normed algebra in $\cQ$ can be rephrased as requiring an
  equivalence
  \[ A_{X} \simeq A_{Y} \circ f_{*}.\] In this case $\cC^{\otimes}$
  also has \emph{cartesian} morphisms over backwards maps, and we can
  phrase this condition more precisely as: If $\bar{X}$ is in $\cC^{\otimes}_{X}$ and
  $\phi \colon \bar{Y} \to \bar{X}$ is cartesian over a backwards map in
  $\SFF$, then $A(\phi)$ is an equivalence.
\end{observation}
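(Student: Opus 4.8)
First I would unwind what \cref{propn:FunTsymmon}(ii) already supplies: under the standing hypotheses, a functor $A\colon\cC^\otimes\to\cX$ is an $\ctxt$-normed algebra in $\cQ$ exactly when each restriction $A_X$ to a fibre is product-preserving and, for every morphism $f\colon X\to Y$ in $\cF$ viewed as a backward map of $\SFF$, the canonical transformation $\beta\colon A_Y\Rightarrow A_X\circ f^*$ exhibits $A_X$ as the left Kan extension of $A_Y$ along $f^*$. Only the left Kan extension clause will change, so the goal is to re-express it in the presence of the right adjoints.

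The first ingredient is the elementary fact that a left Kan extension along a functor with a right adjoint is computed by precomposition with that adjoint: if $f^*\colon\cC(Y)\to\cC(X)$ has right adjoint $f_*$, then for any $G\colon\cC(Y)\to\cX$ the pointwise formula identifies the relevant comma category $\cC(Y)\times_{\cC(X)}\cC(X)_{/\bar X}$ with $\cC(Y)_{/f_*\bar X}$, which has a terminal object; hence $(f^*)_!G\simeq G\circ f_*$, with universal transformation $G\Rightarrow G\circ f_*\circ f^*$ obtained by applying $G$ to the unit of $f^*\dashv f_*$. Tracing through this identification, the comparison map $(f^*)_!A_Y\to A_X$ determined by $\beta$ is, at an object $\bar X\in\cC(X)$, the composite $A_Y(f_*\bar X)\xrightarrow{\beta_{f_*\bar X}}A_X(f^*f_*\bar X)\xrightarrow{A_X(\varepsilon_{\bar X})}A_X(\bar X)$, with $\varepsilon$ the counit. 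So the left Kan extension clause for $f$ holds iff this composite is an equivalence for all $\bar X$, which is precisely the asserted reformulation $A_X\simeq A_Y\circ f_*$ (understood via the canonical comparison); the product-preservation clause on the $A_X$ is left untouched.

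For the sharper, morphism-theoretic version I would next record that, because the pushforward of the cocartesian fibration $\cC^\otimes\to\SFF$ along the backward map attached to $f\colon X\to Y$ is exactly $f^*$, and $f^*$ now has a right adjoint, a standard fact about cocartesian fibrations supplies $p$-cartesian lifts of all backward maps; concretely, the $p$-cartesian lift of such a backward map ending at $\bar X\in\cC^\otimes_X$ has source $f_*\bar X$ and factors as a $p$-cocartesian lift $f_*\bar X\to f^*f_*\bar X$ followed by the counit $\varepsilon_{\bar X}\colon f^*f_*\bar X\to\bar X$ in the fibre $\cC(X)$ — this in particular proves the claim that $\cC^\otimes$ has cartesian morphisms over backward maps. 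Applying $A$ to this factorization, and using that $A$ carries a $p$-cocartesian morphism over the backward map at $f_*\bar X$ to the component $\beta_{f_*\bar X}$ (the cocartesian-morphisms-to-natural-transformations dictionary underlying \cref{constr:funprodfib}), we obtain $A(\phi)=A_X(\varepsilon_{\bar X})\circ\beta_{f_*\bar X}$ for the cartesian lift $\phi$ ending at $\bar X$, i.e.\ exactly the comparison map above. Since every $p$-cartesian morphism over a given backward map differs from such a $\phi$ by equivalences in the fibres (which $A$ inverts), and a natural transformation is invertible iff it is so pointwise, the left Kan extension clause for $f$ is equivalent to asking that $A$ invert every $p$-cartesian morphism over the corresponding backward map; letting $f$ range over all of $\cF$ gives the statement.

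The one place needing genuine care is the bookkeeping identification $A(\phi)=A_X(\varepsilon_{\bar X})\circ\beta_{f_*\bar X}$: matching the transformation $\beta$ from \cref{propn:FunTsymmon}(ii) with the action of $A$ on $p$-cocartesian morphisms and then factoring the cartesian lift correctly. None of this is deep, but the variance conventions — which maps of $\cF$ produce which backward maps of $\SFF$, and the source and target of the cartesian lifts — must be tracked carefully throughout.
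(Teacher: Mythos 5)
Your argument is correct and is precisely the (unwritten) argument the paper intends: the paper states this as an observation without proof, relying on exactly the two facts you spell out — that left Kan extension along a functor with a right adjoint is precomposition with that adjoint, and that the cartesian lift of a backward map factors as a cocartesian lift followed by the counit, so that $A(\phi)$ is the canonical comparison map $A_Y(f_*\bar X)\to A_X(\bar X)$. Your bookkeeping of the variance and of the dictionary from \cref{constr:funprodfib} is accurate, so nothing is missing.
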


In the special case $\ctxt = (\xF,\xF)$, the resulting normed structure on $\Fun^{\times}(\cC,\cX)$ corresponds by \Cref{ex:classical-cmon} to a symmetric monoidal structure. We will now compare it to the Day convolution monoidal structure:

\begin{propn}\label{prop:dayconvolution}
  Let $\cX$ be a cocomplete \icat{} with finite products, where the
  product functor preserves colimits in each variable. Suppose
  $\cC \colon \Span(\xF) \to \CatI$ is a symmetric monoidal \icat{} whose underlying \icat{} has finite products.
  \begin{itemize}
  	\item The symmetric
  	monoidal structure on $\FunT(\cC, \cX)$ from \cref{propn:FunTsymmon}
  	is a full symmetric monoidal subcategory of the Day convolution on
  	$\Fun(\cC, \cX)$.
  	\item If $\cX$ is presentable and the tensor product on $\cC$ preserves finite products in each variable, it is moreover a
  	symmetric monoidal localization.
  \end{itemize}
\end{propn}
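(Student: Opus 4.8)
The plan is to exhibit the normed structure on $\FunT(\cC,\cX)$ constructed in \cref{propn:FunTsymmon} (for $\ctxt = (\xF,\xF)$) as arising from the Day convolution structure on $\Fun(\cC,\cX)$ by first passing to a full symmetric monoidal subcategory and then, under the presentability hypothesis, to a symmetric monoidal localization. Throughout, I would use \cref{ex:classical-cmon} to translate between the $(\xF,\xF)$-normed language and the usual language of symmetric monoidal \icats{}, and the key structural input will be the universal property of Day convolution together with \cref{propn:LKEprodpres}.

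For the first bullet, the strategy is to identify the cocartesian fibration $\cQ^\otimes \to \Span(\xF)$ from \cref{propn:FunTsymmon} as a subfibration of the cocartesian fibration $\Fun(\cC,\cX)^{\otimes,\mathrm{Day}} \to \Span(\xF)$ classifying Day convolution. Concretely: \cref{constr:funprodfib} describes $\cQ^\otimes$ as the full subcategory $\cE' \subseteq \cE$ of the cartesian-and-cocartesian fibration for $X \mapsto \Fun(\cC(X),\cX)$ spanned fiberwise by product-preserving functors, where $\cC(X) = \cC^{\times X}$ for the symmetric monoidal \icat{} $\cC$ (since $\cC \colon \Span(\xF) \to \CatI$ is product-preserving, $\cC(\mathbf n) \simeq \cC^{\times n}$). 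On the other hand, Day convolution can be presented via exactly the same recipe: the Day convolution symmetric monoidal structure on $\Fun(\cC,\cX)$ has underlying \icat{} $\Fun(\cC,\cX)$, and its $n$-ary tensor is computed as left Kan extension along the tensor functor $\cC^{\times n} \to \cC$, which is precisely the covariant functoriality of $\cE \to \Span(\xF)$ over forward maps, while the contravariant functoriality over backward maps is restriction. (Here I would cite the explicit span-category model of Day convolution — e.g.\ the construction via $\Span(\xF)$ appearing in the work of Nikolaus or Glasman, or reconstruct it directly from the universal property.) The map of fibrations $\cE' \hookrightarrow \cE \xrightarrow{\sim} \Fun(\cC,\cX)^{\otimes,\mathrm{Day}}$ is then a fully faithful inclusion which is fiberwise full (fiber over $\mathbf 1$ is $\FunT(\cC,\cX) \subseteq \Fun(\cC,\cX)$) and preserves cocartesian edges, because the cocartesian edges in both fibrations are computed by the same left Kan extensions; by \cref{propn:LKEprodpres} the subcategory $\FunT(\cC,\cX)$ is closed under these, which is what makes $\cE' \to \Span(\xF)$ a cocartesian subfibration in the first place. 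This exhibits $\FunT(\cC,\cX)$ as a full symmetric monoidal subcategory of $\Fun(\cC,\cX)^{\mathrm{Day}}$.

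For the second bullet, assume $\cX$ presentable and the tensor on $\cC$ product-preserving in each variable. First, $\FunT(\cC,\cX) \subseteq \Fun(\cC,\cX)$ is an accessible localization: it is the full subcategory of functors inverting the (small set of) maps $F(x) \amalg F(y) \to F(x\times y)$ and $F(\emptyset) \to *$, and $\Fun(\cC,\cX)$ is presentable, so this localization $L$ exists by standard presentability arguments. To upgrade $L$ to a symmetric monoidal localization of the Day convolution structure, I would invoke the criterion that a localization $L$ of a symmetric monoidal presentable \icat{} is compatible with the tensor product precisely when $L$-equivalences are stable under tensoring with arbitrary objects; equivalently (since Day convolution is closed) it suffices to check that for each $c \in \cC$ the internal-hom functor $[\,\mathrm{ev}_c,-\,]$ — or more simply, that tensoring a generating $L$-equivalence with a corepresentable $\cX\{c\} := (j_c)_!(\mathrm{pt})$ again yields an $L$-equivalence. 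Using the Day convolution formula $\mathcal F \otimes \cX\{c\} \simeq \mathcal F(c \otimes -)$ (restriction along $c \otimes - \colon \cC \to \cC$), the generating equivalences get sent to maps of the form $\mathcal F((c\otimes x) \amalg (c \otimes y)) \to \mathcal F(c \otimes (x \times y))$; since $c \otimes -$ preserves finite products by hypothesis, $c \otimes (x\times y) \simeq (c\otimes x)\times(c\otimes y)$ and $(c\otimes x)\amalg(c\otimes y)$ maps to it via the canonical comparison, so this is again one of the generating $L$-equivalences (up to the monoid structure coming from $\mathcal F \in \FunT$, where it becomes an equivalence). Since every object of $\Fun(\cC,\cX)^{\mathrm{Day}}$ is a colimit of such corepresentables and tensor commutes with colimits, $L$-equivalences are stable under $\otimes$, so $L$ is a symmetric monoidal localization. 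Finally, identifying the local objects with $\FunT(\cC,\cX)$ and the localized tensor with the structure of \cref{propn:FunTsymmon} is formal, since the latter's cocartesian edges are the $L$-localizations of the Day cocartesian edges by the first bullet.

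The main obstacle I anticipate is the bookkeeping in matching the two presentations of the multiplicative structure: checking carefully that the cocartesian edges of $\cE' \to \Span(\xF)$ in \cref{constr:funprodfib} literally agree with those of the standard $\Span(\xF)$-model of Day convolution (rather than just ``morally'' being the same left Kan extensions), and handling the nullary/unit case correctly. A secondary subtlety is that the generating $L$-equivalences are not themselves maps between product-preserving functors, so the stability-under-tensor argument must be run in $\Fun(\cC,\cX)$ and only afterward restricted; making sure the comparison maps $(c\otimes x)\amalg(c\otimes y) \to c \otimes(x\times y)$ are exactly (a cofinal generating subset of) the maps defining the localization, and not merely maps inverted by it, is where I would be most careful.
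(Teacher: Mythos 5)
There is a genuine gap in your comparison step. Your route for the first bullet hinges on the claim that the fibration $\cE \to \Span(\xF)$ of \cref{constr:funprodfib} is equivalent to the Day convolution fibration $\Fun(\cC,\cX)^{\otimes,\mathrm{Day}}$, but this is false: the fiber of $\cE$ over $\mathbf{n}$ is $\Fun(\cC^{\times n},\cX)$, whereas a symmetric monoidal \icat{} (a product-preserving functor on $\Span(\xF)$) must have fiber $\Fun(\cC,\cX)^{\times n}$ over $\mathbf{n}$; indeed $\cE$ does not satisfy the Segal condition at all, which is precisely why \cref{propn:FunTsymmon} has to pass to the subfibration $\cE'$ and invoke \cref{lem:prodpresprod}. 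So the composite $\cE'\hookrightarrow\cE\xrightarrow{\sim}\Fun(\cC,\cX)^{\otimes,\mathrm{Day}}$ you propose does not exist, and with it goes your only mechanism for producing a comparison functor between the two monoidal structures. It is true that, after identifying $\FunT(\cC^{\times n},\cX)\simeq\FunT(\cC,\cX)^{\times n}$, the tensor products on both sides are objectwise given by the same left Kan extension of the external product along $\otimes^{n}\colon\cC^{\times n}\to\cC$; but turning this objectwise observation into an equivalence of cocartesian fibrations over the base is the substantive content here, not bookkeeping. The paper sidesteps the need for a direct functor entirely by showing that both structures corepresent the same functor on lax symmetric monoidal functors: by the universal property of Day convolution, lax maps $\cO^\otimes\to\Fun(\cC,\cX)^{\otimes,\mathrm{Day}}$ are product-preserving functors $\cC^\otimes\times_{\Span(\xF)}\cO^\otimes\to\cX$, and \cref{propn:technical} (the real work) shows the same for the structure of \cref{propn:FunTsymmon}. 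Your plan has no working substitute for that step.

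The rest is essentially sound and close to the paper. That Day convolution restricts to the full subcategory $\FunT(\cC,\cX)$ is exactly the paper's (short) argument: the Day tensor of product-preserving functors is a left Kan extension of a product-preserving functor along $\otimes$, hence product-preserving by \cref{propn:LKEprodpres}. Your second bullet is the standard compatible-localization argument the paper delegates to \cite{CHLL1}*{3.3.4}: one caveat is that your formula $\mathcal F\otimes\cX\{c\}\simeq\mathcal F(c\otimes-)$ is backwards (tensoring with a corepresentable is left Kan extension along $c\otimes-$; restriction along $c\otimes-$ computes the internal hom \emph{out of} the corepresentable), but the computation that actually matters, $j_c\otimes j_x\simeq j_{c\otimes x}$ together with $c\otimes(x\times y)\simeq(c\otimes x)\times(c\otimes y)$, does send generating local equivalences to generating local equivalences, so that part goes through.
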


The proof will require some preparations.

\begin{lemma}
  Let $\cC,\cD\to\cI$ be cocartesian fibrations, and let $F\colon\cC\to\cD$ be a functor over $\cI$. Then the following are equivalent:
  \begin{enumerate}[(i)]
    \item $F$ preserves cocartesian edges.
    \item For every cocartesian edge $[1]\to\cC$ the composite $[1]\to\cD$ is the relative left Kan extension (over $\cI$) of its restriction to $0$.
    \item For every $i\in\cI$ and every $\cI_{i/}\to\cC$ over $\cI$ landing in the subcategory of cocartesian edges, the composite $\cI_{i/}\to\cD$ is relatively left Kan extended from $\id_{i}\in\cI_{i/}$.
   \end{enumerate}
   \begin{proof}
    Recall that if $\cJ\to\cI$ is arbitrary and $\cJ$ has an initial object $\emptyset$, then the relative left Kan extension along $\{\emptyset\}\hookrightarrow\cJ$ exists for every cocartesian fibration $\mathcal E\to\cI$, and $\cJ\to\mathcal E$ is relatively left Kan extended if and only if it factors through cocartesian edges \cite[\href{https://kerodon.net/tag/043G}{Tag \textsc{043g}}]{kerodon}. The equivalence between (1) and (2) follows immediately, while for the equivalence between (1) and (3) it suffices to observe in addition that every cocartesian edge $x\to y$ of $\cC$ is contained in the image of some cocartesian $\cI_{i/}\to\cC$: namely, if $i$ is the image of $x$, then the relative left Kan extension of $x$ along $\{\emptyset\}\hookrightarrow\cI_{i/}$ has the required properties.
   \end{proof}
\end{lemma}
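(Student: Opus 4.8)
The plan is to deduce all three equivalences from one standard fact about relative left Kan extensions out of an initial object. Namely: if $\cJ$ is an \icat{} with initial object $\emptyset$ and $p\colon\cE\to\cI$ is a cocartesian fibration, then a functor $G\colon\cJ\to\cE$ over $\cI$ is a relative left Kan extension (over $\cI$) of $G|_{\{\emptyset\}}$ along $\{\emptyset\}\hookrightarrow\cJ$ if and only if $G$ sends every morphism of $\cJ$ to a $p$-cocartesian edge. I would cite this (e.g.\ from \cite{kerodon}), but the idea is that, $\emptyset$ being initial, the \icat{} $\{\emptyset\}\times_{\cJ}\cJ_{/j}$ appearing in the pointwise formula is equivalent to the contractible space $\Map_{\cJ}(\emptyset,j)$, so that being a relative left Kan extension at $j$ reduces to $G(\emptyset\to j)$ being $p$-cocartesian; the two-out-of-three property of $p$-cocartesian edges then upgrades ``every $\emptyset\to j$'' to ``every morphism''. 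In particular such relative left Kan extensions always exist here, since $p$ is a cocartesian fibration. With this in hand, (ii) and (iii) are precisely the statements that $F$ takes relative left Kan extensions from the initial object to relative left Kan extensions from the initial object, for the families of test categories $\cJ=[1]$ and $\cJ=\cI_{i/}$ respectively, and the lemma amounts to saying that these families already detect condition (i).

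For (i) $\Leftrightarrow$ (ii), I would apply the fact with $\cE=\cD$ and $(\cJ,\emptyset)=([1],0)$: a functor $[1]\to\cC$ over $\cI$ is a cocartesian edge exactly when it is a relative left Kan extension of its restriction to $\{0\}$, and likewise in $\cD$; hence the composite of a cocartesian edge with $F$ is again such a relative left Kan extension if and only if $F$ sends it to a cocartesian edge. Letting the cocartesian edge vary yields the equivalence with (i).

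For (i) $\Rightarrow$ (iii): if a functor $\cI_{i/}\to\cC$ over $\cI$ lands in the cocartesian edges, then so does its composite with $F$ since $F$ preserves cocartesian edges; as $\id_i$ is initial in $\cI_{i/}$, the fact identifies this with the composite being relatively left Kan extended from $\id_i$. For (iii) $\Rightarrow$ (i), let $\phi\colon x\to y$ be a $p_{\cC}$-cocartesian edge lying over $u\colon i\to j$ in $\cI$, viewed as the (essentially unique) morphism $\id_i\to u$ of $\cI_{i/}$. Since $p_{\cC}$ is a cocartesian fibration, form the relative left Kan extension $G\colon\cI_{i/}\to\cC$ of $x\colon\{\id_i\}\to\cC$ along $\{\id_i\}\hookrightarrow\cI_{i/}$; by the fact, $G$ lands in the cocartesian edges, and $G(\id_i\to u)$ is a $p_{\cC}$-cocartesian lift of $u$ with source $x$, hence equivalent to $\phi$ by essential uniqueness of cocartesian lifts. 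Now (iii) applied to $G$ says $F\circ G$ is relatively left Kan extended from $\id_i$, so by the fact $F\circ G$ also lands in $p_{\cD}$-cocartesian edges; since $p_{\cD}$-cocartesianness is invariant under equivalence of edges, $F(\phi)\simeq(F\circ G)(\id_i\to u)$ is $p_{\cD}$-cocartesian. This establishes (i).

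The argument is entirely formal, so I do not expect a real obstacle. The two points that need care are getting the precise shape of the Kan-extension criterion right — in particular the contractibility of $\{\emptyset\}\times_{\cJ}\cJ_{/j}$ when the source has an initial object — and, in the step (iii) $\Rightarrow$ (i), legitimately identifying the value $G(\id_i\to u)$ of the manufactured functor with the prescribed cocartesian edge $\phi$ via essential uniqueness of cocartesian lifts; I expect the latter to be the only place where one must be slightly more explicit.
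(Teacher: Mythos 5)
Your proof is correct and follows essentially the same route as the paper's: both rest on the single Kerodon fact that a functor out of an $\infty$-category with initial object is a relative left Kan extension from that object if and only if it lands in cocartesian edges, applied with $\cJ=[1]$ for (i)$\Leftrightarrow$(ii) and with $\cJ=\cI_{i/}$ for (i)$\Leftrightarrow$(iii). Your step realizing a given cocartesian edge $\phi$ as $G(\id_i\to u)$ for the manufactured $G=$ relative left Kan extension of $x$ is exactly the paper's observation that every cocartesian edge lies in the image of some cocartesian $\cI_{i/}\to\cC$.
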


\begin{propn}\label{propn:technical}
  Let $\cC\colon\Span(\xF)\to\CatI$ be a symmetric monoidal \icat{}, let $\cX$ be a cocomplete category with finite products such that the product preserves colimits in each variable, and let $\cE'\to\Span(\xF)$ denote the cocartesian fibration classifying the functor $\FunT(\cC(-),\cX)$ with functoriality via left Kan extension. 

  If $\cO^\otimes\to\Span(\xF)$ is any symmetric monoidal \icat{}, then $\cC^\otimes\times_{\Span(\xF)}\cO^\otimes$ has finite products, and a functor $\cO^\otimes\to\cE'$ over $\Span(\xF)$ is lax symmetric monoidal if and only if the functor $\tilde F\colon\cC^\otimes\times_{\Span(\xF)}\cO^\otimes\to\cX$ associated to $F$ via (\ref{eq:fibfuntoconst}) preserves finite products.
\end{propn}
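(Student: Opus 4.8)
The plan is to treat the two assertions in turn. Set $\cM\coloneqq\cC^\otimes\times_{\Span(\xF)}\cO^\otimes$, with its cocartesian fibration $q\colon\cM\to\Span(\xF)$. For the first assertion I would use the following general fact: if $p\colon\cE\to\cB$ is a cocartesian fibration with $\cB$ having finite products, $\cE_{1}$ (the fibre over the terminal object $1\in\cB$) having a terminal object, and the functor $\cE_{B_{1}\times B_{2}}\xrightarrow{\,\sim\,}\cE_{B_{1}}\times\cE_{B_{2}}$ given by cocartesian pushforward along the two projections being an equivalence for all $B_{1},B_{2}$, then $\cE$ has finite products, computed with terminal object that of $\cE_{1}$ and with $e_{1}\times e_{2}$ the object of $\cE_{p(e_{1})\times p(e_{2})}$ corresponding to $(e_{1},e_{2})$ (projections: the cocartesian lifts of the projections downstairs). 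This is immediate from the fact that $\Map_{\cE}(e,e')$ fibres over $\Map_{\cB}(pe,pe')$ with fibre $\Map_{\cE_{pe'}}(f_{!}e,e')$ over $f$, together with $(\mathrm{pr}_{i}\circ f)_{!}\simeq(\mathrm{pr}_{i})_{!}\circ f_{!}$. I then apply this to $q$: the base $\Span(\xF)$ is semiadditive with products the disjoint unions by \cref{obs:spanprod}, the fibre over $\emptyset$ is $\cC(\emptyset)\times\cO(\emptyset)\simeq\ast$, and the product condition holds since $\cC$ and $\cO$ are product-preserving functors $\Span(\xF)\to\CatI$, so $\cM_{X\amalg Y}\simeq\cM_{X}\times\cM_{Y}$ compatibly with the projections. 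It follows that every object of $\cM$ over $\mathbf n$ is canonically the product of its $n$ ``Segal components'' in $\cM_{\mathbf 1}$ obtained by cocartesian pushforward along the backward maps $\mathbf n\to\{i\}$, and hence that a functor out of $\cM$ preserves finite products if and only if it carries the product of every finite (possibly empty) family of objects of $\cM_{\mathbf 1}$ to a product.

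Next I would record the dictionary coming from \eqref{eq:fibfuntoconst}: in the fibre over $X$ the section $F$ restricts to a functor $\cO(X)\to\Fun(\cC(X),\cX)$ whose adjunct is $\tilde F|_{\cM_{X}}$, so $F(o)(c)\simeq\tilde F(c,o)$ for $o\in\cO(X)$, $c\in\cC(X)$. The backward map of $\Span(\xF)$ associated with $g\colon X\to Y$ in $\xF$ acts on $\cE$ by left Kan extension along $g^{*}\colon\cC(Y)\to\cC(X)$; since $\cE'\hookrightarrow\cE$ preserves cocartesian morphisms by \cref{propn:LKEprodpres}, a morphism of $\cE'$ over it is cocartesian if and only if the resulting triangle exhibits a left Kan extension along $g^{*}$. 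For the Segal map $\mathbf n\to\{i\}$, $g$ is the inclusion $\{i\}\hookrightarrow\mathbf n$ and $g^{*}=\mathrm{pr}_{i}\colon\cC(\mathbf 1)^{n}\to\cC(\mathbf 1)$; moreover \cref{lem:prodpresprod} (and the pointwise formula for left Kan extensions) gives that for any $G\colon\cC(\mathbf 1)^{n}\to\cX$ one has $(\mathrm{pr}_{i})_{!}G(c)\simeq G(\ast,\dots,c,\dots,\ast)$, and that if $G$ is product-preserving then $G(c_{1},\dots,c_{n})\simeq\prod_{i}(\mathrm{pr}_{i})_{!}G(c_{i})$. Finally I would use the standard fact that a functor over $\Span(\xF)$ between cocartesian fibrations classified by product-preserving diagrams preserves all inert (i.e.\ cocartesian-over-backward) morphisms once it preserves cocartesian lifts of the Segal backward maps, any backward map becoming a Segal map up to an equivalence of one-element sets after post-composition with a Segal map.

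Now the equivalence is a direct calculation. Fix $\mathbf o\in\cO(\mathbf n)$ with Segal components $o^{(i)}\in\cO(\mathbf 1)$, so $\mathbf o\simeq\prod_{i}o^{(i)}$ in $\cO^\otimes$; since $F$ takes values in $\cE'$, every $F(o')$ is product-preserving and therefore sends terminal objects to $\ast\in\cX$. If $F$ is lax symmetric monoidal, then $F(o^{(i)})\simeq(\mathrm{pr}_{i})_{!}F(\mathbf o)$, whence for $\mathbf c=(c^{(i)})_{i}\in\cC(\mathbf n)$
\[
\tilde F\Bigl(\textstyle\prod_{i}(c^{(i)},o^{(i)})\Bigr)=\tilde F(\mathbf c,\mathbf o)=F(\mathbf o)(\mathbf c)\simeq\prod_{i}(\mathrm{pr}_{i})_{!}F(\mathbf o)(c^{(i)})\simeq\prod_{i}\tilde F(c^{(i)},o^{(i)}),
\]
and ranging over all such finite families this says precisely that $\tilde F$ preserves finite products. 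Conversely, if $\tilde F$ preserves finite products then, evaluating it on $(\ast,\dots,c,\dots,\ast)\in\cC(\mathbf n)$ paired with $\mathbf o$,
\[
(\mathrm{pr}_{i})_{!}F(\mathbf o)(c)=F(\mathbf o)(\ast,\dots,c,\dots,\ast)=\tilde F\bigl((\ast,\dots,c,\dots,\ast),\mathbf o\bigr)\simeq\tilde F(c,o^{(i)})\times\prod_{j\ne i}\tilde F(\ast,o^{(j)}),
\]
and each $\tilde F(\ast,o^{(j)})=F(o^{(j)})(\ast)\simeq\ast$ because $F(o^{(j)})$ is product-preserving, so the right-hand side equals $\tilde F(c,o^{(i)})=F(o^{(i)})(c)$. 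Hence $F(o^{(i)})\simeq(\mathrm{pr}_{i})_{!}F(\mathbf o)$, i.e.\ $F$ preserves cocartesian lifts of Segal backward maps and is thus lax symmetric monoidal. It remains only to check that the displayed equivalences are realised by the obvious comparison maps, which is routine.

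The step I expect to be the actual obstacle — or at least the one that cannot be overlooked — is the appearance of the parasitic factors $\prod_{j\ne i}\tilde F(\ast,o^{(j)})$ in the converse direction: the statement is simply false without the hypothesis that $F$ takes values in $\cE'$, and it is exactly product-preservation of each individual $F(o^{(j)})$ (equivalently, preservation of terminal objects) that makes those factors disappear. Everything else is bookkeeping: keeping straight which backward maps induce left Kan extensions along which restriction functors, the reduction from arbitrary inert maps to Segal ones, and the mapping-space computation underpinning the product structure on $\cM$.
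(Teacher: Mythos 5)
Your argument is correct and follows essentially the same route as the paper's: establish finite products in $\cC^\otimes\times_{\Span(\xF)}\cO^\otimes$ via the Segal decomposition, reduce laxness to preservation of cocartesian lifts of the Segal backward maps (equivalently, to $F(o^{(i)})$ being the left Kan extension of $F(\mathbf{o})$ along $\mathrm{pr}_i$), and then combine \cref{propn:LKEprodpres}, the pointwise Kan-extension formula from \cref{lem:prodpresprod}, and the fact that the factors $\tilde F(\ast,o^{(j)})$ are terminal because $F$ lands in $\cE'$. That last cancellation is exactly the pivot of the paper's own necessity argument (there phrased as the counit $j_!j^*G\to G$ being an equivalence because $G(\ast,\mathrm{id})$ is terminal and a product is terminal iff all its factors are), so you have correctly isolated the one step that cannot be finessed.
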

\begin{proof}
  We first recall from \cite{anmnd2}*{2.2.6} that $\cC^\otimes$ and $\cO^\otimes$ have finite products and that the maps $\cC^\otimes\to\Span(\xF)$ and $\cO^\otimes\to\Span(\xF)$ preserve them; thus, $\cC^\otimes\times_{\Span(\xF)}\cO^\otimes$ again has finite products, which are computed componentwise. The cited reference moreover shows that any $X\in\cO_{\textbf n}$ is the product of its cocartesian pushforwards along the backwards maps $\textbf n\gets\bfone=\bfone$; thus, we see that a functor $\cC^\otimes\times_{\Span(\xF)}\cO^\otimes\to\cX$ preserves products if and only if its restriction to $\cC^\otimes\times_{\Span(\xF)}{(\xF_{/\textbf n})}^\op$ does so for every map ${(\xF_{/\textbf n})}^\op\to\cO^\otimes$ over $\Span(\xF)$ landing in cocartesian edges.

  Write now $\cE\to\Span(\xF)$ for the cocartesian fibration from Construction~\ref{constr:funprodfib} classifying $\Fun(\cC(-),\cX)$, of which $\cE'\to\Span(\xF)$ is a subfibration. We then have for every $\textbf{n}\in\xF$ and $(\xF_{/\textbf n})^\op\to\cO^\otimes$ over $\Span(\xF)$ a commutative diagram
  \begin{equation}\label{diag:naturality-cE-repr}
    \begin{tikzcd}
      \Fun_{/\Span(\xF)}\big(\cO^\otimes,\cE\big)\arrow[r,"\sim"]\arrow[d] &
      \Fun\big(\cC^\otimes\times_{\Span(\xF)}\cO^\otimes,\cX\big)\arrow[d]\\
      \Fun_{/\Span(\xF)}\big((\xF_{/\textbf n})^\op, \cE\big)\arrow[r,"\sim"]\arrow[d] &
      \Fun\big(\cC^\otimes\times_{\Span(\xF)}(\xF_{/\textbf n})^\op,\cX\big)\arrow[d]\\
      \Fun\big(\{O\},\Fun(\cC_{\textbf n},\cX)\big)\arrow[r,"\sim"] &
      \Fun\big(\vrule width0pt height0pt depth11pt\smash{\underbrace{\smash{\cC^\otimes\times_{\Span(\xF)}\{O\}}}_{{}=\cC_{\textbf n}}},\cX\big)
    \end{tikzcd}
  \end{equation}
  where the horizontal equivalences are as in Construction~\ref{constr:funprodfib} and the vertical maps are the restrictions. By the previous lemma applied to $\mathcal I=\xF^\op$, $F\colon\cO^\otimes\to\cE$ preserves inert edges if and only if restriction to $(\xF_{/\textbf n})^\op$ is contained in the image of the left adjoint of the lower left vertical map for every $(\xF_{/\textbf n})^\op\to\cO^\otimes$ factoring through cocartesian edges. It follows formally from commutativity of (\ref{diag:naturality-cE-repr}) that this is equivalent to the restriction of the corresponding functor $\tilde F$ to $\cC^\otimes\times_{\Span(\xF)}(\xF_{/\textbf n})^\op$ being left Kan extended from $\cC_{\textbf n}$; it remains to show that if $F$ factors through $\cE'\subseteq\mathcal E$ (i.e.~if the restriction of $\tilde F$ to $\cC^\otimes\times_{\Span(\xF)}\{P\}$ preserves products for every $P\in\cO^\otimes$), then the latter condition is in turn equivalent to the restriction of $\tilde F$ to $\cC^\otimes\times_{\Span(\xF)}{(\xF_{/\textbf n})}^\op\to\cX$ preserving products.

  Proposition~\ref{propn:LKEprodpres} shows that the left Kan extension of any product-preserving functor $\cC_{\textbf n}\to\cX$ to $\cC^\otimes\times_{\Span(\xF)}(\xF_{/\textbf n})^\op$ is again product-preserving, so the above condition is indeed sufficient for $\tilde F$ to preserve products. To see that it is also necessary, it will suffice to show that any product-preserving functor $G\colon\cC^\otimes\times_{\Span(\xF)}(\xF_{/\textbf n})^\op\to\cX$ {whose restriction to $\cC_{\textbf{\textup n}}$ again preserves products} is left Kan extended from its restriction to $\cC_{\textbf n}$.

  If we let $j\colon\cC_{\textbf n}\hookrightarrow\cC^\otimes\times_{\Span(\xF)}(\xF_{/\textbf n})^\op$ denote the inclusion, then the counit $j_!j^*G\to G$ is an equivalence for any $(X,\id_{n})\in\cC^\otimes\times_{\Span(\xF)}(\xF_{/\textbf n})^\op$ by full faithfulness of $j$. We claim that it is in fact also an equivalence for every $(X,i\colon{\bfone\to\textbf n})$; with this established, the claim will follow as both $G$ (by assumption) and $j_!j^*G$ (by the above) preserve products and every object of $\cC^\otimes\times_{\Span(\xF)}(\xF_{/\textbf n})^\op$ decomposes as a finite product of objects $(X,\bfone\to\textbf n)$.

  To prove the claim, note that for any $X_1,\dots,X_n\in\cC_{\bfone}$
  \[
    \cC_{\textbf n}\ni(X_1,\dots,X_n;\id_{\textbf n})\simeq\prod_{k=1}^n(X_k, k\colon\bfone\to\textbf n);
  \]
  thus, the product of the counits $j_!j^*G(X_k, k\colon\bfone\to\textbf n)\to G(X_k, k\colon\bfone\to\textbf n)$ is an equivalence as $G$ and $j_!j^*G$ preserve products. Specializing to $X_k=*$ for $k\not=i$, it will therefore suffice that $G(*,k\colon\bfone\to\textbf n)\simeq*\simeq j_!j^*G(*,k\colon\bfone\to\textbf n)$ for every $1\le k\le n$. For this, we further set $X_i=*$ to see that 
  \[
    \prod_{k=1}^n G(*,k\colon\bfone\to\textbf n)\simeq G(*,\id_{\textbf n})\simeq j_!j^*G(*,\id_{\textbf n})\simeq\prod_{k=1}^n j_!j^*G(*,k\colon\bfone\to\textbf n).
  \]
  As the restriction of $G$ to $\cC_{\textbf n}$ preserves finite products, $G(*,\id_{\textbf n})$ is terminal; since a product is terminal if and only if all of its factors are, this completes the proof of the claim and hence of the proposition.
\end{proof}

\begin{proof}[Proof of Proposition~\ref{prop:dayconvolution}]
  Recall first that the Day convolution of $F,G\colon\cC\to\cX$ is given by the left Kan extension of
  \begin{equation}\label{eq:external-product}
    \cC\times\cC\xrightarrow{F\times G}\cX\times\cX\xrightarrow{\prod}\cX
  \end{equation}
  along $\otimes\colon\cC\times\cC\to\cC$ \cite{HA}*{2.2.6.17}. If $F$ and $G$ preserve products, so does (\ref{eq:external-product}), whence so does the Day convolution by Proposition~\ref{propn:LKEprodpres}. On the other hand, the unit is given by the left Kan extension of $*$ along $\{\bbone\}\hookrightarrow\cC$, and the same argument shows that this is again product preserving, i.e.~the Day convolution structure indeed restricts to $\Fun^\times(\cC,\cX)$. Moreover, we saw in \cite{CHLL1}*{3.3.4} that this is also a symmetric monoidal localization
  when $\cX$ is presentable and the tensor product on $\cC$ preserves products in each variable.

  It remains to compare this symmetric monoidal structure to the one above, for which we will show that both represent the same functor in the \icat{} of symmetric monoidal \icats{} and \emph{lax} symmetric monoidal functors.

  For this we first recall that Day convolution is defined in such a way that lax symmetric monoidal functors $\cO^\otimes\to\Fun(\cC,\cX)^\otimes_\textup{Day}$ correspond bijectively to lax symmetric monoidal functors $\cC^\otimes\times_{\Span(\xF)}\cO^\otimes\to\cX^\times$, which are in turn identified with product-preserving functors $\cC^\otimes\times_{\Span(\xF)}\cO^\otimes\to\cX$ \cite{CHLL1}*{2.4.5 and 2.4.6}. Thus, functors into the restricted Day convolution on $\Fun^{\times}(\cC,\cX)$ correspond bijectively to product-preserving functors $\cC^\otimes\times_{\Span(\xF)}\cO^\otimes\to\cX$ such that in addition the restriction $\cC_\textbf{n}\simeq\cC^\otimes\times_{\Span(\xF)}\{O\}\to\cX$ preserves products for all $n\ge0$, $O\in\cO_\textbf{n}$.

  On the other hand, we have seen in Construction~\ref{constr:funprodfib} that
  functors $F\colon\cO^\otimes\to\Fun^\times(\cC,\cX)^\otimes$ over $\Span(\xF)$ correspond to functors
  \begin{equation}\label{eq:adjointed}
    \tilde F\colon\cC^\otimes\times_{\Span(\xF)}\cO^\otimes\to\cX
  \end{equation}
  that preserve products when restricted to each $\cC^\otimes\times_{\Span(\xF)}\{O\}$, and Proposition~\ref{propn:technical} shows that such a functor $F$ is indeed lax symmetric monoidal if and only if $\tilde F$ preserves finite products.
\end{proof}

\subsection[The cartesian normed structure on $\cF$]{\boldmath The cartesian normed structure on $\cF$}

Consider an extensive \icat{} $\cF$ with pullbacks, taken to be fixed throughout this subsection.
We will see that we may equip $\cF$ with a ``cartesian'' normed structure
whenever $\cF$ is suitably locally cartesian closed.

\begin{notation}\label{nota:f-ul}
  We define a parametrized version $\ulF$ of $\cF$ as the functor
  \[\ulF\colon \cF^{\op}\rightarrow \Cat_\infty,\quad  X \mapsto \cF_{/X},\] with functoriality coming from
  pullbacks. Since $\cF$ is extensive, this functor preserves products, and hence defines an $\cF$-\icat{}.
\end{notation}

\begin{defn}
	\label{def:Locally_Cartesian_Closed}
  Given a weakly extensive subcategory $\cF_{\nrms} \subseteq \cF$, we say that $\cF$ is
  \emph{$\nrms$-locally cartesian closed} if the functor
  $f^{*} \colon \cF_{/Y} \to \cF_{/X}$ given by pullback along
  $f \colon X \to Y$ in $\cF_{\nrms}$ has a right adjoint $f_{*}$.
\end{defn}

\begin{propn}\label{propn:cFtimes}
  Let $\cF_{\nrms} \subseteq \cF$ be a weakly extensive subcategory and suppose $\cF$ is $\nrms$-locally cartesian
  closed. Then the following hold:
  \begin{itemize}
  	\item The pair $(\Ar(\cF), \Ar(\cF)_{\nrms\dpb})$ is a span pair, where
  	$\Ar(\cF)_{\nrms\dpb}$ consists of the pullback squares along morphisms
  	in $\cF_{\nrms}$.
  	\item The functor $\ev_{1} \colon \Ar(\cF) \to \cF$ is a morphism of
  	span pairs.
  	\item The functor
  	\[ \Span(\ev_{1})^{\op} \colon \Span_{\nrms\dpb}(\Ar(\cF))^{\op} \to \SFF^{\op} \]
  	is the cartesian fibration for an $\nrms$-normed structure on $\ulF$.
  \end{itemize}
\end{propn}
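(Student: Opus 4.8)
The plan is to derive all three assertions from Barwick's unfurling theorem (\cref{unfurl radj}), applied to the span pair $(\cF,\cF_{\nrms})$ -- which is indeed a span pair, being weakly extensive -- and the functor
\[
  \Phi\colon \cF\longrightarrow\CatI,\qquad X\longmapsto \cF_{/X},
\]
with covariant functoriality given by postcomposition, whose cocartesian unstraightening is the evaluation functor $\ev_{1}\colon\Ar(\cF)\to\cF$ (see \cite{freefib}; cocartesian lifts exist unconditionally and are given by postcomposition). The hypotheses of \cref{unfurl radj} are immediate: for every $f\colon X\to Y$ in $\cF$ the functor $\Phi(f)=f_{!}$ has the right adjoint $f^{*}\colon\cF_{/Y}\to\cF_{/X}$ given by pullback along $f$, which exists as $\cF$ has pullbacks; and the Beck--Chevalley condition for a pullback square in $\cF$ is exactly the pasting lemma for pullback squares. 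Moreover the $\ev_{1}$-cartesian morphisms of $\Ar(\cF)$ are precisely the pullback squares, so the subcategory of $\ev_{1}$-cartesian morphisms lying over $\cF_{\nrms}$ -- playing the role of $\cE_{\fcart}$ in \cref{unfurl radj} -- is $\Ar(\cF)_{\nrms\dpb}$. Thus \cref{unfurl radj} already yields the first two claims, and shows in addition that
\[
  \Span(\ev_{1})^{\op}\colon\Span_{\nrms\dpb}(\Ar(\cF))^{\op}\longrightarrow\SFF^{\op}
\]
is the \emph{cocartesian} fibration for a functor $\Psi\colon\SFF^{\op}\to\CatI$ which restricts to $\Phi$ (i.e.\ to postcomposition) on the subcategory of forward maps of $\SFF^{\op}$ and to pullback functors on the subcategory of backward maps.

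Since the statement instead asks for the \emph{cartesian} fibration of a normed structure on $\ulF$, the next step is to upgrade this cocartesian fibration to a cartesian one over the same base. This is where $\nrms$-local cartesian closedness is used: each transport functor of $\Psi$ is a composite of a postcomposition functor $g_{!}$ -- which has the right adjoint $g^{*}$ -- and a pullback functor $f^{*}$ along some $f\in\cF_{\nrms}$ -- which has the right adjoint $f_{*}$ by \cref{def:Locally_Cartesian_Closed} -- and hence is itself right adjointable. A cocartesian fibration all of whose transport functors admit right adjoints is automatically also a cartesian fibration, classified by the functor obtained by passing to these right adjoints (cf.\ the interplay between cartesian and cocartesian fibrations used in the proof of \cref{unfurl radj}, e.g.\ \cite{HHLN2} and \cite{BarwickGlasmanNardinCart}). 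So $\Span(\ev_{1})^{\op}$ is also a cartesian fibration over $\SFF^{\op}$; let $\cC\colon\SFF\to\CatI$ denote the functor it classifies.

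It remains to recognise $\cC$ as an $\ctxt$-normed structure on $\ulF$. Decomposing a morphism of $\SFF$ as (forward)$\circ$(backward) and passing to right adjoints levelwise, one reads off that $\cC$ sends a backward map to the corresponding pullback functor and a forward map $n\in\cF_{\nrms}$ to $n_{*}$; in particular the restriction of $\cC$ to $\cF^{\op}$ is $\ulF$. Since $\ulF$ preserves finite products (\cref{nota:f-ul}), \cref{rmk:Normed_Terminology} shows that $\cC$ itself preserves finite products, i.e.\ $\cC$ is an $\ctxt$-normed \icat{} whose underlying $\cF$-\icat{} is $\ulF$. Therefore $\Span(\ev_{1})^{\op}=\cC_{\otimes}$ is the cartesian fibration of the $\nrms$-normed structure $\cC$ on $\ulF$, which is the content of the third claim.

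I expect the only genuinely delicate point to be bookkeeping rather than anything substantive: one must track the variances carefully through the two nested opposites in $\Span_{\nrms}(\cF)^{\op}$, and in particular verify that the functor extracted from the cartesian-fibration structure really restricts to $\ulF$ with its pullback functoriality on $\cF^{\op}$ (and to the $f_{*}$'s on $\cF_{\nrms}$) rather than to some dual of it. Everything else is a formal consequence of unfurling, together with the elementary observation that pullback along $\cF_{\nrms}$ is right adjointable exactly under the hypothesis that $\cF$ is $\nrms$-locally cartesian closed.
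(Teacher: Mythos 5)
Your argument is correct and follows the paper's proof essentially verbatim: both apply the unfurling result (\cref{unfurl radj}) to $\cF_{/(\blank)}$ with postcomposition functoriality (right adjoints given by pullback, Beck--Chevalley via the pasting lemma), obtaining the first two bullets and the cocartesian fibration, and both then upgrade it to a cartesian fibration using the right adjoints $n_*$ supplied by $\nrms$-local cartesian closedness before identifying the restriction to $\cF^{\op}$ with $\ulF$. The only cosmetic difference is that the paper checks cartesianness locally over $\cF$ and $\cF_{\nrms}^{\op}$ separately, whereas you note directly that every transport functor $b_!n^*$ is right adjointable; these amount to the same standard fact.
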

\begin{proof}
  Consider the cocartesian fibration \[\ev_{1} \colon \Ar(\cF) \to \cF\] classified by the functor
  $\cF_{/(\blank)} \colon \cF \to \CatI$, with functoriality given by
  composition. By assumption we have right adjoints (given by
  pullback) for morphisms in $\cF_{\nrms}$, and by unpacking the
  definitions and applying the pasting lemma for pullbacks we see that
  these satisfy base change. Applying \cref{unfurl radj} to this situation, we obtain the first two bullet points, and we get that $\Span(\ev_{1})^{\op}$ is a cocartesian fibration. To see that it is also a cartesian fibration, it suffices by \cite{HTT}*{5.2.2.4${}^\op$} to show that it is a locally cartesian fibration, which we can check separately over $\cF$ and $\cF_{\nrms}^{\op}$. Over $\cF_{\nrms}$ we get the cocartesian fibration for the functor $\cF_{/(\blank)} \colon \cF_{\nrms}^{\op} \to \CatI$, with functoriality given by pullback; since these pullback functors have right adjoints due to $\nrms$-locally cartesian closedness of $\cF$, it is also a cartesian fibration over $\cF_{\nrms}^{\op}$. On the other hand, over $\cF$ we get the functor
  $\ev_{1} \colon \Ar(\cF) \to \cF$. Since this is the cartesian fibration for $\ulF$, we indeed get the cartesian fibration for an $\ctxt$-normed structure on $\ulF$.
\end{proof}

\begin{notation}
  In the context of \cref{propn:cFtimes}, we write
  \[\ulF_{\times} \coloneqq \Span_{\nrms\dpb}(\Ar(\cF))^{\op}\] and refer to it as
  the \emph{cartesian} normed structure on $\ulF$. In
  the non-parametrized case, this construction indeed gives the
  cartesian fibration for the cartesian symmetric monoidal structure
  on $\xF$ by \cite{CHLL1}*{3.1.4}. We expect that our construction
  more generally agrees with \cite{NardinShah}*{2.4.1} whenever the
  two frameworks overlap; however, as this won't be relevant for the
  purposes of this paper, we will not prove this here.
\end{notation}

\section{Normed rings}
\label{sec:commring}
In this section, we introduce the notion of a \textit{normed ring} and show it may equivalently be encoded as a \emph{space-valued Tambara functor}.

\subsection[Normed semirings]{\boldmath Normed semirings}

We want to consider notions of normed semirings where we have
two potentially different families of (``additive'' and
``multiplicative'') norms, generalizing the addition and multiplication operations that exist in an ordinary semiring. To capture such structures, we introduce the following definition:
\begin{defn}
  \label{defn:rigctxt}
  A \emph{semiring context} $\ctxt = (\cF, \cF_{M}, \cF_{A})$ consists of
  an extensive \icat{} $\cF$ together with two weakly extensive subcategories $\cF_{M}$ and
  $\cF_{A}$ such that:
  \begin{enumerate}
  \item $\cF$ has pullbacks.
  \item For $m \colon X \to Y$ in $\cF_{M}$, the pullback functor
    $m^{*} \colon \cF_{/Y} \to \cF_{/X}$ has a right adjoint $m_{*}\colon \cF_{/X} \to \cF_{/Y}$ which preserves morphisms whose image in $\cF$ lies in $\cF_A$.
  \end{enumerate}
  We write $\ctxt_{M} \coloneqq (\cF, \cF_{M})$ and $\ctxt_{A} \coloneqq (\cF,\cF_{A})$ for the resulting two weakly extensive span pairs.
\end{defn}

\begin{observation}
	\label{obs:rigctxt_bispantriple}
  Every semiring context is a bispan triple:
  \begin{itemize}
    \item The Beck--Chevalley condition for the functors $m_*$ is automatically satisfied, since it may be checked after passing to left adjoints.
    \item For $m\colon X \to Y$ in $\cF_M$, the functor $m_*\colon \cF_{/X} \to \cF_{/Y}$ preserves terminal objects, hence it sends $\cF_{/X}^A$ into $\cF_{/Y}^A$.
  \end{itemize}
\end{observation}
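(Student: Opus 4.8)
The plan is to verify, for the triple $(\cF,\cF_M,\cF_A)$, the three conditions of the explicit characterization of bispan triples given in the remark following \Cref{def:bispantrip} (so with $\cC_F=\cF_M$ and $\cC_L=\cF_A$), and then invoke \cite{bispans}*{2.5.2(1)}. That $(\cF,\cF_M)$ and $(\cF,\cF_A)$ are span pairs holds by hypothesis, since $\cF_M$ and $\cF_A$ are weakly extensive subcategories of the extensive \icat{} $\cF$.

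For the right-adjoint condition \ref{it:bispanradj}, axiom~(2) of \Cref{defn:rigctxt} provides a right adjoint $m_{*}$ to the pullback functor $m^{*}\colon\cF_{/Y}\to\cF_{/X}$ on the full slices, for each $m\colon X\to Y$ in $\cF_M$; I would then check that this adjunction restricts to an adjunction $m^{*}\colon\cF^{A}_{/Y}\rightleftarrows\cF^{A}_{/X}\colon m_{*}$. Here $m^{*}$ preserves these full subcategories because $\cF_A$ is closed under base change ($(\cF,\cF_A)$ being a span pair), and $m_{*}$ preserves them because, being a right adjoint, it preserves terminal objects, so $m_{*}\id_X\simeq\id_Y$: thus the hypothesis in axiom~(2), applied to the morphism of $\cF_{/X}$ from an object $(z\to X)$ of $\cF^{A}_{/X}$ to the terminal object $\id_X$, shows that the structure map of $m_{*}(z\to X)$ over $Y$ again lies in $\cF_A$. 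Since an adjunction restricts to any pair of full subcategories preserved by both functors, condition \ref{it:bispanradj} follows; this is the second bullet of the observation spelled out.

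Condition \ref{it:bispanmate} — right adjointability of the square of pullback functors attached to a pullback square along a morphism $m$ in $\cF_M$ — I would deduce by passing to left adjoints, as indicated in that same remark. Working in the full slices $\cF_{/(\blank)}$, the relevant pullback functors fit into a commuting square by the pasting law; each of the two horizontal functors (pullback along $m$ and along its base change $m'$) carries a left adjoint given by postcomposition, alongside the right adjoint furnished by axiom~(2), and the Beck--Chevalley transformation for the left adjoints is invertible by the pasting law. The calculus of mates then yields invertibility of the Beck--Chevalley transformation $\eta^{*}m_{*}\to m'_{*}\xi^{*}$ for the right adjoints, and restricting to the $\cF_A$-slices, which all functors in sight preserve, gives condition \ref{it:bispanmate}. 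Alternatively, one verifies this equivalence by a direct Yoneda argument, chaining the adjunctions $\eta_{!}\dashv\eta^{*}$, $m^{*}\dashv m_{*}$, $\xi_{!}\dashv\xi^{*}$, $m'^{*}\dashv m'_{*}$ with the pasting-law equivalence $m^{*}\eta_{!}\simeq\xi_{!}m'^{*}$. The one delicate point is precisely this last step: the relevant left adjoints (the postcomposition functors) do \emph{not} preserve the subcategories $\cF^{A}_{/(\blank)}$, so the mate calculus has to be run on the full slices and only restricted afterwards — which is why the direct computation is a safe fallback.
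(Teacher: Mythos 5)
Your proposal is correct and follows the same route as the paper's (very terse) justification: the second bullet is verified exactly as the paper intends, by applying $m_*$ to the morphism $(z\to X)\to\id_X$ of $\cF_{/X}$ and using $m_*\id_X\simeq\id_Y$, and you rightly observe that the Beck--Chevalley condition has to be checked on the full slices and only afterwards restricted to the $\cF_A$-slices, where $\eta_!$ and $\xi_!$ are unavailable. One imprecision in your primary phrasing of the mate argument: the left adjoints relevant to ``passing to left adjoints'' are $\eta_!$ and $\xi_!$, the postcomposition left adjoints of the \emph{vertical} functors $\eta^*,\xi^*$ --- the point being that $\eta^*m_*$ and $m'_*\xi^*$ are themselves right adjoints, their total mate is the pasting-law equivalence $\xi_!m'^{*}\simeq m^*\eta_!$, and conjugation of transformations between right adjoints to transformations between their left adjoints preserves and reflects invertibility; by contrast, invertibility of the Beck--Chevalley map $m'_!\xi^*\to\eta^*m_!$ built from the left adjoints of the \emph{horizontal} functors $m^*,m'^{*}$ is not formally equivalent to the statement you want, so that step as literally written does not go through. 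Since your ``alternative'' Yoneda argument chains exactly the right adjunctions ($\eta_!\dashv\eta^*$, $\xi_!\dashv\xi^*$ together with $m^*\eta_!\simeq\xi_!m'^{*}$), the proof as a whole is complete.
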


\begin{ex}\label{ex:trivialsemictx}
  When $\cF_A = \cF^{\simeq}$, the triple $(\cF, \cF_{M}, \cF^{\simeq})$ is
  a semiring context if and only if $\cF$ is extensive, admits pullbacks, and is $M$-locally cartesian
  closed, in the sense of \Cref{def:Locally_Cartesian_Closed}.
\end{ex}

\begin{ex}
  Let $\cF$ be an extensive $\infty$-category that is locally
  cartesian closed. Then the triple $(\cF, \cF,\cF)$ is a semiring
  context.
\end{ex}

\begin{ex}\label{ex:F_G-rigctxt}
  For a finite group $G$, the category $\xF_G$ of finite $G$-sets is
  extensive and locally cartesian closed, so that the triple
  $(\xF_G, \xF_G,\xF_G)$ is a semiring context. More generally we obtain a semiring context $(\xF_G,I,\xF_G)$ for every weakly extensive subcategory $I \subseteq \xF_G$.
\end{ex}

We fix a semiring context $\ctxt = (\cF, \cF_{M}, \cF_{A})$. Our goal in
the rest of this subsection is to construct the $\infty$-category $\NRig_{\ctxt}(\cX)$ of \textit{$\ctxt$-normed semirings in $\cX$} for suitable choices of $\infty$-categories $\cX$. We start by constructing a certain $\ctxt_M$-normed $\infty$-category $\Span_{A}(\ulF)$ of spans in $\cF$.

\begin{construction}
  \label{cons:SpanA}
  As $\cF$ admits pullbacks, the evaluation map $\ev_1\colon \Ar(\cF) \to \cF$
  is a cartesian fibration, classifying the functor $\ul{\cF}\colon \cF^{\op} \to \CatI$ from Notation~\ref{nota:f-ul}. Since
  morphisms in $\cF_{A}$ are closed under base change, we obtain a functor
  \[
    \cF^{\op} \to \SPair, \qquad X \mapsto (\cF_{/X}, \cF_{/X,A})
  \]
  where $(\cF_{/X},\cF_{/X,A})$ is the span pair from \cref{ex:extslice}; we define the
  $\cF$-$\infty$-category $\Span_{A}(\ul{\cF})$ by composing with the
  functor $\Span\colon \SPair \to \CatI$. Since limits in $\SPair$ are computed in $\CatI$ and $\Span(\blank)$ preserves limits, this is indeed an $\cF$-\icat{}.
\end{construction}

\begin{lemma}
  The $\ctxt_M$-normed \icat{} $\ulF$ from \cref{propn:cFtimes}
  induces, via \cref{const:sym_mon_spans}, an $\ctxt_M$-normed
  structure on $\Span_{A}(\ulF)$.
\end{lemma}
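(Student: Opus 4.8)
The plan is to apply \cref{const:sym_mon_spans}: it suffices to lift the $\ctxt_M$-normed $\infty$-category $\ulF\colon\Span_M(\cF)\to\CatI$ of \cref{propn:cFtimes} to a product-preserving functor $\Span_M(\cF)\to\SPair$, \ie{} to equip each value $\ulF(X)=\cF_{/X}$ with a wide subcategory $\ulF_Q(X)\subseteq\cF_{/X}$, functorially in $\Span_M(\cF)$, so that $(\ulF(X),\ulF_Q(X))$ is a span pair and every transition functor of $\ulF$ is a morphism of span pairs. I would take $\ulF_Q(X)\coloneqq\cF_{/X,A}$, the wide subcategory of those morphisms whose image in $\cF$ lies in $\cF_A$, exactly as in \cref{cons:SpanA}. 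The resulting $\ctxt_M$-normed $\infty$-category $\Span\circ(\ulF,\ulF_Q)$ then restricts along $\cF^\op\hookrightarrow\Span_M(\cF)$ to $\Span_A(\ulF)$ by construction of the latter, so it is the asserted normed structure.

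That each $(\cF_{/X},\cF_{/X,A})$ is a span pair is \cref{ex:extslice} applied to the weakly extensive span pair $\ctxt_A=(\cF,\cF_A)$. For the remaining conditions, recall from the proof of \cref{propn:cFtimes} that the restriction $f^{*}$ of $\ulF$ along a backward map is pullback along $f$, while the norm $m_{\otimes}$ along a forward map $m\in\cF_M$ is the right adjoint $m_{*}$ of $m^{*}$. Since every morphism of $\Span_M(\cF)$ factors as a backward map followed by a forward one, and morphisms of span pairs are closed under composition, it is enough to check these two kinds of functor are morphisms of span pairs. A pullback functor sends $\cF_{/\bullet,A}$ into $\cF_{/\bullet,A}$ because $\cF_A$ is stable under base change, and it preserves all pullbacks, pullbacks in a slice of $\cF$ being computed in $\cF$ and base-change functors preserving limits; the functor $m_{*}$ sends $\cF_{/\bullet,A}$ into $\cF_{/\bullet,A}$ by condition~(2) in the definition of a semiring context (\cref{defn:rigctxt}), and preserves all pullbacks since it is a right adjoint. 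Finally, $(\ulF,\ulF_Q)$ is product-preserving: $\ulF$ is, and since $\cF$ is extensive while $\cF_A$ is closed under finite coproducts and under the extensive decomposition, the canonical equivalence $\cF_{/\coprod_i X_i}\simeq\prod_i\cF_{/X_i}$ carries $\cF_{/\coprod_i X_i,A}$ to $\prod_i\cF_{/X_i,A}$, whence $(\cF_{/\coprod_i X_i},\cF_{/\coprod_i X_i,A})\simeq\prod_i(\cF_{/X_i},\cF_{/X_i,A})$ in $\SPair$, limits there being computed in $\CatI$. Invoking \cref{const:sym_mon_spans} then concludes the argument.

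The one point that requires care is organising the subcategories $\cF_{/X,A}$ into an honest subfunctor of $\ulF$ over all of $\Span_M(\cF)$, rather than merely over $\cF^\op$ as in \cref{cons:SpanA}; I would proceed exactly as there, refining the cocartesian fibration that classifies $\ulF$ to a span-pair-valued functor by recording the wide subcategories $\cF_{/X,A}$, the refinement being well defined precisely because pullback functors preserve $\cF_A$ (handling the backward functoriality) and the right adjoints $m_{*}$ preserve $\cF_A$ (handling the norms), as verified above. Everything else is the routine bookkeeping that pullback functors and right adjoints preserve limits and that $\cF_A$ is well behaved under base change and finite coproducts.
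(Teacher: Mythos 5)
Your proposal is correct and follows essentially the same route as the paper: equip each slice $\cF_{/X}$ with the span pair structure $(\cF_{/X},\cF_{/X,A})$ from \cref{cons:SpanA}, observe that the transition functors $m_{*}f^{*}$ are morphisms of span pairs (pullbacks preserve $\cF_A$ by base-change stability, and $m_{*}$ preserves $\cF_A$ by condition (2) of \cref{defn:rigctxt} and preserves pullbacks as a right adjoint), and then invoke \cref{const:sym_mon_spans}. The paper states this in one sentence; you have merely spelled out the verifications it leaves implicit, including the product-preservation of the subfunctor, which is a welcome extra detail rather than a deviation.
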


\begin{proof}
  Equipping each $\cF_{/X}$ with the span pair structure
  $(\cF_{/X}, \cF_{/X,A})$ from \cref{cons:SpanA}, the functors
  $m_*f^*\colon \cF_{/X} \to \cF_{/Y}$ are morphisms of span pairs by
  our assumptions on $\ctxt$, hence we obtain an $\ctxt_M$-normed
  $\infty$-category $\Span_A(\ul{\cF})^{\otimes}$ using
  \cref{const:sym_mon_spans}.
\end{proof}

\begin{defn}
	\label{def:monoidal_CMon}
Let $\cX$ be a cocomplete \icat{} with finite products, such that
the cartesian product preserves colimits in each variable. We define
$\uNMon_{\ctxt_{A}}(\cX)$ to be the $\ctxt_M$-normed \icat{} $\FunT(\Span_{A}(\ulF), \cX)$ obtained by applying
\cref{propn:FunTsymmon} to the $\ctxt_M$-normed structure
on $\Span_{A}(\ulF)$ from \cref{cons:SpanA}.
\end{defn}

Note that the value of $\uNMon_{\ctxt_{A}}\hskip0pt minus 1.25pt(\cX\hskip0pt minus .5pt)$ at $X\hskip0pt minus .25pt\in\hskip0pt minus .25pt \cF$ is the $\infty$-category of $(\cF_{/X}\hskip0pt minus .25pt,\hskip0pt minus .25pt \cF_{/X,A})$-normed monoids in $\cX$.

\begin{ex}\label{ex:CMonisGGN}
  Combining \cref{prop:dayconvolution} with \cite{CHLL1}*{3.3.5}, we
  see that when $\cX$ is a cartesian closed presentable \icat{}, then
  the symmetric monoidal structure on $\NMon_{\xF}(\cX)\simeq \CMon(\cX)$ from
  \cref{def:monoidal_CMon} agrees with the ``standard'' one constructed in
  \cite{GepnerGrothNikolaus}.
\end{ex}

\begin{remark}\label{rk:extend-wrong-way}
Let $f\colon X\to Y$ be any map in $\cF$. By \cite{norms}*{C.21(2)}, the adjunction $f_!\colon\cF_{/X}\rightleftarrows\cF_{/Y}:\!f^*$ induces a ``wrong way'' adjunction \[f^*\colon\Span_A(\cF_{/Y})\rightleftarrows\Span_A(\cF_{/X}) :\!f_!.\]
The underlying $\cF$-\icat{} of $\ul\NMon_{\ctxt_A}(\cX)$ therefore admits the following alternative description: it is given by the composite
\[
\cF^\op\xrightarrow{\Span_A(\cF_{/\blank})}(\CatI^\times)^\op\xrightarrow{\Fun^\times(\blank,\cX)}\CatI,
\]
i.e.~its functoriality is given via restriction along pushforwards.
\end{remark}

\begin{remark}
If $\cX$ is presentable and $(\cF,\cF_A)=(\xF[T],\xF[P])$ for a small
\icat{} $T$ and a left-cancellable orbital subcategory $P\subseteq T$
consisting of truncated maps, then the $\cF$-\icat{}
$\ul\NMon_{\ctxt_A}(\cX)$ is studied in \cite{CLL_Spans}*{\S9.2} under the
name $\ul\Mack^{P}_T(\cX)$. Corollary~9.9 of said article establishes
a universal property for this $\cF$-\icat{}, and shows that whenever $P$ is a
so-called \emph{atomic} orbital subcategory it agrees with the
$\cF$-\icat{} $\ul{\CMon}^P_T(\ul\cX_T)$ of
\cite{CLL_Global}*{\S4.8}. In particular, if in addition $P=T$, this
further agrees with Nardin's $\ul{\CMon}_T(\ul{\cX}_T)$
\cite{NardinStab}*{4.9}.
\end{remark}

\begin{defn}
  Let $\cX$ be as in Definition~\ref{def:monoidal_CMon}.
An \emph{$\ctxt$-normed semiring} in $\cX$ is an $\ctxt_M$-normed
algebra in $\uNMon_{\ctxt_{A}}(\cX)$; we write
\[ \NRig_{\ctxt}(\cX) \coloneqq \NAlg_{\ctxt_M}\big(\uNMon_{\ctxt_{A}}(\cX)\big).\]
\end{defn}

\begin{ex}\label{ex:ggn}
  Let $\ctxt=(\xF,\xF,\xF)$. Combining Examples~\ref{ex:classical-cmon} and~\ref{ex:CMonisGGN}, we see that $\NRig_{\ctxt}(\cX)$ agrees with the \icat{} ${\mathcal R}\kern-1pt\textup{ig}_{\mathbb E_\infty}(\cX)$ of $\mathbb E_\infty$-semirings considered by Gepner, Groth, and Nikolaus \cite{GepnerGrothNikolaus}*{7.1}.
\end{ex}

\subsection[The Lawvere theory of normed semirings]{\boldmath The Lawvere theory of normed semirings}
Let $\ctxt = (\cF, \cF_{M}, \cF_{A})$ be a semiring context. Since $\ctxt$ is in particular a bispan triple by \Cref{obs:rigctxt_bispantriple}, we may form its bispan category $\Bispan_{M,A}(\cF)$. Our goal in this subsection is to show that the $\infty$-category $\Bispan_{M,A}(\cF)$ is the \textit{Lawvere theory for $\ctxt$-normed semirings}: for any $\infty$-category $\cX$ satisfying the conditions from \Cref{def:monoidal_CMon}, the $\infty$-category of $\ctxt$-normed semi\-rings in $\cX$ is equivalent to the $\infty$-category of product-preserving functors $\Bispan_{M,A}(\cF) \to \cX$. This in particular allows us to think of an $\ctxt$-normed semiring $R$ in $\cX$ as a family of objects $R(X)$ for all $X \in \cF$ equipped with restrictions $f^*\colon R(Y) \to R(X)$, additive norms $a_{\oplus}\colon R(X) \to R(Y)$, and multiplicative norms $m_{\otimes}\colon R(X) \to R(Y)$, which satisfy various compatibility relations exhibited by the respective composition laws in $\Bispan_{M,A}(\cF)$.

We start with some preliminary statements.

\begin{propn}\label{propn:spanFotimes}
  The $\ctxt_M$-normed structure on $\Span_{A}(\ulF)$ induced
  by the cartesian $\ctxt_M$-normed structure on $\ulF$ is
  given by
  \[ \Span_{A}(\ulF)^{\otimes} \simeq
    \Bispan_{M\dpb,A\dfw}(\Ar(\cF)) =
    \Span_{A\dfw}(\Span_{M\dpb}(\Ar(\cF))^{\op}) \]
  where $\Ar(\cF)_{M\dpb}$ denotes the wide subcategory of $\Ar(\cF)$ whose morphisms are pullback squares over $\cF_{M}$, and
  $\Span_{M\dpb}(\Ar(\cF))_{A\dfw}$ denotes the subcategory of
  morphisms whose image under $\ev_{1}$ is an equivalence in $\Span_{M}(\cF)$ and whose forward part lives over $\cF_{A}$.
\end{propn}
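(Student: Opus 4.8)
The plan is to apply \cref{propn:spansmon} to the $\ctxt_M$-normed monoid in $\SPair$ underlying the $\ctxt_M$-normed structure on $\Span_A(\ulF)$, and then unwind the resulting span-of-spans description. Recall from \cref{const:sym_mon_spans} that the $\ctxt_M$-normed structure on $\Span_A(\ulF)$ comes from an $\ctxt_M$-normed monoid in $\SPair$, namely the functor $\Span_M(\cF)\to\CatI$ underlying $\ulF$ together with the subfunctor picking out, at each $X\in\cF$, the span pair $(\cF_{/X},\cF_{/X,A})$. The associated cartesian fibration over $\SFF[M]^\op=\Span_M(\cF)^\op$ is, by \cref{propn:cFtimes}, exactly $\Span(\ev_1)^\op\colon\Span_{M\dpb}(\Ar(\cF))^\op\to\Span_M(\cF)^\op$, with the fiberwise span-pair structure picking out the forward maps lying over $\cF_A$ — that is, $\Ar(\cF)_{A\dfw}$ in the notation of the statement (pullback squares over $\cF$ whose bottom, equivalently any, edge lies in $\cF_A$).

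**Main step.** Now I would invoke \cref{propn:spansmon} verbatim: it identifies the cocartesian fibration $\Span_Q(\cC)^\otimes\to\SFF[M]$ for the induced normed structure on spans with $\Span_{Q\dfw}(\cC_\otimes)$, where $\cC_\otimes\to\SFF[M]^\op$ is the cartesian fibration above and $Q\dfw$ denotes the subcategory of maps that become equivalences under the projection and lie fiberwise in the chosen forward maps. Plugging in $\cC_\otimes=\Span_{M\dpb}(\Ar(\cF))^\op$ with fiberwise forward maps $\Ar(\cF)_{A\dfw}$, this reads
\[
  \Span_A(\ulF)^\otimes\simeq\Span_{A\dfw}\bigl(\Span_{M\dpb}(\Ar(\cF))^\op\bigr),
\]
where $A\dfw$ is the subcategory of $\Span_{M\dpb}(\Ar(\cF))^\op$ consisting of morphisms whose image under $\Span(\ev_1)^\op$ is an equivalence in $\Span_M(\cF)^\op$ — equivalently, under $\ev_1$, an equivalence in $\Span_M(\cF)$ — and whose underlying edge lies fiberwise in $\Ar(\cF)_{A\dfw}$, i.e.\ whose forward part lives over $\cF_A$. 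By \cref{def:bispantrip}, $\Span_{A\dfw}(\Span_{M\dpb}(\Ar(\cF))^\op)=\Bispan_{M\dpb,A\dfw}(\Ar(\cF))$, giving both displayed identifications in the proposition.

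**Bookkeeping to check.** The only thing left is to match the two descriptions of the fiberwise forward maps: on the $\SPair$-valued side of \cref{const:sym_mon_spans} the span pair at $X$ is $(\cF_{/X},\cF_{/X,A})$, and one must confirm that under the unfurling identification of \cref{propn:cFtimes} — where the fiber of $\Span(\ev_1)^\op$ over $X\in\cF$ is $\cF_{/X}$ realized as $\Ar(\cF)\times_\cF\{X\}$ — the forward maps $\cF_{/X,A}$ correspond precisely to arrows of $\Ar(\cF)$ over $X$ that lie in $\cF_A$, which is immediate from the definition of $\Ar(\cF)_{A\dfw}$. One also needs the hypotheses of \cref{propn:spansmon}: that $\Span_M(\cF)^\op\ni X\mapsto(\cF_{/X},\cF_{/X,A})$ really is an $\ctxt_M$-normed monoid in $\SPair$, but this is exactly what the preceding lemma (the one just before \cref{def:monoidal_CMon}) establishes via \cref{const:sym_mon_spans}, using that the functors $m_*f^*$ are maps of span pairs by the semiring-context axioms. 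I expect no genuine obstacle here: the proof is a bookkeeping exercise in composing \cref{propn:cFtimes}, \cref{const:sym_mon_spans}, and \cref{propn:spansmon}, with the mildest subtlety being the translation between the ``fiberwise forward'' condition of \cref{propn:spansmon} and the explicit subcategory $\Ar(\cF)_{A\dfw}$ appearing in the statement.
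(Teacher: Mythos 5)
Your proposal is correct and takes essentially the same route as the paper, whose proof of this proposition is exactly the combination of \cref{propn:spansmon} with the description of $\ulF_{\times}$ from \cref{propn:cFtimes}; you merely spell out the bookkeeping that the paper leaves implicit. The only slight imprecision is your parenthetical describing the fiberwise forward maps as ``pullback squares'': they are just morphisms of $\Ar(\cF)$ lying over an equivalence under $\ev_1$ whose $\ev_0$-image lies in $\cF_A$, which is what your main step correctly uses.
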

\begin{proof}
  This follows by combining \cref{propn:spansmon} with the
  description of $\ulF_{\times}$ from \cref{propn:cFtimes}.
\end{proof}

Explicitly this means a morphism in $\Span_{A}(\ulF)^{\otimes}$ is represented by a diagram
\[
\begin{tikzcd}
	X \arrow{d} & Y \arrow{l} \arrow{r} \arrow{d} \drpullback & X' \arrow{d} \arrow{r}{a} & X'' \arrow{d} \\
	S & T \arrow{l} \arrow{r}[swap]{m} & S'\arrow[r,equals] & S'
\end{tikzcd}
\]
with $m$ in $\cF_{M}$ and $a$ in $\cF_{A}$; the cocartesian fibration
to $\Span_M(\cF)$ is given by restricting to the bottom row.

\begin{ex}\label{ex:triv2}
	For the semiring context $\ctxt = (\cF, \cF_{M},
	\cF^{\simeq})$ from \cref{ex:trivialsemictx}, we
	obtain the $\ctxt_M$-normed structure on $\ulF^{\op}$ given by
	\[ \Span_{M\dpb}(\Ar(\cF))^{\op} \simeq (\ulF_{\times})^{\op}.\]
	Moreover, the underlying $\cF$-\icat{} of $\uNMon_{T_{A}}(\cX)$ is the $\cF$-\icat{}
  \[
    \ul{\cX}_{\cF}\coloneqq\Fun^{\times}(\ulF^{\op}, \cX)=\Fun^\times(-,\cX)\circ\ulF^\op
  \]
  of $\cF$-objects in $\cX$.
\end{ex}

\begin{cor}\label{cor:crigbispanarf}
  The \icat{} $\NRig_{\ctxt}(\cX)$ of $\ctxt$-normed semirings in $\cX$ is naturally equivalent to the
  full subcategory $\cR\subseteq\Fun(\Bispan_{M\dpb,A\dfw}(\Ar(\cF)), \cX)$ spanned by functors
  \[ \Phi \colon \Bispan_{M\dpb,A\dfw}(\Ar(\cF)) \to \cX\]
  such that
  \begin{enumerate}[(1)]
  \item For every object $E \to X$ in
    $\Bispan_{M\dpb,A\dfw}(\Ar(\cF))$, where $E$ decomposes as a
    coproduct $\coprod_{i=1}^{n} E_{i}$ in $\cF$,
    evaluating $\Phi$ at the morphisms
    \begin{equation}
      \label{eq:srigprod}
      \begin{tikzcd}
        E \ar[d] & E_{i} \ar[l] \ar[d] \ar[r,equals] \drpullback & E_i \ar[r, equals] \ar[d] & E_i \ar[d] \\
        X & X \ar[l, equals] \ar[r,equals] & X \ar[r,equals] & X
      \end{tikzcd}
    \end{equation}
    gives an equivalence
    \[ \Phi(E \to X) \isoto \prod_{i=1}^{n} \Phi(E_{i} \to X).\]
  \item $\Phi$ takes morphisms of the form
    \begin{equation}
      \label{eq:sriginvert}
           \begin{tikzcd}
     	E \ar[d] & E \ar[l,equals] \ar[d] \ar[r,equals] \drpullback & E \ar[r, equals] \ar[d] & E \ar[d] \\
     	Y & X \ar[l] \ar[r,equals] & X \ar[r,equals] & X
     \end{tikzcd}
    \end{equation}
    in $\Bispan_{M\dpb,A\dfw}(\Ar(\cF))$ to equivalences in $\cX$.
  \end{enumerate}
  Moreover, for every $X\in\cF$ this equivalence fits into a commutative diagram
  \begin{equation}\label{diag:prelawvere-pw}
    \begin{tikzcd}[column sep=small]
      \NRig_\ctxt(\cX)\arrow[dr, bend right=15pt, "\ev_A"']\arrow[rr,"\simeq"] &&[1.5em] \cR\arrow[dl, bend left=15pt, "\iota_X^*"]\\
      & \Fun^\times(\Span_A(\cF_{/X}),\cX)
    \end{tikzcd}
  \end{equation}
  where $\iota_X$ is induced by $(\cF_{/X},(\cF_{/X})^\simeq,\cF_{/X,A})\hookrightarrow (\Ar(\cF),\Ar(\cF)_{M\dpb},\Ar(\cF)_{A\dfw})$.
\end{cor}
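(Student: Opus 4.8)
The plan is to deduce this from \Cref{propn:FunTsymmon} applied to the $\ctxt_M$-normed $\infty$-category $\cC \coloneqq \Span_A(\ulF)$, whose associated cocartesian fibration is $\cC^\otimes = \Bispan_{M\dpb,A\dfw}(\Ar(\cF))$ by \Cref{propn:spanFotimes}. By definition $\NRig_\ctxt(\cX) = \NAlg_{\ctxt_M}(\uNMon_{\ctxt_A}(\cX)) = \NAlg_{\ctxt_M}(\cQ)$ with $\cQ = \FunT(\cC(-),\cX)$, so part (ii) of \Cref{propn:FunTsymmon} already identifies $\NRig_\ctxt(\cX)$ with the full subcategory $\cal A \subseteq \Fun(\cC^\otimes,\cX)$ of functors $A$ that are product-preserving on each fiber $\cC(X) = \Span_A(\cF_{/X})$ and that are left Kan extended along every backward map $f^*\colon\cC(Y)\to\cC(X)$. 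Moreover, the cited proposition also supplies, for each $X$, the commuting triangle relating $\ev_X$ on $\NAlg$ to restriction along the fiber inclusion; since the fiber inclusion $\cC(X)\hookrightarrow\cC^\otimes$ is exactly $\iota_X$ of the statement, this will give the triangle \eqref{diag:prelawvere-pw} once we match $\cal A$ with $\cR$. So the real content is to show $\cal A = \cR$ as full subcategories of $\Fun(\Bispan_{M\dpb,A\dfw}(\Ar(\cF)),\cX)$, i.e.\ to translate the two abstract conditions of \Cref{propn:FunTsymmon}(ii) into conditions (1) and (2).

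First I would handle the fiberwise product-preservation condition. A morphism in $\cC^\otimes$ lying over a backward map of $\Span_M(\cF)$ and restricting to a fiber is one whose ``$\ev_1$-part'' is an equivalence; unwinding \Cref{propn:spanFotimes}, the summand-inclusion morphisms \eqref{eq:srigprod} are precisely (representatives of) the morphisms $\cC(X)\ni (E\to X)\to(E_i\to X)$ induced by the projections in $\Span_A(\cF_{/X})$ coming from the coproduct decomposition $E\simeq\coprod_i E_i$ in $\cF$ (using that coproduct in $\cF_{/X}$ gives product in $\Span_A(\cF_{/X})$ by \Cref{obs:spanprod}(1), since $(\cF_{/X},\cF_{/X,A})$ is weakly extensive). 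Hence $A_X = \iota_X^* A\colon \Span_A(\cF_{/X})\to\cX$ preserves finite products if and only if $\Phi$ sends the maps \eqref{eq:srigprod} to a product decomposition, which is condition (1). Since every object of $\Span_A(\cF_{/X})$ arises as such an $E\to X$, these two conditions are literally the same.

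Next I would treat the left-Kan-extension condition. By \Cref{obs:radjsimplify}, since $\cF$ is $M$-locally cartesian closed the backward-map functor $f^*$ has a right adjoint, so the condition ``$A_X$ is the left Kan extension of $A_Y$ along $f^*$'' is equivalent to ``$A$ inverts every morphism of $\cC^\otimes$ that is cartesian over a backward map of $\Span_M(\cF)^\op$.'' It therefore suffices to identify the cartesian morphisms over backward maps with (the saturation of) the morphisms \eqref{eq:sriginvert}. Using the explicit description of pullbacks in $\Span_F(\cC)^\op$ recalled in the remark after \Cref{def:bispanmor} together with \Cref{propn:spanFotimes}, a backward map in $\Span_M(\cF)$ from $X$ to $Y$ (i.e.\ a map $g\colon X\to Y$ in $\cF$, viewed backward) has cartesian lift through $E\to X$ given precisely by a diagram of the shape \eqref{eq:sriginvert} (the lower-left span being $(g)$ viewed backward, the rest identities). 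One then checks, as usual, that a functor inverts all cartesian lifts over all backward maps iff it inverts those of the form \eqref{eq:sriginvert}; since the total space $\cC^\otimes\to\Span_M(\cF)$ is a cartesian fibration, general cartesian morphisms over backward maps factor (up to equivalence in the fibers) as an equivalence followed by one of the chosen lifts, and equivalences are automatically inverted. This matches condition (2).

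The main obstacle I anticipate is purely bookkeeping: carefully matching the combinatorial descriptions of morphisms in $\Bispan_{M\dpb,A\dfw}(\Ar(\cF))$ --- in particular the fiber inclusions and the cartesian lifts over backward maps --- with the specific four-step diagrams \eqref{eq:srigprod} and \eqref{eq:sriginvert}, keeping track of which legs live over $\cF_M$ vs.\ $\cF_A$ and of the $\ev_1$-versus-fiber distinction. Once this dictionary is pinned down, everything else is a direct citation of \Cref{propn:FunTsymmon}, \Cref{obs:radjsimplify}, \Cref{propn:spanFotimes}, and \Cref{obs:spanprod}(1), and the naturality triangle \eqref{diag:prelawvere-pw} is inherited verbatim from the corresponding triangle in \Cref{propn:FunTsymmon}(ii) under the identification $\cal A = \cR$.
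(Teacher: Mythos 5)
Your overall strategy coincides with the paper's: reduce to \cref{propn:FunTsymmon}(ii) via the identification $\Span_A(\ulF)^\otimes\simeq\Bispan_{M\dpb,A\dfw}(\Ar(\cF))$ of \cref{propn:spanFotimes}, match condition (1) with fiberwise product-preservation, and match condition (2) with the left-Kan-extension condition through \cref{obs:radjsimplify}. The treatment of condition (1) and of the commuting triangle is fine. The problem is in condition (2), which is where essentially all of the content of the proof lies, and there your argument has two genuine defects.

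First, to invoke \cref{obs:radjsimplify} you need a right adjoint to $f^*\colon\Span_A(\cF_{/Y})\to\Span_A(\cF_{/X})$ for an \emph{arbitrary} morphism $f\colon X\to Y$ of $\cF$ (backward maps are not confined to $\cF_M$), and $M$-local cartesian closedness is not the reason such an adjoint exists: that hypothesis only supplies $m_*$ for $m\in\cF_M$, and in any case passing to span categories reverses the handedness of adjunctions, so an adjunction $f^*\dashv f_*$ on slices would produce a \emph{left} adjoint to $f^*$ on spans. The right adjoint you need comes instead from the always-available adjunction $f_!\dashv f^*$ on slice categories, which flips to the ``wrong-way'' adjunction $f^*\dashv f_!$ on $\Span_A(\cF_{/(\blank)})$ recorded in \cref{rk:extend-wrong-way}. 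Second, the claim that \eqref{eq:sriginvert} is a cartesian edge of $\Bispan_{M\dpb,A\dfw}(\Ar(\cF))\to\Span_M(\cF)$ over $Y\gets X=X$ is asserted rather than proved, and the tool you cite for it (the description, after \cref{def:bispanmor}, of pullbacks in $\Span_F(\cC)^\op$ along $\cC_L$-maps) is about composition in bispan categories, not about cartesian lifts in this cocartesian fibration. The actual verification is where \cref{rk:extend-wrong-way} is needed: the squares \eqref{eq:sriginvert} are the cartesian edges of the subfibration $\Ar(\cF)^\op\to\cF^\op$ (being the cocartesian edges of $\ev_1\colon\Ar(\cF)\to\cF$), and they remain cartesian in the larger category $\Span_{A\dfw}(\Ar(\cF))$ precisely because the fiberwise adjunctions $f^*\dashv f_!$ extend from $(\cF_{/X})^\op\rightleftarrows(\cF_{/Y})^\op$ to $\Span_A(\cF_{/X})\rightleftarrows\Span_A(\cF_{/Y})$. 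Until this is established, the identification of your condition (2) with the Kan-extension condition of \cref{propn:FunTsymmon} is not complete.
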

\begin{proof}
  In light of \Cref{propn:spanFotimes}, we only have to show that the description of the full subcategory $\cR$ is equivalent to the description given in \cref{propn:FunTsymmon}.

  Since products in $\Span_{A}(\cF_{/X})$ are given by coproducts in $\cF$, the first condition amounts to asking for
  the restriction
  \[ \Phi_{X} \colon \Span_{A}(\cF_{/X}) \to \cX\]
  of $\Phi$ to preserve products for every $X \in \cF$, which is the first condition formulated in \cref{propn:FunTsymmon}.

  We now claim that (\ref{eq:sriginvert}) defines a cartesian edge over $Y\gets X=X$; \cref{obs:radjsimplify} will then immediately show that our second condition is indeed equivalent to the second condition of \cref{propn:FunTsymmon}. For this we observe that restricting in the target to $\cF^\op=\Span_{\eq}(\cF)$ recovers the map $\Span_{A\dfw}(\Ar(\cF))\to\cF^\op$ classifying $\Span_A(\ulF)$. The subfibration $\Ar(\cF)^\op\to\cF^\op$ is both cartesian and cocartesian, with cartesian edges given by the squares in question (the \emph{co}cartesian edges of $\Ar(\cF)\to\cF$). We therefore want to show that this is still cartesian in $\Span_{A\dfw}(\Ar(\cF))$. However, this simply means that the adjunction $f^*\colon (\cF_{/X})^\op\rightleftarrows(\cF_{/Y})^\op :\! f_!$ ought to extend to $\Span_A(\cF_{/X})\rightleftarrows\Span_A(\cF_{/Y})$, which was observed in Remark~\ref{rk:extend-wrong-way} above.
\end{proof}

\begin{thm}\label{thm:cmon-bispan-model}
  Composition with $\ev_{0}\hskip0pt minus .5pt \colon\hskip0pt minus 1pt \Bispan_{M\dpb,\,A\dfw}\hskip 0pt minus .75pt(\hskip 0pt minus .5pt\Ar(\cF)\hskip 0pt minus .25pt)\hskip0pt minus 1.75pt \to\hskip0pt minus 1.5pt \Bispan_{M,A}\hskip 0pt minus .75pt(\hskip 0pt minus .25pt\cF)$ induces an equivalence
  \[ \NRig_{\ctxt}(\cX) \isoto \FunT(\Bispan_{M,A}(\cF), \cX)\]
  fitting into commutative diagrams
  \begin{equation}\label{diag:lawvere-equ-pw}
    \begin{tikzcd}[column sep=small]
      \NRig_{\ctxt}(\cX)\arrow[rr,"\simeq"]\arrow[dr, bend right=15pt, "\ev_{X}"'] && \FunT(\Bispan_{M,A}(\cF), \cX)\arrow[dl,bend left=15pt, "p_X^*"]\\
      & \FunT(\Span_A(\cF_{/X}),\cX)
    \end{tikzcd}
  \end{equation}
  where $p_X$ is induced by the forgetful map $(\cF_{/X},(\cF_{/X})^\simeq,\cF_{/X,A})\to(\cF,\cF_M,\cF_A)$.
\end{thm}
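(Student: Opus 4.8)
The plan is to exploit \cref{cor:crigbispanarf}, which already identifies $\NRig_{\ctxt}(\cX)$ with the explicit full subcategory $\cR \subseteq \Fun(\Bispan_{M\dpb,A\dfw}(\Ar(\cF)), \cX)$ of functors satisfying conditions~(1) and~(2) there. The functor $\ev_{0}\colon \Ar(\cF) \to \cF$ is a morphism of bispan triples from $(\Ar(\cF), \Ar(\cF)_{M\dpb}, \Ar(\cF)_{A\dfw})$ to $(\cF, \cF_{M}, \cF_{A})$ — pullbacks in $\Ar(\cF)$ are computed pointwise and $\ev_{0}$ sends a pullback square over $\cF_{M}$ to a base change of a morphism in $\cF_{M}$ — so it induces a functor of bispan categories, and we must upgrade this to an equivalence $\ev_{0}^{*}\colon \FunT(\Bispan_{M,A}(\cF), \cX) \isoto \cR$. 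The diagonal $\delta\colon \cF \hookrightarrow \Ar(\cF)$, $X \mapsto \id_{X}$, is likewise a morphism of bispan triples; writing $s \coloneqq \Bispan(\delta)$ and using $\ev_{0}\circ\delta \simeq \id_{\cF}$ gives $\ev_{0}\circ s \simeq \id$ and hence $s^{*}\circ \ev_{0}^{*} \simeq \id$.

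One first checks that $\ev_{0}^{*}$ lands in $\cR$: for product-preserving $\Psi$, condition~(1) of \cref{cor:crigbispanarf} for $\Psi\circ\ev_{0}$ holds because products in $\Bispan_{M,A}(\cF)$ are coproducts in $\cF$ (\cref{propn:bispanprod}) and $\ev_{0}$ carries the morphisms~\cref{eq:srigprod} to the summand-restriction bispans, i.e.\ to the product projections, while condition~(2) holds because $\ev_{0}$ sends each morphism~\cref{eq:sriginvert} to an identity in $\Bispan_{M,A}(\cF)$. Conversely, restriction along $\delta$ carries $\cR$ into the product-preserving functors: for $\Phi \in \cR$ and $X \in \cF$ one has $s^{*}\Phi(X) \simeq \Phi(\id_{X})$, and condition~(2) applied to the morphisms~\cref{eq:sriginvert} whose backward leg is the structure map $E \to X$ identifies $\Phi(E \to X) \simeq \Phi(\id_{E})$ for every object, whence condition~(1) becomes product-preservation of $s^{*}\Phi$. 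Thus the whole statement reduces to the claim that
\[ \ev_{0}\colon \Bispan_{M\dpb,A\dfw}(\Ar(\cF)) \longrightarrow \Bispan_{M,A}(\cF) \]
is a localisation, at the saturated class generated by the morphisms~\cref{eq:sriginvert}: granting this, $\ev_{0}^{*}$ is fully faithful with essential image the functors inverting~\cref{eq:sriginvert}, and by the preceding paragraph this image meets $\FunT(\Bispan_{M,A}(\cF),\cX) \hookrightarrow \Fun(\Bispan_{M,A}(\cF),\cX)$ precisely in $\cR$.

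To prove the localisation claim I would descend to its ``unfurled'' shadow. On arrow categories $\ev_{0}\colon \Ar(\cF) \to \cF$ has the fully faithful left adjoint $\delta$ with $\ev_{0}\circ\delta \simeq \id$, so it is a localisation at the class of morphisms it inverts (those that become equivalences on sources), with counit $\delta\,\ev_{0} \Rightarrow \id_{\Ar(\cF)}$ valued in that class; and under the backward embedding $\Ar(\cF)^{\op} \hookrightarrow \Bispan_{M\dpb,A\dfw}(\Ar(\cF))$ these morphisms become exactly the morphisms~\cref{eq:sriginvert}. It remains to propagate ``being a localisation'' through the two $\Span$-constructions (and the intervening $(-)^{\op}$) out of which $\Bispan_{M\dpb,A\dfw}(\Ar(\cF))$ is built over $\Bispan_{M,A}(\cF)$: one checks that $\ev_{0}$ stays a localisation after restriction to the relevant forward/transfer subcategories at each stage, and that the localising class is stable under the base changes along forward maps that enter the composition law, so that $\Span(\blank)$ sends each of these to a localisation. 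This last point is the step I expect to be the main obstacle; it is the parametrised analogue of the non-equivariant comparison in \cite{CHLL1} and rests on the span calculus of \cite{HHLN2} and \cite{bispans}.

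Finally, the commuting triangles~\cref{diag:lawvere-equ-pw} are formal. The underlying functor of $p_{X}$ is the restriction of $\ev_{0}$ to $\cF_{/X} \subseteq \Ar(\cF)$, so on bispan categories $p_{X} = \ev_{0}\circ\iota_{X}$ and hence $p_{X}^{*} = \iota_{X}^{*}\circ\ev_{0}^{*}$. The equivalence $\NRig_{\ctxt}(\cX) \isoto \FunT(\Bispan_{M,A}(\cF),\cX)$ of the theorem is obtained by applying $s^{*}$ (which on $\cR$ is inverse to $\ev_{0}^{*}$) after the equivalence $\NRig_{\ctxt}(\cX) \isoto \cR$ of \cref{cor:crigbispanarf}; composing with $p_{X}^{*} = \iota_{X}^{*}\circ\ev_{0}^{*}$ and using $\ev_{0}^{*}\circ s^{*} \simeq \id_{\cR}$ reduces it to $\iota_{X}^{*}$ applied after $\NRig_{\ctxt}(\cX) \isoto \cR$, which is exactly the factorisation of the evaluation functor recorded by~\cref{diag:prelawvere-pw}.
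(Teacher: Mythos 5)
Your overall architecture coincides with the paper's: reduce everything to the statement that $\ev_0\colon\Bispan_{M\dpb,A\dfw}(\Ar(\cF))\to\Bispan_{M,A}(\cF)$ is a localization, then match the Segal/product conditions on both sides (that part of your argument is fine). But the execution has two real problems. The first is that the localization claim, which carries all the weight, is not proved: you yourself flag the propagation of ``being a localization'' through the two $\Span$-constructions as the main obstacle and leave it unresolved. The paper does not reprove it either --- it imports it from \cite{CHLL1}*{4.2.2}, which is already stated for general bispan triples and not only for $\cF=\xF$, so no new ``parametrised analogue'' is needed. Moreover, the class $W$ of morphisms inverted by $\ev_0$ is strictly larger than the class $S$ of morphisms \cref{eq:sriginvert}: a general member of $W$ has identities on top but an arbitrary span $S\gets T\to S'$ (with $T\to S'$ in $\cF_M$) on the bottom. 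One therefore needs the additional argument (the paper invokes the proof of \cite{CHLL1}*{4.3.1}) that a functor inverting $S$ already inverts all of $W$; your phrase ``the saturated class generated by \cref{eq:sriginvert}'' quietly assumes exactly this.

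The second problem is that the section $s=\Bispan(\delta)$ does not exist as a functor into $\Bispan_{M\dpb,A\dfw}(\Ar(\cF))$. The transfer morphisms $A\dfw$ of that bispan category are by definition those whose image under $\ev_1$ is an equivalence --- concretely, an identity on the base and a map in $\cF_A$ on top (see the diagram following \cref{propn:spanFotimes}). For $a\colon X\to Y$ in $\cF_A$ the square $\delta(a)$ has $a$ itself on the base, so $\delta$ does not carry $\cF_A$-transfers into $A\dfw$; the transfer along $a$ only acquires a lift against $\ev_0$ after the morphisms \cref{eq:sriginvert} have been inverted, via the resulting identification $(X\xrightarrow{a}Y)\simeq\id_X$. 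Consequently the identity $s^*\circ\ev_0^*\simeq\id$, the claim that restriction along $\delta$ carries $\cR$ into product-preserving functors, and your verification of the triangle \cref{diag:lawvere-equ-pw} all rest on a nonexistent functor. The triangle can be repaired without $s$: once the equivalence is known to be given by $\ev_0^*$ (with inverse supplied abstractly by the localization property), the factorization $p_X=\ev_0\circ\iota_X$ together with \cref{diag:prelawvere-pw} gives the commutativity directly, which is how the paper concludes.
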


\begin{proof}
  By \cite{CHLL1}*{4.2.2}, the functor $\ev_{0}$ on bispans is a localization. Let $W$ be the class of morphisms it takes to equivalences, which we can immediately simplify to those of the form
  \[
    \begin{tikzcd}
      X \arrow{d} & X \arrow[equals]{l} \arrow[equals]{r} \arrow{d} \drpullback & X \arrow{d} \arrow[r, equals] & X\arrow[d]\\
      S & T \arrow{l} \arrow{r} & S' \arrow[r, equals] & S'\llap,
    \end{tikzcd}
  \]
  and let $S \subseteq W$ be the morphisms of the form
  \cref{eq:sriginvert}. Arguing as in the proof of
  \cite{CHLL1}*{4.3.1}, we see that a functor that inverts $S$ must
  invert all of $W$.  From \cref{cor:crigbispanarf} we know that an
  $\ctxt$-normed semiring inverts the morphisms in $S$, and it
  therefore factors through the localization. All that remains for the equivalence $\NRig_\ctxt(\cX)\simeq\FunT(\Bispan_{M,A}(\cF),\cX)$ is to show that a functor $\Phi \colon \Bispan_{M,A}(\cF) \to \cX$ preserves products \IFF{} the composite $\Psi \coloneqq \Phi \circ \ev_{0}$ satisfies
  \[ \Psi(E \to X) \isoto \prod_{i}\Psi(E_{i} \to X)\]
  for $E \simeq \coprod_{i} E_{i}$.
  Since the product in $\Bispan_{M,A}(\cF)$ is given by the coproduct in $\cF$ by \cref{propn:bispanprod}, the condition for $\Psi$ is immediate if $\Phi$ preserves products. Conversely, for any coproduct decomposition $E \simeq \coprod E_{i}$, we can apply the condition for $\Psi$ with $X = E$ to conclude that $\Phi$ preserves this product.

  Finally, the commutativity of (\ref{diag:lawvere-equ-pw}) follows at once from the commutativity of (\ref{diag:prelawvere-pw}) and the observation $p_X=\ev_0\circ\iota_X$.
\end{proof}

\begin{remark}
	\label{rmk:Distributivity_Relation}
	By \Cref{thm:cmon-bispan-model}, an $\ctxt$-normed semiring in $\cX$ may be identified with a product-preserving functor $R\colon \Bispan_{M,A}(\cF) \to \cX$, and thus gives rise to maps $f^*\colon R(Y) \to R(X)$, $a_{\oplus}\colon R(X) \to R(Y)$ and $m_{\otimes}\colon R(X) \to R(Y)$ for morphisms $f,a,m \colon X \to Y$ in $\cF$ such that $a \in \cF_A$ and $m \in \cF_M$. Each of these classes of maps are compatible with composition in $\cF$. Furthermore, given pullback squares
	\[
	\begin{tikzcd}
		X' \dar[swap]{a'} \rar{g} \drpullback & X \dar{a} \\
		Y' \rar{f} & Y
	\end{tikzcd}
	\qquad \text{ and } \qquad
	\begin{tikzcd}
		X' \dar[swap]{m'} \rar{g} \drpullback & X \dar{m} \\
		Y' \rar{f} & Y
	\end{tikzcd}
	\]
	with $a \in \cF_A$ and $m \in \cF_M$, the composition relations in $\Bispan_{M,A}(\cF)$ give rise to relations $f^*a_{\oplus} \simeq a'_{\oplus} g^*$ and $f^*m_{\otimes} \simeq m'_{\otimes} g^*$ of maps $R(X) \to R(Y')$. Finally, given morphisms $a\colon X \to Y$ in $\cF_A$ and $m\colon Y \to Z$ in $\cF_M$, we may consider their associated ``distributivity diagram'' \cite{bispans}*{2.4.1}
	\[
	\begin{tikzcd}
		X \drar[swap]{a} & m^*m_*(X) \dar{b'} \rar{m'} \lar[swap]{e} \drpullback & m_*(X) \dar{b = m_*(a)} \\
		& Y \rar{m} & Z,
	\end{tikzcd}
	\]
	where $e$ is the counit of the adjunction $m^* \dashv m_*$; we then obtain a distributivity relation $m_{\otimes}a_{\oplus} \simeq b_{\oplus}m'_{\otimes}e^*$ of maps $R(X) \to R(Z)$.
\end{remark}

\begin{ex}
	Specializing \Cref{thm:cmon-bispan-model} to $\ctxt = (\xF, \xF, \xF)$ as in Example~\ref{ex:ggn}, we obtain equivalences ${\mathcal R}\kern-1pt\textup{ig}_{\mathbb E_\infty}(\cX)\simeq\NRig_\ctxt(\cX)\simeq\Fun^\times(\Bispan(\xF),\cX)$, recovering the main result of \cite{CHLL1}.
\end{ex}

\begin{ex}
	Our main interest is the case $\ctxt = (\xF_G, \xF_G, \xF_G)$ for a finite group $G$, which will be discussed extensively in \Cref{sec:Normed_Spectra}.
\end{ex}

\begin{ex}\label{ex:triv3}
  Applying \Cref{thm:cmon-bispan-model} to the case for trivial additive norms from \cref{ex:triv2}, we deduce that $\ctxt$-normed monoids in $\cX$ admit an interpretation as $\ctxt$-normed algebras:
  \[ \NAlg_{\ctxt}(\Fun^{\times}(\ulF^{\op}, \cX)) \simeq
    \Fun^{\times}(\Span_{\nrms}(\cF), \cX)  = \NMon_\ctxt(\cX).\]
    We may think of this as the normed analogue of the statement that commutative monoids in $\cX$ are the commutative algebras with respect to the cartesian symmetric monoidal structure on $\cX$.
\end{ex}

\subsection{Normed rings and Tambara functors}
Fix a cocomplete $\infty$-category $\cX$ with finite products such that the cartesian product preserves colimits in each variable. Among the normed semirings in $\cX$, we are especially interested in those that behave like \textit{rings}, in the sense that their underlying additive monoid is in fact a group:

\begin{defn}\label{defn:grouplike}
  Suppose $\ctxt_A = (\cF, \cF_{A})$ is an extensive span pair (and not only a \emph{weakly} extensive one). We then
  say an $\ctxt_A$-normed monoid $M \colon \Span_{A}(\cF) \to \cX$ is
  \emph{group\-like} if the induced commutative
  monoid structure on $M(X)$ from \Cref{obs:levelwise-monoid} is grouplike in the usual sense for every $X \in \cF$. We also refer to grouplike
  $\ctxt_A$-normed monoids as \emph{$\ctxt_A$-normed groups} and write
  $\NGrp_{\ctxt_A}(\cX) \subseteq \NMon_{\ctxt_A}(\cX)$ for the full subcategory
  of these. Note that under the equivalence \cref{eq:cmonincmon}, the full subcategory $\NGrp_{\ctxt_A}(\cX)$ corresponds to the subcategory
  \[ \NMon_{\ctxt_A}(\CGrp(\cX)) \subseteq \NMon_{\ctxt_A}(\CMon(\cX)) \simeq \NMon_{\ctxt_A}(\cX).\]
  If $\cX$ is presentable, then $\NGrp_{\ctxt_A}(\cX)$ is an accessible localization of $\NMon_{\ctxt_A}(\cX)$, and so is again presentable.
\end{defn}

\begin{defn}\label{defn:commring}
	A \textit{ring context} is a semiring context $\ctxt = (\cF, \cF_{M}, \cF_{A})$ such that the span pair $\ctxt_A = (\cF,\cF_A)$ is extensive. An \textit{$\ctxt$-normed ring in $\cX$} is an $\ctxt$-normed semiring $R\colon \Span_M(\cF)\to\ul\NMon_{\ctxt_A}(\cX)^\otimes$ such that for all $X\in \cF$ the resulting $\ctxt_{/X,A}$-normed monoid $R_X\colon \Span_A(\cF_{/X})\to\cX$ is an $\ctxt_{/X,A}$-normed group; here $\ctxt_{/X,A} = (\cF_{/X},\cF_{/X,A})$ denotes the span pair from \cref{ex:extslice}.
\end{defn}

\begin{ex}
The semiring contexts arising in equivariant mathematics, discussed in \cref{ex:F_G-rigctxt}, are always ring contexts.
\end{ex}

\begin{warning}\label{warn:cgrp-not-sym-mon}
  In the generality of our setup, the $\ctxt_M$-normed structure on
  $\ul\NMon_{\ctxt_A}(\cX)$ need not descend to grouplike objects: the
  latter may only form what should be called a
  \emph{$\ctxt_M$-$\infty$-operad}, and the previous definition could then
  be more succinctly phrased as saying that an $\ctxt$-normed ring is an
  $\ctxt_M$-normed algebra in this parametrized \iopd{}.

  However, we will show in the next section that such a normed structure \emph{does} exist in the setting of equivariant homotopy theory, which is the main case of interest to us.
\end{warning}

\begin{defn}\label{def:tambara}
  A product-preserving functor $\Bispan_{M,A}(\cF)\to\cX$ is called an ($\cX$-valued) \emph{$\ctxt$-Tambara functor} if its restriction to $\Span_A(\cF)\simeq\Bispan_{\eq,A}(\cF)$ is grouplike in the sense of Definition~\ref{defn:grouplike}. We write \[\Tamb_\ctxt(\cX)\subseteq\Fun^\times(\Bispan_{M,A}(\cF),\cX)\] for the full subcategory spanned by the Tambara functors.
\end{defn}

\begin{thm}\label{thm:ring-comparison}
  Let $\ctxt=(\cF,\cF_M,\cF_A)$ be a ring context. Then the equivalence $\NRig_\ctxt(\cX)\simeq\Fun^\times(\Bispan_{M,A}(\cF),\cX)$ constructed in Theorem~\ref{thm:cmon-bispan-model} restricts to
  \[\NRing_\ctxt(\cX)\simeq\Tamb_\ctxt(\cX).\]
  \begin{proof}
    Write $\Phi\colon\NRig_\ctxt(\cX)\to\FunT(\Bispan_{M,A}(\cF),\cX)$ for the equivalence from Theorem~\ref{thm:cmon-bispan-model}; we have to show that an $\ctxt$-normed semiring $R\in\NRig_\ctxt(\cX)$ is an $\ctxt$-normed ring if and only if $\Phi(R)$ is grouplike.

    By commutativity of (\ref{diag:lawvere-equ-pw}), $R$ is an $\ctxt$-normed ring if and only if $\Phi(R)\circ p_X$ is grouplike for every $X\in\cF$; we have to show that this is in turn equivalent to the composite $\phi\colon\Span_A(\cF)\to\Bispan_{M,A}(\cF)\to\cX$ being grouplike. But indeed, if $\hat\phi\colon\Span_A(\cF)\to\CMon(\cX)$ is the unique lift of $\phi$, its restriction along $\Span_A(\cF_{/X})\to\Span_A(\cF)$ is a lift of $\Phi(R)\circ p_X$; the claim follows as the functors $\Span_A(\cF_{/X})\to\Span_A(\cF)$ for varying $X\in\cF$ are jointly surjective.
  \end{proof}
\end{thm}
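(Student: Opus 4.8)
The plan is to exploit that $\NRing_\ctxt(\cX)$ and $\Tamb_\ctxt(\cX)$ are, by construction, \emph{full} subcategories of $\NRig_\ctxt(\cX)$ and $\FunT(\Bispan_{M,A}(\cF),\cX)$ respectively. Thus the equivalence $\Phi\colon\NRig_\ctxt(\cX)\isoto\FunT(\Bispan_{M,A}(\cF),\cX)$ of \cref{thm:cmon-bispan-model} restricts to an equivalence of the two subcategories as soon as one checks that it identifies their objects, i.e.\ that an $\ctxt$-normed semiring $R$ is an $\ctxt$-normed ring if and only if $\Phi(R)$ is an $\ctxt$-Tambara functor. So everything reduces to this pointwise statement.

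Fix such an $R$. First I would reformulate the normed ring condition using \cref{thm:cmon-bispan-model}: by \cref{defn:commring}, $R$ is a normed ring iff $\ev_X(R)\colon\Span_A(\cF_{/X})\to\cX$ is grouplike for every $X\in\cF$, and by the compatibility triangle~(\ref{diag:lawvere-equ-pw}) one has $\ev_X(R)\simeq p_X^*\Phi(R)$, where $p_X$ is induced by the forgetful map of ring contexts $(\cF_{/X},(\cF_{/X})^\simeq,\cF_{/X,A})\to(\cF,\cF_M,\cF_A)$. On the other side, $\Phi(R)$ is a Tambara functor iff its restriction $\phi$ along the canonical functor $\Span_A(\cF)\simeq\Bispan_{\eq,A}(\cF)\to\Bispan_{M,A}(\cF)$ is grouplike.

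The bridge between these is that the forgetful map of ring contexts above factors as $(\cF_{/X},(\cF_{/X})^\simeq,\cF_{/X,A})\to(\cF,\cF^\simeq,\cF_A)\to(\cF,\cF_M,\cF_A)$, hence $p_X=\iota\circ q_X$, where $\iota$ is the functor $\Span_A(\cF)\to\Bispan_{M,A}(\cF)$ appearing above and $q_X\colon\Span_A(\cF_{/X})\to\Span_A(\cF)$ is the product-preserving span functor induced by the projection $\cF_{/X}\to\cF$. Therefore $\ev_X(R)\simeq q_X^*\phi$. Since $\ctxt_A$ is extensive, $\phi$ has a unique product-preserving lift $\hat\phi\colon\Span_A(\cF)\to\CMon(\cX)$ by \cref{obs:levelwise-monoid}, and then $\hat\phi\circ q_X$ is the corresponding lift of $\ev_X(R)$ (again by the uniqueness clause, using that $q_X$ preserves products); so $\ev_X(R)$ is grouplike exactly when $\hat\phi$ sends every object in the image of $q_X$ into the full subcategory $\CGrp(\cX)\subseteq\CMon(\cX)$. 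As grouplikeness of $\phi$ is itself an objectwise condition and the functors $q_X$ are jointly surjective on objects as $X$ ranges over $\cF$ (already $q_Y$ sends $\id_Y$ to $Y$), we conclude that $\ev_X(R)$ is grouplike for all $X$ precisely when $\phi$ is grouplike. This is the desired pointwise equivalence.

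The part I expect to require the most care is the third paragraph: verifying that $p_X$ genuinely factors through $\Span_A(\cF)\to\Bispan_{M,A}(\cF)$, and that $\hat\phi\circ q_X$ is \emph{the} canonical commutative-monoid lift of $\ev_X(R)$ rather than some other lift. Both are ultimately formal — the first from the factorization of morphisms of bispan triples, the second from the uniqueness of product-preserving lifts in \cref{obs:levelwise-monoid} — but they are the places where a careless argument could slip. Once \cref{thm:cmon-bispan-model} and its compatibility diagram~(\ref{diag:lawvere-equ-pw}) are granted, no further genuine input is needed.
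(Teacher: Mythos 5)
Your proposal is correct and follows essentially the same route as the paper's own proof: reduce to the objectwise statement via fullness, use the compatibility triangle~(\ref{diag:lawvere-equ-pw}) to identify $\ev_X(R)$ with $\Phi(R)\circ p_X$, factor $p_X$ through $\Span_A(\cF_{/X})\to\Span_A(\cF)\to\Bispan_{M,A}(\cF)$, invoke the uniqueness of product-preserving lifts to $\CMon(\cX)$, and conclude by joint surjectivity of the functors $\Span_A(\cF_{/X})\to\Span_A(\cF)$. The only difference is that you spell out the factorization of $p_X$ and the identification of the canonical lift, which the paper leaves implicit.
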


\section{Normed equivariant spectra}
\label{sec:Normed_Spectra}
In this section, we will prove the main results of our paper: In particular, we will define the $G$-normed \icat{} of $G$-spectra, compare it to $G$-commutative groups, and then finally specialize the results of the previous sections to describe connective normed $G$-spectra in terms of Tambara functors.

\begin{convention}
We will fix the ring context $\ctxt = (\xF_G, \xF_G,\xF_G)$ throughout the whole section, write ``$G$-\icat{}'' instead of ``$\xF_G$-\icat{}'', and write ``normed'' instead of \kern1pt``\kern-1pt$\ctxt$-normed.'' It will be convenient to think of normed $G$-\icats{} as functors $\Span(\xF_G)\to\CMon(\CatI)$, via Observation~\ref{obs:levelwise-monoid}. We will furthermore repurpose the notation $\cC^\otimes$ to refer to a normed $G$-\icat{} $\Span(\xF_G)\to\CMon(\CatI)$ with underlying $G$-\icat{} $\cC\colon\xF_G^\op\to\CatI$.
\end{convention}

\subsection[Presentable $G$-\icats{}]{\boldmath Presentable $G$-\icats{}}
Unlike in the rest of this paper, we will need a fair bit of parametrized higher category theory \cite{exposeI,MW} in this section, and we begin by recalling some of the basic terminology. For simplicity, we will restrict to the case of $G$-\icats{} here, although our references work in much greater generality.

\begin{construction}
  The \icat{} $\Fun^\times(\xF_G^\op,\CatI)\simeq\Fun(\textbf{O}^\op_G,\CatI)$ of $G$-\icats{} is cartesian closed. We write $\ul\Fun_G$ for the internal hom, and $\Fun_G=\ev_{G/G}\circ\ul\Fun_G$ for its underlying ordinary \icat.
\end{construction}

Using $\Fun_G$, we can view the \icat{} of $G$-$\infty$-categories as an $(\infty,2)$-category; all that we will need below is that this enhances the homotopy $1$-category to a $2$-category. In particular, we obtain a natural notion of \emph{adjunctions} between $G$-\icats. The following recognition principle will be useful:

\begin{lemma}[\cite{MW}*{3.2.9 and 3.2.11}]\label{lemma:adj-recognition-principle}
  A functor $F\colon\cC\to\cD$ of $G$-\icats{} admits a right adjoint if and only if the following hold:
  \begin{enumerate}
    \item For each $X\in\textbf{\textup O}_G$ (or equivalently for each $X\in\xF_G$), the functor $F_X\colon\cC(X)\to\cD(X)$ admits a right adjoint $G_X$ in the usual sense.
    \item For each $f\colon X\to Y$ in $\textbf{\textup O}_G$ (or equivalently for $f$ in $\xF_G$) the Beck--Chevalley transformation $f^*G_Y\to G_Xf^*$ is invertible.\qed
  \end{enumerate}
\end{lemma}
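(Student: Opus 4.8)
The plan is to adapt the strategy of \cite{MW}. Recall that, by assumption, an adjunction of $G$-\icats{} is an adjunction in the homotopy $2$-category underlying $\Fun(\textbf{O}_G^{\op},\CatI)$; its mapping $1$-categories are ordinary functor categories and composition is computed objectwise, so the evaluation functors $\ev_X$ and the restriction functors $f^*$ are $2$-functorial, and a quadruple $(F,G,\eta,\epsilon)$ forms an adjunction exactly when each $(F_X,G_X,\eta_X,\epsilon_X)$ does — the triangle identities being equations between natural transformations, hence checkable objectwise. Necessity of (1) and (2) is then immediate: applying $\ev_X$ to a given adjunction $F\dashv G$ yields $F_X\dashv G_X$, and since $G$ is a functor of $G$-\icats{} it comes equipped with natural equivalences $f^*G_Y\simeq G_Xf^*$; a standard mate computation identifies these with the Beck--Chevalley transformations assembled from the unit, the structural equivalence $F_Xf^*\simeq f^*F_Y$ of $F$, and the counit, so the latter are invertible.

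For the converse, the plan is to build the right adjoint after unstraightening. First I would unstraighten $\cC,\cD\colon\textbf{O}_G^{\op}\to\CatI$ to cartesian fibrations $\cE,\cE'\to\textbf{O}_G$ and $F$ to a cartesian-edge-preserving functor $\widetilde F\colon\cE\to\cE'$ over $\textbf{O}_G$ with fibers $F_X$. By hypothesis (1) and the relative adjoint functor theorem \cite{HA}*{7.3.2.6}, $\widetilde F$ admits a right adjoint $\widetilde G$ relative to $\textbf{O}_G$, whose relative unit $\widetilde\eta$ and counit $\widetilde\epsilon$ lie over identities; in particular $\widetilde G$ restricts on fibers to the $G_X$, and $\widetilde\eta,\widetilde\epsilon$ restrict to the units and counits of $F_X\dashv G_X$. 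The crucial — and, I expect, the hardest — step is to show that $\widetilde G$ preserves cartesian edges: for $f\colon X\to Y$ and $e'\in\cE'_Y$, applying $\widetilde G$ to a cartesian lift of $f$ with target $e'$ gives an edge over $f$, and I would unwind the construction of the relative adjoint to identify the canonical comparison of this edge with the cartesian lift of $f$ targeting $G_Y(e')$ as precisely the $e'$-component of the Beck--Chevalley map $f^*G_Y\to G_Xf^*$. By (2) this is an equivalence, so $\widetilde G$ preserves cartesian edges and hence straightens to a functor of $G$-\icats{} $G\colon\cD\to\cC$ with components $G_X$.

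Finally, since $\mathrm{id}_\cE$, $\widetilde G\widetilde F$, and $\widetilde F\widetilde G$ all preserve cartesian edges, the relative unit and counit straighten to natural transformations $\eta\colon\mathrm{id}_\cC\Rightarrow GF$ and $\epsilon\colon FG\Rightarrow\mathrm{id}_\cD$ of $G$-\icats{}; the triangle identities hold after applying each $\ev_X$, where they are the triangle identities of $F_X\dashv G_X$, so by the objectwise criterion recalled above $F\dashv G$ as $G$-\icats{}. As an alternative to unstraightening, one could instead argue directly with parametrized mapping objects, showing that the parametrized functor sending $d$ to the parametrized mapping object out of $F(-)$ into $d$ is corepresentable by means of the fiberwise adjoints and the Beck--Chevalley equivalences; but the fibrational argument above looks more convenient for handling the coherences.
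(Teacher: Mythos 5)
The overall strategy is reasonable — and since the paper offers no proof of its own here (the lemma is quoted from \cite{MW}, where it is established by quite different means, namely internal category theory in an $\infty$-topos), your fibrational route is a legitimate alternative. However, there is a genuine gap at the pivotal step. You unstraighten $\cC,\cD\colon\textbf{O}_G^{\op}\to\CatI$ to \emph{cartesian} fibrations over $\textbf{O}_G$ and then invoke \cite{HA}*{7.3.2.6} to produce a relative right adjoint from hypothesis (1) alone. That proposition concerns \emph{cocartesian} fibrations and functors preserving cocartesian edges; its dual for cartesian fibrations produces relative \emph{left} adjoints from fiberwise left adjoints. In your setup (cartesian fibrations, fiberwise right adjoints) the relative right adjoint does \emph{not} exist from (1) alone: already over $\Delta^1$, for $d$ in the fiber $\cD(Y)$ the only candidate value is $G_Y(d)$, which corepresents $\Map_{\cD}(F(-),d)$ on the fiber $\cC(Y)$, but on the fiber $\cC(X)$ one needs $\Map(c,f^*G_Yd)\simeq\Map(F_Xc,f^*d)\simeq\Map(c,G_Xf^*d)$ — i.e.\ exactly the Beck--Chevalley condition. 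So existence of the relative right adjoint and preservation of cartesian edges are entangled, and the two-step plan ``build $\widetilde G$ from (1), then use (2) to check it preserves cartesian edges'' does not parse as written.

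The repair is easy and keeps your architecture intact: unstraighten instead to \emph{cocartesian} fibrations over $\textbf{O}_G^{\op}$. Then $F$ becomes a cocartesian-edge-preserving functor, \cite{HA}*{7.3.2.6} applies verbatim, and hypothesis (1) alone yields a relative right adjoint $\widetilde G$ over $\textbf{O}_G^{\op}$ — a priori only a ``lax'' $G$-functor, whose structure $2$-cell over an edge $f$ is (as you correctly anticipate one should verify) precisely the Beck--Chevalley map $f^*G_Y\to G_Xf^*$. Hypothesis (2) is then exactly the condition that $\widetilde G$ preserve cocartesian edges, so that it straightens to a $G$-functor and the relative unit and counit straighten to $G$-natural transformations; the remainder of your argument (checking the triangle identities objectwise, which is legitimate since invertibility of the relevant composite $2$-cells is detected on evaluations) then goes through. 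Your necessity direction is fine as stated.
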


\begin{defn}
  A $G$-\icat{} $\cC\colon\xF_G\to\CatI$ is called \emph{presentable} if it satisfies all of the following conditions:
  \begin{enumerate}
    \item It factors through $\Pr^\textup{L}$.
    \item For each $g\colon C\to D$ in $\xF_G$ the functor $g^*\colon\cC(D)\to\cC(C)$ admits a left adjoint $g_!$, and for any pullback square
    \[
      \begin{tikzcd}
        A\arrow[r, "f"]\arrow[dr,phantom,"\lrcorner"{very near start}]\arrow[d,"p"'] & B\arrow[d, "q"]\\
        C\arrow[r, "g"'] & D
      \end{tikzcd}
    \]
    the Beck--Chevalley map $f_!p^*\to q^*g_!$ is invertible.
  \end{enumerate}
\end{defn}

\begin{cor}[\cite{MW_presentable}*{6.3.1}]\label{cor:G-SAFT}
  A functor $F\colon\cC\to\cD$ of presentable $G$-\icats{} is a left adjoint if and only if it \emph{\boldmath preserves $G$-colimits} in the following sense: $F$ is left adjointable, and for every $X\in\textbf{\textup O}_G$ (or equivalently $\xF_G$) the functor $\cC(X)\to\cD(X)$ preserves ordinary colimits.
  \begin{proof}
    By the Adjoint Functor Theorem the latter condition is
    equivalent to each $\cC(X)\to\cD(X)$ admitting a right adjoint
    $G_X$. By passing to total mates, the former condition is then
    equivalent to the Beck--Chevalley maps $f^*G_Y\to G_Xf^*$ being
    invertible. Thus, the claim follows from
    Lemma~\ref{lemma:adj-recognition-principle}.
  \end{proof}
\end{cor}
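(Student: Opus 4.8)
The plan is to deduce this from the recognition principle for right adjoints of $G$-functors (Lemma~\ref{lemma:adj-recognition-principle}) by feeding in the ordinary adjoint functor theorem together with a short mate-calculus argument. Since $F$ is a morphism of $G$-\icats{} it commutes with the restriction functors, so for every $f\colon X\to Y$ in $\xF_G$ we are handed an equivalence $F_X f^*\simeq f^* F_Y$; and since $\cC$ and $\cD$ are presentable, each $f^*$ admits a left adjoint $f_!$. Taking the mate of $F_X f^*\simeq f^* F_Y$ along the adjunctions $f_!\dashv f^*$ produces a Beck--Chevalley transformation $f_! F_X\to F_Y f_!$, and ``$F$ is left adjointable'' means precisely that this is an equivalence for every $f$ (equivalently for every $f$ in $\textbf{O}_G$, since everything in sight is built by finite coproduct completion). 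Unwinding the definition given in the statement, ``$F$ preserves $G$-colimits'' is thus the conjunction of: (a) each $F_X$ preserves small colimits, and (b) each transformation $f_! F_X\to F_Y f_!$ is invertible.

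First I would dispose of (a): by the adjoint functor theorem for presentable \icats{}, a functor between presentable \icats{} preserves small colimits if and only if it admits a right adjoint. Hence (a) is equivalent to saying that $F_X$ admits a right adjoint $G_X$ for every $X$, which is precisely condition~(1) of Lemma~\ref{lemma:adj-recognition-principle}.

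It then remains to check that, assuming the pointwise right adjoints $G_X$ exist, condition (b) is equivalent to condition~(2) of Lemma~\ref{lemma:adj-recognition-principle}, i.e.\ to invertibility of the Beck--Chevalley maps $f^* G_Y\to G_X f^*$. Both of these arise as mates of the same equivalence $F_X f^*\simeq f^* F_Y$: the first along $f_!\dashv f^*$, the second along $F_X\dashv G_X$ and $F_Y\dashv G_Y$. The mate calculus in the homotopy $2$-category of $G$-\icats{} (with the enhancement recalled before Lemma~\ref{lemma:adj-recognition-principle}) then shows that, for each fixed $f$, one of these is an equivalence if and only if the other is --- morally, an equivalence between the left adjoints $f_! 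F_X$ and $F_Y f_!$ is the same datum as an equivalence between their right adjoints $G_X f^*$ and $f^* G_Y$. Combining this with the previous step and Lemma~\ref{lemma:adj-recognition-principle} yields the claim. The one genuinely nontrivial ingredient is this last step: stating and applying the appropriate ``mate lemma'' $(\infty,2)$-categorically; granting it, the rest (including the harmless interchange of $\textbf{O}_G$ and $\xF_G$) is routine bookkeeping.
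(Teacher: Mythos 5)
Your proposal is correct and follows essentially the same route as the paper: apply the adjoint functor theorem to trade pointwise colimit-preservation for pointwise right adjoints $G_X$, pass to total mates to convert invertibility of $f_!F_X\to F_Yf_!$ into invertibility of $f^*G_Y\to G_Xf^*$, and conclude by Lemma~\ref{lemma:adj-recognition-principle}. The paper compresses your ``mate lemma'' step into the phrase ``by passing to total mates,'' so you have correctly identified both the structure of the argument and its one nontrivial ingredient.
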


\begin{defn}
  We denote the \icat{} of presentable $G$-\icats{} and left adjoint (equivalently: $G$-cocontinuous) functors by $\Pr^\textup{L}_G$.
\end{defn}

\begin{remark}
  For every $\cC,\cD\in\Pr^\textup{L}_G$, there is a $G$-subcategory $\ul\Fun_G^\textup{L}(\cC,\cD)\subseteq\ul\Fun_G(\cC,\cD)$ of the internal hom, given in degree $G/G$ by the full subcategory of left adjoint functors $\cC\to\cD$; we refer the reader to \cite{CLL_Global}*{2.3.22} or \cite{MW}*{discussion before~3.3.6} for details. As we will recall in the proof of Theorem~\ref{thm:unique-naive} below, this is the internal hom for a parametrized analogue of the Lurie tensor product.
\end{remark}

\subsection[The $G$-\icat{} of $G$-spaces]{\boldmath The $G$-\icat{} of $G$-spaces}
As a warm-up and an ingredient for the construction of the normed $G$-\icat{} of $G$-spectra, we will recall two equivalent constructions of the $G$-\icat{} of \emph{$G$-spaces} in this subsection, and show that it admits a unique normed structure interacting suitably with (pointwise) colimits.

We begin with a construction via classical equivariant homotopy theory:

\begin{construction}
  Let $\SSet$ be the $1$-category of simplicial sets. Applying Construction~\ref{constr:borel-g-cat}, we obtain a \emph{Borel $G$-category} $\SSet^\flat$, given slightly informally as follows: $\SSet^\flat$ sends $G/H$ to the category of simplicial sets with (strict) $H$-action, with contravariant functoriality via restricting the action.

  We now equip each $\SSet^\flat(G/H)=\Fun(BH,\SSet)$ with the $H$-equivariant weak equivalences, i.e.~those maps $f$ such that $f^K$ is a weak homotopy equivalence for every $K\leqslant H$. As these are clearly stable under restriction, this defines a lift of $\SSet^\flat$ to a functor from $\textbf{O}^\op_G$ to relative categories. Postcomposing with Dwyer--Kan localization, we therefore obtain a $G$-$\infty$-category $\mS_G$.

  We moreover write $\mF_G$ for the Borel $G$-category $\xF^\flat$; equivalently, this is the full subcategory of $\mS_G$ spanned in degree $G/H$ by the finite $H$-sets.
\end{construction}

Next, let us compare this to a purely $\infty$-categorical construction.

\begin{construction}
  We write $\ul{\xF}_G$ for the $G$-\icat{} $X\mapsto (\xF_G)_{/X}$ and $\ul\Spc_G$ for the $G$-\icat{} $\cP_\Sigma(\ul\xF_G)\coloneqq\Fun^{\times}(\ul{\xF}_G^\op,\Spc)$, with functoriality via left Kan extension (cf.~Proposition~\ref{propn:LKEprodpres}). By \cite{HHLN2}*{8.1} the Yoneda embeddings assemble into a $G$-functor $\ul{\xF}_G\hookrightarrow\ul{\Spc}_G$, exhibiting the target as the pointwise sifted cocompletion.
\end{construction}

\begin{remark}
  Equivalently, $\ul{\Spc}_G$ is obtained from the ``co-$G$-\icat'' $\xF_G\to\CatI,X\mapsto (\xF_G)_{/X}$ (functoriality via pushforward) by applying the \emph{contravariant} functor $\Fun^\times(\blank,\Spc)$. As the inclusion $(\textbf{O}_G)_{/X}\hookrightarrow(\xF_G)_{/X}$ is the finite coproduct completion for every $X\in\textbf{O}_G$, we can also describe this as the functor $\textbf{O}_G^\op\to\CatI, X\mapsto\Fun((\textbf{O}_G)_{/X}^\op,\Spc)$. The latter description serves as the definition of $\ul\Spc_G$ in \cite{CLL_Clefts}.
\end{remark}

\begin{thm}\label{thm:unstable-comparison}
  There are unique equivalences $\mS_G\simeq\ul\Spc_G$ and $\mF_G\simeq\ul{\xF}_G$ of $G$-\icats. Moreover, these equivalences fit into a commutative diagram
  \begin{equation*}
    \begin{tikzcd}
      \mF_G\arrow[r, hook]\arrow[d,"\simeq"'] &[2em] \mS_G\arrow[d,"\simeq"]\\
      \ul{\xF}_G\arrow[r, hook, "\textup{Yoneda}"'] &[2em] \ul\Spc_G\rlap.
    \end{tikzcd}
  \end{equation*}
  \begin{proof}
    By \cite{CLL_Clefts}*{5.12} there exists a unique equivalence $\Phi\colon\mS_G\isoto\ul\Spc_G$. By virtue of being an equivalence, this preserves (levelwise) terminal objects and $G$-colimits.

    Let now $K\leqslant H\leqslant G$. Then both the restriction $i^*\colon\Fun(BH,\SSet)\to\Fun(BK,\SSet)$ as well as its $1$-categorical left adjoint $i_!$ are homotopical; thus, the $\infty$-categorical left adjoint $i_!\colon\mS_G(G/K)\to\mS_G(G/H)$ can simply be computed by the $1$-categorical left adjoint. In the same way, we see that terminal objects and coproducts in $\mS_G$ can be computed in the $1$-category $\SSet$. In particular, we have $H/K=i_!i^*(\bfone_{K})$ in $\mS_G$, whence \[\Phi(H/K)\simeq i_!i^*(\id_{G/H})\simeq (G/K\twoheadrightarrow G/H)\in(\xF_G)_{/(G/H)}.\] It follows by direct inspection that $\Phi$ maps $\textbf{O}_H\subseteq\mF_G(G/H)$ essentially surjectively into $(\textbf{O}_{G})_{/(G/H)}\subseteq\ul\xF_G(G/H)$, and closing up under finite coproducts we see that the equivalence $\Phi$ restricts to a functor $\phi\colon\ul{\xF}_G\to\mF_G$ that is essentially surjective, and hence itself an equivalence.

    Finally, $\ul{\xF}_G$ has no non-trivial automorphisms by \cite{CLL_Global}*{4.2.17}, which completes the proof of the proposition.
  \end{proof}
\end{thm}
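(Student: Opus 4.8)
The plan is to bootstrap the statement about finite $G$-sets from the corresponding statement about $G$-spaces, which is already available: by \cite{CLL_Clefts}*{5.12} there is a \emph{unique} equivalence $\Phi\colon\mS_G\isoto\ul\Spc_G$ of $G$-\icats{}. It then suffices to (a) show that $\Phi$ restricts to an equivalence between the full subcategories $\mF_G\subseteq\mS_G$ and $\ul\xF_G\subseteq\ul\Spc_G$, (b) check that this restriction is again the unique such equivalence, and (c) verify that the square with the two canonical embeddings commutes.

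For (a), the crux is to compute $\Phi$ on orbit objects. In $\mS_G(G/H)$ the orbit $H/K$ (for $K\leqslant H$) can be written as $i_!i^*\bfone_K$, where $i\colon K\hookrightarrow H$ is the inclusion, $\bfone_K$ the fiberwise terminal object, $i^*$ restriction, and $i_!$ its left adjoint. I would first observe that since both $i^*\colon\Fun(BH,\SSet)\to\Fun(BK,\SSet)$ and its point-set left adjoint (induction) are homotopical, the $\infty$-categorical $i_!$, the fiberwise terminal objects, and finite coproducts in $\mS_G$ are all computed by their evident $1$-categorical counterparts in $\SSet$. Since $\Phi$ is an equivalence of $G$-\icats{} it preserves all of this structure — in particular the parametrized functoriality $i_!$, which is a $G$-colimit — so $\Phi(H/K)\simeq i_!i^*(\id_{G/H})\simeq(G/K\twoheadrightarrow G/H)$ inside $\ul\xF_G(G/H)=(\xF_G)_{/(G/H)}$. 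Letting $K$ vary, $\Phi$ sends $\textbf{O}_H\subseteq\mF_G(G/H)$ essentially surjectively onto $(\textbf{O}_G)_{/(G/H)}$; as every finite $H$-set is a finite coproduct of orbits and $\Phi$ preserves finite coproducts, $\Phi$ restricts to an essentially surjective $G$-functor $\phi\colon\mF_G\to\ul\xF_G$, which is fully faithful since $\Phi$ is, hence an equivalence.

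For (b), uniqueness follows from the uniqueness clause of \cite{CLL_Clefts}*{5.12} together with the fact that $\ul\xF_G$ has no nontrivial automorphisms \cite{CLL_Global}*{4.2.17}. Part (c) is then formal: the vertical maps of the square are $\phi$ and $\Phi$ by construction, the horizontal maps are the defining inclusion $\mF_G\hookrightarrow\mS_G$ and the Yoneda embedding $\ul\xF_G\hookrightarrow\ul\Spc_G$, and commutativity holds because $\phi$ is by definition the corestriction of $\Phi$ along these inclusions, matched with the Yoneda embedding precisely by the computation of $\Phi$ on orbits above. I expect step (a) to be the only real subtlety: one must be careful that the strict model genuinely computes the \emph{parametrized} colimit structure (the functors $i_!$ and the fiberwise terminal objects), and that being an equivalence of $G$-\icats{} really does force $\Phi$ to respect this structure and not merely the pointwise one; the rest is bookkeeping.
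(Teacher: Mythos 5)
Your proposal is correct and follows essentially the same route as the paper: invoke the unique equivalence $\Phi\colon\mS_G\isoto\ul\Spc_G$ from \cite{CLL_Clefts}*{5.12}, compute $\Phi$ on orbits via $H/K=i_!i^*(\bfone_K)$ using that the point-set adjoints are homotopical, deduce that $\Phi$ restricts to an essentially surjective (hence, being fully faithful, an) equivalence $\mF_G\simeq\ul\xF_G$, and settle uniqueness of the latter via the absence of nontrivial automorphisms of $\ul\xF_G$ \cite{CLL_Global}*{4.2.17}. The only differences are cosmetic (you spell out the commutativity of the square slightly more explicitly).
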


\begin{remark}\label{rk:Spc-universal}
  As recalled in \cite{CLL_Global}*{2.4.11}, $\ul\Spc_G$ is the \emph{free presentable $G$-\icat{} on a point} in the following sense: for any $\cC\in\Pr^\textup{L}_G$ evaluation at the terminal object of $\Spc_G(G/G)$ defines an equivalence $\ul\Fun_G^\textup{L}(\ul\Spc_G,\cC)\isoto\cC$.
\end{remark}

We can also give a pointed version of the above comparison:

\begin{construction}
  Consider the category $\SSet_*$ of pointed simplicial sets. As before, we can associate to this a $G$-$1$-category $\SSet_*^\flat, G/H\mapsto\Fun(BH,\SSet_*)$, which we then localize at the (underlying) equivariant weak equivalences to obtain a $G$-$\infty$-category $\mS_{G,*}$. As the equivariant weak equivalences are part of a left proper model structure, we get a natural equivalence $\mS_{G,*}\simeq(\mS_G)_*$ compatible with the forgetful functors.

  We further write $\mF_{G,*}$ for the full subcategory spanned in degree $G/H$ by the finite pointed $H$-sets, so that $\mF_{G,*}\simeq(\mF_G)_*$.
\end{construction}

\begin{cor}\label{cor:based-comparison}
  There is a commutative diagram
  \begin{equation*}
    \begin{tikzcd}
      \mF_{G,*}\arrow[r, hook]\arrow[d,"\simeq"'] &[2em] \mS_{G,*}\arrow[d,"\simeq"]\\
      (\ul{\xF}_G)_*\arrow[r, hook] &[2em] (\ul\Spc_G)_*\rlap.
    \end{tikzcd}
  \end{equation*}
  in which the vertical maps are equivalences and the top and bottom vertical arrow exhibit their targets as sifted cocompletion of the respective sources.
  \begin{proof}
    In light of Theorem~\ref{thm:unstable-comparison}, the only non-trivial statement is that the horizontal maps define sifted cocompletions. For this it will be enough to consider the bottom arrow, where this is an immediate consequence of \cite{norms}*{4.1} as every $(Y\to X\to Y)\in \big((\xF_G)_{/Y}\big)_*$ is disjointly based.
  \end{proof}
\end{cor}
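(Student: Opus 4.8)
The plan is to deduce this formally from Theorem~\ref{thm:unstable-comparison} by applying the pointwise pointed-objects functor, so that the only substantive point — that the two horizontal maps are pointwise sifted cocompletions — reduces to the non-parametrized statement \cite{norms}*{4.1}. First I would use that passing to pointed objects $\cD\mapsto\cD_*$ is functorial in arbitrary functors and preserves equivalences, as does its levelwise extension to $G$-\icats{}; applying this to the commutative square of Theorem~\ref{thm:unstable-comparison} produces a commutative square
\[
\begin{tikzcd}
  (\mF_G)_*\arrow[r, hook]\arrow[d,"\simeq"'] &[1.5em] (\mS_G)_*\arrow[d,"\simeq"]\\
  (\ul\xF_G)_*\arrow[r, hook] &[1.5em] (\ul\Spc_G)_*
\end{tikzcd}
\]
with vertical maps equivalences and horizontal maps the induced inclusions. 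Combining this with the identifications $\mS_{G,*}\simeq(\mS_G)_*$ and $\mF_{G,*}\simeq(\mF_G)_*$ recorded in the preceding constructions — which are compatible with the inclusions of the finite-set subcategories by construction — gives the square of the statement together with the assertion that its vertical maps are equivalences.

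It then remains to show that the top and bottom horizontal maps exhibit their targets as pointwise sifted cocompletions. Since the vertical maps are equivalences it suffices to treat the bottom one, and as the pointwise sifted cocompletion of a $G$-\icat{} is computed degreewise, it is enough to check for each $Y\in\xF_G$ that the inclusion $\big((\xF_G)_{/Y}\big)_*\hookrightarrow\big(\cP_\Sigma((\xF_G)_{/Y})\big)_*$ exhibits the target as the sifted cocompletion of the source. This is precisely \cite{norms}*{4.1}, whose hypothesis holds here: $(\xF_G)_{/Y}$ has finite coproducts, and every pointed object $(Y\xrightarrow{s}X\xrightarrow{p}Y)$ of it is disjointly based, because the section $s$ is a split monomorphism in $\xF_G$ and hence exhibits $Y$ as a disjoint summand of $X$.

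The step I expect to cost the most work is the bookkeeping of compatibilities behind these two reductions: that the equivalence $\mS_{G,*}\simeq(\mS_G)_*$ carries the inclusion of $\mF_{G,*}$ to that of $(\mF_G)_*$, and that after invoking \cite{norms}*{4.1} the induced functor is literally the canonical inclusion $(\ul\xF_G)_*\hookrightarrow(\ul\Spc_G)_*$ — in particular that the left Kan extension functoriality of $\ul\Spc_G$ agrees degreewise with that coming from $\cP_\Sigma$ of the pointed slices. All of this amounts to unwinding definitions, using that the functors involved are the evident pointwise constructions and that the Yoneda embedding and \cite{norms}*{4.1} are natural; if a fully explicit verification is wanted, it can be carried out degreewise and on the generating objects $H/K$, just as in the proof of Theorem~\ref{thm:unstable-comparison}.
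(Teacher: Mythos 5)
Your proposal is correct and follows essentially the same route as the paper: reduce the square and the vertical equivalences to Theorem~\ref{thm:unstable-comparison} via the levelwise pointed-objects construction, then verify the sifted-cocompletion claim only for the bottom arrow by applying \cite{norms}*{4.1}, using that every pointed object $(Y\to X\to Y)$ of $(\xF_G)_{/Y}$ is disjointly based since the section is a complemented monomorphism of finite $G$-sets. The extra compatibility bookkeeping you flag is exactly the content the paper subsumes under ``in light of Theorem~\ref{thm:unstable-comparison}.''
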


Next, we turn our attention to normed structures on these $G$-\icats; we restrict to the pointed case here (as this is the only one we will need below), although the unbased case is analogous.

\begin{propn}\label{prop:F-G-unique-sm}
  There exists a unique normed structure on $\mF_{G,*}$ with unit $S^0$ such that the symmetric monoidal product on $\mF_{G,*}(G/e)=\xF_*$ preserves finite coproducts in each variable.
  \begin{proof}
    By Corollary~\ref{cor:borel-unique-single}, it will suffice that $\xF_*$ (with trivial $G$-action) has a unique lift to $\Fun(BG,\CMon(\Cat_\infty))$ with unit $S^0$ and for which the tensor product preserves finite coproducts in each variable. Consider for this the version of the Lurie tensor product on $\Cat^\amalg$ representing functors that preserve finite coproducts in each variable. Then $(\xF_*,S^0)$ is an idempotent object for this tensor product by e.g.~\cite{CLL_Global}*{4.7.6}, whence it is also idempotent in $\Fun(BG,\Cat^\amalg)$ with the levelwise symmetric monoidal structure. The claim now follows from \cite{HA}*{4.8.2.9}.
  \end{proof}
\end{propn}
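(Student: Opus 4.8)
The plan is to reduce the statement to a purely non-equivariant computation by exploiting that $\mF_{G,*}$ is a \emph{Borel} $G$-category, and then to recognise the relevant object as an idempotent algebra. First I would appeal to the uniqueness principle for normed structures on Borel $G$-categories established in the appendix (Corollary~\ref{cor:borel-unique-single}): since $\mF_{G,*}$ is the Borel $G$-category attached to $\xF_*$ with its trivial $G$-action, a normed structure on $\mF_{G,*}$ with the prescribed unit and the prescribed behaviour on the fibre over $G/e$ is the same as a lift of $\xF_*\in\Fun(BG,\CatI)$ along $\CMon(\CatI)\to\CatI$ whose unit is $S^0$ and whose underlying tensor product on $\xF_*$ preserves finite coproducts in each variable. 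It therefore suffices to prove that the space of such lifts is contractible.

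Next I would encode this condition algebraically. There is a variant $\otimes^{\amalg}$ of the Lurie tensor product on the $\infty$-category $\CatI^{\amalg}$ of small $\infty$-categories admitting finite coproducts (with coproduct-preserving functors), characterised by the property that a commutative algebra for $\otimes^{\amalg}$ is exactly a symmetric monoidal $\infty$-category whose tensor product preserves finite coproducts in each variable; its monoidal unit is $\xF$, the free such $\infty$-category on a point. Since $\otimes^{\amalg}$ is computed objectwise, it induces the levelwise symmetric monoidal structure on $\Fun(BG,\CatI^{\amalg})$, and under this identification the lifts we want are precisely the commutative algebra structures on the object of $\Fun(BG,\CatI^{\amalg})$ given by $\xF_*$ with trivial action, with unit the (trivially equivariant) coproduct-preserving classifying map $\xF\to\xF_*$ of $S^0$.

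The key point is then that $(\xF_*,S^0)$ is an \emph{idempotent} object of $\CatI^{\amalg}$, i.e.\ the multiplication $\xF_*\otimes^{\amalg}\xF_*\to\xF_*$ is an equivalence: concretely, tensoring with $\xF_*$ freely adjoins a zero object compatibly with finite coproducts, and adjoining a second zero object does nothing. This is a soft consequence of the universal property of $\xF_*$ (see \cite{CLL_Global}*{4.7.6}). Idempotency is detected objectwise, so $(\xF_*,S^0)$ with trivial action is again idempotent in $\Fun(BG,\CatI^{\amalg})$, and I would conclude by \cite{HA}*{4.8.2.9}, which says that an idempotent object of a symmetric monoidal $\infty$-category carries an essentially unique commutative algebra structure; unwinding the reductions, this gives both existence and uniqueness. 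The only real friction I anticipate is bookkeeping in the first step: one must check that Corollary~\ref{cor:borel-unique-single} applies with the hypothesis imposed only on the fibre over $G/e$ (coproduct-compatibility over the other orbits being automatic, which is exactly why the Borel construction is convenient here), and one should verify that the variant $\otimes^{\amalg}$ of the Lurie tensor product has the stated universal property by rerunning the usual argument with finite coproducts in place of all colimits. The idempotency input and the appeal to \cite{HA}*{4.8.2.9} are then formal.
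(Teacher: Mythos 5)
Your proposal is correct and follows essentially the same route as the paper: reduce via Corollary~\ref{cor:borel-unique-single} to lifting $\xF_*$ with trivial action to $\Fun(BG,\CMon(\CatI))$, recognise $(\xF_*,S^0)$ as an idempotent for the coproduct-preserving variant of the Lurie tensor product using \cite{CLL_Global}*{4.7.6}, and conclude with \cite{HA}*{4.8.2.9}.
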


Combining this with Corollary~\ref{cor:based-comparison} and  the universal property of sifted cocompletion, we get:

\begin{cor}\label{cor:pted-unique-G-sym-mon}
  There exists a unique $G$-normed structure on $\mS_{G,*}$ together with a lift of $\mF_{G,*}\hookrightarrow\mS_{G,*}$ to a normed $G$-functor such that the following two conditions are satisfied:
  \begin{enumerate}
    \item For each $X\in\xF_G$, $\mS_{G,*}(X)$ is presentably symmetric monoidal.
    \item For each $f\colon X\to Y$ in $\xF_G$, the functor $f_\otimes\colon\mS_{G,*}(X)\to\mS_{G,*}(Y)$ preserves sifted colimits.
  \end{enumerate}
  The analogous statement for $\ul{\xF}_{G,*}\hookrightarrow\ul\Spc_{G,*}$ holds, and for these
  normed structures there is a unique normed equivalence $\mS_{G,*}^\otimes\simeq \ul\Spc_{G,*}^\otimes$.\qed
\end{cor}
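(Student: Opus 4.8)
The plan is to reduce the statement to the uniqueness result of \cref{prop:F-G-unique-sm} by means of a normed enhancement of the universal property of pointwise sifted cocompletion. By \cref{cor:based-comparison}, the inclusions $\mF_{G,*}\hookrightarrow\mS_{G,*}$ and $\ul\xF_{G,*}\hookrightarrow\ul\Spc_{G,*}$ each exhibit the target as the pointwise sifted cocompletion of the source, and by the pointed form of \cref{thm:unstable-comparison} (i.e.\ \cref{cor:based-comparison} itself) these two pictures are identified by a \emph{unique} pair of equivalences $\mF_{G,*}\simeq\ul\xF_{G,*}$ and $\mS_{G,*}\simeq\ul\Spc_{G,*}$. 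It therefore suffices to treat $\mF_{G,*}\hookrightarrow\mS_{G,*}$; the case of $\ul\xF_{G,*}\hookrightarrow\ul\Spc_{G,*}$ is identical, and since the equivalence $\mF_{G,*}\simeq\ul\xF_{G,*}$ is automatically normed by the uniqueness in \cref{prop:F-G-unique-sm}, the final clause about the normed equivalence $\mS_{G,*}^\otimes\simeq\ul\Spc_{G,*}^\otimes$ will drop out at the end by naturality of the construction.

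The technical heart, which I would isolate as a lemma, is the following: for a normed presentable $G$-\icat{} $\cD^\otimes$ whose norm maps preserve sifted colimits, restriction along the pointwise sifted cocompletion $\mF_{G,*}\hookrightarrow\mS_{G,*}$ identifies the space of normed structures of this kind on $\mS_{G,*}$ that lift the inclusion to a normed $G$-functor with the space of normed structures on $\mF_{G,*}$ whose tensor product preserves finite coproducts in each variable. Concretely, given such a structure on $\mS_{G,*}$, each norm $f_\otimes\colon\mS_{G,*}(X)\to\mS_{G,*}(Y)$ is forced --- by condition~(2) together with the normed-functor condition --- to be the unique sifted-colimit-preserving extension of the norm on $\mF_{G,*}$, and the tensor products and all coherence data are determined in the same way, while the tensor product of the restricted structure preserves finite coproducts in each variable because the fully faithful inclusions $\mF_{G,*}(X)\hookrightarrow\mS_{G,*}(X)$ do and the tensor product upstairs is bicocontinuous; conversely, one must check that these extensions assemble into an honest normed $G$-\icat{}, which is precisely what the symmetric monoidality of pointwise $\cP_\Sigma$ (for the ``finite-coproduct'' parametrized Lurie tensor product on the source and the parametrized Lurie tensor product on $\Pr^\textup{L}_G$) provides. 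Perhaps most cleanly, existence and uniqueness can be obtained together from this monoidality by noting that $\mF_{G,*}^\otimes$ is $\otimes$-idempotent --- this is essentially the content of the proof of \cref{prop:F-G-unique-sm}, where $(\xF_*,S^0)$ is shown to be idempotent in $\Fun(BG,\Cat^\amalg)$ --- so that its image under $\cP_\Sigma^{\mathrm{pt}}$ is an idempotent object among normed presentable $G$-\icats{}, whence \cite{HA}*{4.8.2.9} furnishes a unique normed structure with prescribed unit. Conditions~(1) and~(2) then hold automatically: (2) is the defining property of the construction, and (1) holds because on each fiber the smash product preserves sifted colimits and finite coproducts, hence all colimits, and the fibers of $\mS_{G,*}$ are presentable.

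Granting this lemma, the corollary follows: feeding the normed structure on $\mF_{G,*}$ provided by \cref{prop:F-G-unique-sm} --- the unique one with unit $S^0$ whose tensor product preserves finite coproducts in each variable --- into the construction yields the desired normed structure on $\mS_{G,*}$, and uniqueness of the output follows from uniqueness of the input via the equivalence above. Running the same argument for $\ul\xF_{G,*}\hookrightarrow\ul\Spc_{G,*}$ and using that $\mF_{G,*}^\otimes\simeq\ul\xF_{G,*}^\otimes$ as normed $G$-\icats{} (again by \cref{prop:F-G-unique-sm}) identifies the two outputs, producing a normed equivalence $\mS_{G,*}^\otimes\simeq\ul\Spc_{G,*}^\otimes$, whose uniqueness is one more instance of the equivalence above. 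The main obstacle is the lemma itself: establishing the normed/monoidal universal property of pointwise sifted cocompletion in the parametrized setting --- equivalently, the symmetric monoidality of $\cP_\Sigma^{\mathrm{pt}}$ for the relevant tensor products and its compatibility with the Borel construction of the appendix. Once that is in place (or cited from the parametrized literature), everything else is formal bookkeeping.
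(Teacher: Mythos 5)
Your proposal is correct and follows essentially the same route as the paper, which derives the corollary in one line from \cref{prop:F-G-unique-sm}, \cref{cor:based-comparison}, and the (normed/monoidal) universal property of pointwise sifted cocompletion; your ``key lemma'' is precisely the content the paper packages into that last phrase. The extra detail you supply --- in particular running the idempotency argument through $\cP_\Sigma$ and noting that preservation of finite coproducts and sifted colimits on each fiber yields condition~(1) --- is a faithful expansion of what the paper leaves implicit.
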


Let us make the normed structure from \Cref{cor:pted-unique-G-sym-mon} explicit for our favorite model:

\begin{construction}
  We equip $\SSet_*$ with the symmetric monoidal structure coming from the smash product. This then yields a normed structure on the Borel $G$-category $\SSet_*^\flat$ via Proposition~\ref{prop:borel-G-sym-mon}. The symmetric monoidal structure on the individual categories $\SSet_*^\flat(G/H)$ is then given by the usual smash product (Observation~\ref{obs:pw-mon-borel}), while Corollary~\ref{cor:borel-norms-classical} shows that the map $i_\otimes\colon\Fun(BK,\SSet_*)\to\Fun(BH,\SSet_*)$ for subgroups $K\leqslant H\leqslant G$ is given by the classical \emph{symmetric monoidal norm}, i.e.~it sends a $K$-simplicial set $X$ to $X^{\wedge n}$ where $n=|H/K|$ and $H$ acts on $X^{\wedge n}$ by restricting the natural $\Sigma_n\wr K$-action along a certain homomorphism~$H\to\Sigma_n\wr K$; see Construction~\ref{constr:classical} for details.
\end{construction}

\begin{propn}\label{prop:spc-*-sm-loc}
  The previous construction localizes to a normed structure on $\mS_{G,*}$, and this is the normed structure from Corollary~\ref{cor:pted-unique-G-sym-mon}.

  \begin{proof}
    Since $\SSet_*^\flat$ comes with a normed $G$-functor $\xF_*^\flat\to\SSet_*^\flat$ by construction, the only non-trivial statement is that this localizes to a normed structure satisfying the assumptions (1) and (2) of \Cref{cor:pted-unique-G-sym-mon}.

    It is clear that the smash product of pointed $H$-simplicial sets preserves weak equivalences in each variable and is a left Quillen bifunctor (with respect to the model structures where cofibrations are levelwise injections). Thus, it descends to make each $\mS_{G,*}(G/H)$ into a presentably symmetric monoidal \icat{}. It remains to show that for every $K\leqslant H$ the symmetric monoidal norm functor $i_\otimes\colon\Fun(BK,\SSet_*)\to\Fun(BH,\SSet_*)$ preserves weak equivalences, and that the resulting functor on localizations preserves sifted colimits.

    For the first statement, let $f$ be a $K$-equivariant weak equivalence; we have to show that for any $j\colon L\hookrightarrow H$ the map $(j^*i_\otimes f)^L$ is a weak equivalence. Rewriting the cospan $G/H\to G/K \gets G/L$ as a span, we see that this splits as a smash product of maps $(i'_\otimes j^{\prime*}f)^L$, i.e.~after renaming we are reduced to showing that $(i_\otimes f)^H$ is a weak equivalence. But this map agrees with $f^K$ by direct inspection.

    For the second statement, we claim that the functor of $1$-categories $X\mapsto X^{\wedge n}$ preserves filtered colimits and geometric realization up to \emph{isomorphism}: the first statement is clear, while the second one follows from the fact that geometric realization is given by taking the diagonal of the associated bisimplicial set. As both of these operations are homotopical by \cite{g-global}*{1.1.2 and 1.2.57}, it follows that $i_\otimes\colon\mS_{G,*}(G/K)\to\mS_{G,*}(G/H)$ commutes with filtered colimits and $\Delta^\op$-shaped colimits, hence with all sifted colimits as claimed.
  \end{proof}
\end{propn}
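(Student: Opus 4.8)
The plan is to deduce both assertions from the uniqueness statement in \Cref{cor:pted-unique-G-sym-mon}. Since the Borel construction equips $\SSet_*^\flat$ with a normed $G$-functor $\xF_*^\flat\to\SSet_*^\flat$ by construction — and $\xF_*^\flat$ carries the normed structure of \Cref{prop:F-G-unique-sm} — it suffices to show: (a) the smash-product normed structure on $\SSet_*^\flat$ descends along the levelwise Dwyer--Kan localizations at the equivariant weak equivalences to a normed structure on $\mS_{G,*}$, through which $\xF_*^\flat\to\SSet_*^\flat$ then factors to give the required normed lift of $\mF_{G,*}\hookrightarrow\mS_{G,*}$; and (b) the resulting normed structure satisfies hypotheses~(1) and~(2) of that corollary. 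For~(a) the only nonformal ingredient is that every structure map of $\SSet_*^\flat$ sends equivariant weak equivalences to equivariant weak equivalences: the restrictions $f^*$ do so by the very definition of equivariant weak equivalence, a levelwise localization of a product-preserving functor $\Span(\xF_G)\to\CMon(\CatI)$ remains product-preserving since equivariant weak equivalences on a finite product of categories decompose coordinatewise, and localizing the defining cocartesian fibration fiberwise then yields the localized normed $\infty$-category. So the heart of the matter is that each norm functor $f_\otimes$ of $\SSet_*^\flat$ preserves equivariant weak equivalences.

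To prove this I would first reduce to orbit inclusions. Writing $f\colon X\to Y$ as a fold map $\nabla$ precomposed with a coproduct of orbit inclusions $i_{K_j}\colon G/K_j\hookrightarrow G/H_j$ (split $Y$, and then $X$, into orbits), the norm $f_\otimes$ is built from the symmetric monoidal product and the orbit norms $(i_{K_j})_\otimes$; since the smash product on pointed $H$-simplicial sets is a homotopical left Quillen bifunctor it preserves equivariant weak equivalences in each variable, so it is enough to treat a single $i\colon G/K\hookrightarrow G/H$ with $K\leqslant H$. Given a $K$-equivariant weak equivalence $g$, I must check that $(i_\otimes g)^L$ is a weak homotopy equivalence for every $L\leqslant H$. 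Writing $j$ for the orbit inclusion of $L$, the composite span in $\Span(\xF_G)$ computing $j^*i_\otimes$ passes through the pullback $G/K\times_{G/H}G/L$, which by the double coset formula is a finite coproduct of $L$-orbits $L/(L\cap{}^\gamma K)$; using once more the multiplicativity of the norm over coproducts together with its compatibility with base change, $j^*i_\otimes g$ is identified with a smash product of orbit norms of the same shape applied to restrictions of $g$. Since the smash product and restrictions preserve equivariant weak equivalences, this reduces everything to the case of \emph{top} fixed points, i.e.\ to showing $(i_\otimes g)^H$ is a weak equivalence. But by the explicit formula of \Cref{cor:borel-norms-classical} the underlying map of $i_\otimes g$ is $g^{\wedge |H/K|}$ with $H$ permuting and twisting the smash factors along $H\to\Sigma_{|H/K|}\wr K$; since $H$ acts transitively on the set $H/K$ of factors with point-stabilizer $K$, passing to $H$-fixed points recovers the functor $(-)^K$, so $(i_\otimes g)^H$ is naturally isomorphic to $g^K$ — a weak homotopy equivalence by hypothesis.

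For hypothesis~(1) I would observe that the smash product on $\Fun(BH,\SSet_*)$ is a homotopical left Quillen bifunctor for the model structure with levelwise injective cofibrations, hence descends to a presentably symmetric monoidal structure on each $\mS_{G,*}(X)$. For hypothesis~(2), again reducing to an orbit inclusion $i\colon G/K\hookrightarrow G/H$, I would show that the $1$-categorical functor $X\mapsto X^{\wedge n}$ (with its twisted $H$-action), $n=|H/K|$, commutes up to isomorphism with filtered colimits and with geometric realizations of simplicial objects: the former because smashing finitely many simplicial sets does, the latter because geometric realization of a simplicial object of $\SSet_*$ is computed by the diagonal of the associated bisimplicial set and the diagonal is strong symmetric monoidal. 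Since filtered colimits and $\Delta^\op$-indexed colimits are homotopical in $\SSet_*$ by \cite{g-global}*{1.1.2 and 1.2.57}, the derived norm $i_\otimes\colon\mS_{G,*}(G/K)\to\mS_{G,*}(G/H)$ commutes with both, hence with all sifted colimits.

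I expect the main obstacle to be the reduction, in the second paragraph, of the equivariant-weak-equivalence check for a general norm to the top-fixed-point computation: carrying out the double coset / base-change bookkeeping precisely — in particular exhibiting the identification of $j^*i_\otimes$ with a smash product of orbit norms of the same shape — is the only genuinely delicate step, whereas the fixed-point identification $(i_\otimes g)^H\simeq g^K$ and the colimit-preservation statements are short once this is in place.
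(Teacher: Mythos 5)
Your proposal is correct and follows essentially the same route as the paper's proof: the same reduction to checking that the norms preserve equivariant weak equivalences and sifted colimits, the same double-coset/span-composition argument reducing the fixed-point check to $(i_\otimes g)^H\cong g^K$, and the same filtered-colimit-plus-geometric-realization argument for sifted colimits. The extra care you take with the reduction to orbit inclusions and the explicit identification of $H$-fixed points of the twisted smash power is just a more detailed writeup of what the paper dispatches as "direct inspection."
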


\subsection[Norms on $G$-Mackey functors]{\boldmath Norms on
  $G$-Mackey functors}
  As a next step, we will show that also the $G$-\icat{} of normed $G$-monoids/$G$-Mackey functors from \Cref{ex:g-mackey} admits a unique normed structure. As a special case of \Cref{def:monoidal_CMon}, we obtain one such normed structure
   \[
  	   \ul\NMon_G\coloneqq\ul\NMon_G(\Spc) =\Fun^\times(\Span(\ul{\xF}_G),\Spc).
   \] 
   We begin by relating it to the unstable world:

\begin{propn}\label{prop:mathbb-P-sym-mon}
  There exists a unique $G$-left adjoint $\mathbb P\colon\ul\mS_{G,*}\to\ul\NMon_G$ sending $S^0$ to $\hom(\bfone,\blank)$. Moreover, this functor upgrades (canonically) to a normed $G$-functor.
  \begin{proof}
    For the first statement, we may equivalently consider
    $\ul\Spc_{G,*}=\ul\Spc_G\otimes\Spc_*$ in lieu of $\mS_{G,*}$. In
    this case, the existence and uniqueness of the $G$-left adjoint
    $\mathbb P$ follows via \cite{CLL_Spans}*{7.39} from the universal
    property of $\ul\Spc_G$ (Remark~\ref{rk:Spc-universal}) and the fact
    that the non-parametrized presentable \icat{} $\Spc_*$ is the mode
    for pointed presentable \icats{} \cite{HA}*{4.8.2.11}.

    To complete the proof, we will now construct a $G$-left adjoint
    normed $G$-functor $\mS_{G,*}^\otimes\to\ul\NMon_G^\otimes$. For
    this, note first that by \cite{CLL_Global}*{4.7.6} the inclusion
    $\xF\hookrightarrow\Span(\xF)$ extends (uniquely) to a
    coproduct-preserving functor $j\colon\xF_{*}\to\Span(\xF)$, and as
    both sides are idempotents in $\Cat^\amalg$ (see
    \cite{harpaz}*{5.3} for the target) this uniquely upgrades to a
    symmetric monoidal functor. Passing to Borel $G$-\icats{} we
    obtain a normed $G$-functor
    \[\mF_{G,*}=\ul\xF_{G}^\flat\to\Span(\xF)^\flat\simeq\Span(\ul\xF_G)\]
    sending $S^0$ to $\bfone$; here the final equivalence uses that
    $\Span(\ul\xF_G)\simeq\Span\circ\mF_{G,*}$ is a Borel $G$-\icat{}
    as postcomposing with the limit preserving functor $\Span$
    preserves right Kan extensions. Passing to sifted cocompletions and using that \[\Fun^\times(\Span(\ul\xF_G),\Spc)\simeq \Fun^\times(\Span(\ul\xF_G)^\op,\Spc)=\cP_\Sigma(\Span(\xF_G)),\] we then get a normed $G$-functor
    $\mS_{G,*}^\otimes\to\ul\NMon_G^\otimes$, whose underlying
    $G$-functor agrees up to equivalence with
    $\cP_\Sigma(j^\flat)\colon\cP_\Sigma(\xF_*^\flat)\to\cP_\Sigma(\Span(\xF)^\flat)$,
    i.e.~it is the restriction of the left Kan extension along
    $(j^\flat)^\op$ to product-preserving functors.

    To see that this is a $G$-left adjoint, we first note that each $\cP_\Sigma(j^\flat)(G/H)$ admits a right adjoint (given by restriction); it therefore only remains to check the Beck--Chevalley condition of \Cref{cor:G-SAFT}. For this we observe that for any inclusion $i\colon K\hookrightarrow H$ of subgroups of $G$, the functors $i^*\colon\Fun(BH,\xF_*)\to\Fun(BK,\xF_*)$ and $\Fun(BH,\Span(\xF))\to\Fun(BK,\Span(\xF))$ admit left adjoints $i_!$, given non-equivariantly by an $|H/K|$-fold coproduct. Thus, we may check the Beck--Chevalley condition before passing to sifted cocompletions, i.e.~we want to show that $i_!\circ\Fun(BK,j)\to\Fun(BH,j)\circ i_!$ is an equivalence of functors $\Fun(BK,\xF_*)\to\Fun(BH,\Span(\xF))$. But this may be checked after forgetting to $\Span(\xF)$, where this follows from the fact that $j$ preserves finite coproducts by construction.
  \end{proof}
\end{propn}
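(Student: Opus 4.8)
The plan is to treat the two claims separately: existence and uniqueness of the $G$-left adjoint $\mathbb P$ will follow formally from universal properties, whereas the normed upgrade will be built by hand from a non-parametrized symmetric monoidal functor through a Borel-plus-sifted-cocompletion argument, after which one identifies the underlying $G$-functor of the construction with $\mathbb P$.

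First I would settle the existence and uniqueness of $\mathbb P$. Using \Cref{cor:based-comparison} I replace $\ul\mS_{G,*}$ by $\ul\Spc_{G,*}\simeq\ul\Spc_G\otimes\Spc_*$, where $\otimes$ is the parametrized Lurie tensor product. The target $\ul\NMon_G=\Fun^\times(\Span(\ul{\xF}_G),\Spc)$ is a presentable $G$-\icat{} that is moreover pointed levelwise: since $\Span(\xF_G)$ is semiadditive, \cref{obs:levelwise-monoid} identifies $\ul\NMon_G$ with $\Fun^\times(\Span(\ul{\xF}_G),\CMon(\Spc))$, and $\infty$-categories of $\CMon(\Spc)$-valued Mackey functors have a zero object; hence $\ul\NMon_G$ is a module over the mode $\Spc_*$ for pointed presentable \icats{} \cite{HA}*{4.8.2.11}. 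Combined with the universal property of $\ul\Spc_G$ as the free presentable $G$-\icat{} on a point (\Cref{rk:Spc-universal}), this yields $\ul\Fun_G^\textup{L}(\ul\mS_{G,*},\ul\NMon_G)\simeq\ul\NMon_G$ implemented by evaluation at $S^0$; since $\hom(\bfone,\blank)$, the monoidal unit of $\ul\NMon_G$ (i.e.\ the Mackey functor $X\mapsto\Map_{\Span(\xF_G)}(\bfone,X)$), is a well-defined object, there is a unique $G$-left adjoint $\mathbb P$ with $\mathbb P(S^0)\simeq\hom(\bfone,\blank)$.

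For the normed upgrade I would first produce a symmetric monoidal functor $j\colon\xF_*\to\Span(\xF)$ extending the inclusion $\xF\hookrightarrow\Span(\xF)$: the unique coproduct-preserving extension exists by \cite{CLL_Global}*{4.7.6}, and it upgrades uniquely to a symmetric monoidal functor because $\xF_*$ and $\Span(\xF)$ are both idempotent in $\Cat^\amalg$ (see \cite{harpaz}*{5.3} for the latter). Applying the Borel construction (cf.\ \cite{borel-normed} and the appendix) produces a normed $G$-functor $\mF_{G,*}=\ul{\xF}_G^\flat\to\Span(\xF)^\flat\simeq\Span(\ul{\xF}_G)$, the last equivalence holding because $\Span$ preserves limits and hence $\Span\circ\mF_{G,*}$ is again Borel. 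Then I would pass to pointwise sifted cocompletions, a normed-monoidal operation by the way the normed structures on $\mS_{G,*}$ and on $\ul\NMon_G$ are built in \Cref{cor:pted-unique-G-sym-mon} and \Cref{def:monoidal_CMon}, obtaining a normed $G$-functor $\mS_{G,*}^\otimes\to\ul\NMon_G^\otimes$ whose underlying $G$-functor is $\cP_\Sigma(j^\flat)$, the restriction to product-preserving presheaves of left Kan extension along $(j^\flat)^\op$.

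It remains to check that $\cP_\Sigma(j^\flat)$ is a $G$-left adjoint coinciding with $\mathbb P$. For $G$-left adjointness I would apply the criterion of \Cref{cor:G-SAFT}: each level $\cP_\Sigma(j^\flat)(G/H)$ has a right adjoint given by restriction along $j$, and the required Beck--Chevalley squares for the restrictions $i\colon K\hookrightarrow H$ may be verified before sifted cocompletion, where the restriction functors on $\Fun(BH,\xF_*)$ and $\Fun(BH,\Span(\xF))$ themselves have left adjoints (given non-equivariantly by an $|H/K|$-fold coproduct), so that the Beck--Chevalley map reduces to the assertion that $j$ preserves finite coproducts, which holds by construction. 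Finally $\cP_\Sigma(j^\flat)$ sends $S^0$ to the presheaf represented by $\bfone\in\Span(\xF_G)$, which under the self-duality of span \icats{} is $\Map_{\Span(\xF_G)}(\bfone,\blank)=\hom(\bfone,\blank)$, so uniqueness from the first step identifies $\cP_\Sigma(j^\flat)$ with $\mathbb P$. The main obstacle I anticipate is not a single deep step but the bookkeeping in the second claim: making the Borel construction and the sifted-cocompletion step genuinely normed-monoidal rather than merely levelwise symmetric monoidal, and then pinning the resulting underlying $G$-functor down to $\mathbb P$; the one honestly computational point, the Beck--Chevalley check, collapses to the triviality that $j$ commutes with finite coproducts.
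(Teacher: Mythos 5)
Your proposal is correct and follows essentially the same route as the paper: the first claim via the universal property of $\ul\Spc_G$ together with $\Spc_*$ being the mode for pointed presentable \icats{} (you unpack the argument that the paper delegates to a citation), and the normed upgrade via the symmetric monoidal functor $j\colon\xF_*\to\Span(\xF)$, the Borel construction, sifted cocompletion, and the same Beck--Chevalley check. The only addition is your explicit final identification of $\cP_\Sigma(j^\flat)$ with $\mathbb P$ via uniqueness, which the paper leaves implicit.
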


Restricting, we in particular get a normed structure on the $G$-functor $\mF_{G,*}\to\ul\NMon_G$. In fact, this once again uniquely characterizes the normed structure if we in addition impose compatibility with colimits:

\begin{propn}
  There exists a unique pair of a normed structure on $\ul\NMon_G$ and a normed structure on the $G$-functor $\mF_{G,*}\to\ul\NMon_G$ such that the following conditions are satisfied:
  \begin{enumerate}
    \item For each $H\leqslant G$, the symmetric monoidal \icat{} $\ul\NMon_G(G/H)$ is presentably symmetric monoidal.
    \item For each $K\leqslant H\leqslant G$ the norm $\ul\NMon_G(G/H)\to\ul\NMon_G(G/K)$ preserves sifted colimits.
  \end{enumerate}
  \begin{proof}
    We will first prove this statement with $\mF_{G,*}$ replaced by $\Span(\mF_{G})$. As $\ul\NMon_G$ is defined as the sifted cocompletion of the latter, the same argument as in Corollary~\ref{cor:pted-unique-G-sym-mon} reduces this to showing that $(\Span(\xF),\bfone)$ is idempotent in $\Cat^\amalg$, which was already recalled above.

    To complete the proof, we now observe that the data in question is equivalent to a normed structure on $\ul\NMon_G$ (satisfying the above two axioms) that preserves the full subcategory $\Span(\mF_G)$, together with a lift of $\mF_{G,*}\to\Span(\mF_G)$ to a normed $G$-functor. The former is no data by the above, while Corollary~\ref{cor:borel-sm-ff} together with the idempotency of $\xF_*$ and $\Span(\xF)$ shows that also the latter is unique.
  \end{proof}
\end{propn}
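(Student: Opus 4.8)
The plan is to follow the same two-step strategy that was used for $\mS_{G,*}$ in Corollary~\ref{cor:pted-unique-G-sym-mon}, ultimately reducing everything to the idempotency of $(\Span(\xF),\bfone)$ in $\Cat^\amalg$. The first step is to replace $\mF_{G,*}$ by the Borel $G$-\icat{} $\Span(\mF_G)\simeq\Span(\xF)^\flat$. Since $\ul\NMon_G=\Fun^\times(\Span(\ul\xF_G),\Spc)$ is, levelwise and compatibly with restrictions, the sifted cocompletion $\cP_\Sigma$ of $\Span(\ul\xF_G)\simeq\Span(\mF_G)$ (with the inclusion given by the parametrized Yoneda embedding), the universal property of sifted cocompletion --- invoked exactly as in the proof of Corollary~\ref{cor:pted-unique-G-sym-mon} --- shows that a normed structure on $\ul\NMon_G$ satisfying (1) and (2) together with a normed lift of $\Span(\mF_G)\hookrightarrow\ul\NMon_G$ is the same datum as a normed structure on the Borel $G$-\icat{} $\Span(\mF_G)$ whose underlying symmetric monoidal product on $\Span(\xF)$ preserves finite coproducts in each variable: conditions (1) and (2) say precisely that the tensor product and the norm functors on $\ul\NMon_G$ are colimit-preserving, resp.\ sifted-colimit-preserving, and hence are uniquely determined by their restrictions to $\Span(\mF_G)$.

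The second step is to apply the Borel construction of the appendix. By Corollary~\ref{cor:borel-unique-single}, a normed structure on $\Span(\mF_G)=\Span(\xF)^\flat$ with the stated coproduct-preservation amounts to a lift of $\Span(\xF)$ (with trivial $G$-action) to an object of $\Fun(BG,\CMon(\Cat_\infty))$ with unit $\bfone$ whose tensor product preserves finite coproducts in each variable. Taking the variant of the Lurie tensor product on $\Cat^\amalg$ that classifies functors preserving finite coproducts in each variable, the pair $(\Span(\xF),\bfone)$ is idempotent --- as recalled in the proof of Proposition~\ref{prop:mathbb-P-sym-mon}, following \cite{harpaz}*{5.3} --- hence so is its image in $\Fun(BG,\Cat^\amalg)$ under the levelwise symmetric monoidal structure; by \cite{HA}*{4.8.2.9} the space of such lifts is therefore contractible. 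This proves the proposition with $\mF_{G,*}$ replaced by $\Span(\mF_G)$.

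Finally I would return to $\mF_{G,*}$. The functor $\mF_{G,*}\to\ul\NMon_G$ of Proposition~\ref{prop:mathbb-P-sym-mon} factors through the parametrized Yoneda embedding $\Span(\mF_G)\hookrightarrow\ul\NMon_G$, its first component being the Borel $G$-functor $j^\flat$ associated to the coproduct-preserving $j\colon\xF_*\to\Span(\xF)$; and the image of $\mF_{G,*}$ generates the full $G$-subcategory $\Span(\mF_G)$ inside $\ul\NMon_G$ under finite products and the normed operations (every representable on $\xF_H$ being a finite product of transfers of the unit). Hence a normed structure on $\ul\NMon_G$ satisfying (1), (2) together with a normed lift of $\mF_{G,*}\to\ul\NMon_G$ is equivalent to such a normed structure preserving $\Span(\mF_G)$ --- which is contractible by the previous step --- together with a normed lift of $j^\flat$; for the latter, Corollary~\ref{cor:borel-sm-ff} identifies normed lifts of $j^\flat$ with symmetric monoidal lifts of $j$ in $\Fun(BG,\Cat^\amalg)$, and these form a contractible space because both $(\xF_*,S^0)$ (by \cite{CLL_Global}*{4.7.6}) and $(\Span(\xF),\bfone)$ are idempotent in $\Cat^\amalg$, hence in $\Fun(BG,\Cat^\amalg)$, so that $j$ admits an essentially unique refinement to a map of commutative algebras. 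Assembling the two contractible spaces of data yields the claim. The main obstacle is the first reduction: one must match conditions (1) and (2) precisely against the extension data furnished by the universal property of $\cP_\Sigma$, and --- in the last step --- verify that preservation of the full $G$-subcategory $\Span(\mF_G)$ is forced once a normed lift of $\mF_{G,*}$ is fixed, which is exactly where the colimit hypotheses and the generation statement are genuinely used.
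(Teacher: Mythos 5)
Your proof follows the paper's argument essentially verbatim: first reduce to $\Span(\mF_G)$ via the sifted-cocompletion argument of Corollary~\ref{cor:pted-unique-G-sym-mon} together with the idempotency of $(\Span(\xF),\bfone)$ in $\Cat^\amalg$, then split the remaining data into a normed structure preserving $\Span(\mF_G)$ plus a normed lift of $\mF_{G,*}\to\Span(\mF_G)$, the latter being handled by Corollary~\ref{cor:borel-sm-ff} and the idempotency of $\xF_*$ and $\Span(\xF)$. The only place your justification is shakier than necessary is the claim that preservation of $\Span(\mF_G)$ is forced: generating representables as ``finite products of transfers of the unit'' would require compatibilities between $\otimes$, the norms, and the inductions $i_!$ (essentially a projection formula) that are not among hypotheses (1) and (2); it is simpler to note that $j\colon\xF_*\to\Span(\xF)$ is essentially surjective, so the essential image of the given normed lift of $\mF_{G,*}$ already exhausts the objects of $\Span(\mF_G)$, and any normed functor carries its essential image into itself under the tensor products and norms.
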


\subsection[$G$-spectra and their symmetric monoidal structure]{\boldmath $G$-spectra and their symmetric monoidal structure}
Let us begin by giving two equivalent descriptions of the $G$-\icat{} of $G$-spectra:

\begin{construction}
  We define the $G$-\icat{} $\ul\Sp_G$ of $G$-spectra as the pointwise stabilization of $\ul\NMon_G$, i.e.~it is the $G$-\icat{} \[X\mapsto \Fun^\times(\Span((\xF_G)_{/X}),\Sp)\] with functoriality via restriction along pushforwards. This comes with a natural \emph{stabilization map} $\ell\colon\ul\NMon_G\to\ul\Sp_G$, induced by the usual stabilization/delooping map $\CMon(\Spc)\to\Sp$. We write $\mathbb S_G$ for the image of $\hom(\bfone,\blank)\in\ul\NMon_G(G/G)$ under $\ell$.
\end{construction}

\begin{construction}
  Write $\textsf{Sp}^\Sigma$ for the $1$-category of symmetric spectra in simplicial sets. For each finite group $H$, the category $\textsf{Sp}^\Sigma$ carries an \emph{equivariant flat model structure} \cite{hausmann-equivariant}*{4.7} whose weak equivalences are the so-called $H$-equivariant weak equivalences and whose cofibrations are the so-called \emph{flat cofibrations}; the latter are independent of the group $H$. We write $\textsf{Sp}^\Sigma_\textup{flat}$ for the full subcategory of flat spectra (i.e.~those $X$ for which $\emptyset\to X$ is a flat cofibration).

  We now consider the Borel $G$-category $(\textsf{Sp}^\Sigma_\textup{flat})^\flat$, and we equip each category $(\textsf{Sp}^\Sigma_\textup{flat})^\flat(G/H)=\Fun(BH,\textsf{Sp}^\Sigma_\textup{flat})$ with the $H$-equivariant weak equivalences. By \cite{hausmann-equivariant}*{\S5.2} these are preserved under restriction, so we can Dwyer--Kan localize this to obtain a $G$-\icat{} $\mSp_G$.
\end{construction}

\begin{remark}
  The inclusion $(\textsf{Sp}^\Sigma_\textup{flat})^\flat\hookrightarrow(\textsf{Sp}^\Sigma)^\flat$ induces an equivalence of Dwyer--Kan localizations (being pointwise the inclusion of the cofibrant objects of a model category), so we could equivalently have worked without restricting to flat spectra. However, flatness will come in handy below to define the symmetric monoidal and normed structures on $\mSp_G$.
\end{remark}

\begin{thm}
  There is a unique equivalence $\mSp_G\simeq\ul\Sp_G$ sending $\mathbb S_G$ to $\mathbb S_G$.
  \begin{proof}
    Combine \cite{CLL_Clefts}*{9.13} with \cite{CLL_Spans}*{9.9}.
  \end{proof}
\end{thm}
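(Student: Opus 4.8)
The plan is to identify both $\mSp_G$ and $\ul\Sp_G$, together with their distinguished objects $\mathbb S_G$, with the free presentable stable $G$-\icat{} on a point, and then to deduce the statement from uniqueness of objects satisfying a universal property. For $\ul\Sp_G$ this universal property is essentially \cite{CLL_Spans}*{9.9}: by construction $\ul\Sp_G$ is the pointwise stabilization of $\ul\NMon_G=\Fun^\times(\Span(\ul\xF_G),\Spc)$, and since the forgetful functor exhibits $\Sp$ as the stabilization of $\CMon(\Spc)$, exactly as it is the stabilization of $\Spc$ (group-completion being invisible to stabilization), this agrees $G$-parametrically with the stabilization of $\ul\Spc_G$; compare \Cref{ex:CMonisGGN} and \cite{CLL_Spans}*{\S9.2}. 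Combining \Cref{rk:Spc-universal}, which exhibits $\ul\Spc_G$ as the free presentable $G$-\icat{} on a point, with the universal property of parametrized stabilization, one obtains that evaluation at $\mathbb S_G=\ell(\hom(\bfone,\blank))$ defines an equivalence $\ul\Fun_G^\textup{L}(\ul\Sp_G,\cD)\isoto\cD$ for every presentable stable $G$-\icat{} $\cD$; here one uses along the way that $\mathbb S_G$ is the image of the distinguished point of $\ul\Spc_G$ under the parametrized suspension spectrum functor, e.g.\ via the functor $\mathbb P$ of \Cref{prop:mathbb-P-sym-mon}.

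I would then check that $\mSp_G$ enjoys the same universal property, with the symmetric sphere spectrum — which we again denote $\mathbb S_G$ — as distinguished object. This is where the model-categorical input enters: Hausmann's flat equivariant model structure on symmetric spectra is set up so that $\mSp_G$ is presentable and stable and so that the comparison $\mS_G\simeq\ul\Spc_G$ of \Cref{thm:unstable-comparison} extends to an identification of $\mSp_G$ with the parametrized stabilization of $\mS_G$, compatibly with suspension spectrum functors and carrying the symmetric sphere spectrum to the generator. This is the content of \cite{CLL_Clefts}*{9.13} (building on \cite{hausmann-equivariant}); in particular $\mSp_G$ is again the free presentable stable $G$-\icat{} on a point, with distinguished object $\mathbb S_G$.

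Granting both universal properties, existence and uniqueness are formal. Applying the universal property of $\mSp_G$ to $\cD=\ul\Sp_G$, the space of $G$-cocontinuous functors $\mSp_G\to\ul\Sp_G$ sending $\mathbb S_G$ to $\mathbb S_G$ is the fiber of the evaluation map $\ul\Fun_G^\textup{L}(\mSp_G,\ul\Sp_G)^{\simeq}\to\ul\Sp_G^{\simeq}$ over $\mathbb S_G$, hence contractible; the resulting functor is an equivalence, being the stabilization of the equivalence $\mS_G\simeq\ul\Spc_G$ (or: it admits an inverse obtained by running the same argument in the opposite direction, and the two composites equal the identity by the same contractibility). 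The hypothesis $\mathbb S_G\mapsto\mathbb S_G$ cannot be dropped, as $\ul\Sp_G$ already has nontrivial autoequivalences — e.g.\ the suspension autoequivalence, or smashing with an invertible object such as a representation sphere — which move $\mathbb S_G$.

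The main obstacle is the second step, i.e.\ the input \cite{CLL_Clefts}*{9.13}: matching the equivariant flat model structure on symmetric spectra with the $\infty$-categorical parametrized stabilization is the technically substantial point. Concretely one must verify that Dwyer--Kan localizing at the $H$-equivariant weak equivalences for all $H\leqslant G$ implements the parametrized stabilization of $\mS_G$ — equivalently, that smashing with representation spheres, which is built into the definition of symmetric spectra, inverts exactly the correct class of maps. Once this is available, everything else reduces to formal bookkeeping with universal properties.
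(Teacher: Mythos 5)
Your proposal is correct and follows essentially the same route as the paper, whose entire proof consists of combining exactly the two references you identify: \cite{CLL_Spans}*{9.9} for the universal property of $\ul\NMon_G$ (hence of its stabilization $\ul\Sp_G$) and \cite{CLL_Clefts}*{9.13} for the corresponding identification of $\mSp_G$. Your elaboration of how these yield contractibility of the space of pointed $G$-cocontinuous functors, and hence the unique equivalence, is the intended argument.
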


Our goal is to make both sides into normed \icats{} and then upgrade the above equivalence to a normed equivalence. As a stepping stone for this, we will first prove a comparison that does not take norms into account. We therefore introduce:

\begin{defn}
  A (na\"ive) \emph{symmetric monoidal $G$-\icat{}} is a functor $\xF_G^\op\to\CMon(\CatI)$.
\end{defn}

Equivalently, we can view a symmetric monoidal $G$-\icat{} as a commutative monoid in the (ordinary) \icat{} of $G$-\icats{} with respect to the cartesian product. Restricting along $\xF_G^\op\hookrightarrow\Span(\xF_G)$, every normed $G$-\icat{} has an underlying symmetric monoidal $G$-\icat{}.

We will be particularly interested in the case where the underlying $G$-\icat{} $\cC$ is presentable and the tensor product $-\otimes-\colon\cC\times\cC\to\cC$ preserves $G$-colimits in each variable, i.e.~for each $G/H$ the symmetric monoidal structure on $\cC(G/H)$ is closed, and for all $i\colon H\hookrightarrow K$ the \emph{projection map} \[i_!(X\otimes i^*Y)\to i_!X\otimes Y\] (the Beck--Chevalley map associated to $i^*X\otimes i^*Y\simeq i^*(X\otimes Y)$) is invertible. We call a symmetric monoidal $G$-\icat{} \emph{$G$-presentably symmetric monoidal} in this case. Similarly we say that a normed $G$-\icat{} is \emph{$G$-presentably normed} if the underlying symmetric monoidal $G$-\icat{} is $G$-presentably symmetric monoidal.

\begin{ex}\label{ex:pted-spaces-sym-mon}
  We have already seen that the normed structure on $\mS_{G,*}$ coming from the smash product is presentably symmetric monoidal in each degree. As all functors in sight are homotopical, the projection formula can be checked on the pointset level, where it is a trivial computation.\footnote{This isomorphism of pointed $G$-(simplicial) sets is sometimes referred to as the \emph{shearing isomorphism}.}
\end{ex}

\begin{ex}\label{ex:G-spectra-naive}
  The usual smash product of ($H$-)symmetric spectra is homotopical when restricted to flat spectra \cite{hausmann-equivariant}*{6.1}, making $\mSp_G$ into a symmetric monoidal $G$-\icat{}. This is again $G$-presentably symmetric monoidal: the statement for the levelwise tensor product is again clear, while for the projection map we observe that the corresponding non-derived map is again an isomorphism by direct inspection, and that all functors in sight are homotopical on flat spectra.
\end{ex}

\begin{ex}
  Also the normed structure on $\ul\NMon_G$ is
  $G$-presentably symmetric monoidal. For this note first that
  Proposition~\ref{prop:dayconvolution} shows that each
  $\ul\NMon_G(X)$ is presentably symmetric monoidal. For the
  projection formula $i_!(X\otimes i^*Y)\simeq i_!X\otimes Y$ we
  observe that both sides preserve colimits in $X$ and $Y$, so we may
  assume that $X$ and $Y$ are both in the image of the free functor $\mathbb{P}\colon \ul\mS_{G,*}\rightarrow \ul\NMon_G $. In this case, the claim
  follows by Proposition~\ref{prop:mathbb-P-sym-mon} together with
  Example~\ref{ex:pted-spaces-sym-mon}.
\end{ex}

\begin{ex}
  As $\Sp$ is idempotent, $\ul\Sp_G=\Sp\otimes\ul\NMon_G$ inherits a symmetric monoidal structure from $\ul\NMon_G$ such that each $\ul\Sp_G(X)$ is presentably symmetric monoidal, see \cite{GepnerGrothNikolaus}*{5.1}. This is again $G$-presentably symmetric monoidal: by the universal property of stabilization, the projection formula can be checked after restricting along $\ul\NMon_G\to\ul\Sp_G$, where this was verified in the previous example.
\end{ex}

In fact, the $G$-presentably symmetric monoidal structures considered in the above examples are unique:

\begin{thm}\label{thm:unique-naive}\
  \begin{enumerate}
    \item  The $G$-\icats{} $\ul\Spc_{G,*}$ and $\ul\mS_{G,*}$ admit unique $G$-\hskip0ptpresentably symmetric monoidal structures with unit $S^0$.
   \item The $G$-\icat{} $\ul\NMon_G$ admits a unique $G$-presentably symmetric mon\-oidal structure with unit $\hom(\bfone,\blank)$.
    \item The $G$-\icats{} $\ul\Sp_{G}$ and $\ul\mSp_{G}$ admit
      unique $G$-presentably symmetric mon\-oidal structures with unit
      $\mathbb S_G$.
  \end{enumerate}
  \noindent Moreover, the $G$-functors
  $\mS_{G,*}\to\ul\NMon_G\to\ul\Sp_G$ considered above enhance
  uniquely to maps of symmetric monoidal $G$-\icats{}, as do the
  equivalences $\ul\Spc_{G,*}\simeq\mS_{G,*}$ and
  $\ul\Sp_{G}\simeq\mSp_G$.
  \begin{proof}
    By \cite{MW_presentable}*{\S8.2}, the \icat{} of presentable $G$-\icats{} comes with a \emph{parametrized Lurie tensor product}, corepresenting bifunctors that preserve $G$-colimits in each variable. The unit is the $G$-\icat{} $\ul\Spc_G$, and the tensor product can be computed by the formula
    \[
      \cC\otimes\cD=\ul\Fun^\textup{R}_G(\cC^\op,\cD)\simeq\ul\Fun^\textup{L}_G(\cC,\cD^\op)^\op
    \]
    with functoriality in $\cC$ given via precomposition, see \cite{MW_presentable}*{8.2.11}.

    It now suffices to show that $(\ul\Spc_{G,*}, S^0)$, $(\ul\NMon_G,\hom(\bfone,\blank))$, and $(\mSp_{G},\mathbb S_G)$ are all idempotent with respect to this tensor product. By the above explicit formula for the tensor product, the statement for $\mSp_{G}$ is an instance of \cite{CLL_Clefts}*{9.13(2)}, while the statement for $\ul\NMon_G$ follows by combining \cite{CLL_Global}*{4.8.11} with \cite{CLL_Spans}*{9.9}.

    Finally, for $\ul\Spc_{G,*}=\ul\Spc_G\otimes\Spc_{*}$, we recall that $\ul\Spc_{G}\otimes\blank\colon\PrL\to\PrL_G$ admits a strong symmetric monoidal structure \cite{MW_presentable}*{end of 8.3.8}. In particular, it sends the idempotent $\Spc_*$ to an idempotent, finishing the proof.
  \end{proof}
\end{thm}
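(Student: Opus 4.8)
The plan is to reduce everything to the theory of idempotent objects (\cite{HA}*{4.8.2}) in the symmetric monoidal $\infty$-category $\Pr^\textup{L}_G$ of presentable $G$-\icats{}, equipped with the parametrized Lurie tensor product of \cite{MW_presentable}*{\S8.2}: its unit is $\ul\Spc_G$, and by the corepresentability property of that tensor product its commutative algebras are precisely the $G$-presentably symmetric monoidal $G$-\icats{}. Recall that an object $\cC$ together with a map $u$ from the unit is called \emph{idempotent} if the induced map $\cC\simeq\mathbf 1\otimes\cC\to\cC\otimes\cC$ is an equivalence, and that \cite{HA}*{4.8.2.9} says the forgetful functor from idempotent commutative algebras to objects-under-$\mathbf 1$ is fully faithful with essential image the idempotents. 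Consequently an idempotent object carries an essentially unique commutative algebra structure with the prescribed unit, and any morphism between idempotent objects equipped with an identification of units lifts uniquely to a map of commutative algebras. Granting this, the theorem reduces to two things: (a) that $(\ul\Spc_{G,*},S^0)$, $(\ul\NMon_G,\hom(\bfone,\blank))$, and $(\mSp_G,\mathbb S_G)$ are idempotent in $\Pr^\textup{L}_G$; and (b) that the functors $\mathbb P\colon\mS_{G,*}\to\ul\NMon_G$ and $\ell\colon\ul\NMon_G\to\ul\Sp_G$ are $G$-cocontinuous and send the chosen unit to the chosen unit. Given these, part (b) of the theorem follows from the quoted full faithfulness, while the two remaining objects in parts (1) and (3) — namely $\ul\mS_{G,*}$ and $\ul\Sp_G$ — acquire their unique structures by transport along the unit-preserving equivalences of Theorem~\ref{thm:unstable-comparison}, Corollary~\ref{cor:based-comparison} and the comparison theorem for $G$-spectra, which themselves then enhance uniquely to symmetric monoidal equivalences.

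For (a) I would argue case by case, in each case recognizing the $G$-\icat{} as a parametrized avatar of a classical mode. The softest case is $\ul\Spc_{G,*}$: since $\ul\Spc_{G,*}\simeq\ul\Spc_G\otimes\Spc_*$ and the functor $\ul\Spc_G\otimes(\blank)\colon\Pr^\textup{L}\to\Pr^\textup{L}_G$ is strong symmetric monoidal \cite{MW_presentable}*{8.3.8}, it carries the idempotent object $(\Spc_*,S^0)$ of \cite{HA}*{4.8.2.11} to an idempotent object. For $\mSp_G$ — and hence for $\ul\Sp_G$, equivalent to it via a map sending $\mathbb S_G$ to $\mathbb S_G$ — I would invoke \cite{CLL_Clefts}*{9.13(2)}, which is precisely the assertion that $(\mSp_G,\mathbb S_G)$ is $\otimes$-idempotent. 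For $\ul\NMon_G$, combining \cite{CLL_Global}*{4.8.11} with \cite{CLL_Spans}*{9.9} identifies it with a parametrized commutative-monoid construction already known to be $\otimes$-idempotent; alternatively one can rerun the argument used for Corollary~\ref{cor:pted-unique-G-sym-mon}, reducing via the Borel construction and pointwise sifted cocompletion to the fact (recalled earlier in the paper) that $(\Span(\xF),\bfone)$ is idempotent in $\Cat^\amalg$.

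For (b), the functor $\mathbb P$ is a $G$-left adjoint carrying $S^0$ to $\hom(\bfone,\blank)$ by Proposition~\ref{prop:mathbb-P-sym-mon}, and the stabilization map $\ell$ is in each degree a left adjoint (group-completion followed by the recognition equivalence $\simeq\Sp^{\geq 0}\hookrightarrow\Sp$), is compatible with the restriction-along-pushforward transition functors of $\ul\NMon_G$ and $\ul\Sp_G$, and sends $\hom(\bfone,\blank)$ to $\mathbb S_G$ by construction — so it is $G$-cocontinuous by Corollary~\ref{cor:G-SAFT} and preserves units. Thus both $\mathbb P$ and $\ell$ are morphisms of idempotent objects equipped with identifications of units, and \cite{HA}*{4.8.2.9} promotes each uniquely to a map of symmetric monoidal $G$-\icats{}. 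The one place I expect to spend real effort is the bookkeeping in that first step: arranging the parametrized Lurie tensor product so that \cite{HA}*{4.8.2.9} literally applies — that $\Pr^\textup{L}_G$ is a genuine symmetric monoidal $\infty$-category, that its commutative algebras are the $G$-presentably symmetric monoidal $G$-\icats{}, and that being an idempotent object is the correct reformulation of \emph{admitting a unique $G$-presentably symmetric monoidal structure with the prescribed unit}. Once that dictionary is in place, every remaining point is a citation of an already-established idempotency.
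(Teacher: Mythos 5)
Your proposal is correct and follows essentially the same route as the paper: reduce to idempotency in $\Pr^\textup{L}_G$ under the parametrized Lurie tensor product, establish idempotency of the three objects by the same three citations (the strong symmetric monoidal functor $\ul\Spc_G\otimes(\blank)$ applied to $\Spc_*$, \cite{CLL_Clefts}*{9.13(2)}, and \cite{CLL_Global}*{4.8.11} combined with \cite{CLL_Spans}*{9.9}), and conclude via \cite{HA}*{4.8.2.9}. The extra detail you supply on verifying that $\mathbb P$ and $\ell$ are unit-preserving $G$-left adjoints is consistent with what the paper establishes earlier and leaves implicit here.
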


\subsection[Normed structures on $G$-spectra]{\boldmath Normed structures on
  $G$-spectra}
In this subsection we will finally construct the $G$-normed structure on $\ul\Sp_G$; in particular, we will show:

\begin{thm}\label{thm:G-sym-mon-unique}
  There exists a unique pair of a normed structure on $\ul\Sp_G$ together with a
  lift of $\ell\colon\ul\NMon_G\to\ul\Sp_G$ to a normed $G$-functor
  $\ell^\otimes\colon\ul\NMon_G^\otimes\to\ul\Sp_G^\otimes$ that satisfies the
  following two properties:
  \begin{enumerate}
    \item For each $X\in\xF_G$, $\ul\Sp_G^\otimes(X)$ is presentably symmetric monoidal.
    \item For each $f\colon X\to Y$ in $\xF_G$, the norm functor $f_\otimes\colon\ul\Sp_G^\otimes(X)\to\ul\Sp_G^\otimes(Y)$ preserves sifted colimits.
  \end{enumerate}
\end{thm}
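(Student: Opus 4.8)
The plan is to follow the same two-step strategy used for $\ul\NMon_G$ in the preceding proposition and for $\mS_{G,*}$ in Corollary~\ref{cor:pted-unique-G-sym-mon}: produce the normed structure by a Borel construction on a point-set model, verify conditions~(1) and~(2) by an explicit computation, and then reduce uniqueness to an idempotency statement to which \cite{HA}*{4.8.2.9} applies. For existence, I would equip the $1$-category $\textsf{Sp}^\Sigma$ with the smash product and apply the Borel construction of the appendix (Proposition~\ref{prop:borel-G-sym-mon}) to the full subcategory $\textsf{Sp}^\Sigma_{\textup{flat}}$ of flat spectra. Since the smash product and the symmetric monoidal norm functors $X\mapsto X^{\wedge n}$ are homotopical on flat spectra by \cite{hausmann-equivariant}*{\S6}, this descends to a normed structure on $\mSp_G\simeq\ul\Sp_G$ whose norms are the classical symmetric monoidal norms (Corollary~\ref{cor:borel-norms-classical}). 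Condition~(1) is immediate, each $\Sp_H$ being presentably symmetric monoidal under the smash product; condition~(2) follows exactly as in the proof of Proposition~\ref{prop:spc-*-sm-loc}, using that $X\mapsto X^{\wedge n}$ commutes up to isomorphism with filtered colimits and with geometric realizations and that both are homotopical on flat spectra. The normed lift $\ell^\otimes$ of $\ell$ is then built by combining the normed $G$-functor $\mathbb P\colon\mS_{G,*}^\otimes\to\ul\NMon_G^\otimes$ of Proposition~\ref{prop:mathbb-P-sym-mon} with the normed suspension-spectrum functor $\mS_{G,*}^\otimes\to\mSp_G^\otimes$ (again from the Borel construction), together with the fact that a normed $G$-functor out of a sifted cocompletion into a $G$-presentably normed \icat{} with sifted-colimit-preserving norms is determined by its restriction; its underlying $G$-functor is then the stabilization map $\ell$ by construction.

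For uniqueness I would argue as in the proof of the preceding proposition. Writing $\ul\Sp_G$ as the fiberwise $\mathrm{Ind}$-completion of the small $G$-\icat{} $\ul\Sp_G^{\omega}$ of fiberwise compact $G$-spectra --- preserved by all the norms by direct inspection --- any $G$-presentably normed structure on $\ul\Sp_G$ whose norms preserve filtered colimits (which holds by~(2)) restricts to a normed structure on $\ul\Sp_G^{\omega}$, compatibly with the small generating subcategory $\Span(\mF_G)\subseteq\ul\NMon_G$, and conversely extends uniquely along the normed $\mathrm{Ind}$-completion. One is thereby reduced to showing that this small normed $G$-\icat, together with the normed lift of the functor restricted from $\Span(\mF_G)$, is unique; exactly as in the proof of the preceding proposition, this reduces via the Borel construction to the idempotency of $(\Span(\xF),\bfone)$ in $\Cat^{\amalg}$ --- already recalled --- and of $(\Sp^{\omega},\mathbb S)$ in the \icat{} of idempotent-complete stable \icats{} under the Lurie tensor product, whence uniqueness follows from \cite{HA}*{4.8.2.9}.

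The main obstacle is the interaction of stabilization with norms. Unlike the purely multiplicative statement of Theorem~\ref{thm:unique-naive}(3), which could be settled inside $\PrL_G$ because all structure functors there are left adjoints, norm functors are \emph{not} colimit-preserving --- not even for finite colimits --- so the reduction to the small stable model $\ul\Sp_G^{\omega}$, and in particular the claims that the norms restrict to it and extend uniquely from it, have to be checked by hand rather than read off from an abstract smashing-localization argument. Concretely, one must verify that $N^K_H$ preserves fiberwise-compact $G$-spectra and establish the normed analogue of the universal property of $\mathrm{Ind}$-completion in the presence of sifted-colimit-preserving but not cocontinuous norms; once these are in place, the idempotency reduction proceeds word for word as for $\ul\NMon_G$. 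A possible alternative is to phrase everything in a symmetric monoidal \icat{} of $G$-presentably normed \icats{} with sifted-colimit-preserving norms and to identify $\ul\Sp_G^\otimes$ with $\Sp\otimes\ul\NMon_G^\otimes$ there, but constructing such a tensor product is itself precisely where the failure of norms to preserve colimits must be confronted.
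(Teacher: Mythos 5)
Your proposal has a genuine gap in the uniqueness argument, and it is exactly at the point you flag as "the main obstacle'' but do not resolve. The reduction "via the Borel construction to the idempotency of $(\Sp^\omega,\mathbb S)$'' cannot work: unlike $\ul\NMon_G$ and $\mS_{G,*}$, the $G$-\icat{} $\ul\Sp_G$ is \emph{not} Borel --- its fiber over $G/H$ is genuine $H$-spectra, not $\Fun(BH,\Sp)$ --- so Corollary~\ref{cor:borel-unique-single} does not apply, and an idempotency statement in $\CAlg(\PrL)$ (or in idempotent-complete stable \icats{}) only pins down the fiberwise tensor products and the restriction functors, i.e.\ the underlying \emph{symmetric monoidal} $G$-\icat{}, which is already Theorem~\ref{thm:unique-naive}(3). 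The Hill--Hopkins--Ravenel norms are not morphisms in any of these \icats{} (they are neither exact nor colimit-preserving), so no amount of idempotency controls them. A related circularity affects your existence step: to produce $\ell^\otimes$ by combining $\mathbb P$ with the normed $\Sigma^\infty$ you need the normed $\Sigma^\infty$ to factor through $\mathbb P$, and $\ul\NMon_G$ is the sifted cocompletion of $\Span(\mF_G)$, not of $\mF_{G,*}$; establishing that factorization is essentially the content of the theorem.

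The missing idea, which is how the paper argues, is to use a \emph{localization} universal property rather than an idempotency one. One first shows (Proposition~\ref{prop:stab-CMon-G}) that each $\ell\colon\ul\NMon_G(X)\to\ul\Sp_G(X)$ is the initial object under $\ul\NMon_G(X)$ in $\CAlg(\Cat^{\textup{sifted}})$ inverting $\Sigma\bbone$, and then (Lemma~\ref{lemma:rep-sphere}) that $\ell(f_\otimes\Sigma\bbone)$ is invertible for every $f$ in $\xF_G$ --- this is the invertibility of representation spheres, proved by comparison with symmetric spectra. Given these two inputs, any norm $f_\otimes$ on $\ul\Sp_G$ fitting into a normed lift of $\ell$ must satisfy $f_\otimes\circ\ell\simeq\ell\circ f_\otimes^{\NMon}$; since the right-hand side is a sifted-colimit-preserving symmetric monoidal functor inverting $\Sigma\bbone$, it factors \emph{uniquely} through the localization $\ell$, which forces $f_\otimes$ (and all higher coherence data of the functor $\Span(\xF_G)\to\CMon(\CatI)$) and simultaneously constructs it. Your point-set existence construction via flat symmetric spectra is essentially the paper's Theorem~\ref{thm:sp-normed-comparison} and its preceding proposition, but in the paper that comparison is a consequence of the abstract construction, not a substitute for it.
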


This will require some further preparations.

\begin{lemma}\label{lemma:cmon-comp-prl}
  Let $\cI$ be a small \icat{} with finite coproducts equipped with a symmetric monoidal structure that preserves coproducts in each variable. Then the Day convolution on $\Fun(\cI^\op,\Spc)$ restricts to a symmetric monoidal structure on $\mathcal P_\Sigma(\cI)=\Fun^\times(\cI^\op,\Spc)$. Moreover, this is a presentably symmetric monoidal structure, and the tensor product of compact objects is compact again.
  \begin{proof}
    The fact that this restricts is the content of \cref{prop:dayconvolution}, where it is also shown that this is equivalently the localization of the Day convolution structure, whence in particular presentably symmetric monoidal.

    For the second statement, we now claim that any compact object in $\cP_\Sigma(\cI)$ is a retract of a finite colimit of representables. To prove the claim, consider any $X\in\cP_\Sigma(\cI)$, and write it as a colimit $\colim_{i\in I}x_i$ in $\cP(\cI)=\Fun(\cI^\op,\Spc)$ of representables. Applying the localization functor $\cP(\cI)\to\cP_\Sigma(\cI)$ we then also get such a colimit decomposition in $\cP_\Sigma(\cI)$. Restricting along a cofinal functor, we may assume that $I$ is a poset \cite{HTT}*{4.2.3.15}, and filtering it by its finite subsets, we can express $X$ as a filtered colimit in $\cP_\Sigma(\cI)$ of finite colimits of representables, i.e.~we have an equivalence
    \[
      \phi\colon X\xrightarrow{\;\simeq\;}\colim_{J\subseteq I\text{ finite}} \colim_{j\in J} x_j.
    \]
    By compactness of $X$, $\phi$ has to factor through a map $\psi\colon X\to\colim_{j\in J} x_j$ for some finite $J\subseteq I$. The composite $\colim_{j\in J} x_j\to\colim_{J\subseteq I\text{ finite}} \colim_{j\in J} x_j\simeq X$ is then a retraction of $\psi$, finishing the proof of the claim.

    With this, we can now easily prove the second statement: if $X$ and $Y$ are compact, then the above shows that $X\otimes Y$ is again a retract of a finite colimit of representables (using that $\otimes$ preserves colimits in each variable). As representables are compact in $\cP$, and hence also in $\cP_\Sigma$ (using that the latter is closed under filtered colimits), this immediately implies that $X\otimes Y$ is compact, as desired.
  \end{proof}
\end{lemma}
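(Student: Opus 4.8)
The plan is to deduce the first two assertions directly from \cref{prop:dayconvolution} and to prove the statement about compact objects by a standard approximation argument. For the restriction and presentability, I would apply \cref{prop:dayconvolution} with the symmetric monoidal \icat{} taken to be $\cI^\op$ — which has finite products since $\cI$ has finite coproducts, and whose tensor product preserves finite products in each variable, being the opposite of a bifunctor that preserves coproducts in each variable — and with $\cX = \Spc$, which is cartesian closed and presentable. That proposition then shows that the Day convolution on $\Fun(\cI^\op,\Spc)$ restricts to a symmetric monoidal structure on $\Fun^\times(\cI^\op,\Spc) = \cP_\Sigma(\cI)$, and that the inclusion into $\Fun(\cI^\op,\Spc)$ is a symmetric monoidal localization. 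Since the Day convolution on $\Fun(\cI^\op,\Spc)$ is presentably symmetric monoidal (the target $\Spc$ being presentably symmetric monoidal) and $\cP_\Sigma(\cI)$ is presentable, the localized structure is presentably symmetric monoidal as well.

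For the statement about compact objects, the plan is first to show that every compact object of $\cP_\Sigma(\cI)$ is a retract of a finite colimit of representables, and then to combine this with the facts that the Day tensor product of two representables $y(a), y(b)$ is the representable $y(a \otimes_{\cI} b)$ and that $\otimes$ preserves colimits in each variable. For the first point I would take an arbitrary $X \in \cP_\Sigma(\cI)$, write $X \simeq \colim_{i \in I} y(x_i)$ as a colimit of representables in $\cP(\cI) = \Fun(\cI^\op,\Spc)$, and apply the localization $L \colon \cP(\cI) \to \cP_\Sigma(\cI)$, which preserves colimits and fixes representables, to obtain the same colimit decomposition inside $\cP_\Sigma(\cI)$. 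Restricting along a cofinal functor I may assume by \cite{HTT}*{4.2.3.15} that $I$ is a poset, and filtering $I$ by its finite sub-posets then exhibits $X$ as a filtered colimit, formed in $\cP_\Sigma(\cI)$, of finite colimits of representables. If $X$ is compact, the structure equivalence $X \xrightarrow{\ \simeq\ } \colim_{J \subseteq I\text{ finite}} \colim_{j \in J} y(x_j)$ factors through one of the finite stages $\colim_{j \in J} y(x_j)$, which therefore retracts onto $X$. Here one also uses that representables are compact in $\cP_\Sigma(\cI)$: they are compact in $\cP(\cI)$, and the inclusion $\cP_\Sigma(\cI) \hookrightarrow \cP(\cI)$ preserves filtered colimits since product-preserving presheaves are closed under filtered colimits in $\Spc$; consequently finite colimits of representables are compact in $\cP_\Sigma(\cI)$.

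Granting this, if $X$ and $Y$ are compact I would write them as retracts of finite colimits of representables $P$ and $Q$; then $X \otimes Y$ is a retract of $P \otimes Q$, and because $\otimes$ preserves colimits in each variable and sends a pair of representables $y(a), y(b)$ to $y(a \otimes_{\cI} b)$, the object $P \otimes Q$ is again a finite colimit of representables, hence compact. Thus $X \otimes Y$ is a retract of a compact object and therefore compact. The only genuinely delicate point in all of this is the first step — the reduction to a poset and the observation that the relevant filtered colimit of finite sub-colimits must be formed inside $\cP_\Sigma(\cI)$ rather than $\cP(\cI)$, so that compactness of $X$ may legitimately be invoked — but this is a familiar manipulation; everything else is formal.
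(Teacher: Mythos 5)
Your argument is correct and follows essentially the same route as the paper's own proof: the restriction and presentability are deduced from \cref{prop:dayconvolution} as a symmetric monoidal localization, and compactness of tensor products is reduced, via the poset/finite-subset filtration of a colimit of representables, to the fact that compact objects of $\cP_\Sigma(\cI)$ are retracts of finite colimits of representables. The only (harmless) difference is that you make explicit the fact that the Day convolution of two representables is again representable, which the paper leaves implicit.
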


\begin{lemma}\label{lemma:sym-mon-stab}
  Let $\cD$ be presentably symmetric monoidal and pointed. Then the stabilization map $\Sigma^\infty\colon\cD\to\Sp(\cD)$ lifts uniquely to a map in $\CAlg(\PrL)$, and this lift is symmetric monoidal inversion of $\Sigma\bbone$.
  \begin{proof}
    The existence and uniqueness of the symmetric monoidal structure follows from idempotency of $(\Sp,\mathbb S)$ in $\PrL$, see~\cite{GepnerGrothNikolaus}*{5.1}. It therefore only remains to prove that this map is symmetric monoidal inversion.

    As the symmetric monoidal product $\otimes$ preserves colimits in each variable, $\Sigma\bbone\otimes\blank$ is equivalent to the suspension functor $\Sigma$. Expressing $\Sigma^\infty\colon\cD\to\Sp(\cD)$ as the sequential colimit in $\PrL$ along $\Sigma$, the claim is therefore an instance of \cite{gepner-meier}*{C.6} once we show that $\Sigma\bbone$ is a \emph{symmetric object}, in the sense that for some $n\ge2$ the automorphism of $(\Sigma\bbone)^{\otimes n}$ induced by the permutation $\sigma\coloneqq(1\;2\;\dots\;n)\in\Sigma_n$ is trivial. This is in fact true for any odd $n$ as in this case $\sigma$ has sign $+1$, so that the induced automorphism of $S^n$ also has degree $1$.
  \end{proof}
\end{lemma}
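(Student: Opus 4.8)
The plan is to prove the two assertions in turn. For the existence and uniqueness of the lift to $\CAlg(\PrL)$ I would argue formally from the idempotency of $(\Sp,\mathbb S)$ in $\PrL$. Since $\cD$ is presentably symmetric monoidal there is an equivalence $\Sp(\cD)\simeq\Sp\otimes\cD$ in $\PrL$, and because $\Sp$ is an idempotent commutative algebra the tensor product $\Sp\otimes\cD$ carries a canonical commutative algebra structure for which the unit map $\cD\to\Sp\otimes\cD$ — which is exactly $\Sigma^\infty$ — is a morphism in $\CAlg(\PrL)$, and moreover the space of such lifts is contractible; this is precisely \cite{GepnerGrothNikolaus}*{5.1}. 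It then remains to identify this lift with the $\otimes$-inversion of $\Sigma\bbone$.

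For the second assertion I would first record two identifications. As the tensor product preserves colimits in each variable and $\cD$ is pointed, tensoring with $\Sigma\bbone$ agrees with suspension, $\Sigma\bbone\otimes(\blank)\simeq\Sigma$ (both are $0\amalg_{(\blank)}0$, using $0\otimes X\simeq0$ and $\bbone\otimes X\simeq X$). On the other hand, the standard presentation of stabilization of a pointed presentable \icat{} realizes $\Sp(\cD)$ as the sequential colimit of $\cD\xrightarrow{\Sigma}\cD\xrightarrow{\Sigma}\cdots$ in $\PrL$, with $\Sigma^\infty$ the structure map of the colimit. Combining these, $\Sp(\cD)$ is the colimit of the tower obtained by iterating $\Sigma\bbone\otimes(\blank)$, which is the presentable \icat{} underlying the $\otimes$-inversion $\cD[(\Sigma\bbone)^{-1}]$.

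To upgrade this to a symmetric monoidal statement I would invoke \cite{gepner-meier}*{C.6}, which applies once $\Sigma\bbone$ is shown to be a \emph{symmetric object}: that for some $n\ge2$ the automorphism of $(\Sigma\bbone)^{\otimes n}$ induced by the cyclic permutation $\sigma=(1\,2\,\cdots\,n)\in\Sigma_n$ is homotopic to the identity. I would check this by transporting the question into pointed spaces. Being pointed presentably symmetric monoidal, $\cD$ receives a unique symmetric monoidal colimit-preserving functor $F\colon\Spc_*\to\cD$ (as $\Spc_*$ is the initial such \icat{}, cf.\ \cite{HA}*{4.8.2.11}), which sends $S^0$ to $\bbone$ and hence $S^1$ to $\Sigma\bbone$; by symmetric monoidality $F$ carries $S^n\simeq(S^1)^{\wedge n}$ with its factor-permutation $\Sigma_n$-action to $(\Sigma\bbone)^{\otimes n}$ with its symmetry action. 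It therefore suffices to see that $\sigma$ acts on $S^n$ by a self-map homotopic to the identity. But $\sigma$ acts on $S^n\cong S^{\mathbb R^n}$ through a coordinate permutation of determinant $\mathrm{sign}(\sigma)=(-1)^{n-1}$, hence by a map of degree $(-1)^{n-1}$; choosing $n$ odd (say $n=3$) this degree is $+1$, and since $\pi_n(S^n)\cong\mathbb Z$ detects the degree the map is homotopic to the identity. Thus $\Sigma\bbone$ is symmetric, and \cite{gepner-meier}*{C.6} identifies the colimit of the previous paragraph, equipped with its induced presentably symmetric monoidal structure, as $\cD[(\Sigma\bbone)^{-1}]$. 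Since this structure and the idempotent one from the first paragraph both make $\Sigma\bbone$ invertible and both restrict along $\cD$ to the symmetric monoidal $\Sigma^\infty$, the universal property of the $\otimes$-inversion forces them to coincide, completing the proof.

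The main obstacle is the symmetric-object verification, and in particular the bookkeeping identifying the abstract symmetry action of $\Sigma_n$ on $(\Sigma\bbone)^{\otimes n}$ with the geometric permutation action on $S^n$; introducing the functor $F$ is what makes this rigorous and reduces the claim to the classical computation that an even permutation of coordinates acts on a sphere by a degree-one map, hence by a map homotopic to the identity.
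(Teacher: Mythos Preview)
Your proof is correct and follows essentially the same route as the paper: idempotency of $(\Sp,\mathbb S)$ for existence and uniqueness, the identification $\Sigma\bbone\otimes(\blank)\simeq\Sigma$, the sequential-colimit description of $\Sp(\cD)$, and the appeal to \cite{gepner-meier}*{C.6} once $\Sigma\bbone$ is shown to be symmetric via the sign/degree computation for an odd cycle. The only differences are that you make explicit the transport along the initial functor $F\colon\Spc_*\to\cD$ to justify computing the symmetry action on $S^n$, and you spell out the final reconciliation of the two symmetric monoidal structures via uniqueness---both of which the paper leaves implicit.
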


\begin{propn}\label{prop:stab-CMon-G}
  For any $X\in\xF_G$, the functor $\ell\colon{\ul\NMon}_G(X)\to{\ul\Sp}_G(X)$ is symmetric monoidal inversion of the object $\Sigma\bbone$ in both $\CAlg(\Pr^\textup{L})$ and $\CAlg(\Cat^\textup{sifted})$.
  \begin{proof}
    Observe first that $\bbone=\hom(\bfone,\blank)$ is compact, whence so is $\Sigma\bbone$. Moreover, we have seen in the proof of Lemma~\ref{lemma:sym-mon-stab} that $\Sigma\bbone$ is a symmetric object, while Lemma~\ref{lemma:cmon-comp-prl} shows that the the symmetric monoidal product preserves compact objects and colimits in each variable. Thus, \cite{norms}*{4.1} shows that the universal map in $\CAlg(\PrL)$ inverting $\Sigma\bbone$ agrees with the universal map in $\CAlg(\Cat^\textup{sifted})$. The claim therefore follows from the previous lemma.
  \end{proof}
\end{propn}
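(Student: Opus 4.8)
The plan is to deduce the statement from the two structural lemmas just proved, together with the Bachmann--Hoyois comparison of symmetric monoidal inversions \cite{norms}*{4.1}. The first task is to make the symmetric monoidal \icat{} $\ul\NMon_G(X)$ concrete: by \Cref{def:monoidal_CMon} and \Cref{prop:dayconvolution} its value at $X$ is the sifted cocompletion $\cP_\Sigma\bigl(\Span((\xF_G)_{/X})\bigr)$, equipped with the restriction of the Day convolution structure induced by the cartesian (fibrewise product) symmetric monoidal structure on $\Span((\xF_G)_{/X})$; the latter preserves finite coproducts separately in each variable because $\xF_G$ is extensive, hence so are its slices, which gives distributivity of $\times_X$ over $\amalg_X$. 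So \Cref{lemma:cmon-comp-prl} applies with $\cI = \Span((\xF_G)_{/X})$, showing that $\ul\NMon_G(X)$ is presentably symmetric monoidal, that its tensor product preserves colimits in each variable, and that the tensor product of two compact objects is again compact; it is moreover pointed, being equivalent to $\Fun^{\times}\bigl(\Span((\xF_G)_{/X}),\CMon(\Spc)\bigr)$ since its source is semiadditive.

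Next I would check that $\Sigma\bbone$ is both compact and symmetric. The unit $\bbone = \hom(\bfone,\blank)$ is a representable presheaf, hence compact in $\cP_\Sigma\bigl(\Span((\xF_G)_{/X})\bigr)$ --- representables are compact in $\cP$, and $\cP_\Sigma$ is closed under filtered colimits therein --- and since the \icat{} is pointed, $\Sigma\bbone$ is a finite colimit of compact objects and so is again compact. That $\Sigma\bbone$ is a \emph{symmetric} object was recorded in the proof of \Cref{lemma:sym-mon-stab}: the cyclic permutation acts trivially on $(\Sigma\bbone)^{\otimes n}$ for odd $n$, acting there by a self-map of the $n$-sphere of degree $(-1)^{n-1}$.

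Finally I would invoke \cite{norms}*{4.1}: for a presentably symmetric monoidal \icat{} whose tensor product preserves compact objects and colimits in each variable, together with a compact symmetric object, the universal map inverting that object in $\CAlg(\PrL)$ agrees with the one in $\CAlg(\Cat^\textup{sifted})$, both being the sequential colimit along $\Sigma\bbone\otimes\blank$ formed in the respective \icat{}. By \Cref{lemma:sym-mon-stab} the former is exactly the stabilization $\Sigma^\infty\colon\ul\NMon_G(X)\to\Sp(\ul\NMon_G(X))$; and since stabilization of $\Fun^{\times}\bigl(\Span((\xF_G)_{/X}),\Spc\bigr)$ is computed pointwise, this target equals $\Fun^{\times}\bigl(\Span((\xF_G)_{/X}),\Sp\bigr) = \ul\Sp_G(X)$ and the map is $\ell$. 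This yields the claim for both $\CAlg(\PrL)$ and $\CAlg(\Cat^\textup{sifted})$ at once.

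The only genuine work is the bookkeeping in the first step --- identifying the parametrized Day convolution structure of \Cref{propn:FunTsymmon} on $\ul\NMon_G(X)$ with the honest restricted Day convolution of a coproduct-compatible symmetric monoidal structure on the \emph{small} category $\Span((\xF_G)_{/X})$, so that \Cref{lemma:cmon-comp-prl} applies on the nose --- together with the routine identification of $\Sp(\ul\NMon_G(X))$ with $\ul\Sp_G(X)$ compatibly with $\ell$. Everything downstream is a direct appeal to the cited results.
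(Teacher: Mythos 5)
Your proposal is correct and follows essentially the same route as the paper: verify that $\Sigma\bbone$ is compact and symmetric, use \cref{lemma:cmon-comp-prl} to see that the tensor product on $\ul\NMon_G(X)$ preserves compact objects and colimits in each variable, invoke \cite{norms}*{4.1} to identify the universal inversions in $\CAlg(\PrL)$ and $\CAlg(\Cat^\textup{sifted})$, and conclude via \cref{lemma:sym-mon-stab}. The extra bookkeeping you supply (identifying $\ul\NMon_G(X)$ with $\cP_\Sigma(\Span((\xF_G)_{/X}))$ carrying the restricted Day convolution of the coproduct-compatible tensor $\times_X$, and matching the stabilization map with $\ell$) is exactly what the paper leaves implicit.
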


\begin{lemma}\label{lemma:rep-sphere}
  Let $f\colon X\to Y$ be any map in $\xF_G$. Then the symmetric monoidal functor $\ell\colon{\ul\NMon}_G(Y)\to{\ul\Sp}_G(Y)$ sends $f_\otimes(\Sigma\bbone)$ to an invertible object.
  \begin{proof}
    Consider first the special case that $f$ is the projection $G/H\to
    G/K$ for some $H\leqslant K$. By Theorem~\ref{thm:unique-naive},
    it will be enough to show that the composite
    \[\mathfrak L\colon\ul\NMon_G\xlongrightarrow{\ell} \ul\Sp_G\isoto\mSp_G\] sends
    $f_\otimes\Sigma\hom(\bfone,\blank)$ to an invertible object (with
    respect to the derived smash product of $K$-equivariant symmetric
    spectra). For this we compute
    \[
      \mathfrak L(f_\otimes\Sigma\hom(\bfone,\blank))=\mathfrak Lf_\otimes\Sigma\mathbb P(S^0)\simeq (\mathfrak L\mathbb P)(f_\otimes S^1)\simeq (\mathfrak L\mathbb P)(S^{K/H})
    \]
    where $\mathbb P\colon\mS_{G,*}\to\ul\NMon_G$ is the normed $G$-left adjoint from Proposition~\ref{prop:mathbb-P-sym-mon}, and the last equation uses the explicit description of the normed structure on $\mS_{G,*}$. Now $\mathfrak L\circ\mathbb P\colon\mS_{G,*}\to\mSp_{G}$ is a $G$-left adjoint sending $S^0$ to $\mathbb S_G$, so it is necessarily the suspension spectrum functor. But the representation sphere $\Sigma^\infty S^{K/H}$ is invertible with respect to the smash product of $K$-spectra by \cite{hausmann-equivariant}*{4.9(i)}, finishing the proof of the special case.

    In the case of a general map $f\colon X\to Y$ in $\xF_G$, we first note that an object is invertible in $\ul\Sp_G(Y)$ if and only if it is so after restricting to each orbit. By the double coset formula, we may therefore assume that $Y=G/H$. Decomposing $X$ into its orbits then provides a factorization of $X\to G/H$ as
    \[
      X=\coprod_{i=1}^r X_i\xrightarrow{\coprod f_i}\coprod_{i=1}^r G/H\xrightarrow{\nabla} G/H,
    \]
    whence $\ell(f_\otimes\Sigma\hom(\bfone,\blank))\simeq\ell\big({\bigotimes_{i=1}^r f_{i\otimes}\Sigma(\hom(\bfone,\blank))}\big)\simeq\bigotimes_{i=1}^r \ell f_{i\otimes}(\Sigma\hom(\bfone,\blank))$. As invertible elements are closed under tensor product, this completes the proof of the lemma.
  \end{proof}
\end{lemma}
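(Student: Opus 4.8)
The plan is to reduce to the case where $f$ is an orbit projection and then to compute, via the equivalence $\ul\Sp_G\simeq\mSp_G$, that the image of $f_\otimes(\Sigma\bbone)$ is the suspension spectrum of a representation sphere. Since invertibility of an object of $\ul\Sp_G(Y)$ can be detected after restriction along the jointly conservative, symmetric monoidal family of orbit inclusions $G/H\to Y$---and such restrictions commute with the norm up to the double coset formula---I would first reduce to the case $Y=G/H$. Writing $X\simeq\coprod_i X_i$ as a disjoint union of orbits $X_i\simeq G/L_i$ (with each $L_i$ subconjugate to $H$) factors $f$ as $\coprod_i X_i\xrightarrow{\coprod_i f_i}\coprod_i G/H\xrightarrow{\nabla}G/H$. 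As the norm along a fold map computes the tensor product and the norm along a coproduct is the product of the individual norms, this yields $\ell(f_\otimes\Sigma\bbone)\simeq\bigotimes_i\ell(f_{i\otimes}\Sigma\bbone)$; since invertible objects are closed under tensor products, it suffices to treat each projection $f_i\colon G/L_i\to G/H$. After renaming this is a single projection $f\colon G/H\to G/K$ with $H\leqslant K$.

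For such a projection, Theorem~\ref{thm:unique-naive} supplies a symmetric monoidal equivalence $\ul\Sp_G\simeq\mSp_G$, so it is enough to show that the composite $\mathfrak L\colon\ul\NMon_G\xrightarrow{\ell}\ul\Sp_G\isoto\mSp_G$ sends $f_\otimes\Sigma\bbone$ to an invertible $K$-spectrum. Here I would crucially use that $\mathbb P\colon\ul\mS_{G,*}\to\ul\NMon_G$ is a normed $G$-functor (Proposition~\ref{prop:mathbb-P-sym-mon}) and that $\bbone=\hom(\bfone,\blank)=\mathbb P(S^0)$: since $\mathbb P$ commutes with suspension and with the norm $f_\otimes$, we obtain $f_\otimes\Sigma\bbone\simeq\mathbb P(f_\otimes S^1)$. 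The explicit description of the Hill--Hopkins--Ravenel norm on $\mS_{G,*}$ then identifies $f_\otimes S^1$ with the representation sphere $S^{K/H}$, i.e.\ the $|K/H|$-fold smash power of $S^1$ carrying its permutation $K$-action.

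It then remains to identify the functor $\mathfrak L\mathbb P\colon\mS_{G,*}\to\mSp_G$. This is a composite of $G$-left adjoints, hence itself a $G$-left adjoint, and it sends $S^0$ to $\mathbb S_G$; by uniqueness of $G$-left adjoints out of $\ul\mS_{G,*}$ with prescribed value on $S^0$ (the universal property used in the proof of Theorem~\ref{thm:unique-naive}), it is necessarily the equivariant suspension spectrum functor. Thus $\mathfrak L(f_\otimes\Sigma\bbone)\simeq(\mathfrak L\mathbb P)(S^{K/H})\simeq\Sigma^\infty S^{K/H}$, which is invertible in genuine $K$-spectra by \cite{hausmann-equivariant}*{4.9(i)}. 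I expect the main obstacle to be the middle step: one must translate between the abstract normed structure on $\ul\NMon_G$ and the point-set Hill--Hopkins--Ravenel norm carefully enough to justify the identity $f_\otimes S^1\simeq S^{K/H}$, and to pin down $\mathfrak L\mathbb P$ as the suspension spectrum functor. Once these identifications are secured, the reductions and the appeal to invertibility of representation spheres are formal.
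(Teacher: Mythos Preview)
Your proposal is correct and follows essentially the same approach as the paper's proof: reduce to the case of an orbit projection by checking invertibility after restriction to orbits of $Y$ and decomposing $X$, then use that $\mathbb P$ is normed to identify $\mathfrak L(f_\otimes\Sigma\bbone)$ with $\Sigma^\infty S^{K/H}$, appealing to the same universal property to pin down $\mathfrak L\mathbb P$ as the suspension spectrum functor. The only difference is cosmetic---you present the general-to-special reduction first, whereas the paper treats the special case first and then reduces.
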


\begin{proof}[Proof of Theorem~\ref{thm:G-sym-mon-unique}]
  By \cref{thm:unique-naive}, $\ell$ lifts uniquely to a natural transformation of functors into the \icat{} of presentably symmetric monoidal \icats{} and sifted-colimit-preserving functors, and by Proposition~\ref{prop:stab-CMon-G} this map is pointwise given by symmetric monoidal inversion of $\Sigma\bbone$. This then uniquely extends to the desired map of normed $G$-\icats{} (viewed as functors $\Span(\xF_G)\to\CMon(\CatI)$ as per our standing convention) by the universal property of symmetric monoidal inversion combined with the previous lemma.
\end{proof}

Let us now give alternative interpretations of this normed $G$-\icat:

\begin{propn}[cf.~\cite{norms}*{9.11 and 9.13}]\label{prop:comparison-BH}
  For every $H\leqslant G$, the symmetric monoidal functor $(\ell\circ\mathbb P)(G/H)\colon\ul\Spc_{G,*}^\otimes(G/H)\to\ul\Sp^\otimes_G(G/H)$ is given by universally inverting the objects of the form $f_\otimes\Sigma\bbone$ in $\CAlg(\PrL)$, or equivalently in $\CAlg(\Cat^\textup{sifted})$.
\end{propn}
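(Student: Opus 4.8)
The plan is to assemble the statement from pieces already in place: the identification of $\ell$ as a symmetric monoidal inversion (\Cref{prop:stab-CMon-G}), the facts that $\mathbb P$ is normed (\Cref{prop:mathbb-P-sym-mon}) and that $\ell\circ\mathbb P$ inverts the relevant spheres (\Cref{lemma:rep-sphere}), the comparison between localizations in $\CAlg(\PrL)$ and $\CAlg(\Cat^\textup{sifted})$ for compact symmetric objects (\cite{norms}*{4.1}), and the classical description of genuine $G$-spectra as a localization of pointed $G$-spaces at representation spheres. First I would reduce to $H=G$: since $\ul\Spc_{G,*},\ul\NMon_G,\ul\Sp_G$ are $G$-\icats{}, $\ell^\otimes,\mathbb P^\otimes$ are normed $G$-functors, and $(\textbf{O}_G)_{/(G/H)}\simeq\textbf{O}_H$, evaluating at $G/H$ amounts to restricting along the orbit $G/H$ to the corresponding $H$-data and evaluating at $H/H$; under this identification $\ul\Spc_{G,*}(G/H)\simeq\Spc_{H,*}$, $\ul\Sp_G(G/H)\simeq\Sp_H$, the functor $(\ell\circ\mathbb P)(G/H)$ becomes $(\ell\circ\mathbb P)(H/H)$, and the objects $f_\otimes\Sigma\bbone$ with $f$ in $\xF_G$ become the analogous objects with $f$ in $\xF_H$.

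So assume $H=G$. The composite $\mathcal L\coloneqq(\ell\circ\mathbb P)(G/G)\colon\Spc_{G,*}\to\Sp_G$ is symmetric monoidal (\Cref{prop:mathbb-P-sym-mon} and \Cref{thm:G-sym-mon-unique}), preserves all colimits (both factors are left adjoints; for $\ell$ this is part of \Cref{prop:stab-CMon-G}), and inverts every $f_\otimes\Sigma\bbone$: as $\mathbb P$ is a normed left adjoint, $\mathbb P(f_\otimes\Sigma\bbone)\simeq f_\otimes\Sigma\bbone$ in $\ul\NMon_G(G/G)$, and $\ell$ sends this object to an invertible one by \Cref{lemma:rep-sphere}. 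Next I would record, exactly as in the proof of \Cref{lemma:rep-sphere}, that $f_\otimes\Sigma\bbone$ for $f\colon B\to G/G$ is the permutation representation sphere $S^{B}$; such an object is compact in $\Spc_{G,*}$ (it is a finite $G$-CW complex) and \emph{symmetric} in the sense of \cite{norms}*{4.1}: for odd $n$ the cyclic permutation of the smash factors of $(S^{B})^{\wedge n}=S^{\mathbb R[B]^{\oplus n}}$ acts, under the identification $\mathbb R[B]^{\oplus n}\cong\mathbb R[B]\otimes_{\mathbb R}\mathbb R^{n}$, as $\id_{\mathbb R[B]}\otimes\sigma$ with $\sigma$ a permutation matrix of determinant $+1$; a path from $\sigma$ to the identity inside $SO(n)$, tensored with $\id_{\mathbb R[B]}$, is a $G$-equivariant path of linear automorphisms, hence yields a $G$-equivariant homotopy from the cyclic permutation to the identity. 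By \cite{norms}*{4.1} the universal localization $\Spc_{G,*}[\{f_\otimes\Sigma\bbone\}^{-1}]$ therefore agrees in $\CAlg(\PrL)$ and in $\CAlg(\Cat^\textup{sifted})$, and $\mathcal L$ factors through it via some functor $\Theta$.

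It remains to prove $\Theta$ is an equivalence, for which I would compare universal properties in $\CAlg(\PrL)$. On the one hand $\mathcal L$ has underlying functor the suspension spectrum functor $\Sigma^\infty_G$, since $\mathfrak L\circ\mathbb P$ is a $G$-left adjoint sending $S^{0}$ to $\mathbb S_G$ (as observed in the proof of \Cref{lemma:rep-sphere}). On the other hand, $\Sigma^\infty_G$ classically exhibits $\Sp_G$ as the localization of $\Spc_{G,*}$ at the set of all representation spheres, equivalently at the single regular representation sphere $S^{G/e}$; via the comparison with equivariant symmetric spectra this may be cited from \cite{hausmann-equivariant}. That this set generates the same localization as $\{f_\otimes\Sigma\bbone\}=\{S^{B}:B\in\xF_G\}$ follows because $S^{G/e}$ is itself one of the $f_\otimes\Sigma\bbone$ (take $f\colon G/e\to G/G$) while conversely every $S^{B}$ becomes invertible once $S^{G/e}$ is: $\mathbb R[B]$ is a direct summand of $\mathbb R[G]^{\oplus N}$ for $N\gg 0$ (the regular representation contains every real irreducible), so $S^{B}$ is a smash factor of the invertible object $(S^{G/e})^{\wedge N}$. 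Hence both $\Theta$ and $\mathcal L$ are the localization of $\Spc_{G,*}$ at $\{f_\otimes\Sigma\bbone\}$ in $\CAlg(\PrL)$, so $\Theta$ is an equivalence; the $\CAlg(\Cat^\textup{sifted})$ statement then follows from the agreement of localizations established above.

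The step I expect to be the main obstacle is importing this classical identification of $\Sp_G$ into the present setting, i.e.\ matching the model-categorical suspension-spectrum functor with $\ell\circ\mathbb P$ across $\mSp_G\simeq\ul\Sp_G$ and phrasing the localization statement so as to respect the $\CAlg(\PrL)$-universal property. An alternative entirely internal to the paper would instead start from \Cref{prop:stab-CMon-G}, which already gives $\Sp_G\simeq\ul\NMon_G(G/G)[(\Sigma\bbone)^{-1}]$ in both $\CAlg(\PrL)$ and $\CAlg(\Cat^\textup{sifted})$, and combine it with the universal property of $\ul\NMon_G$ relative to $\ul\Spc_{G,*}$ along $\mathbb P$ (cf.\ the remark after \Cref{def:monoidal_CMon} and \cite{CLL_Spans}); the heart of that route is the bookkeeping showing that inverting $\Sigma\bbone$ after applying $\mathbb P$ unwinds to inverting all $f_\otimes\Sigma\bbone$.
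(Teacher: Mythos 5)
Your argument follows essentially the same route as the paper's: both ultimately rest on the classical identification of the suspension spectrum functor $\Sigma^\infty\colon\mS_{G,*}(G/H)\to\mSp_G(G/H)$ as the symmetric monoidal inversion of representation spheres (in the paper this is imported via \cite{gepner-meier}*{C.7} together with \cite{hausmann-equivariant}*{7.5}), combined with the identification of $\ell\circ\mathbb P$ with $\Sigma^\infty$. Your supplementary steps --- the reduction to $H=G$, the compactness and symmetry check for the permutation representation spheres $S^{B}$ needed to apply \cite{norms}*{4.1}, and the verification that $\{f_\otimes\Sigma\bbone\}$ and the full collection of representation spheres generate the same localization --- are all correct, and the last two are genuinely needed since the classical statement is phrased in terms of all representation spheres. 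The one step you flag as the main obstacle is precisely where the remaining content sits, and it is closed by \cref{thm:unique-naive}: the triangle of $G$-functors relating $\Sigma^\infty$, $\ell\circ\mathbb P$, and the equivalence $\mSp_G\simeq\ul\Sp_G$ commutes by the computation in the proof of \cref{lemma:rep-sphere}, and since all three symmetric monoidal structures in sight are $G$-presentably symmetric monoidal, this triangle lifts (uniquely) to a commutative triangle of \emph{symmetric monoidal} $G$-\icats{}. That upgrade is exactly what you need in order to replace $(\ell\circ\mathbb P)(G/H)$ by $\Sigma^\infty$ in your universal-property comparison in $\CAlg(\PrL)$, rather than merely identifying their underlying functors; with it in place, your proof is complete and agrees with the paper's.
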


Note that Nardin and Shah use this as the \emph{definition} of the normed structure on $\ul\Sp_G$ \cite{NardinShah}*{2.4.2}, following \cite{norms}*{\S9.2}. Thus, this result in particular shows that our approach agrees with their construction.

\begin{proof}
First note that this holds for $\Sigma^\infty\colon\mS_{G,*}(G/H)\to\mSp_G(G/H)$ by \cite{gepner-meier}*{C.7} together with \cite{hausmann-equivariant}*{7.5}.\footnote{Hausmann a priori just provides a Quillen equivalence without referring to the symmetric monoidal structures on both sides, but the left Quillen functor from symmetric to orthogonal spectra is strong symmetric monoidal with respect to Day convolution, so this automatically gives an equivalence of symmetric monoidal \icats.} Thus, it will suffice to lift the commutative triangle
\begin{equation}\label{diag:magic-square}
  \begin{tikzcd}[column sep=small]
    &\mS_{G,*}\arrow[dl,"\Sigma^\infty"', bend right=15pt]\arrow[dr, "\ell\circ\mathbb P", bend left=15pt]\\
    \mSp_{G}\arrow[rr, "\simeq"'] && \ul\Sp_{G}
  \end{tikzcd}
\end{equation}
observed in the proof of Theorem~\ref{thm:G-sym-mon-unique} to a commutative diagram of symmetric monoidal $G$-\icats{} with respect to the symmetric monoidal structures inherited from the normed structures considered above. This is however clear from Theorem~\ref{thm:unique-naive} (using that all of these structures are indeed $G$-presentably symmetric monoidal by the above).
\end{proof}

Finally, we can also describe the normed structure in terms of models:

\begin{propn}
  The normed structure on $(\Sp^\Sigma_\textup{flat})^\flat$ given by the smash product localizes to a $G$-presentably normed structure on $\mSp_{G}$ such that the norm $f_\otimes\colon\mSp^\otimes(X)\to\mSp^\otimes(Y)$ preserves sifted colimits for every $f\colon X\to Y$ in $\xF_G$.
  \begin{proof}
    In view of Example~\ref{ex:G-spectra-naive} it only remains to show that the symmetric monoidal norms $N^H_K\colon\Fun(BK,\Sp^\Sigma_\textup{flat})\to\Fun(BH,\Sp^\Sigma_\textup{flat})$ (known as the \emph{Hill--Hopkins--Ravenel norms}) are homotopical and that the resulting functors on localizations preserve sifted colimits.

    The first statement is \cite{hausmann-equivariant}*{6.8}. For the second statement we first observe that both filtered colimits and geometric realization in $\Sp^\Sigma$ are homotopical: namely, both are left Quillen (with respect to the projective model structures on the respective source functor categories) and moreover preserve levelwise weak equivalences (maps $f\colon X\to Y$ such that each $f(A)$ is a $(G\times\Sigma_A)$-weak equivalences) as observed in the proof of Proposition~\ref{prop:spc-*-sm-loc}. We moreover claim that both of these constructions preserve flat spectra. To see this, we recall that a symmetric spectrum $X$ is flat if and only if for each finite set $A$ a certain natural \emph{latching map} $L_A(X)\to X(A)$ is levelwise injective, see \cite{hausmann-equivariant}*{2.18}; all that we will need to know is that $L_A$ is defined as a certain colimit, and in particular commutes with geometric realization and all colimits. The two claims now immediately follow as injections of simplicial sets are preserved by filtered colimits and geometric realization.

    With this established, it will be enough to show that $N^G_H$ commutes with geometric realization and filtered colimits on the point-set level, up to \emph{isomorphism}. In particular, we can forget about all the actions and simply consider the endofunctor $X\mapsto X^{\wedge n}$ of $\Sp^\Sigma\supseteq\Sp^\Sigma_\textup{flat}$. The statement about filtered colimits is then clear as the smash product preserves colimits in each variable. Similarly, the statement about geometric realizations reduces to showing that for any $X_\bullet\colon\Delta^\op\to\Sp^\Sigma_\textup{flat}$ and $n\ge1$ the map
    \[
      \int^{[k]\in\Delta^\op} X_k\wedge\Delta^k_+\to\int^{[k_1],\dots,[k_n]\in(\Delta^\op)^n} X_{k_1}\wedge\cdots\wedge X_{k_n}\wedge \Delta^{k_1}_+\wedge\cdots\wedge\Delta^{k_n}_+
    \]
    induced by the diagonal embedding is an isomorphism. By induction, we reduce to proving this for the map
    \[
      \int^{[k]} Y_{k,k}\wedge\Delta^k_+\to \int^{[k_1]}\mskip-10mu\int^{[k_2]} Y_{k_1,k_2}\wedge\Delta^{k_1}_+\wedge\Delta^{k_2}_+
    \]
    for any bisimplicial object $Y$ in $\Sp^\Sigma$. Arguing levelwise, this follows at once from the fact that the geometric realization of a simplicial object in (pointed) simplicial sets is just given by its diagonal.
  \end{proof}
\end{propn}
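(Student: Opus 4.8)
The plan is to reduce everything to the behaviour of the norm functors, since the ``additive'' data has already been handled in Example~\ref{ex:G-spectra-naive}: there it is recorded that the smash product of flat ($H$-)symmetric spectra is homotopical (\cite{hausmann-equivariant}*{6.1}), so that the underlying symmetric monoidal $G$-\icat{} of $(\Sp^\Sigma_\textup{flat})^\flat$ already localizes to the $G$-presentably symmetric monoidal structure on $\mSp_G$. In particular, $G$-presentability of the localized \emph{normed} structure will be automatic once we know such a structure exists. To produce it, recall that the Borel construction of the appendix (Proposition~\ref{prop:borel-G-sym-mon}, Corollary~\ref{cor:borel-norms-classical}) encodes the normed $G$-\icat{} $(\Sp^\Sigma_\textup{flat})^\flat$ as a functor out of $\Span(\xF_G)$ whose covariant functoriality is generated, under composition and the product decompositions of $\xF_G$, by the smash products and the norm functors $N^H_K\colon\Fun(BK,\Sp^\Sigma_\textup{flat})\to\Fun(BH,\Sp^\Sigma_\textup{flat})$ for $K\leqslant H\leqslant G$. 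Since the restrictions are homotopical by construction of $\mSp_G$ and the smash products are homotopical by \cite{hausmann-equivariant}*{6.1}, the whole normed structure will descend to $\mSp_G$ as soon as the norms $N^H_K$ are homotopical --- and this is \cite{hausmann-equivariant}*{6.8}. Thus the only remaining point is that the localized norm functors preserve sifted colimits, and by the same decomposition it suffices to treat the functors $N^H_K$.

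For this I would first use that a functor preserves sifted colimits precisely when it preserves filtered colimits and geometric realizations, reducing to those two diagram shapes. Next I would observe that both colimit types in $\Sp^\Sigma$ are homotopical --- they are computed by left Quillen functors for the projective model structures and preserve levelwise weak equivalences, as in the proof of Proposition~\ref{prop:spc-*-sm-loc} (see also \cite{g-global}*{1.1.2 and 1.2.57}) --- and that both preserve the subcategory of flat spectra: flatness of $X$ is detected by levelwise injectivity of the latching maps $L_A(X)\to X(A)$ (\cite{hausmann-equivariant}*{2.18}), the functor $L_A$ is a colimit and hence commutes with filtered colimits and with geometric realization, and injections of simplicial sets are stable under both operations. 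Consequently it is enough to show that $N^H_K$ commutes with filtered colimits and with geometric realizations on the point-set level, up to isomorphism, and at that point the group actions are irrelevant: forgetting them, $N^H_K$ is the $n$-fold smash power $X\mapsto X^{\wedge n}$ with $n=\lvert H/K\rvert$.

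The filtered-colimit case is then immediate, since $\wedge$ preserves colimits in each variable. For geometric realizations the plan is to write $\lvert X_\bullet\rvert^{\wedge n}\cong\int^{[k_1],\dots,[k_n]}X_{k_1}\wedge\cdots\wedge X_{k_n}\wedge\Delta^{k_1}_+\wedge\cdots\wedge\Delta^{k_n}_+$ using cocontinuity of $\wedge$, and then to identify this coend along the diagonal $\Delta^\op\to(\Delta^\op)^{n}$ with $\int^{[k]}X_k^{\wedge n}\wedge\Delta^k_+\cong\lvert X_\bullet^{\wedge n}\rvert$. By induction on $n$ this reduces to the case $n=2$, i.e.\ to the identity $\int^{[k]}Y_{k,k}\wedge\Delta^k_+\cong\int^{[k_1]}\int^{[k_2]}Y_{k_1,k_2}\wedge\Delta^{k_1}_+\wedge\Delta^{k_2}_+$ for a bisimplicial object $Y$ in $\Sp^\Sigma$; arguing levelwise in the spectrum coordinate this is the classical statement that the realization of a bisimplicial set is computed by its diagonal. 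I expect this coend/diagonal bookkeeping to be the only genuinely delicate step; everything else amounts to assembling the homotopy-invariance inputs of \cite{hausmann-equivariant} (and \cite{g-global} for realizations) with the Borel-construction machinery of the appendix, together with the already-established $G$-presentable symmetric monoidal comparison of Example~\ref{ex:G-spectra-naive}.
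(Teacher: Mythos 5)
Your proposal is correct and follows essentially the same route as the paper's own proof: reduce to the Hill--Hopkins--Ravenel norms being homotopical (citing \cite{hausmann-equivariant}*{6.8}), handle sifted colimits via filtered colimits and geometric realizations (using the latching-map criterion to see that flat spectra are preserved), and then verify the point-set commutation of $X\mapsto X^{\wedge n}$ with realizations via the coend/diagonal argument reduced by induction to the bisimplicial case. The only cosmetic difference is that you spell out the Borel-construction reduction to the norms in more detail than the paper, which simply defers to Example~\ref{ex:G-spectra-naive}.
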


\begin{thm}\label{thm:sp-normed-comparison}
  The equivalence $\mSp_G\simeq\ul\Sp_G$ of $G$-\icats{} upgrades canonically to an equivalence $\ul\mSp_{G}^\otimes\simeq\ul\Sp_{G}^\otimes$ of normed $G$-\icats{}.
  \begin{proof}
    Of the maps of $G$-\icats{} comprising the diagram (\ref{diag:magic-square}), all except for the lower one have been lifted to maps in $\NAlg_G(\CatI)$ above. As $\Sigma^\infty$ is given by universally inverting representation spheres, and we have shown that they become invertible in $\ul\Sp_G$, there is then a unique normed pointwise left adjoint $\ul\mSp_G^\otimes\to\ul\Sp_G^\otimes$ making the diagram commute. It only remains to show that this map forgets to our equivalence $\mSp_G\simeq\ul\Sp_G$.

    For this we simply note that after forgetting to symmetric monoidal $G$-\icats{} there is still a unique map making the diagram commute, and we have lifted the equivalence $\mSp_G\simeq\ul\Sp_G$ to such a map in the proof of Proposition~\ref{prop:comparison-BH}.
  \end{proof}
\end{thm}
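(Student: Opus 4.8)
The plan is to reuse the universal property of parametrized symmetric monoidal inversion, exactly as in the proof of \Cref{thm:G-sym-mon-unique}, now applied to the two normed $G$-\icats{} $\ul\mSp_G^\otimes$ and $\ul\Sp_G^\otimes$. First I would record that both are $G$-presentably normed with norms preserving sifted colimits (the former by the preceding proposition, the latter by \Cref{thm:G-sym-mon-unique}), and that the suspension spectrum functor $\Sigma^\infty\colon\mS_{G,*}\to\mSp_G$ refines to a normed $G$-functor $\mS_{G,*}^\otimes\to\ul\mSp_G^\otimes$. On the point-set models this is essentially forced: the strong symmetric monoidal suspension spectrum functor $\xF_*\to\Sp^\Sigma$ commutes with $n$-fold smash powers up to isomorphism, so it promotes to a normed $G$-functor between the associated Borel $G$-\icats{}, and this descends after Dwyer--Kan localization and sifted cocompletion. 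On the other side of \cref{diag:magic-square}, the composite $\ell\circ\mathbb P\colon\mS_{G,*}^\otimes\to\ul\Sp_G^\otimes$ is already a normed $G$-functor by \Cref{prop:mathbb-P-sym-mon} and \Cref{thm:G-sym-mon-unique}.

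Next I would invoke \Cref{prop:comparison-BH}, which identifies $\Sigma^\infty$ pointwise with the symmetric monoidal inversion of the objects $f_\otimes\Sigma\bbone$. Since these objects are compact and symmetric and the norm functors preserve sifted colimits, the relevant universal property persists at the normed level via the parametrized form of \cite{norms}*{4.1} used in \Cref{thm:G-sym-mon-unique}. As \Cref{lemma:rep-sphere} shows that $\ell\circ\mathbb P$ carries every $f_\otimes\Sigma\bbone$ to an invertible object, there is then a unique normed $G$-functor $\Theta\colon\ul\mSp_G^\otimes\to\ul\Sp_G^\otimes$ (automatically $G$-cocontinuous) with $\Theta\circ\Sigma^\infty\simeq\ell\circ\mathbb P$ as normed $G$-functors.

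It then remains to check that the underlying $G$-functor of $\Theta$ is the equivalence $\mSp_G\simeq\ul\Sp_G$. For this I would forget all the way down to symmetric monoidal $G$-\icats{}: there the same universal property, now for ordinary (non-normed) symmetric monoidal inversion, again produces a \emph{unique} symmetric monoidal $G$-functor making the triangle \cref{diag:magic-square} commute, and the proof of \Cref{prop:comparison-BH} has already exhibited such a functor, namely the symmetric monoidal refinement of $\mSp_G\simeq\ul\Sp_G$. By uniqueness $\Theta$ must restrict to this symmetric monoidal equivalence; in particular its underlying $G$-functor is an equivalence, and hence $\Theta$ itself is an equivalence of normed $G$-\icats{}.

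The hard part is the bookkeeping around the universal property rather than any computation: one needs a formulation of parametrized symmetric monoidal inversion that simultaneously produces $\Theta$ and is compatible with the forgetful functor to symmetric monoidal $G$-\icats{}, so that the two uniqueness statements genuinely match up. The only model-dependent input is the elementary identity $\Sigma^\infty(X^{\wedge n})\cong(\Sigma^\infty X)^{\wedge n}$, needed to see that $\Sigma^\infty$ is a normed $G$-functor in the first place.
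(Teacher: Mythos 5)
Your proposal is correct and follows essentially the same route as the paper: lift the two upper legs of the triangle \cref{diag:magic-square} to normed $G$-functors, use the universal property of (normed, sifted-colimit-compatible) inversion of the representation spheres together with \Cref{lemma:rep-sphere} to produce a unique normed filler $\ul\mSp_G^\otimes\to\ul\Sp_G^\otimes$, and then identify its underlying functor with the known equivalence by running the same uniqueness argument after forgetting to symmetric monoidal $G$-\icats{}, where \Cref{prop:comparison-BH} already supplies the symmetric monoidal lift of $\mSp_G\simeq\ul\Sp_G$. The extra detail you supply on why $\Sigma^\infty$ refines to a normed $G$-functor at the point-set level is a fair elaboration of what the paper takes as already established.
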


\subsection{The multiplicative equivariant recognition theorem}
As an upshot of all the hard work done in the previous subsections, we can now easily prove Theorem~\ref{introthm:Gsymmeq} from the introduction:

\begin{thm}\label{thm:recognition}\
  \begin{enumerate}
    \item The normed structure on $\ul\NMon_G$ localizes to a normed structure on $\ul\NGrp_G$.
    \item The normed structure on $\mSp_G$ restricts to a normed structure on the full $G$-subcategory $\mSp_{G}^{\ge0}$ spanned by the connective equivariant spectra.
    \item The delooping functor $\ul\NMon_G\to\mSp_G$ acquires a canonical normed structure, and this restricts to a normed equivalence $\ul\NGrp_G^\otimes\simeq(\mSp_{G}^{\ge0})^\otimes$.
  \end{enumerate}
  \begin{proof}
    By Theorem~\ref{thm:sp-normed-comparison}, we may replace $\mSp_G$ by $\ul\Sp_G$; under this identification, the $G$-subcategory $\mSp_{G}^{\ge0}$ corresponds to $\ul\Sp_{G}^{\ge0}\coloneqq\Fun(\Span((\xF_G)_{/-}),\Sp^{\ge0})$.

    We now observe that the essential image of $\ell\colon\ul\NGrp_G\to\ul\Sp_G$ is precisely $\ul\Sp_{G}^{\ge0}$. As we have lifted $\ell$ to a normed functor in Theorem~\ref{thm:G-sym-mon-unique}, this shows that $\ul\Sp_{G}^{\ge0}$ is indeed a normed subcategory. Since $\ell$ factors as the localization functor $\ul\NMon_G\to\ul\NGrp_G$ followed by an equivalence $\ul\NGrp_G\simeq\ul\Sp_{G}^{\ge0}$, this then immediately implies the remaining statements.
  \end{proof}
\end{thm}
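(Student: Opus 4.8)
The plan is to bootstrap the three claims from the non-equivariant recognition theorem applied in each degree, using the normed delooping functor $\ell^\otimes\colon\ul\NMon_G^\otimes\to\ul\Sp_G^\otimes$ of \Cref{thm:G-sym-mon-unique} and the model comparison of \Cref{thm:sp-normed-comparison}. First I would use \Cref{thm:sp-normed-comparison} to replace $\mSp_G$ by $\ul\Sp_G=\Fun^\times(\Span((\xF_G)_{/-}),\Sp)$ throughout, so that $\mSp_G^{\ge0}$ is identified with the full $G$-subcategory $\ul\Sp_G^{\ge0}\coloneqq\Fun^\times(\Span((\xF_G)_{/-}),\Sp^{\ge0})$. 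The key structural input is that in each degree $X\in\xF_G$ the functor $\ell$ is the composite
\[
  \Fun^\times\!\big(\Span((\xF_G)_{/X}),\CMon(\Spc)\big)\longrightarrow\Fun^\times\!\big(\Span((\xF_G)_{/X}),\CGrp(\Spc)\big)\xrightarrow{\;\simeq\;}\ul\Sp_G^{\ge0}(X)\hookrightarrow\ul\Sp_G(X),
\]
where the first arrow is post-composition with grouplification and the equivalence is post-composition with the Gepner--Groth--Nikolaus equivalence $\CGrp(\Spc)\simeq\Sp^{\ge0}$ (using $\Fun^\times(-,\CMon(\Spc))\simeq\Fun^\times(-,\Spc)$). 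Consequently $\ell$ is pointwise essentially surjective onto the connective objects, it restricts to an equivalence $\ul\NGrp_G\simeq\ul\Sp_G^{\ge0}$ of underlying $G$-\icats{}, and by \Cref{prop:stab-CMon-G} together with the fact that grouplification is a symmetric monoidal localization it is pointwise a symmetric monoidal localization.

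With this in hand, part~(2) follows: since $\ell^\otimes$ is a normed $G$-functor, for every $f\colon X\to Y$ in $\xF_G$ we have $f_\otimes\circ\ell\simeq\ell\circ f_\otimes$, so the norm $f_\otimes$ on $\ul\Sp_G$ carries the essential image of $\ell$ in degree $X$ into the essential image of $\ell$ in degree $Y$; as this image is exactly the connective part, the norms preserve connective objects, and the restriction functors obviously do as well, so the normed structure on $\ul\Sp_G$ restricts to a normed structure on $\ul\Sp_G^{\ge0}$. For part~(1) I would then transport this normed structure along the equivalence $\ul\NGrp_G\simeq\ul\Sp_G^{\ge0}$; since $\ell^\otimes$ factors (and, by the uniqueness clause of \Cref{thm:G-sym-mon-unique}, must factor) as the grouplification $\ul\NMon_G\to\ul\NGrp_G$ followed by this equivalence, the grouplification itself lifts to a normed $G$-functor which is pointwise a symmetric monoidal localization, so the resulting normed structure on $\ul\NGrp_G$ is a localization of the one on $\ul\NMon_G$. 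Part~(3) is then immediate: $\ell^\otimes$ is the desired normed structure on the delooping functor, and its restriction to $\ul\NGrp_G$ is the composite of this localization with a normed equivalence $\ul\NGrp_G^\otimes\simeq(\ul\Sp_G^{\ge0})^\otimes\simeq(\mSp_G^{\ge0})^\otimes$.

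The main obstacle I expect is packaging the passage from pointwise information to parametrized statements: one must check that closure of each $\ul\Sp_G(X)$ under finite products and under all norm functors $f_\otimes$ genuinely cuts out a sub-normed-\icat{} (equivalently, a product-preserving subfunctor $\Span(\xF_G)\to\CMon(\CatI)$), and dually that the pointwise grouplifications $\ul\NMon_G(X)\to\ul\NGrp_G(X)$ assemble, along the backward and forward maps of $\Span(\xF_G)$, into a localization of normed \icats{}. The leverage for both is exactly that $\ell^\otimes$ is already a normed functor that is pointwise essentially surjective onto connectives and pointwise a symmetric monoidal localization; beyond this bookkeeping, everything reduces to the pointwise facts, which are either classical (Gepner--Groth--Nikolaus) or already recorded in \Cref{prop:stab-CMon-G} and \Cref{thm:G-sym-mon-unique}. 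A secondary point of care is ensuring that $\ell^\otimes$ restricted to grouplike objects really is an equivalence and not merely fully faithful, which again follows from the pointwise essential surjectivity noted above.
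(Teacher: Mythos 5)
Your proposal is correct and follows essentially the same route as the paper: replace $\mSp_G$ by $\ul\Sp_G$ via \Cref{thm:sp-normed-comparison}, identify the essential image of the normed functor $\ell^\otimes$ with $\ul\Sp_G^{\ge0}$ to see that the connective part is a normed subcategory, and use the pointwise factorization of $\ell$ through group completion to obtain the localization statement for $\ul\NGrp_G$ and the normed equivalence. The extra bookkeeping you flag (assembling pointwise closure under norms into a product-preserving subfunctor of $\Span(\xF_G)\to\CMon(\CatI)$) is exactly what the paper's terse "this shows that $\ul\Sp_G^{\ge0}$ is indeed a normed subcategory" is implicitly doing.
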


\begin{cor}
  The cocartesian fibration $\ul\NMon_G^\otimes\to\Span(\xF_G)$ restricts to a cocartesian fibration $\ul\NGrp_G^\otimes\to\Span(\xF_G)$, and the inclusion $\iota\colon\ul\NGrp_G^\otimes\hookrightarrow\ul\NMon_G^\otimes$ is lax normed.
  \begin{proof}
    For each $X\in\xF_G$, the functor
    $\ell\colon\ul\NMon_G(X)\to\ul\Sp_{G}^{\ge0}(X)$ has a fully faithful
    right adjoint, induced by the right adjoint $\Sp^{\ge0}\to\CMon$ of
    the delooping functor, and this induces a relative adjunction of
    cocartesian fibrations over $\xF_G^\op$. As a consequence of
    \cite{HA}*{7.3.2.6} (parallel to \cite{HA}*{7.3.2.8}), the right
    adjoint $\iota$ then canonically lifts to a lax normed functor
    $\iota^\otimes\colon(\ul\Sp_{G}^{\ge0})^\otimes\hookrightarrow\ul\NMon_G^\otimes$,
    which is fully faithful with essential image the subcategory $\ul\NGrp_G$.
  \end{proof}
\end{cor}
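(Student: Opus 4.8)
The plan is to exhibit $\ul\NGrp_G^\otimes$ as the full subcategory of $\ul\NMon_G^\otimes$ spanned by the fiberwise grouplike objects by realizing the inclusion as a relative right adjoint and invoking a parametrized version of \cite{HA}*{7.3.2.6}. First I would use Theorem~\ref{thm:sp-normed-comparison} to replace $\mSp_G$ by $\ul\Sp_G$, so that $\mSp_G^{\ge 0}$ becomes $\ul\Sp_G^{\ge 0}\coloneqq\Fun^\times(\Span((\xF_G)_{/-}),\Sp^{\ge 0})$, and recall from the proof of Theorem~\ref{thm:recognition} that the delooping $\ell$ factors as the group-completion localization $\ul\NMon_G\to\ul\NGrp_G$ followed by the normed equivalence $\ul\NGrp_G^\otimes\simeq(\ul\Sp_G^{\ge 0})^\otimes$, and that by Theorem~\ref{thm:G-sym-mon-unique} it carries a canonical lift to a normed functor $\ell^\otimes\colon\ul\NMon_G^\otimes\to(\ul\Sp_G^{\ge 0})^\otimes$ of cocartesian fibrations over $\Span(\xF_G)$.

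Next I would construct, for each $X\in\xF_G$, a fully faithful right adjoint to $\ell_X\colon\ul\NMon_G(X)\to\ul\Sp_G^{\ge 0}(X)$: indeed $\ell_X$ is postcomposition, in the $\Span$-variable, with the group-completion functor $\CMon(\Spc)\to\CGrp(\Spc)\simeq\Sp^{\ge 0}$, which is a Bousfield localization, so its right adjoint $r_X$ is postcomposition with the fully faithful inclusion $\CGrp(\Spc)\hookrightarrow\CMon(\Spc)$, with essential image the grouplike functors $\ul\NGrp_G(X)$. Since the restriction functors on $\ul\NMon_G$ and $\ul\Sp_G^{\ge 0}$ are given by \emph{pre}composition in the $\Span$-variable (Remark~\ref{rk:extend-wrong-way}), they commute strictly with $\ell_X$ and $r_X$, so the Beck--Chevalley maps $f^*r_Y\to r_Xf^*$ are invertible, and by Lemma~\ref{lemma:adj-recognition-principle} the $r_X$ assemble into a $G$-right adjoint of the underlying $G$-functor of $\ell^\otimes$. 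Feeding $\ell^\otimes$ and this compatible family of fiberwise adjunctions into the parametrized enhancement of \cite{HA}*{7.3.2.6} (cf.\ the argument for \cite{HA}*{7.3.2.8}) then produces a \emph{lax} normed functor $\iota^\otimes\colon(\ul\Sp_G^{\ge 0})^\otimes\to\ul\NMon_G^\otimes$ restricting to $r_X$ on fibers; in particular it is fully faithful, and its essential image is the full subcategory of $\ul\NMon_G^\otimes$ spanned by the fiberwise grouplike objects.

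Transporting along the normed equivalence $\ul\NGrp_G^\otimes\simeq(\ul\Sp_G^{\ge 0})^\otimes$ identifies that full subcategory with $\ul\NGrp_G^\otimes$; since $(\ul\Sp_G^{\ge 0})^\otimes\to\Span(\xF_G)$ is a cocartesian fibration (Theorem~\ref{thm:recognition}(2)), the composite $\ul\NGrp_G^\otimes\hookrightarrow\ul\NMon_G^\otimes\to\Span(\xF_G)$ is one as well, and $\iota$ is lax normed because $\iota^\otimes$ preserves inert morphisms. The step I expect to be the main obstacle is making precise the parametrized analogue of \cite{HA}*{7.3.2.6}: one must check that fiberwise right adjoints together with Beck--Chevalley over the \emph{backward} maps of $\Span(\xF_G)$ suffice to produce a functor over $\Span(\xF_G)$ which is a relative right adjoint along the inert part of the base --- hence preserves inert edges and so is only \emph{lax} normed, rather than strong --- and that this construction refines the fiberwise picture above. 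Everything else is formal once the group-completion adjunction and its strict naturality in the restriction variable are in place.
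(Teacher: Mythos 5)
Your proposal is correct and follows essentially the same route as the paper: fiberwise fully faithful right adjoints induced by $\Sp^{\ge0}\to\CMon(\Spc)$, assembled into a relative adjunction over the backward maps, and then lifted to a lax normed functor via \cite{HA}*{7.3.2.6} (parallel to \cite{HA}*{7.3.2.8}). The only difference is that you spell out the Beck--Chevalley check (postcomposition vs.\ precomposition in the $\Span$-variable) that the paper leaves implicit.
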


We now immediately obtain the following generalization of Theorem~\ref{introthm:tambara} from the introduction:

\begin{thm}
	\label{thm:Main_Theorem}
	For a weakly extensive subcategory $I \subseteq \xF_G$, there is an
	equivalence between
	\begin{itemize}
		\item the $\infty$-category $\NAlg_I\big((\mSp_{G}^{\ge0})^\otimes\big)\coloneqq \NAlg_{(\xF_G,I)}\big((\mSp_{G}^{\ge0})^\otimes\big)$ of connective \emph{\boldmath $I$-normed $G$-spectra}, and
		\item the $\infty$-category $\Tamb_{(\xF_G,I)}(\Spc) \subseteq \Fun^{\times}(\Bispan_I(\xF_G),\Spc)$ of space-valued $(\xF_G,I)$-Tambara functors.
	\end{itemize}
\end{thm}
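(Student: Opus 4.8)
The plan is to combine the general Lawvere-theory comparison for normed rings (\Cref{thm:ring-comparison}) with the multiplicative equivariant recognition theorem (\Cref{thm:recognition}), applied to the ring context $\ctxt \coloneqq (\xF_G, I, \xF_G)$. First I would note that $\ctxt$ really is a ring context: it is a semiring context by \Cref{ex:F_G-rigctxt} (since $\xF_G$ has pullbacks and is locally cartesian closed and $I$ is weakly extensive by hypothesis), and $\ctxt_A = (\xF_G, \xF_G)$ is an extensive span pair by \Cref{ex:g-mackey}. Here $\cF_M = I$ and $\cF_A = \xF_G$, so $\Bispan_{M,A}(\cF) = \Bispan_{I,\xF_G}(\xF_G) = \Bispan_I(\xF_G)$, and \Cref{thm:ring-comparison} with $\cX = \Spc$ directly yields
\[ \NRing_{\ctxt}(\Spc) \simeq \Tamb_{(\xF_G, I)}(\Spc). \]
It therefore remains to produce an equivalence $\NAlg_I\big((\mSp_G^{\ge0})^\otimes\big) \simeq \NRing_{\ctxt}(\Spc)$.

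Second, I would unwind $\NRing_{\ctxt}(\Spc)$. By \Cref{defn:commring} it is the full subcategory of $\NAlg_{\ctxt_M}\big(\ul\NMon_{\ctxt_A}(\Spc)^\otimes\big) = \NAlg_I\big(\ul\NMon_{\ctxt_A}(\Spc)^\otimes\big)$ spanned by those sections $R$ whose value $R_X$ at each $X$, an object of $\NMon_{((\xF_G)_{/X},(\xF_G)_{/X})}(\Spc) = \ul\NMon_G(X)$, is grouplike — i.e.\ by those $R$ landing fiberwise in $\ul\NGrp_G$. Now, from its construction via \Cref{propn:FunTsymmon} (same values, same left-Kan-extension functoriality), the $(\xF_G, I)$-normed \icat{} $\ul\NMon_{\ctxt_A}(\Spc)$ is precisely the restriction of the $\xF_G$-normed \icat{} $\ul\NMon_G$ along the subcategory inclusion $\Span_I(\xF_G) \hookrightarrow \Span(\xF_G)$. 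By \Cref{thm:recognition}(1) the normed structure on $\ul\NMon_G$ fiberwise localizes onto $\ul\NGrp_G$, and (by the corollary following that theorem) the inclusion $\ul\NGrp_G^\otimes \hookrightarrow \ul\NMon_G^\otimes$ is a fully faithful lax normed functor; both facts are inherited after restriction along $\Span_I(\xF_G)$, since fiberwise localization, full faithfulness, and preservation of inert morphisms may all be checked after passing to the subcategory. Composing with this restricted lax normed inclusion then identifies $\NAlg_I\big(\ul\NGrp_G^\otimes|_{\Span_I(\xF_G)}\big)$ with the full subcategory of $\NAlg_I\big(\ul\NMon_{\ctxt_A}(\Spc)^\otimes\big)$ on the sections landing fiberwise in $\ul\NGrp_G$, which is exactly $\NRing_{\ctxt}(\Spc)$. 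Hence $\NRing_{\ctxt}(\Spc) \simeq \NAlg_I\big(\ul\NGrp_G^\otimes|_{\Span_I(\xF_G)}\big)$.

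Finally, I would transport along the equivariant comparisons: \Cref{thm:recognition}(3) gives a normed equivalence $\ul\NGrp_G^\otimes \simeq (\ul\Sp_G^{\ge0})^\otimes$, and \Cref{thm:sp-normed-comparison}, restricted to connective parts using \Cref{thm:recognition}(2), gives $(\ul\Sp_G^{\ge0})^\otimes \simeq (\mSp_G^{\ge0})^\otimes$. Restricting both along $\Span_I(\xF_G) \hookrightarrow \Span(\xF_G)$ and applying $\NAlg_I$ yields $\NAlg_I\big(\ul\NGrp_G^\otimes|_{\Span_I(\xF_G)}\big) \simeq \NAlg_I\big((\mSp_G^{\ge0})^\otimes\big)$, and concatenating the displayed equivalences finishes the argument.

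I expect the main obstacle to be the bookkeeping in the second step: one must carefully pin down that $\ul\NMon_{\ctxt_A}(\Spc)$, as produced by \Cref{def:monoidal_CMon}, is literally the restriction of $\ul\NMon_G$ along $\Span_I(\xF_G) \hookrightarrow \Span(\xF_G)$, and that the fiberwise-localization statement of \Cref{thm:recognition}(1) together with the lax-normed-ness of $\ul\NGrp_G^\otimes \hookrightarrow \ul\NMon_G^\otimes$ — both phrased for the full group — indeed survive this restriction. Everything else is a formal concatenation of the cited results.
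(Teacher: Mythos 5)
Your proposal is correct and follows essentially the same route as the paper: both arguments concatenate the recognition theorem (\Cref{thm:recognition}), the identification of $\NAlg_{(\xF_G,I)}(\ul\NGrp_G^\otimes)$ with $\NRing_{(\xF_G,I)}(\Spc)$ inside $\NRig_{(\xF_G,I)}(\Spc)$, and the restriction of the Lawvere-theory equivalence to grouplike objects (\Cref{thm:ring-comparison}). You simply spell out more explicitly the bookkeeping about restricting the normed structures along $\Span_I(\xF_G)\hookrightarrow\Span(\xF_G)$, which the paper leaves implicit.
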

\begin{proof}
	By \Cref{thm:recognition}, $\NAlg_{(\xF_G,I)}\big((\mSp_{G}^{\ge0})^\otimes\big)$ is equivalent to $\NAlg_{(\xF_G,I)}(\ul\NGrp_G^\otimes)$. The inclusion $\NAlg_{(\xF_G,I)}(\ul\NGrp_G^\otimes) \hookrightarrow \NAlg_{(\xF_G,I)}(\ul\NMon_G^\otimes) = \NRig_{(\xF_G,I)}(\Spc)$ identifies its source with the subcategory $\NRing_{(\xF_G,I)}(\Spc) \subseteq \NRig_{(\xF_G,I)}(\Spc)$ of normed $G$-rings in the \icat{} of spaces. By Theorem~\ref{thm:ring-comparison}, the equivalence between $\NRig_{(\xF_G,I)}(\Spc) \simeq \Fun^\times(\Bispan_I(\xF_G),\Spc)$ of Theorem~\ref{thm:cmon-bispan-model} restricts to an equivalence between $\NRing_{(\xF_G,I)}(\Spc)$ and $\Tamb_{(\xF_G,I)}(\Spc)$. Combining these three equivalences gives the result.
\end{proof}

If $I\subseteq\xF_G$ is even an extensive subcategory (i.e.~an
indexing system), then  
set-valued $(\xF_G,I)$-Tambara functors are known under the name \textit{incomplete Tambara functors} \cite[4.1]{blumberg-hill}; thus, we may think of the right-hand side of the theorem as ``higher'' incomplete Tambara functors.

\begin{remark}
  Let $I\subseteq\xF_G$ be an indexing system and $\cC^{\otimes}$
  an $I$-normed $G$-\icat{}. As we will now explain, the \icat{}
  $\NAlg_{{I}}(\cC)$ can be identified with that of algebras for
  the $G$-\iopd{} $\textup{Com}_{I}^\otimes$ as defined by Nardin and
  Shah~\cite{NardinShah}*{2.4.10}: By definition, the \icat{}
  $\NAlg_{{I}}(\cC^\otimes)$ is that of sections
  $\Span_{I}(\xF_G)\to \cC^{\otimes}$ that are cocartesian over
  $\xF_G^\op$. Here the inclusion
  $\Span_{I}(\xF_G) \to \Span(\xF_{G})$ exhibits $\Span_{I}(\xF_G)$ as
  an equivariant \iopd{} when these are defined over the base
  $\Span(\xF_{G})$ (see \cite{envelopes}*{\S 5.2}), and $I$-normed algebras in $\cC^\otimes$ are precisely
  algebras for this \iopd{}. In \cite{NardinShah} the theory of
  equivariant \iopds{} is instead developed over a different base
  $\ul\xF_{G,*}$ (a specific model of the cocartesian unstraightening of the functor $\ul{\xF}_{G,*}\colon\xF_G\to\CatI$ considered above), but these two versions of $G$-\iopds{} were shown to
  be equivalent under pullback along a certain functor
  $\ul\xF_{G,*} \to \Span(\xF_{G})$ in \cite{envelopes}*{5.2.14}. It
  is clear from the definitions that $\textup{Com}_{I}^\otimes$ is
  precisely the pullback
  $\ul\xF_{G,*} \times_{\Span(\xF_{G})} \Span_{I}(\xF_{G})$, so by
  \cite{envelopes}*{5.3.17} we get an equivalence between the \icat{}
  of $\Span_{I}(\xF_{G})$- and $\textup{Com}_{I}^\otimes$-algebras in
  $\cC^\otimes$. In particular, our $I$-normed $G$-spectra are equivalently $\textup{Com}_I^\otimes$-algebras in $\ul\Sp_G^\otimes$ in the sense of \cite{NardinShah}.
\end{remark}

\begin{remark}
  Recall from Remark~\ref{rk:mackey-functors} that any indexing system $I\subseteq\xF_G$ has an associated $N_\infty$-operad $\cO$ in $G$-spaces. It is generally expected that the \icat{} of $\textup{Com}^\otimes_{I}$-algebras in $\ul\Sp_{G}^\otimes$ is modelled by $\cO$-algebras in a good model category of $G$-spectra, like $G$-symmetric spectra; however, to our knowledge no rigorous proof of this comparison has appeared in the literature.
\end{remark}

\appendix
\section{The Borel construction}
In this appendix, we recall from \cite{borel-normed} how any \icat{} with $G$-action gives rise to a $G$-\icat{} and how similarly any symmetric monoidal \icat{} with $G$-action yields a normed $G$-\icat.

\subsection[Borel $G$-\icats{}]{\boldmath Borel $G$-\icats{}}

We start by constructing the functor
\[
	(-)^{\flat}\colon \Fun(BG,\CatI) \to \Fun^{\times}(\xF_G^{\op}, \CatI)
\]
from \icats{} with $G$-action to $G$-$\infty$-categories, which is
used, for instance, to define the $G$-\icats{} $\mS_G$ and $\mSp_G$.

\begin{construction}\label{constr:borel-g-cat}
  Write $k\colon (BG)^\op\hookrightarrow\xF_G$ for the inclusion of
  the full subcategory on the free $G$-set $G$. Then $k$ is fully faithful, so $k^*\colon\Fun(\xF_G^\op,\CatI)\to\Fun(BG,\CatI)$ has a fully faithful right adjoint $(-)^\flat$, which is uniquely characterized by demanding that we have a counit equivalence $\epsilon\colon k^*(-)^\flat\to\id$ and that each individual $\cC^\flat$ be right Kan extended.

  We will now give an explicit construction of $(-)^\flat$. For this we note that the inclusion $\Fun(BG,\Spc)\hookrightarrow\Fun(BG,\CatI)$ is cocontinuous, hence (by the universal property of presheaves) left Kan extended from the functor $(BG)^\op\to\Fun(BG,\CatI)$ classifying the right $G$-set $G$. Restricting to a full subcategory, we see that also the inclusion $i\colon\xF_G\hookrightarrow\Fun(BG,\CatI)$ is left Kan extended from the same functor. Thus, $\Fun_G(i(-),\cC)$ is right Kan extended, and we see that $(-)^\flat$ is given by the assignment $\cC\mapsto\Fun_G(i(-),\cC)$, where the right-hand side denotes the internal hom in $\Fun(BG,\CatI)$; the counit is the evident equivalence $\Fun_G(G,\cC)\simeq\cC$. Note that $(-)^\flat$ lands in the \icat{} $\Fun^\times(\xF_G^\op,\CatI)$ of $G$-\icats, so that we obtain an adjunction
  \[
  		k^*\colon \Fun^{\times}(\xF_G^{\op},\CatI) \rightleftarrows \Fun(BG,\CatI)\noloc (-)^\flat.
  \]
\end{construction}

\begin{defn}
  We will refer to $G$-\icats{} in the essential image of $(-)^\flat$ as \emph{Borel $G$-\icats{}}.
\end{defn}

\begin{remark}
	\label{rmk:Explicit_Description_Borel_Construction}
  In all of our examples, we apply the above right adjoint $(-)^\flat$ to an ordinary \icat{}, which is then to be understood as coming equipped with the trivial $G$-action. By adjointness, the resulting functor is given by
  \begin{align*}
    \CatI&\to\Fun^\times(\xF_G^\op,\CatI)\simeq\Fun(\textbf{O}_G^\op,\CatI)\\
    \cC&\longmapsto\Fun(i(-)_{hG},\cC).
  \end{align*}
  In particular, the value of $\cC^{\flat}$ on an orbit $G/H$ is the \icat{} $\cC^{BH} := \Fun(BH, \cC)$ of objects of $\cC$ with an $H$-action, with the evident restriction functoriality. This suggests the following alternative description of the Borel $G$-$\infty$-category $\cC^{\flat}$ that connects it to the constructions of \cite{CLL_Global, CLL_Clefts}:

  Write $\Orb$ for the \icat{} of finite connected groupoids and faithful functors: in other words, the objects are groupoids of the form $BH$ for a finite group $H$, and the morphisms $BK \to BH$ are those induced by \textit{injective} group homomorphisms $K \to H$. By \cite[5.10]{CLL_Clefts} there is an equivalence $\textbf{O}_G\simeq\Orb_{/BG}$ sending $G$ to the homomorphism $1\to BG$; postcomposing with the forgetful functor and the inclusion yields a functor $\upsilon\colon\textbf{O}_G\to\CatI$. We claim that $\upsilon$ agrees with $i(-)_{hG}$. For this we observe that both agree on the full subcategory spanned by the object $G$ (where they are constant with value the terminal object), so it suffices that both are left Kan extended from this subcategory.

  For $i(-)_{hG}$ this is clear since it is the restriction of a cocontinuous functor $\Fun(BG,\Spc)\to\CatI$. For $\upsilon$, it suffices that $\Orb_{/BG}\to\CatI$ is left Kan extended. But $\Orb_{/BG}$ is a full subcategory of $(\Spc)_{/BG}$, so it will be enough that the forgetful functor $\Spc_{/BG}\to\CatI$ is left Kan extended from the full subcategory spanned by $1\to BG$. However, straightening--unstraightening provides an equivalence $\Spc_{/BG}\simeq\Fun(BG,\Spc)$ sending $1\to BG$ to the corepresented functor $G=\Map_{BG}(*,\blank)$ so this follows again from cocontinuity.
\end{remark}

\subsection[Normed structures on Borel $G$-\icats{}]{\boldmath Normed structures on Borel $G$-\icats{}}
Recall from \Cref{def:Normed_Category} that a \textit{normed structure} on a $G$-$\infty$-category $\xF_G^{\op} \to \CatI$ is an extension to a product-preserving functor $\Span(\xF_G) \to \CatI$. Given a symmetric monoidal \icat{} $\cC$ with $G$-action, the Borel $G$-\icat{} $\cC^\flat$ comes equipped with a canonical normed structure that we will refer to as the \textit{Borel normed structure}:

\begin{propn}[\cite{global-picard}*{3.4 and~3.6}, \cite{borel-normed}*{3.3.3}]\label{prop:borel-G-sym-mon}
  The adjunction from \cref{constr:borel-g-cat} lifts to an adjunction
  \[\Fun^{\times}(\Span(\xF_G),\CatI) \rightleftarrows\Fun(BG,\CMon(\CatI))\noloc (-)^\flat,\]
  i.e.~the forgetful functor $\Fun^{\times}(\Span(\xF_G),\CatI)\to\Fun(BG,\CMon(\CatI))$ has a right adjoint $(-)^\flat$, and the Beck--Chevalley transformation
  \[
    \begin{tikzcd}
      \Fun(BG,\CMon(\CatI))\arrow[d, "\mathbb U"'] \arrow[r, "(-)^\flat"] & \Fun^{\times}(\Span(\xF_G),\CatI)\arrow[d, "\mathbb U"]\arrow[dl, Rightarrow]\\
      \Fun(BG,\CatI)\arrow[r, "(-)^\flat"'] & \Fun^\times(\xF_G^\op,\CatI)
    \end{tikzcd}
  \]
  is invertible.\qed
\end{propn}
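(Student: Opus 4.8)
We plan to lift the unnormed adjunction $k^{*}\dashv(-)^{\flat}$ from \Cref{constr:borel-g-cat} to the normed setting. As a preliminary reduction: since $(\xF_{G},\xF_{G})$ is extensive, \Cref{obs:levelwise-monoid} identifies $\Fun^{\times}(\Span(\xF_{G}),\CatI)$ with $\Fun^{\times}(\Span(\xF_{G}),\CMon(\CatI))$, and under this identification the forgetful functor $\mathbb U$ of the statement is the restriction of a normed $G$-\icat{} to the free orbit $G\in\xF_{G}$, remembering the canonical commutative monoid structure on the value there (which exists since $\Span(\xF_{G})$ is semiadditive). In particular $\mathbb U$ is, up to the standard identification $\Fun^{\times}(\Span(\xF_{G}^{\mathrm{free}}),\CatI)\simeq\Fun(BG,\CMon(\CatI))$ for $\xF_{G}^{\mathrm{free}}$ the extensive sub-span-pair of finite free $G$-sets, a restriction functor between product-preserving functor categories, so it preserves limits and filtered colimits (computed pointwise in $\CatI$); both source and target being presentable, a right adjoint $(-)^{\flat}$ exists by the adjoint functor theorem. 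The substance of the proposition is then the \emph{identification} of this right adjoint with the Borel $G$-\icat{}, together with the Beck--Chevalley square.

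I would carry out the identification not via the abstract right Kan extension---whose defining comma categories inside $\Span(\xF_{G})$ are awkward---but by constructing the normed structure on $\cC^{\flat}$ by hand and checking its universal property. Concretely, for a symmetric monoidal \icat{} $\cC$ with $G$-action, and subgroups $K\leqslant H\leqslant G$, one introduces the norm functor $N^{H}_{K}\colon\Fun(BK,\cC)\to\Fun(BH,\cC)$ carrying an object $X$ with $K$-action to $X^{\otimes n}$ ($n=|H/K|$), with $H$ acting by restriction of the canonical $\Sigma_{n}\wr K$-action along a chosen homomorphism $H\to\Sigma_{n}\wr K$---the classical symmetric monoidal norm; together with the evident restriction functors and the product decompositions coming from disjoint unions of $G$-sets, this data must be assembled into a single product-preserving functor $\Span(\xF_{G})\to\CatI$. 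Verifying that it assembles coherently---so that the Mackey double-coset formulas and the norm--transfer distributivity are satisfied---is the technical heart of the argument and the step I expect to be the main obstacle; this is exactly the verification carried out in \cite{borel-normed} and \cite{global-picard}, and it can alternatively be organized by applying Barwick's unfurling theorem (\Cref{unfurl radj}) to the cocartesian fibration classifying the symmetric-monoidal version of the Borel construction. That the resulting functor $\Span(\xF_{G})\to\CatI$ preserves products is automatic, as its underlying $G$-\icat{} is the unnormed $\cC^{\flat}$ of \Cref{constr:borel-g-cat}, which does.

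Finally, one checks that this explicitly built $(-)^{\flat}$ is right adjoint to $\mathbb U$ and that the Beck--Chevalley square commutes. The latter is essentially built in: by construction the underlying $G$-\icat{} of $\cC^{\flat}$ is the unnormed Borel $G$-\icat{} of the underlying $G$-\icat{} of $\cC$, and the vertical maps of the square are precisely ``pass to underlying structure'', so the square commutes up to a canonical equivalence---this is the claimed invertibility of the Beck--Chevalley transformation. For adjointness, the counit $\mathbb U(\cC^{\flat})\to\cC$ is an equivalence, since evaluating $\cC^{\flat}$ at the free orbit returns $\Fun(B\{e\},\cC)\simeq\cC$ with its ambient $G$-action and with tensor product as commutative monoid structure; the unit $M\to(\mathbb U M)^{\flat}$ is, after forgetting the norms, the unit of the unnormed adjunction of \Cref{constr:borel-g-cat} (orbitwise the comparison $M(G/H)\to\Fun(BH,M(G/e))$ given by restriction together with the residual action), and the content is that this refines to a morphism of normed $G$-\icats{}---compatibility with restrictions is clear, while compatibility with the norm functors is exactly the assertion that the norms of $M$ restrict to the classical wreath-product norms, which is part of the coherence verification above. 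The triangle identities, and naturality of all the data in $\cC$ and $M$, may then be checked after passing to underlying $G$-\icats{}, where they reduce to the already-established unnormed adjunction.
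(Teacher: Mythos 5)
The paper does not actually prove this proposition --- it is imported wholesale from \cite{global-picard}*{3.4, 3.6} and \cite{borel-normed}*{3.3.3} with a \textit{qed} and no argument --- so the only thing to assess is whether your blind attempt would stand on its own. Its skeleton is right (identify $\mathbb U$ as restriction along $\Span(\xF_G^{\mathrm{free}})\to\Span(\xF_G)$ under $\Fun(BG,\CMon(\CatI))\simeq\Fun^\times(\Span(\xF_G^{\mathrm{free}}),\CatI)$; produce the right adjoint; identify its underlying $G$-\icat{} with the unnormed Borel construction, which is the Beck--Chevalley statement), but there are concrete gaps. First, your existence argument runs the adjoint functor theorem backwards: an accessible functor preserving limits and filtered colimits acquires a \emph{left} adjoint; for the \emph{right} adjoint $(-)^\flat$ you need $\mathbb U$ to preserve colimits. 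That is in fact true (sifted colimits are pointwise, and finite coproducts agree with finite products by semiadditivity of both sides), but it is not what you argued; alternatively one takes the right Kan extension along $\Span(\xF_G^{\mathrm{free}})\hookrightarrow\Span(\xF_G)$ and shows it preserves the product condition, which is essentially what the cited sources do.

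Second, the step you yourself call ``the technical heart'' --- coherently assembling the wreath-product norms $X\mapsto X^{\otimes n}$ into a single product-preserving functor $\Span(\xF_G)\to\CatI$ --- \emph{is} the proposition, and you defer it wholesale to the very references being cited, so nothing is proved. Note that the paper's Appendix deliberately runs the logic in the opposite direction: it takes the abstract right adjoint as primary and then \emph{identifies} its structure maps with the classical norms via a uniqueness argument (\cref{prop:norm-unique}, \cref{cor:borel-norms-classical}), precisely because building the coherent \icatl{} structure out of the biased $1$-categorical norms is the hard direction. Your proposed shortcut via \cref{unfurl radj} also does not work here: unfurling produces the covariant functoriality by passing to \emph{adjoints} of the restriction functors, and the multiplicative norms $N^H_K$ are not adjoint to restriction (those adjoints are induction and coinduction); unfurling only applies in this paper to the cartesian normed structure, where the ``norms'' genuinely are dependent products. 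Finally, even granting a by-hand construction, the adjointness check is not as soft as claimed: the triangle identities cannot simply be ``checked after passing to underlying $G$-\icats{},'' since the forgetful functor from normed functors to their underlying functors is not locally fully faithful; one must instead verify directly that the counit induces an equivalence $\Map(M,\cC^\flat)\to\Map(\mathbb U M,\cC)$ for all $M$.
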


\begin{cor}\label{cor:borel-sm-ff}
 The right adjoint
 \[
 	(-)^\flat\colon \Fun(BG,\CMon(\CatI)) \to \Fun^{\times}(\Span(\xF_G),\CatI)
 \]
 is fully faithful, with essential image those normed $G$-\icats{} whose underlying $G$-\icat{} is Borel.
  \begin{proof}
    As $\mathbb U$ is conservative, the Beck--Chevalley condition readily implies that the counit is invertible, as it is so for the original adjunction $\Fun^\times(\xF_G^\op,\CatI)\rightleftarrows\Fun(BG,\CatI)$, proving full faithfulness. Arguing in the same way about the units yields the characterization of the essential image.
  \end{proof}
\end{cor}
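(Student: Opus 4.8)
The plan is to derive the corollary formally from Proposition~\ref{prop:borel-G-sym-mon}, using only the conservativity of the relevant forgetful functors together with the fact — built into Construction~\ref{constr:borel-g-cat} — that the right adjoint $(-)^\flat\colon\Fun(BG,\CatI)\to\Fun^\times(\xF_G^\op,\CatI)$ is already fully faithful, its counit $k^*(-)^\flat\to\id$ being an equivalence by construction, and that its essential image is by definition the class of Borel $G$-\icats{}. Write $\mathbb U$ for both of the forgetful functors $\Fun(BG,\CMon(\CatI))\to\Fun(BG,\CatI)$ and $\Fun^\times(\Span(\xF_G),\CatI)\to\Fun^\times(\xF_G^\op,\CatI)$ (the latter being restriction along $\xF_G^\op\hookrightarrow\Span(\xF_G)$). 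The first is conservative because the forgetful functor $\CMon(\CatI)\to\CatI$ is conservative on Segal objects and equivalences of functors out of $BG$ are detected objectwise; the second is conservative because $\xF_G^\op$ and $\Span(\xF_G)$ have the same objects and equivalences of product-preserving functors are detected objectwise.

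Next I would set up the square of left adjoints. Writing $\mathbb V\colon\Fun^\times(\Span(\xF_G),\CatI)\to\Fun(BG,\CMon(\CatI))$ for the forgetful functor of Proposition~\ref{prop:borel-G-sym-mon}, the diagram
\[
\begin{tikzcd}
\Fun^\times(\Span(\xF_G),\CatI)\arrow[r,"\mathbb V"]\arrow[d,"\mathbb U"'] & \Fun(BG,\CMon(\CatI))\arrow[d,"\mathbb U"]\\
\Fun^\times(\xF_G^\op,\CatI)\arrow[r,"k^*"'] & \Fun(BG,\CatI)
\end{tikzcd}
\]
commutes, since either composite sends a normed $G$-\icat{} to its underlying $\CatI$-valued functor on the free orbit. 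The mate of this commuting square, relative to the adjunctions $\mathbb V\dashv(-)^\flat$ and $k^*\dashv(-)^\flat$, is a transformation $\mathbb U\circ(-)^\flat\to(-)^\flat\circ\mathbb U$, and this is precisely the Beck--Chevalley transformation that Proposition~\ref{prop:borel-G-sym-mon} asserts to be invertible.

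Now I would conclude using the compatibility of the calculus of mates with units and counits. Because the mate $\mathbb U\circ(-)^\flat\xrightarrow{\,\sim\,}(-)^\flat\circ\mathbb U$ is invertible, the counit $\varepsilon_\cC\colon\mathbb V(\cC^\flat)\to\cC$ of $\mathbb V\dashv(-)^\flat$ at $\cC\in\Fun(BG,\CMon(\CatI))$ is carried by $\mathbb U$ to a conjugate of the counit $k^*\big((\mathbb U\cC)^\flat\big)\to\mathbb U\cC$ of $k^*\dashv(-)^\flat$, which is an equivalence; since $\mathbb U$ is conservative, $\varepsilon_\cC$ is an equivalence, so $(-)^\flat\colon\Fun(BG,\CMon(\CatI))\to\Fun^\times(\Span(\xF_G),\CatI)$ is fully faithful. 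Dually, for a normed $G$-\icat{} $\cD$ the unit $\eta_\cD\colon\cD\to(\mathbb V\cD)^\flat$ is carried by $\mathbb U$ to a conjugate of the unit $\mathbb U\cD\to(k^*\mathbb U\cD)^\flat$ of $k^*\dashv(-)^\flat$; so by conservativity $\eta_\cD$ is an equivalence if and only if $\mathbb U\cD$ lies in the essential image of $(-)^\flat\colon\Fun(BG,\CatI)\to\Fun^\times(\xF_G^\op,\CatI)$, i.e.\ if and only if the underlying $G$-\icat{} of $\cD$ is Borel. Since $\cD$ lies in the essential image of the fully faithful right adjoint $(-)^\flat$ exactly when $\eta_\cD$ is invertible, this gives the stated description of the essential image.

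The step I expect to be the main obstacle is the last one: making precise, in the $\infty$-categorical setting, that invertibility of the Beck--Chevalley transformation of Proposition~\ref{prop:borel-G-sym-mon} forces $\mathbb U$ to intertwine the units and counits of the two adjunctions up to that transformation. This is purely formal — it expresses the functoriality of passing to mates together with the triangle identities — and requires no computation once the bookkeeping is organized; it can be quoted from the general theory of adjunctions and mates in $\infty$-categories.
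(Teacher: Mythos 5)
Your proposal is correct and follows essentially the same route as the paper's own proof: conservativity of the two forgetful functors $\mathbb U$ together with the invertible Beck--Chevalley transformation of Proposition~\ref{prop:borel-G-sym-mon} reduce the invertibility of the counit (resp.\ unit) of $\mathbb V\dashv(-)^\flat$ to the corresponding statement for the underlying adjunction $k^*\dashv(-)^\flat$ from Construction~\ref{constr:borel-g-cat}. The detail you supply about the compatibility of mates with units and counits is precisely the formal content the paper's one-line argument leaves implicit.
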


\begin{cor}\label{cor:borel-unique-family}
  Let $F\colon\Fun(BG,\CMon(\CatI))\to\Fun^{\times}(\Span(\xF_G),\CatI)$ be any functor equipped with a natural equivalence $\epsilon\colon\ev_{G}\circ F\to\id$, and assume that $F$ takes values in Borel $G$-\icats{}. Then $F$ is right adjoint to the evaluation functor, with counit given by $\epsilon$.
  \begin{proof}
    By adjointness, there is a unique natural transformation $F\to(-)^\flat$ that upon evaluation at $G\in\xF_G$ recovers $\epsilon$. As this evaluation functor is conservative on Borel $G$-\icats{}, this map is then an equivalence as desired.
  \end{proof}
\end{cor}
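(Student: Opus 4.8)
The plan is to deduce the statement formally from the adjunction $\ev_G\dashv(-)^\flat$ of Proposition~\ref{prop:borel-G-sym-mon} (where $\ev_G$ is the forgetful functor denoted $\mathbb U$ there) together with the full faithfulness of $(-)^\flat$ from Corollary~\ref{cor:borel-sm-ff}. Write $\varepsilon^\flat\colon\ev_G\circ(-)^\flat\to\id$ for the counit of this adjunction; since $(-)^\flat$ is fully faithful, $\varepsilon^\flat$ is a natural equivalence.

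First I would use the adjunction to convert the given $\epsilon\colon\ev_G\circ F\to\id$ into a natural transformation $\tau\colon F\to(-)^\flat$: for each $\cC$ let $\tau_\cC\colon F(\cC)\to\cC^\flat$ be the morphism adjoint to $\epsilon_\cC\colon\ev_G(F(\cC))\to\cC$ under $\mathrm{Map}(\ev_G(F(\cC)),\cC)\simeq\mathrm{Map}(F(\cC),\cC^\flat)$. Naturality of this mate correspondence assembles the $\tau_\cC$ into a natural transformation, which by construction satisfies $\epsilon\simeq\varepsilon^\flat\circ\ev_G(\tau)$.

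The crux is then to show that $\tau$ is an equivalence, and the point that needs care is the following: the functor $\ev_G$ is conservative when restricted to the full subcategory of normed $G$-\icats{} whose underlying $G$-\icat{} is Borel. Granting this, $\tau_\cC$ is an equivalence for every $\cC$: its source $F(\cC)$ is Borel by hypothesis, its target $\cC^\flat$ is Borel by construction, and $\ev_G(\tau_\cC)$ is an equivalence because $\epsilon\simeq\varepsilon^\flat\circ\ev_G(\tau)$ with both $\epsilon$ and $\varepsilon^\flat$ equivalences. To prove the conservativity itself, I would invoke Corollary~\ref{cor:borel-sm-ff}: any object in that subcategory is (equivalent to) one of the form $\cD^\flat$, so by full faithfulness of $(-)^\flat$ a morphism between two such objects is the image under $(-)^\flat$ of a morphism of the underlying objects with $G$-action, and applying $\ev_G$ and composing with the equivalences $\varepsilon^\flat$ recovers that morphism up to equivalence; hence if $\ev_G$ takes it to an equivalence it was already one. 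This is the main obstacle, though it is essentially immediate once one unwinds the definitions.

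Finally, transporting the adjunction $\ev_G\dashv(-)^\flat$ along the natural equivalence $\tau$ exhibits $F$ as a right adjoint of $\ev_G$, with counit $\varepsilon^\flat\circ\ev_G(\tau)$; by the defining property of $\tau$ this counit is $\epsilon$, which is exactly the claim.
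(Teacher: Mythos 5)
Your proposal is correct and follows essentially the same route as the paper: transpose $\epsilon$ across the adjunction $\ev_G\dashv(-)^\flat$ to a natural transformation $F\to(-)^\flat$, and conclude it is an equivalence because $\ev_G$ is conservative on normed $G$-\icats{} with Borel underlying $G$-\icat{} (which you correctly justify via Corollary~\ref{cor:borel-sm-ff}). One cosmetic remark: the left adjoint in Proposition~\ref{prop:borel-G-sym-mon} is the evaluation/restriction functor, not the functor labelled $\mathbb U$ in its Beck--Chevalley square (those are the vertical forgetful functors); this slip does not affect your argument.
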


In the same way one shows the following pointwise version:

\begin{cor}\label{cor:borel-unique-single}
  If the $G$-\icat{} $\cC\colon\xF_G^\op\to\CatI$ is Borel, then any $G$-equivariant symmetric monoidal structure on $\cC(G)\in\Fun(BG,\CatI)$ lifts uniquely to a normed structure on $\cC$.\qed
\end{cor}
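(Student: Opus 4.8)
The plan is to deduce this from Corollary~\ref{cor:borel-sm-ff}, in exactly the spirit of Corollary~\ref{cor:borel-unique-family}. By Proposition~\ref{prop:borel-G-sym-mon} we have a commuting square of forgetful functors
\[
\begin{tikzcd}
\Fun^{\times}(\Span(\xF_G),\CatI)\arrow[r,"\mathbb V"]\arrow[d,"\mathbb U"'] & \Fun(BG,\CMon(\CatI))\arrow[d,"\mathbb U"]\\
\Fun^{\times}(\xF_G^\op,\CatI)\arrow[r,"k^*"'] & \Fun(BG,\CatI)\rlap,
\end{tikzcd}
\]
in which $\mathbb V$ is restriction to the free orbit $G$, both vertical functors admit the right adjoint $(-)^\flat$, and the associated Beck--Chevalley transformation is invertible. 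Here a \emph{normed structure on $\cC$} is a point in the fiber of the left-hand $\mathbb U$ over $\cC$, a \emph{$G$-equivariant symmetric monoidal structure on $\cC(G)$} is a point in the fiber of the right-hand $\mathbb U$ over $\cC(G)=k^*\cC$, and a normed structure \emph{lifts} a $G$-symmetric monoidal structure $M$ if $\mathbb V$ carries the former to $M$. We must show that for a Borel $\cC$ and a fixed $M$ the space of such lifts is contractible.

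The main step is to apply the unit of the adjunction $\mathbb V\dashv(-)^\flat$. Since $\cC$ is Borel, Corollary~\ref{cor:borel-sm-ff} tells us that every normed structure $\cC^\otimes$ on $\cC$ lies in the essential image of the fully faithful functor $(-)^\flat$, so the unit $\cC^\otimes\to(\mathbb V\cC^\otimes)^\flat$ is an equivalence. Thus $\cC^\otimes$ is functorially recovered from its image $\mathbb V\cC^\otimes$ under restriction to the free orbit, which yields uniqueness of the lift with prescribed value $M$. Conversely, the Borel normed $G$-\icat{} $(k^*\cC,M)^\flat$ attached to $k^*\cC$ equipped with the $G$-symmetric monoidal structure $M$ is a normed structure on $\cC$: its underlying $G$-\icat{} is $(k^*\cC)^\flat\simeq\cC$ by invertibility of the Beck--Chevalley transformation, and $\mathbb V$ sends it to $M$ because the counit $\mathbb V(-)^\flat\to\id$ is an equivalence. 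Assembling these into a statement about spaces of lifts --- using full faithfulness of $(-)^\flat$ to turn the fiber of $\mathbb U$ over the Borel object $\cC$ into the fiber of $\mathbb U$ over $k^*\cC$, compatibly with $\mathbb V$ --- shows that this space is contractible.

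I expect the only genuine subtlety to be the bookkeeping in the last sentence: verifying that the equivalence of fibers induced by $(-)^\flat$ really does intertwine $\mathbb V$ with the identity of $\Fun(BG,\CMon(\CatI))$ rather than with some twisted version. This is precisely what the invertibility of the Beck--Chevalley transformation in Proposition~\ref{prop:borel-G-sym-mon} provides --- it forces the two occurrences of $(-)^\flat$ in the square to be mutually compatible --- so once that is invoked the argument is entirely formal, with no computation required, exactly as in the proof of Corollary~\ref{cor:borel-unique-family}.
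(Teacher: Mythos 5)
Your argument is correct and uses exactly the ingredients the paper intends: the adjunction and invertible Beck--Chevalley transformation of \cref{prop:borel-G-sym-mon} together with the identification of the essential image in \cref{cor:borel-sm-ff}, so that every normed structure on a Borel $\cC$ is of the form $M^\flat$ and is therefore determined by its restriction to the free orbit. The paper leaves this proof implicit (``in the same way one shows the following pointwise version''), and your write-up is a faithful and correct spelling-out of that same approach.
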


\begin{observation}\label{obs:pw-mon-borel}
  If $\cC$ is a symmetric monoidal \icat{} with $G$-action, then we get two natural symmetric monoidal structures on $\cC^{hH}\simeq\cC^\flat(G/H)$ for any $H\leqslant G$: on the one hand, we can equip $\cC^{hH}$ with the symmetric monoidal structure obtained from the one on $\cC$ by taking homotopy fixed points; on the other hand, we can restrict $\cC^\flat$ along the functor $\Span(\xF)\to\Span(\xF_G)$ induced by $G/H\times\blank$. The Eckmann--Hilton argument then shows that these two structures agree, i.e.~the covariant functoriality of $\cC^\flat$ in fold maps is induced by the given symmetric monoidal structure by passing to homotopy fixed points.
\end{observation}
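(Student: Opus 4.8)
The plan is to realize both symmetric monoidal structures on $\cC^\flat(G/H)$ as the value at $G/H$ of a single \emph{symmetric monoidal $G$-$\infty$-category}, and then to pin that down using the rigidity of the Borel construction.

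First, since the span pair $(\xF_G,\xF_G)$ is extensive, \cref{obs:levelwise-monoid} (via \cite{GepnerGrothNikolaus}*{2.5}, which supplies the Eckmann--Hilton input) lifts the normed $G$-$\infty$-category $\cC^\flat\colon\Span(\xF_G)\to\CatI$ uniquely to a functor $\widetilde{\cC^\flat}\colon\Span(\xF_G)\to\CMon(\CatI)$. Restricting along $\xF_G^\op\hookrightarrow\Span(\xF_G)$ gives a (na\"ive) symmetric monoidal $G$-$\infty$-category $\widetilde{\cC^\flat}_0\colon\xF_G^\op\to\CMon(\CatI)$ whose value at $G/H$ is $\cC^\flat(G/H)$ with the structure obtained by restricting $\cC^\flat$ along the functor $\Span(\xF)\to\Span(\xF_G)$ induced by $G/H\times\blank$ (the second structure in the statement), the multiplication being the norm along the fold map $G/H\amalg G/H\to G/H$. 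On the other hand, viewing $\cC$ as an object of $\CMon(\Fun(BG,\CatI))=\Fun(BG,\CMon(\CatI))$ and applying $\CMon(-)$ to the non-monoidal Borel construction $(-)^\flat\colon\Fun(BG,\CatI)\to\Fun^\times(\xF_G^\op,\CatI)$ of \cref{constr:borel-g-cat} (which preserves products, being a right adjoint) yields a second na\"ive symmetric monoidal $G$-$\infty$-category $\CMon((-)^\flat)(\cC)\in\Fun^\times(\xF_G^\op,\CMon(\CatI))$ whose value at $G/H$ is $\cC^\flat(G/H)\simeq\cC^{hH}$ with the homotopy fixed point symmetric monoidal structure (the first structure in the statement). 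Thus it suffices to identify $\widetilde{\cC^\flat}_0$ with $\CMon((-)^\flat)(\cC)$.

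For this, recall that $(-)^\flat\colon\Fun(BG,\CatI)\to\Fun^\times(\xF_G^\op,\CatI)$ is fully faithful with essential image the Borel $G$-$\infty$-categories and with inverse-on-the-image given by restriction along $k\colon(BG)^\op\hookrightarrow\xF_G$ (\cref{constr:borel-g-cat}); since $\CMon(-)$ preserves limits, the same holds for $\CMon((-)^\flat)$ and $\CMon(k^*)$, where now ``Borel'' means that the underlying $G$-$\infty$-category is Borel. The symmetric monoidal $G$-$\infty$-category $\widetilde{\cC^\flat}_0$ lies in this essential image, since its underlying $G$-$\infty$-category is the Borel $G$-$\infty$-category of the $\infty$-category with $G$-action underlying $\cC$ by the Beck--Chevalley square of \cref{prop:borel-G-sym-mon}; and $\CMon(k^*)$ applied to $\widetilde{\cC^\flat}_0$ is the value $\cC^\flat(G)$ at the free orbit $G\in\xF_G$ together with its $G$-action and its fold-norm multiplication, which by \cref{cor:borel-sm-ff} (invertibility of the counit at $\cC$) is exactly $\cC$ with its given symmetric monoidal structure $\otimes_\cC$. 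Hence $\widetilde{\cC^\flat}_0\simeq\CMon((-)^\flat)(\cC)$, and evaluating at $G/H$ identifies the two structures.

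The only step that is not purely formal is the claim that the forgetful functor in \cref{prop:borel-G-sym-mon} records the value at the free orbit \emph{together with} its fold-norm multiplication, i.e.\ that the monoid structure of $\widetilde{\cC^\flat}_0$ at the free orbit is $\otimes_\cC$; this has to be traced through the construction of that adjunction in \cite{global-picard,borel-normed}. One can also bypass it using the explicit formula $\cC^\flat(X)\simeq\ul\Fun_G(X,\cC)$ of \cref{rmk:Explicit_Description_Borel_Construction} in its evident monoidal refinement: there the norm along a fold map $X\amalg X\to X$ is manifestly the pointwise tensor product on $\ul\Fun_G(X,\cC)$ induced by $\otimes_\cC$, so at $X=G/H$ it becomes the homotopy fixed point tensor product under $\ul\Fun_G(G/H,\cC)\simeq\cC^{hH}$, which is exactly the assertion.
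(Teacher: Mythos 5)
Your argument is correct, and it is a legitimate way of making precise what the paper dispatches with a one-line appeal to Eckmann--Hilton. The paper's intended argument is essentially the interchange underlying \cite{GepnerGrothNikolaus}*{2.5}: the unique lift of the product-preserving functor $\cC^\flat\colon\Span(\xF_G)\to\CatI$ to $\CMon(\CatI)$ equips each value with the fold-map monoidal structure, while the $\CMon$-level Borel adjunction equips it with the homotopy-fixed-point one, and these two commuting structures must coincide. You reorganize this around the rigidity of the Borel construction: both candidate objects of $\Fun^\times(\xF_G^\op,\CMon(\CatI))$ have Borel underlying $G$-\icat{} and restrict to $(\cC,\otimes_\cC)$ on the free orbit, so the fully faithful right adjoint $\CMon((-)^\flat)$ (with inverse $\CMon(k^*)$ on its image, using that $\CMon(\CatI)\to\CatI$ is conservative and limit-preserving) forces them to agree. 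This buys a cleaner reduction to the free orbit at the cost of importing \cref{prop:borel-G-sym-mon} and \cref{cor:borel-sm-ff}. Two remarks on the step you flag. First, it is less of a gap than you suggest: given the paper's convention of regarding normed $G$-\icats{} as functors $\Span(\xF_G)\to\CMon(\CatI)$, the forgetful functor in \cref{prop:borel-G-sym-mon} is restriction along $BG\hookrightarrow\xF_G^\op\hookrightarrow\Span(\xF_G)$, which by construction remembers exactly the fold-map multiplication at the free orbit; so invertibility of the counit there is precisely the identification you need. Second, the proposed ``bypass'' via the evident pointwise-monoidal refinement of $\ul\Fun_G(\blank,\cC)$ should not be leaned on: identifying that refinement with the restriction to $\xF_G^\op$ of the normed structure of \cref{prop:borel-G-sym-mon} is essentially the assertion being proved, so as stated it is circular. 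Your main argument does not need it.
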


\subsection{The classical case}
Let $\cC$ be a symmetric monoidal \icat{} with $G$-action. So far, we have completely described the contravariant functoriality of $\cC^\flat$, as well as the covariant functoriality with respect to fold maps. The goal of this final subsection is to provide the only missing piece of information, namely the covariant functoriality with respect to maps $G/K\to G/H$ for $K\leqslant H\leqslant G$, when $\cC$ is a $1$-category.

It turns out that the hardest part of this is actually not understanding the $\infty$-categorical side, but rather translating this back through the equivalence between (classical, biased) symmetric monoidal $1$-categories and the \icat{} of commutative monoids in $\Cat$ \cite{shimada-shimakawa,sharma}. We therefore take a somewhat different route here: namely, we will give a general result characterizing the structure maps $\cC^{hK}\to\cC^{hH}$ uniquely, and then give a (classical) construction satisfying these assumptions. As an upshot, we will never need to recall \emph{how} the functor $\Phi$ from symmetric monoidal $1$-categories to symmetric monoidal \icats{} actually works, except for the following basic facts:
\begin{itemize}
  \item $\Phi$ is fully faithful with essential image those symmetric monoidal $\infty$-{\hskip0pt}categories whose underlying category is a $1$-category.
  \item $\Phi$ preserves underlying categories, and for every $\cC$ the maps $*\to\cC$ and $\cC^{\times 2}\to\cC$ obtained via the functoriality of $\Phi(\cC)$ in $\bfn0\to\bfn1$ and $\bfn2\to\bfn1$ are given by the inclusion of the unit and the tensor product, respectively.
\end{itemize}
For definiteness, we fix $\Phi$ to be the equivalence from \cite{sharma}*{6.19} between
$\CMon(\Cat)$ and the \icat{} obtained from the $1$-category
$\textsf{PermCat}_1^\textup{strict}$ of \emph{permutative categories}
(i.e.~symmetric monoidal categories in which associativity and
unitality hold on the nose) and \emph{strict} symmetric monoidal
functors by Dwyer--Kan localizing at the underlying equivalences of categories. Below, we will frequently and
implicitly extend $\Phi$ to the analogous localizations of the $1$-categories
$\SymMonCat_1^\textup{strict}$ of symmetric monoidal categories
and \emph{strict} symmetric monoidal functors as well as
$\SymMonCat_1^\textup{stron\smash g}$ using the following
reformulation of Mac~Lane's coherence theorem, as refined symmetrically in \cite{may-perm}*{4.2}:

\begin{lemma}
  The inclusions
  \[
    \textup{\textsf{PermCat}}^\textup{strict}_1 \hookrightarrow\SymMonCat^\textup{strict}_1 \hookrightarrow\SymMonCat^\textup{stron\smash g}_1
  \]
  induce equivalences on Dwyer--Kan localizations.
  \begin{proof}
    For the composite $\textup{\textsf{PermCat}}^\textup{strict}_1 \hookrightarrow\SymMonCat^\textup{stron\smash g}_1$ this appears for example as \cite{parsum}*{1.19}. As part of the proof, the reference constructs a functor $\Pi\colon\SymMonCat^\textup{stron\smash g}_1\to  \textup{\textsf{PermCat}}^\textup{strict}_1$ together with a natural \emph{strong} symmetric monoidal equivalence $\nu\colon\cC\to\Pi\cC$ for any symmetric monoidal category $\cC$. It will therefore suffice to show that there exists a natural zig-zag of \emph{strict} symmetric monoidal equivalences between $\cC$ and $\Pi\cC$.

    This is actually an instance of a general construction: Define $\Xi\cC$ to be the category with objects triples of an object $X\in\cC$, an object $Y\in\Pi\cC$, and an isomorphism $\sigma\colon\nu(X)\isoto Y$. A morphism in $(X,Y,\sigma)\to(X',Y',\sigma')$ is given by a pair of a map $X\to X'$ and a map $Y\to Y'$ making the obvious diagram commute. This becomes a functor in $\cC$ in the obvious way, and the forgetful maps provide natural equivalences $\cC\isofrom\Xi\cC\isoto\Pi\cC$.

    We now make $\Xi\cC$ into a symmetric monoidal category as follows: the tensor product of objects is given by \[(X,Y,\sigma)\otimes(X',Y',\sigma')=\big(X\otimes X',Y\otimes Y',(\sigma\otimes\sigma')\circ\psi^{-1}\big)\] where $\psi\colon \nu(X)\otimes \nu(X')\isoto \nu(X\otimes X')$ denotes the structure isomorphism of the symmetric monoidal functor $\nu$. The unit is given by the inverse structure isomorphism $\nu(\bbone)\isoto\bbone$ of $\nu$, while the tensor product of morphisms as well as the associativity, unitality, and symmetry isomorphisms for $\Xi\cC$ are simply given pointwise. We omit the straightforward verification that this is well-defined and a symmetric monoidal category. It is then clear from the definitions that the projections $\cC\gets\Xi\cC\to\Pi\cC$ are strict symmetric monoidal. By direct inspection, they are still natural when considered as maps in $\SymMonCat_1^\textup{stron\smash g}$, which then completes the proof of the lemma.
  \end{proof}
\end{lemma}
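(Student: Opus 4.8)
The plan is to deduce the statement from Mac~Lane's coherence theorem in the symmetric-monoidal-functorial form already available in the literature. First I would invoke the strictification result for the outer composite $\textup{\textsf{PermCat}}^\textup{strict}_1\hookrightarrow\SymMonCat^\textup{strong}_1$ (e.g.\ \cite{parsum}*{1.19}; see also \cite{may-perm}*{4.2}): it shows that this composite already induces an equivalence on Dwyer--Kan localizations, and, crucially, it supplies a rigidification functor $\Pi\colon\SymMonCat^\textup{strong}_1\to\textup{\textsf{PermCat}}^\textup{strict}_1$ together with a natural \emph{strong} symmetric monoidal equivalence $\nu\colon\cC\to\Pi\cC$. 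Since the class of functors inducing equivalences on localizations has the two-out-of-three property, it then suffices to prove that the first inclusion $\textup{\textsf{PermCat}}^\textup{strict}_1\hookrightarrow\SymMonCat^\textup{strict}_1$ induces an equivalence; concretely, I would try to replace the comparison $\nu$ by a natural zig-zag of \emph{strict} symmetric monoidal equivalences, so that the quasi-inverse $\Pi$ (which does restrict to $\SymMonCat^\textup{strict}_1$) and all the connecting maps stay inside the middle category.

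To produce such a zig-zag I would use a ``graph category'' construction. Given a symmetric monoidal category $\cC$, let $\Xi\cC$ have as objects the triples $(X,Y,\sigma)$ with $X\in\cC$, $Y\in\Pi\cC$ and $\sigma\colon\nu(X)\isoto Y$ an isomorphism, and as morphisms the commuting squares; the two forgetful functors $\cC\isofrom\Xi\cC\isoto\Pi\cC$ are then equivalences of underlying categories and are manifestly natural in $\cC$. The main construction step is to upgrade $\Xi\cC$ to a symmetric monoidal category for which both projections become \emph{strict} symmetric monoidal functors. I would define the tensor product on objects by
\[
(X,Y,\sigma)\otimes(X',Y',\sigma')\;=\;\bigl(X\otimes X',\;Y\otimes Y',\;(\sigma\otimes\sigma')\circ\psi^{-1}\bigr),
\]
where $\psi\colon\nu(X)\otimes\nu(X')\isoto\nu(X\otimes X')$ is the structure isomorphism of the monoidal functor $\nu$; take the unit to be the triple whose visible coordinates are the units of $\cC$ and $\Pi\cC$ and whose third component is the inverse unit constraint of $\nu$; and define the tensor of morphisms and the associativity, unitality and symmetry constraints ``pointwise'', i.e.\ componentwise from $\cC$ and $\Pi\cC$ on the two visible coordinates. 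One then has to check that this datum is well defined (the twisted map is genuinely an isomorphism $\nu(X\otimes X')\to Y\otimes Y'$), that it satisfies the pentagon and the two hexagon axioms, and that the projections are strictly monoidal and remain natural when $\cC$ varies along strong (not only strict) symmetric monoidal functors.

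Granting this, the argument closes as follows. The span $\cC\isofrom\Xi\cC\isoto\Pi\cC$ lives entirely in $\SymMonCat^\textup{strict}_1$, is natural in $\cC$, and has legs that are underlying equivalences; after inverting the underlying equivalences it therefore exhibits $(\textup{incl})\circ\Pi$ as naturally equivalent to the identity on the localization of $\SymMonCat^\textup{strict}_1$. Feeding a permutative category $\cD$ into $\Xi$ again yields a permutative category --- the pointwise associativity and unitality constraints are identities, and the relevant triples agree on the nose precisely because of the coherence of $\nu$ --- so the same construction, restricted to permutative inputs, also gives the round trip $\Pi\circ(\textup{incl})\simeq\id$ on $\textup{\textsf{PermCat}}^\textup{strict}_1$. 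Hence $\textup{\textsf{PermCat}}^\textup{strict}_1\hookrightarrow\SymMonCat^\textup{strict}_1$ induces an equivalence on Dwyer--Kan localizations, and two-out-of-three against the known outer composite upgrades this to the remaining inclusion. The step I expect to be the main obstacle is precisely the coherence bookkeeping for $\Xi\cC$: checking that the $\psi$-twisted tensor together with the pointwise constraints genuinely assembles into a symmetric monoidal structure amounts, diagram by diagram, to the associativity and unitality axioms of the monoidal functor $\nu$, and one must also carefully track the structure isomorphisms when verifying naturality along strong symmetric monoidal functors.
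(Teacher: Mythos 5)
Your proposal is correct and follows essentially the same route as the paper: it cites the same strictification result for the outer composite, extracts the rigidification $\Pi$ with its strong comparison $\nu\colon\cC\to\Pi\cC$, and replaces $\nu$ by the strict zig-zag $\cC\gets\Xi\cC\to\Pi\cC$ through the isomorphism category equipped with the same $\psi$-twisted tensor product and pointwise constraints. The only minor difference is that you additionally check that $\Xi$ preserves permutative categories to close the two-out-of-three argument (the paper instead relies on the naturality of the zig-zag over all strong symmetric monoidal functors), and the coherence verification you flag as the main remaining work is likewise omitted as routine in the paper.
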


We can now state our key technical lemma, whose proof will be given below after some preparations.

\begin{propn}\label{prop:norm-unique}
  Let $K\leqslant H\leqslant G$, and let $h_1,\dots,h_r$ be orbit representatives for $H/K$. Then there exists a \emph{unique} natural transformation
  \[
    \nu\colon\cC^{hK}\to\cC^{hH}
  \]
  of functors $\Fun(BG,\SymMonCat^\textup{strict}_1)\to\Cat$ lifting $\cC\to\cC, X\mapsto\bigotimes_{i=1}^rh_i.X$.
\end{propn}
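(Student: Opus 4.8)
The plan is to exhibit $\cC\mapsto\cC^{hK}$ as a corepresentable functor (in the $\Cat$-enriched, i.e.\ $2$-categorical, sense) and then read off both existence and uniqueness from the Yoneda lemma together with an elementary computation in a free symmetric monoidal category. Throughout I work in $\Fun(BG,\CMon(\Cat))$, which we identify with the source $\Fun(BG,\SymMonCat^\textup{strict}_1)$ of the proposition. Writing $\Sym_G$ for the free commutative-monoid functor on $\Fun(BG,\Cat)$, composing the enriched adjunctions $\Sym_G\dashv\textup{(forget)}$ and $\mathrm{ind}_K^G\dashv\mathrm{res}_K^G$ with the identification $\mathrm{ind}_K^G(*)\simeq G/K$ yields, $\Cat$-naturally in $\cC$,
\[
  \mathbf{Map}\big(\Sym_G(G/K),\cC\big)\;\simeq\;\mathbf{Map}_{\Fun(BG,\Cat)}(G/K,\cC)\;\simeq\;\mathbf{Map}_{\Fun(BK,\Cat)}\big(*,\cC|_{BK}\big)\;=\;\cC^{hK},
\]
so $\cC\mapsto\cC^{hK}$ is corepresented by $\Sym_G(G/K)$, and the object of $\Sym_G(G/K)^{hK}$ corepresenting the identity is the generator $[eK]$ with its trivial $K$-action, which I call $u$. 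The same argument applies with $K$ replaced by $H$ (and by the trivial group, recovering $\cC\mapsto\cC$).

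By the enriched co-Yoneda lemma, evaluation at $u$ identifies the category of natural transformations $\cC^{hK}\to\cC^{hH}$ with $\Sym_G(G/K)^{hH}$, and under this identification the composite of such a transformation with the forgetful functor $\cC^{hH}\to\cC$ is obtained by forgetting the $H$-fixed structure on the corresponding object of $\Sym_G(G/K)^{hH}$. Since the forgetful functor $\cC^{hK}\to\cC$ sends $u$ to $[eK]$ and $\bigotimes_{i=1}^r h_i\cdot[eK]=\sum_{i=1}^r[h_iK]$ in $\Sym_G(G/K)$, a natural transformation $\nu$ lifts $X\mapsto\bigotimes_{i=1}^r h_i\cdot X$ precisely when the underlying object of $\nu_{\Sym_G(G/K)}(u)\in\Sym_G(G/K)^{hH}$ is $Y:=\sum_{i=1}^r[h_iK]$. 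The claim is thus equivalent to: the full subcategory of $\Sym_G(G/K)^{hH}$ on objects with underlying object $Y$ is contractible (a point).

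To verify this I compute directly. Recall that $\Sym_G(G/K)$ is the category of finite sets graded over $G/K$ with grading-preserving bijections, with $G$ acting via translation on $G/K$; thus $Y$ is the set $\{1,\dots,r\}$ graded by $i\mapsto h_iK$, and since the $h_iK$ are pairwise distinct cosets every grading-fibre of $Y$ is a singleton, so $Y$ has no non-identity automorphisms over $G/K$. For $h\in H$, writing $h\cdot h_iK=h_{\sigma_h(i)}K$ defines an $H$-action $\sigma$ on $\{1,\dots,r\}$ (well defined since the $h_i$ represent $H/K$); as all fibres are singletons, the unique isomorphism $h\cdot Y\to Y$ over $G/K$ is the permutation $\sigma_h$, which is therefore the only possible structure map of an $H$-fixed structure, and the cocycle identity $\sigma_{h'h}=\sigma_{h'}\sigma_h$ then holds automatically. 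Hence there is exactly one $H$-fixed structure on $Y$, and since $Y$ has only the identity automorphism over $G/K$ there are no non-identity morphisms between $H$-fixed structures either; the subcategory in question is a point. Transporting this object of $\Sym_G(G/K)^{hH}$ back through the co-Yoneda identification produces the unique natural transformation $\nu$, whose value on $u$ is $Y$ with the $H$-action $\sigma$; one then recognizes this (in the next step of the paper) as the classical norm functor of Construction~\ref{constr:classical}.

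The step I expect to be delicate is the $2$-categorical bookkeeping of the first two paragraphs: upgrading the free--forgetful and induction--restriction adjunctions to genuinely $\Cat$-enriched adjunctions and invoking the enriched co-Yoneda lemma, so that the argument controls \emph{all} natural transformations together with the morphisms between them (not merely transformations up to isomorphism), and correctly translating the hypothesis that $\nu$ lift $X\mapsto\bigotimes h_i\cdot X$ into the stated condition on the underlying object of $\nu_{\Sym_G(G/K)}(u)$. The computation in $\Sym_G(G/K)$ itself is elementary.
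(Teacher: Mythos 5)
Your endgame coincides with the paper's: both arguments bottom out in the observation that the object $H/K\hookrightarrow G/K$ of the free symmetric monoidal groupoid $(\xF_{/(G/K)})^{\simeq}$ on $G/K$ has no non-trivial automorphisms, hence carries an essentially unique $H$-homotopy-fixed structure (the paper's Lemma on $\mathbb{P}(X)\simeq(\xF_{/X})^{\simeq}$ plays the role of your description of the free permutative category as graded finite sets). The divergence is in how one reduces to that computation, and the step you flag as ``delicate bookkeeping'' is in fact the crux — and it is precisely what the paper's proof is engineered to avoid. The enriched co-Yoneda lemma, even once the $\Cat$-enrichment of $\Fun(BG,\CMon(\Cat))$ and the enriched adjunctions are set up, computes the category of \emph{enriched} ($2$-categorical) natural transformations $\mathbf{Map}(\mathbb{P}(G/K),-)\to\mathbf{Map}(\mathbb{P}(G/H),-)$. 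The proposition, however, asserts uniqueness among all natural transformations of $\Cat$-valued functors on the $(\infty,1)$-category $\Fun(BG,\SymMonCat_1^{\textup{strict}})$; these are only required to be natural with respect to $1$-morphisms of the source and so a priori form a \emph{larger} class than the enriched ones. Proving uniqueness within the smaller enriched class does not yield the statement, and closing that gap is the real content you have deferred. (A smaller omission: passing to $\Fun(BG,\CMon(\Cat))$ requires first noting that $(-)^{hK}$ and $(-)^{hH}$ invert underlying equivalences, so that they factor through the Dwyer--Kan localization.)

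The paper bridges exactly this gap by a different device, which you would need to either reprove or replace. Since $\cC$ is a $1$-category, the forgetful functor $\cC^{hH}\to\cC$ is faithful, so each component $\nu_{\cC}\colon\cC^{hK}\to\cC^{hH}$ is determined by its restriction to cores together with the composite $\cC^{hK}\to\cC^{hH}\to\cC$; the composite is pinned down by the lifting hypothesis, and what remains is a natural transformation of \emph{space}-valued functors $((-)^{hK})^{\simeq}\to((-)^{hH})^{\simeq}$, to which the ordinary (unenriched, $\infty$-categorical) Yoneda lemma applies, identifying it with a lift of $H/K\to G/K$ to $((\xF_{/(G/K)})^{\simeq})^{hH}$. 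In short: the faithfulness trick substitutes for the enriched Yoneda lemma. If you wish to keep your route, you must additionally prove that every $(\infty,1)$-natural transformation between these corepresentables is (uniquely) enriched-natural; otherwise, importing the core-plus-faithfulness reduction brings you back to the paper's argument.
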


\begin{ex}\label{ex:norm-us-an-example}
  If $\cC$ is any symmetric monoidal \icat{} with $G$-action, then the structure map $\cC^\flat(G/K=G/K\to G/H)\colon\cC^{hK}\to\cC^{hH}$ lifts the twisted $r$-fold tensor product $X\mapsto\bigotimes_{i=1}^rh_i.X$: this follows at once by computing the composition
  \[
    \begin{tikzcd}[column sep=tiny,row sep=small]
      &[-.75em] G/K\arrow[dl,equal]\arrow[dr] &[-.75em]& G\arrow[dl]\arrow[dr,equal]\\
      G/K &&[1.5em] G/H &[1.5em]& G
    \end{tikzcd}
  \]
  in $\Span(\xF_G)$, cf.~\cite{borel-normed}*{3.2.1}.
\end{ex}

\begin{construction}\label{constr:classical}
  Let $\cC$ be a symmetric monoidal $1$-category with $G$-action. Recall that the \emph{symmetric monoidal norm} $\Nm^H_K\colon\cC^{hK}\to\cC^{hH}$ is given as follows: we send a $K$-homotopy fixed point $X$ (with structure isomorphisms $\phi_k\colon X\to k.X$) of $\cC$ to $\bigotimes_{i=1}^r h_i.X$ with structure isomorphisms
  \begin{equation*}
    \psi_h\colon\bigotimes_{i=1}^r h_i.X \to \bigotimes_{i=1}^r hh_i.x
  \end{equation*}
  given as follows: if $\sigma\in\Sigma_n$ and $\ell_1,\dots,\ell_r\in K$ satisfy $hh_i=h_{\sigma(i)}\ell_i$ for $i=1,\dots,r$, then $\psi_h$ is given as the composite
  \[
    \bigotimes_{i=1}^r h_i.X
    \isoto
    \bigotimes_{i=1}^r h_{\sigma(i)}.X
    \xrightarrow{\bigotimes h_{\sigma(i)}.\phi_{\ell_i}}
    \bigotimes_{i=1}^r h_{\sigma(i)}\ell_i.X=
    \bigotimes_{i=1}^r hh_i.x
  \]
  where the unlabelled isomorphism is given by permuting the tensor factors according to $\sigma$; on morphisms, $\Nm^{K}_H$ is simply given by $f\mapsto\bigotimes_{i=1}^r h_i.f$.

  We omit the straightforward but rather lengthy verification that this is well-defined. Note that this is clearly natural in \emph{strict} symmetric monoidal functors, and hence also satisfies the assumptions of the proposition.
\end{construction}

\begin{observation}
  If $\cC$ carries the trivial $G$-action, the above construction simplifies as follows: the assignment $h\mapsto(\sigma;\ell_1,\dots,\ell_r)$ defines a homomorphism $\iota\colon H\to\Sigma_r\wr K$, and the $H$-object $\Nm^H_KX$ is given by equipping $\bigotimes_{i=1}^rX$ with the restriction of the natural $\Sigma_r\wr K$-action on $X$ (by permuting the factors and via the individual $K$-actions) along $\iota$.
\end{observation}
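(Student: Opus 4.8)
The statement splits into two parts, which I would prove in turn. \emph{Part one}: that $\iota$ is a well-defined group homomorphism. Recall $h_1,\dots,h_r$ are chosen with $H=\coprod_{i=1}^r h_iK$, so for each $h\in H$ and each $i$ there are unique $\sigma_h(i)\in\{1,\dots,r\}$ and $\ell_i^h\in K$ with $hh_i=h_{\sigma_h(i)}\ell_i^h$; since left translation by $h$ permutes the cosets $h_iK$, we get $\sigma_h\in\Sigma_r$. For $h,h'\in H$ I would expand $(hh')h_i = h\bigl(h_{\sigma_{h'}(i)}\ell_i^{h'}\bigr) = h_{\sigma_h(\sigma_{h'}(i))}\,\ell_{\sigma_{h'}(i)}^h\,\ell_i^{h'}$ and compare with $(hh')h_i = h_{\sigma_{hh'}(i)}\ell_i^{hh'}$; uniqueness then forces $\sigma_{hh'}=\sigma_h\circ\sigma_{h'}$ and $\ell_i^{hh'}=\ell_{\sigma_{h'}(i)}^h\,\ell_i^{h'}$, and (after fixing compatible conventions) this is precisely the multiplication law of $\Sigma_r\wr K=\Sigma_r\ltimes K^r$, so $\iota$ is a homomorphism.

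\emph{Part two}: the identification of $\Nm^H_KX$. Since the $G$-action on $\cC$ is trivial and $\cC$ is a $1$-category, an object of $\cC^{hK}=\Fun(BK,\cC)$ is a pair $(X,\phi)$ of an object and a homomorphism $\phi\colon K\to\mathrm{Aut}_\cC(X)$, and likewise for $\cC^{hH}$; in particular a functor $BH\to\cC$ is determined by its underlying object together with the induced homomorphism out of $H$. I would then simply unwind \Cref{constr:classical} in the trivial case: each $g.(\blank)$ becomes the identity and each $g.\phi_\ell$ becomes $\phi_\ell$, so $\Nm^H_KX$ has underlying object $X^{\otimes r}$ and structure isomorphism $\psi_h$ equal to the composite of the symmetry isomorphism permuting the $r$ tensor factors by $\sigma_h$ with $\bigotimes_i\phi_{\ell_i^h}$. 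On the other hand, the ``natural $\Sigma_r\wr K$-action on $X^{\otimes r}$'' is by definition the homomorphism sending $(\tau;m_1,\dots,m_r)$ to the composite of the factor-permutation by $\tau$ with $\bigotimes_i\phi_{m_i}$; this is a homomorphism precisely because the symmetry isomorphisms assemble into a $\Sigma_r$-action on the $r$-fold tensor functor and conjugation of $\bigotimes_i\phi_{m_i}$ past such a permutation reindexes the factors accordingly, matching the wreath multiplication. Comparing formulas, precomposition with $\iota$ sends $h$ to exactly $\psi_h$, so — both objects having underlying object $X^{\otimes r}$ — I conclude that $\Nm^H_KX$ is the restriction of this action along $\iota$; naturality in $(X,\phi)$ is visible from the formulas.

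The hard part will be purely the bookkeeping of conventions: one must choose compatible conventions for the semidirect product $\Sigma_r\ltimes K^r$, for the direction of the symmetry isomorphism on $X^{\otimes r}$, and for the labelling in \Cref{constr:classical}, so that both the homomorphism property of $\iota$ and the identity $\psi_h=(\text{action of }\iota(h))$ come out consistently; once these are pinned down everything is a direct check, and the well-definedness of the ``natural $\Sigma_r\wr K$-action'' is nothing but the coherence of the symmetric monoidal structure already invoked in \Cref{constr:classical}. Finally, combining this with \Cref{prop:norm-unique} and \Cref{ex:norm-us-an-example} identifies the structure map $\cC^\flat(G/K\to G/H)$ itself, for any $1$-category $\cC$ with trivial $G$-action, with $X^{\otimes r}$ equipped with this restricted wreath-product action.
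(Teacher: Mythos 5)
Your proposal is correct: the paper states this as an \emph{Observation} with no written proof, treating it as an immediate unwinding of Construction~\ref{constr:classical} in the trivial-action case, and your two-part verification (the coset computation showing $\iota$ is a homomorphism, and the identification of $\psi_h$ with the action of $\iota(h)$ on $X^{\otimes r}$) is exactly the check the authors leave implicit. The only caveat, which you already flag, is the bookkeeping of conventions for the wreath-product multiplication and the direction of the cocycle condition on homotopy fixed points; this is routine and does not affect correctness.
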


The uniqueness part of Proposition~\ref{prop:norm-unique} now immediately implies:

\begin{cor}\label{cor:borel-norms-classical}
  Let $\cC$ be a symmetric monoidal $1$-category with $G$-action. Then the structure map $\cC^{hK}=\cC^\flat(G/K)\to\cC^\flat(G/H)=\cC^{hH}$ of the associated Borel $G$-\icat{} is given by the classical symmetric monoidal norm of Construction~\ref{constr:classical}.\qed
\end{cor}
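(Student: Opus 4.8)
The goal is to prove Corollary~\ref{cor:borel-norms-classical}: for a symmetric monoidal $1$-category $\cC$ with $G$-action, the structure map $\cC^{hK}\to\cC^{hH}$ of the Borel $G$-$\infty$-category $\cC^\flat$ agrees with the classical symmetric monoidal norm of Construction~\ref{constr:classical}. The plan is to deduce this from the uniqueness part of Proposition~\ref{prop:norm-unique} together with the compatibility statements already assembled in the surrounding text.

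First I would assemble the two relevant natural transformations. On the one hand, Example~\ref{ex:norm-us-an-example} identifies the Borel structure map $\cC^\flat(G/K\to G/H)\colon\cC^{hK}\to\cC^{hH}$; combined with the fact that $\cC^\flat$ is functorial in (strict) symmetric monoidal functors of $\cC$-with-$G$-action (this is built into Proposition~\ref{prop:borel-G-sym-mon} and Corollary~\ref{cor:borel-sm-ff}), this gives a natural transformation of functors $\Fun(BG,\SymMonCat_1^{\textup{strict}})\to\Cat$ — where we first postcompose with $\Phi$ and restrict to the essential image consisting of symmetric monoidal $\infty$-categories whose underlying category is a $1$-category. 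By Example~\ref{ex:norm-us-an-example} this transformation lifts the underlying functor $\cC\to\cC$, $X\mapsto\bigotimes_{i=1}^r h_i.X$ (computed there as a composite in $\Span(\xF_G)$). On the other hand, Construction~\ref{constr:classical} and the remark immediately following it exhibit the classical norm $\Nm^H_K\colon\cC^{hK}\to\cC^{hH}$ as a natural transformation of the same functors, also lifting $X\mapsto\bigotimes_{i=1}^r h_i.X$.

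With both transformations in hand, the corollary is immediate: Proposition~\ref{prop:norm-unique} asserts that there is a \emph{unique} natural transformation $\cC^{hK}\to\cC^{hH}$ of functors $\Fun(BG,\SymMonCat_1^{\textup{strict}})\to\Cat$ lifting the twisted $r$-fold tensor product, so the Borel structure map and the classical norm must coincide. The only genuine verification needed is that both transformations do in fact lift the \emph{same} underlying functor $X\mapsto\bigotimes_{i=1}^r h_i.X$ (with the same ordering convention for the tensor factors coming from the chosen orbit representatives $h_1,\dots,h_r$). For the classical norm this is literally its definition; for the Borel map it is the content of Example~\ref{ex:norm-us-an-example}, which computes the composite span $(G/K=G/K\to G/H)\circ(G/H\gets G\to G)$ and reads off the underlying functor.

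The main obstacle — though it has essentially been dispatched by the preceding material — is matching conventions: one must check that the identification of the underlying functor in Example~\ref{ex:norm-us-an-example} uses orbit representatives in the same way as Construction~\ref{constr:classical}, so that the two transformations are literally lifts of one and the same functor rather than of functors that merely happen to be (non-canonically) isomorphic. Since Proposition~\ref{prop:norm-unique} concerns lifts of a \emph{fixed} underlying functor, this bookkeeping is what makes the uniqueness applicable; once it is in place, no further argument is required.

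\begin{proof}
  Both the structure map $\cC^\flat(G/K\to G/H)\colon\cC^{hK}\to\cC^{hH}$ of the Borel $G$-$\infty$-category and the classical symmetric monoidal norm $\Nm^H_K\colon\cC^{hK}\to\cC^{hH}$ of Construction~\ref{constr:classical} are natural in strict symmetric monoidal functors of the symmetric monoidal $1$-category $\cC$ with $G$-action: for the Borel map this follows from the functoriality built into Proposition~\ref{prop:borel-G-sym-mon} (together with the identification of the essential image of $\Phi$), while for the classical norm it is noted at the end of Construction~\ref{constr:classical}. Hence, upon applying $\Phi$, both define natural transformations of functors $\Fun(BG,\SymMonCat^\textup{strict}_1)\to\Cat$. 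Moreover, both lift the underlying functor $X\mapsto\bigotimes_{i=1}^r h_i.X$, where $h_1,\dots,h_r$ are the chosen orbit representatives for $H/K$: for the classical norm this is immediate from the definition of $\Nm^H_K$ on objects and morphisms, and for the Borel structure map it is exactly the computation of Example~\ref{ex:norm-us-an-example}. By the uniqueness statement of Proposition~\ref{prop:norm-unique}, these two natural transformations coincide, which is the assertion of the corollary.
\end{proof}
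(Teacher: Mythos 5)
Your argument is correct and is exactly the paper's intended proof: the corollary is stated as an immediate consequence of the uniqueness in Proposition~\ref{prop:norm-unique}, with Example~\ref{ex:norm-us-an-example} supplying that the Borel structure map lifts the twisted $r$-fold tensor product and the closing remark of Construction~\ref{constr:classical} supplying the same for the classical norm. Your additional care about matching the ordering conventions for the orbit representatives is a sensible bookkeeping point that the paper leaves implicit.
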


It remains to prove the proposition.

\begin{lemma}\label{lemma:free-sym-mon-cat}
  Let $\mathbb P\colon\Fun(BG,\CatI)\to\Fun(BG,\CMon(\CatI))$ denote
  the left adjoint of the forgetful functor. Then the restriction of
  $\mathbb P$ to $\xF_G$ is given by $X\mapsto(\xF_{/X})^\simeq$, with
  functoriality and $G$-action via postcomposition. The unit is given
  by $X\to (\xF_{/X})^\simeq,x\mapsto (x\colon\{*\}\to
  X)$.
  \begin{proof}
    Note that the claim is clear for $G=1$ and $X=\{*\}$. For trivial $G$ and general $X$, we now observe that since $\mathbb P$ is a left adjoint, it in particular preserves finite coproducts. As $\CMon(\CatI)$ is semiadditive, it therefore suffices that $\prod_{x\in X}\xF_{/\{x\}}\simeq\xF_{/X}$ via the coproduct functor, which is simply the statement that $\xF$ is extensive.

    This finishes the proof for trivial $G$. The lemma follows as the left adjoint for general $G$ is simply given by taking the non-equivariant left adjoint and equipping it with the induced $G$-action.
  \end{proof}
\end{lemma}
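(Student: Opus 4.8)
The plan is to first reduce to the case $G=1$ and then compute the non-equivariant free commutative monoid functor on finite sets. For the reduction I would observe that the forgetful functor $\Fun(BG,\CMon(\CatI))\to\Fun(BG,\CatI)$ is simply postcomposition with the non-equivariant forgetful functor $U\colon\CMon(\CatI)\to\CatI$, hence is computed pointwise. Since $\Fun(BG,\blank)$ sends an adjunction $\mathbb P_1\dashv U$ to the adjunction $\Fun(BG,\mathbb P_1)\dashv\Fun(BG,U)$, the left adjoint $\mathbb P$ is itself given by postcomposition with the non-equivariant free functor $\mathbb P_1$, and its unit is induced levelwise from the unit of $\mathbb P_1\dashv U$. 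Granting the non-equivariant identification of $\mathbb P_1$ on $\xF$, this immediately produces both the asserted $G$-action and functoriality via postcomposition, as well as the claimed description of the unit.

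It thus remains to identify $\mathbb P_1|_{\xF}$ with $X\mapsto(\xF_{/X})^\simeq$. For the base case $X=\{*\}$ I would recall that the free commutative monoid on an object $Y$ is $\coprod_{n\ge0}(Y^{\times n})_{h\Sigma_n}$; taking $Y=*$ the terminal \icat{} gives $\coprod_{n\ge0}B\Sigma_n\simeq\xF^\simeq=(\xF_{/\{*\}})^\simeq$, with unit picking out the one-point set, i.e.\ the object $(\{*\}\xrightarrow{\id}\{*\})$. For general finite $X$, since $\mathbb P_1$ is a left adjoint it preserves finite coproducts, so writing $X\simeq\coprod_{x\in X}\{*\}$ yields $\mathbb P_1(X)\simeq\coprod_{x\in X}\xF^\simeq$ in $\CMon(\CatI)$; as this \icat{} is semiadditive, the coproduct agrees with the product $\prod_{x\in X}(\xF_{/\{x\}})^\simeq$, which by extensivity of $\xF$ (applied after passing to cores) is $(\xF_{/X})^\simeq$.

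The hard part will be upgrading these pointwise equivalences to a genuine equivalence of functors $\xF\to\CMon(\CatI)$, i.e.\ matching $\mathbb P_1(f)$ with postcomposition $f_*$ for every map $f$ of finite sets and matching the units, rather than merely comparing values objectwise. I expect the cleanest way to finesse this is to argue directly through the universal property instead of the ad hoc semiadditivity identification: for every $M\in\CMon(\CatI)$ the chain of equivalences
\[
\Map_{\CMon(\CatI)}\big((\xF_{/X})^\simeq,M\big)\simeq\prod_{x\in X}\Map_{\CMon(\CatI)}(\xF^\simeq,M)\simeq\prod_{x\in X}(UM)^\simeq\simeq\Map_{\CatI}(X,UM)
\]
is natural in both $X$ and $M$, the middle step using precisely that $\xF^\simeq$ is the free commutative monoid on a point. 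By the Yoneda lemma this exhibits $(\xF_{/\blank})^\simeq$ as a left adjoint to $U$ on $\xF$, hence as $\mathbb P_1|_{\xF}$ together with its unit, which is exactly what is needed to feed back into the reduction of the first paragraph and conclude.
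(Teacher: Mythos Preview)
Your proof is correct and follows essentially the same strategy as the paper: reduce to $G=1$ via the observation that the left adjoint is computed pointwise, handle the base case $X=\{*\}$, and then extend to general finite $X$ using that $\mathbb P_1$ preserves coproducts together with semiadditivity of $\CMon(\CatI)$ and extensivity of $\xF$. You are simply more explicit than the paper---which calls the base case ``clear'' and leaves naturality implicit---and your closing Yoneda argument is a clean way to package the naturality rather than a genuinely different route.
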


\begin{proof}[Proof of Proposition~\ref{prop:norm-unique}]
  The existence of such a map was observed in Example~\ref{ex:norm-us-an-example}, so it only remains to prove uniqueness.

  As both $(-)^{hK}$ and $(-)^{hH}$ preserve underlying equivalences of categories, we may replace the source by its Dwyer--Kan localization. Combining the above with \cite{g-global}*{4.1.36}, we see that this localization is given by $\Fun(BG, \CMon(\Cat))$ (with the evident localization functor).

  On the other hand, we observe that since the forgetful functor $\cC^{hH}\to\cC$ is faithful (here it is crucial that $\cC$ is a $1$-category!), a functor $f\colon\cC^{hK}\to\cC^{hH}$ is uniquely described by its effect on cores together with the composition $\cC^{hK}\to\cC^{hH}\to\cC$. Thus, we are altogether reduced to showing that there is at most one natural transformation $((-)^{hK})^\simeq\to((-)^{hH})^\simeq$ of functors $\CMon(\Cat)\to\Spc$ lifting the twisted $r$-fold tensor product.

  Combining the Yoneda lemma with the representability result proven in Lemma~\ref{lemma:free-sym-mon-cat}, this translates to saying that the object \[\bigotimes\limits_{i=1}^r h_i.(1\to G/K)\cong (H/K\hookrightarrow G/K)\] of $\xF_{/(G/K)}^\simeq$ admits at most one lift to an $H$-homotopy fixed point. But this is immediate since it admits no non-trivial automorphisms.
\end{proof}

\begin{bibdiv}
\begin{biblist}
\bibselect{refs_crigbispan}
\end{biblist}
\end{bibdiv}
\end{document}